\documentclass[
	a4paper,
	11pt
]{amsart}

\usepackage[british]{babel}
\usepackage{enumitem}

\usepackage{times}
\usepackage[T1]{fontenc}

\usepackage[leqno]{amsmath}
\usepackage{amssymb}
\usepackage{amsthm}
\usepackage{amsaddr}
\usepackage{mathtools}
\usepackage{stmaryrd}
\usepackage[mathcal]{euscript}
\usepackage{mathrsfs}%\mathscr

\usepackage{tikz-cd}
\usetikzlibrary {shapes.arrows, arrows.meta, calc, intersections, decorations.pathmorphing}
\tikzset{bg/.style={fill=gray, fill opacity=0.05}}
\definecolor{figcolor}{HTML}{3E4953}
\definecolor{accentcolor}{HTML}{53403E}
\tikzset{bg/.style={fill=gray, fill opacity=0.05}}
\tikzset{textbg/.style={fill=white, fill opacity=.8, text opacity=1, inner sep=0pt}}

\usepackage[
backend=biber,
style=alphabetic,
sorting=nyt,
date=year,
giveninits=true,
useprefix=true,
doi=false,isbn=false,url=false,
maxnames=99
]{biblatex}
\bibliography{bibliography.bib}

\usepackage[dvipsnames,svgnames]{xcolor}
\definecolor{TodoColor}{HTML}{b7636b}
\definecolor{LinkColor}{HTML}{849f78}
\definecolor{CiteColor}{HTML}{d3a951}
\definecolor{URLColor}{HTML}{5a6d81}
\usepackage[
	colorlinks=true,
	linkcolor=LinkColor,
	citecolor=CiteColor,
	urlcolor=URLColor
]{hyperref}
\usepackage{aliascnt}
\usepackage[capitalise,nameinlink,noabbrev]{cleveref}

\theoremstyle{plain}
\newtheorem{theorem}{Theorem}[section]
\newtheorem*{theorem*}{Theorem}
\crefname{theorem}{Theorem}{Theorems}

\newcommand{\newaliastheorem}[3]{%
	\newaliascnt{#1}{theorem}%
	\newtheorem{#1}[#1]{#2}%
	\aliascntresetthe{#1}%
	\crefname{#1}{#2}{#3}%
	\Crefname{#1}{#2}{#3}%
}

\newaliastheorem{proposition}{Proposition}{Propositions}
\newaliastheorem{corollary}{Corollary}{Corollaries}
\newaliastheorem{lemma}{Lemma}{Lemmas}
\newaliastheorem{conjecture}{Conjecture}{Conjectures}

\theoremstyle{definition}
\newaliastheorem{definition}{Definition}{Definitions}
\newaliastheorem{example}{Example}{Examples}
\newaliastheorem{counterexample}{Counterexample}{Counterexamples}
\newaliastheorem{construction}{Construction}{Constructions}

\theoremstyle{remark}
\newaliastheorem{remark}{Remark}{Remarks}
\newaliastheorem{intuition}{Intuition}{Intuitions}

\renewcommand{\leq}{\leqslant}
\renewcommand{\geq}{\geqslant}
\renewcommand{\epsilon}{\varepsilon}

\newcommand{\Opens}{\mathop{\mathcal{O}\mspace{-4mu}}}
\newcommand{\Powerset}{\mathop{\mathcal{P}\mspace{-1mu}}}

\newcommand{\Sl}{\mathop{\mathrm{S}\ell}\mspace{-1mu}}
\DeclareMathOperator{\im}{im}
\DeclareMathOperator{\id}{id}
\newcommand{\pr}{\mathrm{pr}}
\newcommand{\sqleq}{\leq}%{\sqsubseteq} 
\newcommand{\sqgeq}{\geq}
\newcommand{\op}{{\mathrm{op}}}
\newcommand{\tensor}{\otimes}
\newcommand{\pftensor}{\odot}

\newcommand{\cc}[1]{\nabla_{\mspace{-3mu}#1}\mspace{2mu}}
\DeclareMathOperator{\swap}{\mathsf{swap}}
\newcommand{\ocomplement}[1]{u_{#1}}
\newcommand{\cocolon}{\mathpunct{}\mathord{:}}
\NewDocumentCommand{\bool}{g}{%
	\IfNoValueTF{#1}
	{\operatorname{\mathsf{B}}}
	{\mathsf{B}_{\mspace{-1mu}#1}}%
}
\DeclareMathOperator{\boolrec}{\varrho}

\newcommand{\tworightarrow}{\begin{tikzcd}[ampersand replacement=\&,cramped, column sep=1em]  \phantom{}\ar[r,two heads] \& \phantom{} \end{tikzcd}}
\let\rightarrowtail\relax\newcommand{\rightarrowtail}{\begin{tikzcd}[ampersand replacement=\&,cramped, column sep=1.2em]  \phantom{}\ar[r,tail] \& \phantom{} \end{tikzcd}}
\usetikzlibrary{arrows.meta,decorations.pathmorphing}
\newcommand{\squigimplies}{%
	\mathrel{%
		\tikz[baseline=-.5ex]{%
		\draw[
			Implies-,
			double,
			double distance=2pt,
			decorate,
			decoration={
			zigzag,
			amplitude=.7pt,
			segment length=1.7mm,
			pre=lineto,
			pre length=4pt
			}
		] (1.4em,0) -- (0,0);
		}%
	}%
}

\newcommand{\up}{\mathop{\uparrow}\mspace{-2mu}}
\newcommand{\down}{\mathop{\downarrow}\mspace{-2mu}}

\newcommand{\preUp}{\scalebox{1.2}[1.05]{\rotatebox[origin=c]{90}{$\rightarrowtriangle$}}}
\newcommand{\preDown}{\scalebox{1.2}[1.05]{\rotatebox[origin=c]{-90}{$\rightarrowtriangle$}}}
\newcommand{\Up}{\text{\preUp}}
\newcommand{\Down}{\text{\preDown}}

\newcommand{\Downsub}[1]{\Down_{\mspace{-2mu} #1}}
\newcommand{\RDown}{R_{\raisebox{0.2ex}[0pt][0pt]{$\scriptstyle\Down$}}}
\newcommand{\KDown}{K_{\raisebox{0.2ex}[0pt][0pt]{$\scriptstyle\Down$}}}
\NewDocumentCommand{\Kdc}{g}{%
	\IfNoValueTF{#1}
	{K_{\mspace{-2mu}\dc}}
	{K_{\mspace{-2mu}\dc{#1}\mspace{-2mu}}}%
}

\newcommand{\arrowRscaled}[3]{%
	\tikz[baseline={#1*#3}, scale=#3]{%
		\begin{scope}[rotate=#2]
			\def\base{.13em}
			\def\tip{.47em}
			\def\half{.2em}
			
			\draw[
			line width=.5pt,
			line cap=round,
			line join=round
			] (-.47em,0) -- (\base,0);
			
			\path[fill, rounded corners=.4pt]
			(\base,\half) -- (\tip,0) -- (\base,-\half) -- cycle;
		\end{scope}
	}%
}
\newcommand{\arrowR}[2][-3pt]{%
	\mathchoice
	{\arrowRscaled{#1}{#2}{1}}      % display
	{\arrowRscaled{#1}{#2}{1}}      % text
	{\arrowRscaled{#1}{#2}{.7}}     % script
	{\arrowRscaled{#1}{#2}{.5}}     % scriptscript
}

\newcommand{\DownR}{\arrowR[-2.6pt]{-90}}

\newcommand{\uc}{\mathop{\vartriangle}\mspace{-2mu}}
\NewDocumentCommand{\dc}{g}{%
	\mathop{\triangledown}\mspace{-2mu}%
	\IfNoValueF{#1}{_{\!\! #1}}%
}

\newcommand{\dcx}{\overline{\mspace{-1.5mu}\dc}}
\newcommand{\Uc}{\blacktriangle}
\newcommand{\Dc}{\blacktriangledown}

\newcommand{\Dcx}{\overline{\Dc\mspace{-.4mu}}}

\newcommand{\cat}[1]{\textnormal{\textbf{#1}}}

\newcommand{\Frm}{\cat{Frm}}
\newcommand{\Loc}{\cat{Loc}}

\newcommand{\dcFrm}{\raisebox{.09ex}{$\dc$}\Frm}
\newcommand{\rLoc}{\cat{rLoc}}

\newcommand{\rocLoc}{\cat{rocLoc}}
\newcommand{\rosLoc}{\cat{rosLoc}}
\newcommand{\rotLoc}{\cat{rotLoc}}

\newcommand{\DLat}{\cat{DLat}}
\newcommand{\CohFrm}{\cat{CohFrm}}

\newcommand{\Bool}{\cat{Bool}}
\newcommand{\Heyt}{\cat{Heyt}}
\newcommand{\EFrm}{\cat{EFrm}}
\newcommand{\ELoc}{\cat{ELoc}}
\newcommand{\HFrm}{\cat{HFrm}}
\newcommand{\OStoneLoc}{\cat{OSLoc}}

\newcommand{\weak}{{\circ}}
\newcommand{\strong}{{\bullet}}
\newcommand{\mode}{\diamond}

\DeclareMathOperator{\idl}{\mathsf{idl}}
\newcommand{\pidl}[1]{\langle #1\rangle}
\DeclareMathOperator{\jidl}{\mathsf{V}\mspace{-1mu}}
\DeclareMathOperator{\cone}{\mathsf{cone}}
\DeclareMathOperator{\rel}{\mathsf{rel}}

\DeclareMathOperator{\townsendB}{\mathscr{B}\mspace{-1mu}}
\DeclareMathOperator{\townsendC}{\mathscr{C}\!}
\DeclareMathOperator{\patch}{\mathsf{patch}}
\DeclareMathOperator{\ClopUp}{\mathsf{ClopUp}}

\newcommand{\calB}{\mathcal{B}}

\makeatletter
\patchcmd{\@maketitle}
	{\global\topskip42\p@}
	{\global\topskip\dimexpr42\p@-1\baselineskip\relax}
	{}{}
\makeatother
\makeatletter
\let\amsart@setauthors\@setauthors % save original AMS author formatting
\makeatother
\makeatletter
\let\@titleblockemails\@empty
\let\@titleblockdate\@empty
\let\@date\@empty
\renewcommand{\email}[2][]{%
	\ifx\@empty\@titleblockemails
	\gdef\@titleblockemails{\href{mailto:#2}{\nolinkurl{#2}}}%
	\else
	\g@addto@macro\@titleblockemails{,\space\href{mailto:#2}{\nolinkurl{#2}}}%
	\fi
}
\renewcommand{\date}[1]{%
	\gdef\@titleblockdate{#1}%
	\global\let\@date\@empty
}
\def\@set@authors@addresses{%
	\amsart@setauthors
	\begingroup
	\par\vspace{.15\baselineskip}
	\centering
	\def\author##1{}% ignore author entries in \addresses list
	\def\\{\protect\linebreak}%
	\def\address##1##2{\par{\footnotesize\itshape ##2\par}}%
	\addresses
	\ifx\@empty\@titleblockemails\else
	\par{\footnotesize\@titleblockemails\par}%
	\fi
	\ifx\@empty\@titleblockdate\else
	\par\smallskip{\footnotesize\@titleblockdate\par}%
	\fi
	\endgroup
	\par
}
\makeatother

\title[Localic Esakia Duality via Conic Frames]{Localic Esakia Duality\\via Conic Frames}
\author{Nesta van der Schaaf}
\address{%
Université Paris-Saclay, CNRS, CentraleSupélec, ENS Paris-Saclay, Inria,\\ Laboratoire Méthodes Formelles, 91190, Gif-sur-Yvette, France
\\
\textnormal{\color{URLColor}\texttt{nesta.van-der-schaaf@inria.fr}}
\\
\textnormal{\url{https://nvds.site/}}
}
\date{26 August 2026}

\begin{document}
	\begin{abstract}
		Esakia duality is the dual equivalence between Heyting algebras and Esakia spaces. However, the traditional proof uses the Prime Ideal Theorem to recover the algebra from its spectrum, a choice principle that is not constructively valid.

		We build on Townsend's localic Priestley duality to describe a fully constructive, localic Esakia duality.
		On the algebraic side, we use the recently introduced Heyting frames as the point-free version of Heyting algebras. On the spatial side, Esakia spaces are modeled by two point-free alternatives.
		First, Esakia locales are defined as a subclass of the ordered Stone locales introduced by Townsend, and it is shown directly that Townsend's equivalence restricts to a duality between Heyting frames and Esakia locales.

		Second, using the theory of conic frames, in which join-preserving closure operators on frames model localic preorders, we introduce Esakia frames as a frame-theoretic analogue of Esakia spaces. It is shown that Esakia frames are equivalent to Heyting frames and dually equivalent to Esakia locales, and that this factorises the restriction of Townsend's equivalence. This yields a fully constructive, localic Esakia duality.
	\end{abstract}
	\maketitle
	\vspace{-5ex}
	\setcounter{tocdepth}{1}
	\tableofcontents
	\vspace{-7ex}

%~~~~~~~~~~~~~~~~~~~~~~~~~~~~~
\section{Introduction}
\label{section:introduction}
\emph{Stone duality}~\cite{stone1936TheoryRepresentationBoolean,stone1938TopologicalRepresentationsDistributive} represents Boolean algebras in terms of clopen subsets of Stone spaces~$X$. This duality is extended by \emph{Priestley duality}~\cite{priestley1970RepresentationDistributiveLattices,priestley1972OrderedTopologicalSpaces}, where arbitrary (bounded) distributive lattices are represented by clopen upper subsets in Priestley spaces~$(X,\leq)$, consisting of a Stone space~$X$ equipped with a partial order~$\leq$ that satisfies the \emph{Priestley separation axiom}~(\cref{definition:priestley space}). In turn, Priestley duality restricts to \emph{Esakia duality}~\cite{esakia1974topologicalKripkemodels}, where Heyting algebras are represented in the same way in terms of Esakia spaces~$(X,\leq)$, which are the Priestley spaces where~$\down U$ is (cl)open for any (cl)open subset~$U\subseteq X$.

In all three cases, starting from an algebra, the corresponding space is obtained by its \emph{spectrum} of prime ideals. However, in the traditional proofs, to recover the algebra from the clopen (upper) sets of its spectrum, the Prime Ideal Theorem is used. This is a choice-principle that is not constructively valid~\cite[\S 10.19]{davey2002IntroductionLatticesOrder}. In this work, we build on Townsend's localic Priestley duality~\cite{townsend1997LocalicPriestleyDuality} to describe a constructive, localic Esakia duality.
In particular, our main contribution is to show that Townsend's duality restricts to a duality between \emph{Heyting frames}~\cite{bezhanishvili2023FrametheoreticPerspectiveEsakia} and \emph{Esakia locales}, and that this duality moreover factorises through a new category of \emph{Esakia frames} using the framework of \emph{conic frames}~\cite{schaaf2026LocalicRelationsOpenCones}.

The advantage of the latter is that it `externalises' the structure of a localic preorder into a join-preserving closure operator on a frame. In this way, internal properties of localic relations, which can be technically difficult to handle, are expressed in frame-theoretic language. This allows for more direct point-free analogues of familiar spatial constructions from Esakia duality. For example, the formula for the Heyting implication on the point-free `clopen upper' elements is directly analogous to the spatial case~(see below, or~\cref{proposition:Kdc Heyting algebra}). Thus the factorisation elucidates how the order-theoretic and algebraic ingredients of the duality interact.

Our results are \emph{constructive} in the sense that they do not rely on the law of the excluded middle, or any non-constructive choice principle.
Throughout, finite means \emph{Kuratowski finite}~\cite[\S D5.4]{johnstone2002Elephant2}, which is the convention followed by~\cite{townsend1997LocalicPriestleyDuality}.
Besides being constructively valid, our work could provide a new spatially motivated but point-free account of Pitts's constructive amalgamation theorem~\cite{pitts1983AmalgamationInterpolation}.
Possible relations to other choice-free or point-free approaches to Stone-type dualities, such as~\cite{korostenski2007LaxProperMaps,bezhanishvili2020ChoiceFreeStone,hartonas2023ChoicefreeTopologicalDuality}, are discussed in~\cref{section:discussion}. 
A more detailed description of this work is as follows.

\vspace{.5\baselineskip}
To describe the localic story, we first need to define point-free analogues on both the algebraic and spatial sides of the duality. (Point-free preliminaries are recalled in~\cref{section:frames}.)
For Priestley duality, on the algebraic side distributive lattices are replaced by \emph{coherent frames}. The constructive equivalence~$\CohFrm\simeq\DLat$ between coherent frames and distributive lattices plays a key role in this work, and is recalled in detail in~\cref{section:coherent and stone frames,section:ideals}~(following~\cite[\S II.3]{johnstone1982StoneSpaces}). On the spatial side, Townsend introduced the notion of an \emph{ordered Stone locale}~$(X,R)$, which is a suitable internalisation of the definition of a Priestley space into the category~$\Loc$ of locales.
In particular,~$X$ is a \emph{Stone locale}, the point-free analogue of a Stone space~(\cref{definition:stone}), and~$R$ is a partial order internal to~$\Loc$~(\cref{definition:localic relation properties}) that satisfies a suitable localic analogue of the Priestley separation axiom~(\cref{definition:ordered stone locale}).
From an ordered Stone locale a distributive lattice~$K_{\down}(\Opens X)$ of `clopen upper' elements is constructed~(\cref{definition:K down}), the ideals of which then form the desired coherent frame.

Conversely, Townsend obtains a Stone locale from a coherent frame~$L$ by taking the frame of ideals of the free Boolean algebra~(\cref{section:free boolean algebras}) generated by the compact elements~$K(L)$ of~$L$, and equips this with the closed localic partial order defined by a certain explicit formula for its open complement~(recalled in~\cref{section:townsends functors}).
Combined, this gives an equivalence~$\CohFrm^\op\simeq \OStoneLoc$ between coherent frames and ordered Stone locales.
Note that while Townsend only defines functors at the point-free level on coherent frames and ordered Stone locales, the fundamental constructions factorise through the distributive lattices~$K_{\down}(\Opens X)$ and~$K(L)$.
In this work we will permit ourselves a similar strategy, explained further below.

The goal is to describe an analogous localic story for Esakia duality.
On the algebraic side, we use the \emph{Heyting frames} introduced in~\cite{bezhanishvili2023FrametheoreticPerspectiveEsakia} as the point-free stand-in for Heyting algebras. This is justified by their result that the equivalence between coherent frames and distributive lattices restricts to an equivalence~$\HFrm\simeq\Heyt$ between Heyting frames and Heyting algebras.
On the spatial side, we introduce two point-free analogues of Esakia spaces: the locale-theoretic \emph{Esakia locales}, and frame-theoretic \emph{Esakia frames}. 

For the localic version, using that Esakia spaces are precisely the Priestley spaces where~$\!\down$ preserves open subsets, we define \emph{Esakia locales} as the ordered Stone locales~$(X,R)$ whose \emph{source map}~$s\colon R\to X$ is open~(\cref{definition:esakia locale}). We prove that Townsend's duality restricts to an equivalence~${\HFrm^\op\simeq\ELoc}$ between Heyting frames and Esakia locales~(\cref{section:townsends functors}).
As in~\cite{bezhanishvili2023FrametheoreticPerspectiveEsakia}, we treat two classes of morphisms. The `weak' class corresponds to the distributive lattice morphisms between Heyting algebras, and continuous monotone functions between Esakia spaces. The `strong' class corresponds to Heyting algebra morphisms, and \emph{p-morphisms} between Esakia spaces. The theory of p-morphisms internal to the category of locales is developed in~\cref{section:internal p morphisms}. All equivalences mentioned hold when interpreted with either type of morphism, respectively.

Our main contributions are the introduction of \emph{Esakia frames}~(\cref{definition:esakia frame}), and a proof that Townsend's restricted duality factorises through the resulting category:
\[
	\HFrm^\op \simeq \EFrm^\op \simeq \ELoc.
\]
As mentioned, the idea is to `externalise' the localic partial order~$R$ of an ordered Stone locale into a join-preserving closure operator~$\Down$ on the underlying frame, using the more general framework of \emph{conic frames} from~\cite{schaaf2026LocalicRelationsOpenCones}, described and slightly generalised in~\cref{section:conic frames}.
We define a category~$\dcFrm$ of \emph{conic frames}~$(L,\dc)$, where~$L$ is a frame equipped with a join-preserving function~${\dc\colon L\to L}$ called its \emph{cone}, modeling the down-closure operator of a would-be localic relation.
There is an adjunction~$\cone\dashv\rel$ between the categories~$\rosLoc$ of locales equipped with open source relations and~$\dcFrm$~(\cref{section:adjunction}). All \emph{closed} localic relations with open source define fixed points of this adjunction~(\cref{section:closed relations are fixed points}), so in particular any localic partial order from an Esakia locale is fully described in terms of the cone~$\Down$ it induces.
We can then translate the axioms of an Esakia locale~$(X,R)$ along the adjunction~$\cone\dashv\rel$ to obtain the notion of an \emph{Esakia frame}~$(E,\dc)$~(\cref{definition:esakia frame}):
\begin{enumerate}[label=(E\arabic*),start=0]
	\item $(E,\dc)$ is a conic frame;
	\hfill ($R$ has open source)
	\item $E$ is a Stone frame;
	\hfill ($X$ is Stone)
	\item $\dc$ is a closure operator;
	\hfill ($R$ is a preorder)
	\item $(E,\dc)$ is closed;
	\hfill ($R$ is closed)
	\item $\Kdc(E)$ Boolean generates~$K(E)$.
	\hfill ($R$ is anti-symmetric)
\end{enumerate}
Here~$\Kdc(E)$ is the frame-theoretic analogue of the lattice~$K_{\down}(\Opens X)$ of `clopen upper' elements~(\cref{definition:Kdc}):
\[
\Kdc(E)
:=
\{c\in K(E):\dc \neg c = \neg c\}.
\]
This is really the Heyting algebra of open elements coming from the closure algebra~$(K(E),\dc)$ in the sense of~\cite{mckinsey1944AlgebraTopology,mckinsey1946ClosedElementsClosure}. See also~\cite[\S 2.2]{esakia2019HeytingAlgebrasDuality}.

Unlike frames and locales, the categories~$\EFrm$ and~$\ELoc$ of Esakia frames and locales are not formally dual, but the equivalence~$\EFrm^\op\simeq\ELoc$ is derived from the fixed point equivalence of the adjunction~${\cone\dashv\rel}$~(\cref{section:equivalence EFrm ELoc}).

To complete the factorisation of Townsend's restricted duality between Heyting frames and Esakia locales, it now suffices to prove~$\HFrm\simeq\EFrm$.
In turn, using~$\Heyt\simeq\HFrm$ from~\cite{bezhanishvili2023FrametheoreticPerspectiveEsakia}, it suffices to construct an equivalence between Esakia frames and Heyting algebras. This is analogous to how Townsend's functors implicitly factorise through distributive lattices. Having externalised the notion of an Esakia locale into frame-theoretic language, where the cone~$\dc$ plays the dominant role, the desired equivalence follows using much the same spatial strategy of Esakia duality~\cite{esakia2019HeytingAlgebrasDuality}. Starting with an Esakia frame~$(E,\dc)$ we prove that~$\Kdc(E)$ defines a Heyting algebra~(\cref{proposition:Kdc Heyting algebra}), where for~$c,d\in \Kdc(E)$ the Heyting implication~$c\to d$ is defined following the spatial intuition of the Heyting implication of clopen upper sets~$C,D\subseteq X$ in an Esakia space:
\[
C\to D
:=
X\setminus \down (C\setminus D)
\qquad\squigimplies\qquad
c\to d
:=
\neg \dc (c\wedge \neg d).
\]

Conversely, starting with a Heyting algebra~$A$, we follow~\cite{townsend1997LocalicPriestleyDuality} and define a Stone frame~$E_A:=\idl(\bool{A})$ as the ideal completion of the free Boolean algebra~$\bool{A}$ generated by the underlying distributive lattice of~$A$. It remains to define a cone~$\dc{A}$ on this frame. For this, note that the frame~$E_A$ is generated by principal ideals of \emph{patches}~$\pidl{u_c\wedge \ell_d}$, where~$c,d\in A$, which model the complements~$C\setminus D$ of clopen upper sets in an Esakia space. Reusing the spatial intuition from the previous equation gives:
\[
\down (C\setminus D)
=
X\setminus (C\to D)
\qquad\squigimplies\qquad
\dc{A}\pidl{u_c\wedge \ell_d}
=
\pidl{\ell_{c\to d}},
\]
and extending~$\dc{A}$ to arbitrary ideals via join-preservation then gives the desired Esakia frame~$(E_A,\dc{A})$~(\cref{proposition:heyting algebra to esakia frame}). Combined, this yields the desired equivalence~$\EFrm\simeq \Heyt$~(\cref{theorem:EFrm Heyt equivalence}), and combining with~\cite{bezhanishvili2023FrametheoreticPerspectiveEsakia} we immediately get~$\EFrm\simeq \HFrm$~(\cref{theorem:EFrm HFrm equivalence}).

All in all, the paper can be summarised in terms of the following commutative diagram of equivalences, where the bottom-left triangle commutes by construction, and the top-right triangle is shown to commute in~\cref{proposition:townsend compatible}:
\newcommand{\summarydiagramlongcolumnsep}{11.5em}
\newcommand{\summarydiagramtightcolumnsep}{.5em}
\newcommand{\summarydiagramlongrowsep}{10em}
\newcommand{\summarydiagramshortrowsep}{1em}
\definecolor{DiagramFunctorColor}{HTML}{3f6c97}
\definecolor{DiagramMapColor}{HTML}{457737}
\newcommand{\summarydiagramfunctorcolor}{DiagramFunctorColor}
\newcommand{\summarydiagrammapcolor}{DiagramMapColor}
\newcommand{\summarydiagrambottomrowshift}{1}
\newcommand{\citedsimeq}[1]{\simeq\mathrlap{\text{\raisebox{.12ex}{\scriptsize\cite{#1}}}}\mspace{12mu}}
\newcommand{\unlinkedcitedsimeq}[1]{\simeq\mathrlap{\text{\raisebox{.12ex}{\scriptsize\mkbibbrackets{\textcolor{CiteColor}{#1}}}}}\mspace{12mu}}
\newcommand{\refsimeq}[1]{\simeq\mathrlap{\text{\raisebox{.12ex}{\scriptsize(\ref{#1})}}}\mspace{12mu}}
\newcommand{\unlinkedrefsimeq}[1]{\simeq\mathrlap{\text{\raisebox{.12ex}{\scriptsize(\textcolor{LinkColor}{\NoHyper\ref{#1}\endNoHyper})}}}\mspace{12mu}}
\[\begin{tikzcd}[
	column sep=0pt,
	row sep=0pt,
	cells={
		/tikz/outer xsep=.7ex,
		/tikz/outer ysep=.7ex
	},
	labels={/tikz/font=\small},
	marking/.append style={/tikz/font=\small},
	execute before arrows={
		\node[anchor=center, outer xsep=.6ex, outer ysep=.6ex] (\tikzcdmatrixname-3-1) at
			([xshift={-\summarydiagrambottomrowshift*(\summarydiagramlongcolumnsep/4)}]\tikzcdmatrixname-3-1.center)
			{$\Heyt^\op$};
		\node[anchor=center, outer xsep=.6ex, outer ysep=.6ex] (\tikzcdmatrixname-3-2) at
			([xshift={-\summarydiagrambottomrowshift*(\summarydiagramlongcolumnsep/4)}]\tikzcdmatrixname-3-2.center)
			{$\EFrm^\op$};
		\node[anchor=center, outer xsep=.6ex, outer ysep=.6ex] (\tikzcdmatrixname-3-3) at
			([xshift={-\summarydiagrambottomrowshift*(\summarydiagramlongcolumnsep/4)}]\tikzcdmatrixname-3-3.center)
			{$\dcFrm^\op$};
	}
]
	{\CohFrm^\op} &[\summarydiagramlongcolumnsep] \OStoneLoc &[\summarydiagramtightcolumnsep] \rLoc \\[\summarydiagramshortrowsep]
	{\HFrm^\op} & \ELoc & \rosLoc \\[\summarydiagramlongrowsep]
	{} & {} & {}
	\arrow[shift left=2, from=1-1, to=1-2]
	\arrow["{\citedsimeq{townsend1997LocalicPriestleyDuality}}"{marking, allow upside down}, draw=none, from=1-1, to=1-2]
	\arrow["{\color{\summarydiagramfunctorcolor}\townsendC}"{marking, font=\normalsize, allow upside down}, shift right=5, draw=none, from=1-1, to=1-2]
	\arrow["{\color{\summarydiagramfunctorcolor}\townsendB}"{marking, font=\normalsize, allow upside down}, shift left=5, draw=none, from=1-1, to=1-2]
	\arrow[shift left=2, from=1-2, to=1-1]
	\arrow["\subseteq"{anchor=center}, draw=none, from=1-2, to=1-3]
	\arrow["\subseteq"{marking, allow upside down}, draw=none, from=2-1, to=1-1]
	\arrow[shift left=2, from=2-1, to=2-2]
	\arrow["{\refsimeq{theorem:HFrm ELoc equivalence}}"{marking, allow upside down}, draw=none, from=2-1, to=2-2]
	\arrow[shift right=2, from=2-1, to=3-1]
	\arrow["{\color{\summarydiagrammapcolor}\idl\Kdc(E)\longmapsfrom(E,\dc)}"{marking, allow upside down}, shift left=5, draw=none, from=2-1, to=3-2]
	\arrow["{\color{\summarydiagrammapcolor} H\longmapsto (E_{K(H)},\dc{K(H)})}"{marking, allow upside down, pos=.45}, shift right=5, draw=none, from=2-1, to=3-2]
	\arrow[shift right=2, from=2-1, to=3-2]
	\arrow["{\unlinkedrefsimeq{theorem:EFrm HFrm equivalence}}"{marking, allow upside down}, draw=none, from=2-1, to=3-2]
	\arrow["{\hyperref[theorem:EFrm HFrm equivalence]{\phantom{\rule[-.78em]{.95em}{1.56em}}}}"{anchor=center, xshift=6pt, yshift=-7.2pt}, draw=none, from=2-1, to=3-2]
	\arrow["\subseteq"{marking, allow upside down}, draw=none, from=2-2, to=1-2]
	\arrow[shift left=2, from=2-2, to=2-1]
	\arrow["\subseteq"{anchor=center}, draw=none, from=2-2, to=2-3]
	\arrow[shift left=2, from=2-2, to=3-2]
	\arrow["\subseteq"{marking, allow upside down}, draw=none, from=2-3, to=1-3]
	\arrow[""{name=0, anchor=center, inner sep=0}, shift right=2, from=2-3, to=3-3]
	\arrow[shift right=2, from=3-1, to=2-1]
	\arrow["{\color{\summarydiagrammapcolor}A\longmapsto \idl(A)}"{marking, allow upside down}, shift right=5, draw=none, from=3-1, to=2-1]
	\arrow["{\color{\summarydiagrammapcolor}K(H)\longmapsfrom H}"{marking, allow upside down}, shift left=5, draw=none, from=3-1, to=2-1]
	\arrow["{\unlinkedcitedsimeq{BCM23}}"{marking, allow upside down}, draw=none, from=3-1, to=2-1]
	\arrow["{\hyperlink{cite.0@bezhanishvili2023FrametheoreticPerspectiveEsakia}{\phantom{\rule[-2.1em]{1.66em}{4.2em}}}}"{anchor=center, xshift=2.8pt, yshift=11pt}, draw=none, from=3-1, to=2-1]
	\arrow["{\color{\summarydiagrammapcolor}\Kdc(E)\longmapsfrom(E,\dc)}"{marking, allow upside down, pos={.5-.08*\summarydiagrambottomrowshift}}, shift left=5, draw=none, from=3-1, to=3-2]
	\arrow[shift right=2, from=3-1, to=3-2]
	\arrow["{\color{\summarydiagrammapcolor}A\longmapsto(E_A,\dc{A})}"{marking, allow upside down, pos={.5-.08*\summarydiagrambottomrowshift}}, shift right=5, draw=none, from=3-1, to=3-2]
	\arrow["{\refsimeq{theorem:EFrm Heyt equivalence}}"{marking, allow upside down}, draw=none, from=3-1, to=3-2]
	\arrow[shift right=2, from=3-2, to=2-1]
	\arrow["{\color{\summarydiagrammapcolor} (E,\dc)\longmapsto (X,R_{\dc})}"{marking, allow upside down}, shift left=5, draw=none, from=3-2, to=2-2]
	\arrow["{\color{\summarydiagrammapcolor} (\Opens X,\Down)\longmapsfrom (X,R)}"{marking, allow upside down}, shift right=5, draw=none, from=3-2, to=2-2]
	\arrow[shift left=2, from=3-2, to=2-2]
\arrow["\simeq"{marking, allow upside down}, draw=none, from=3-2, to=2-2]
\arrow["{\text{\scriptsize(\ref{theorem:ELoc EFrm equivalence})}}"{anchor=center, xshift={-\summarydiagrambottomrowshift*.12em}, yshift=-2.1ex, fill=white, inner sep=.5pt}, draw=none, from=3-2, to=2-2]
	\arrow[shift right=2, from=3-2, to=3-1]
	\arrow["\subseteq"{anchor=center}, draw=none, from=3-2, to=3-3]
	\arrow[""{name=1, anchor=center, inner sep=0}, shift right=2, from=3-3, to=2-3]
	\arrow["{\color{\summarydiagramfunctorcolor}\cone}"{marking, font=\normalsize, allow upside down}, shift left=5, draw=none, from=3-3, to=2-3]
	\arrow["{\color{\summarydiagramfunctorcolor}\rel}"{marking, font=\normalsize, allow upside down}, shift right=5, draw=none, from=3-3, to=2-3]
	\arrow["\dashv"{anchor=center, sloped, allow upside down}, draw=none, from=0, to=1]
	\arrow["{\text{\scriptsize(\ref{theorem:cone rel adjunction})}}"{anchor=center, xshift={-\summarydiagrambottomrowshift*.12em}, yshift=-2.1ex, fill=white, inner sep=.5pt}, draw=none, from=0, to=1]
\end{tikzcd}\]

\bigskip
\noindent\emph{Overview.} The paper is organised as follows. \cref{section:preliminaries,section:localic partial orders} establish preliminaries, and basic definitions around localic partial orders. \cref{section:conic frames} develops the theory of conic frames and proves the adjunction~$\cone\dashv\rel$. The theory of internal p-morphisms is introduced in~\cref{section:internal p morphisms}, which is then used in~\cref{section:esakia locales and esakia frames} to define categories of Esakia locales. The same section introduces Esakia frames and proves the equivalence~$\EFrm^\op\simeq \ELoc$. \cref{section:heyting frames and esakia frames} recalls the definition of Heyting frames, and contains the main construction~$\HFrm\simeq\EFrm$. \cref{section:townsends functors} ties this all together by proving Townsend's duality restricts to~$\HFrm^\op\simeq\ELoc$, and that this factorises through Esakia frames via the previous two equivalences. \cref{section:conclusion} concludes with directions for future research and connections to the literature.

%~~~~~~~~~~~~~~~~~~~~~~~~~~~~~
\section{Preliminaries}
\label{section:lattices and locales}
\label{section:preliminaries}
We briefly recap definitions on lattices, Heyting algebras, coherent frames, sublocales, and Esakia duality. For general order- and lattice-theoretic background we refer to~\cite{davey2002IntroductionLatticesOrder}. A reference covering the necessary category theory is~\cite{maclane1998CategoriesWorkingMathematician}.

%~~~~~~~~~~~~~~~~~~~~~~~~~~
\subsection{Lattice theory}
A \emph{distributive lattice} is a partially ordered set~$A$, whose order we denote by~$\sqleq$, admitting all finite~\emph{meets}~$\wedge$ (greatest lower bounds) and all finite~\emph{joins}~$\vee$ (least upper bounds) such that the distributivity law holds:
\[
x\wedge (y\vee z) = (x\wedge y)\vee (x\wedge z).
\]
Hence~$A$ is always assumed to be \emph{bounded}, meaning it admits a bottom element~$\bot$ and top element~$\top$. A morphism of distributive lattices~$h\colon A\to B$ is a function that preserves finite meets and finite joins. In particular, they are monotone. The category of distributive lattices and their morphisms is denoted~$\DLat$.

Monotone maps~$f\colon P\to Q$ and~$g\colon Q\to P$ between partially ordered sets define a~\emph{(Galois) adjunction}~$f\dashv g$ if it holds for all~$x\in P$ and~$y\in Q$ that:
\[
f(x)\leq y
\quad\iff\quad
x\leq g(y).
\]
An important result we need later is that a monotone function between complete lattices preserves meets/joins iff it admits a left/right adjoint~\cite[\S 7.34]{davey2002IntroductionLatticesOrder}.

A~\emph{Heyting algebra} is a distributive lattice~$A$ such that for every fixed~$x\in A$ the function~$x\wedge -$ has a right adjoint, denoted~$x\to -$, called the \emph{Heyting implication}. Explicitly, it is characterised by the following universal property:
\[
x\wedge y \sqleq z
\quad\iff\quad
y\sqleq x\to z.
\]
A morphism of Heyting algebras~$g\colon A\to B$ is a map of distributive lattices that also preserves the implication:~${g(x\to y)=g(x)\to g(y)}$. We define categories of Heyting algebras in~\cref{definition:Heyt}.

The \emph{Heyting negation} of~$x\in A$ is denoted~$\neg x:= x\to\bot$. A \emph{Boolean algebra} is a Heyting algebra~$A$ such that~$\neg \neg x=x$, or equivalently~$x\vee \neg x=\top$, for all~$x\in A$. In this case~$\neg$ is sometimes called the~(Boolean)~complement.
A morphism of Boolean algebras~$g\colon A\to B$ is a map of distributive lattices that also preserves complements:~$g(\neg x)=\neg g(x)$. We denote the category of Boolean algebras and Boolean morphisms by~$\Bool$.

%~~~~~~~~~~~~~~~~~~~~~~~~~~~~
\subsection{Frames and locales}
\label{section:frames}
For textbook accounts, see~\cite{johnstone1982StoneSpaces,picado2012FramesLocalesTopology}.
% A \emph{complete lattice}~$L$ is a partial order that has all meets and joins. They define bounded lattices, where the bottom is the empty join and the top is the empty meet.
A \emph{frame} is a complete lattice~$L$ in which the infinite distributivity law holds:
\[
x \wedge \bigvee y_i = \bigvee x\wedge y_i.
\]
A morphism of frames~$h\colon L\to M$ is a function that preserves finite meets and arbitrary joins. We denote the category of frames by~$\Frm$. Dually, the category of locales is defined as the opposite:~$\Loc:=\Frm^\op$. Explicitly, a \emph{locale}~$X$ is formally described by its \emph{frame of opens}~$\Opens X$, and a map of locales~$f\colon X\to Y$ is just a morphism of frames~$f^{-1}\colon \Opens Y\to \Opens X$ going the other way.

The infinite distributivity law says that for a fixed element~$x\in L$ the map~$x\wedge -$ is join-preserving, so admits a right adjoint~(\cite[\S 7.34]{davey2002IntroductionLatticesOrder})~$x\to -$, turning~$L$ canonically into a (complete) Heyting algebra.

%~~~~~~~~~~~~~~~~~~~~~~~~~~~~~~~
\subsection{Sublocales}
\label{section:sublocales}
In order to state the definition of a partial order on a locale, we need a localic analogue of what it means to be a subset. \emph{Sublocales} play this role. Structurally, a sublocale of~$X$ is an (equivalence class of an) extremal monomorphism~$A\rightarrowtail X$ in the category~$\Loc$. There are several equivalent ways of unpacking this to frame-theoretic language, for more details on which we refer to~\cite[\S III]{picado2012FramesLocalesTopology}. For our purposes, note that since locales are formally dual to frames, a sublocale of~$X$ will correspond to a \emph{quotient} of the frame~$\Opens X$, and since frames are algebraic objects, these in turn correspond to frame congruences on~$\Opens X$. Recall that a~\emph{frame congruence} on~$L$ is an equivalence relation~$\equiv$ that preserves finite meets and arbitrary joins. Its~\emph{quotient}~$L/{\equiv}$ is the frame of~$\equiv$-equivalence classes. Using the complete structure, each~$\equiv$-equivalence class can be uniquely represented by its maximal element, and so we may identify~$L/{\equiv}$ with the subset of~$L$ containing these maximal elements. This is known as its~\emph{sublocale set}, which in general is a subset~$S\subseteq L$ of a frame that is closed under all meets, and closed under the Heyting implication in that for~$s\in S$ and~$x\in L$ we have~$x\to s\in S$.

For a relation~$\sim$ on a frame~$L$ we denote by~$\langle\sim\rangle$ the smallest frame congruence containing~$\sim$. Rather than computing~$\langle\sim\rangle$ first and then taking its quotient, we use the techniques from~\cite[\S III.11]{picado2012FramesLocalesTopology} and~\cite{moshier2017GeneratingSublocalesSubsets} to describe the quotient~$L/\langle\sim\rangle$ directly in terms of~$\sim$.

\begin{definition}
	\label{definition:saturated}
	% Fix a binary relation ${\sim}\subseteq L\times L$ on a frame.
	An element~$s\in L$ is called \emph{$\sim$-saturated} if:
	\[
	\forall a,b,c\in L: a \sim b \quad\implies\quad \left(a\wedge c \sqleq s \text{ iff } b\wedge c\sqleq s\right).
	\]
	The set of $\sim$-saturated elements is denoted $L/{\sim}$.
\end{definition}

% \begin{remark}[{\cite[\S III.11.2]{picado2012FramesLocalesTopology}}]
	% \label{remark:quotient map}
	The set~$L/{\sim}$ defines a sublocale set of~$L$, and equals the sublocale set corresponding to the frame congruence~$\langle\sim\rangle$.
	We obtain the~\emph{quotient map} of~$\sim$ as
	\[
	\mu_\sim \colon L \longrightarrow L/{\sim};
	\qquad
	\mu_\sim(x)=\bigwedge\{s\in L/{\sim}: x\sqleq s\},
	\]
	taking an element in the frame and producing the smallest $\sim$-saturated element containing it. This is a map of frames. Seen as a map on~$L$ itself,~$\mu_\sim$ is equivalently the \emph{nucleus} of the induced sublocale, so in particular it is a closure operator~(recalled in~\cref{section:preorders in terms of cones}), and~$L/{\sim}$ is precisely the set of fixed points of~$\mu_\sim$.
% \end{remark}

The next theorem shows that~$\mu_\sim$ indeed behaves like a quotient. Say a function~$h$ defined on~$L$ \emph{equalises} the relation~$\sim$ if~$x\sim y$ implies $h(x)=h(y)$.

\begin{theorem}[{\cite[\S III.11.3.1]{picado2012FramesLocalesTopology}}]
	\label{theorem:frame quotient theorem}
	Let $\sim$ be a binary relation on a frame $L$. Then the function $\mu_\sim\colon L\to L/{\sim}$ is a frame map that equalises~$\sim$. If~$h\colon L\to M$ is a frame map equalising $\sim$, there exists a unique frame map $\bar h$ making the following diagram commute:
	\[
	% https://q.uiver.app/#q=WzAsMyxbMCwwLCJMIl0sWzEsMSwiTSJdLFsxLDAsIkwve1xcc2ltfSJdLFswLDIsIlxcbXVfXFxzaW0iXSxbMCwxLCJoIiwyXSxbMiwxLCJcXGV4aXN0cyEgXFxiYXIgaCIsMCx7InN0eWxlIjp7ImJvZHkiOnsibmFtZSI6ImRhc2hlZCJ9fX1dXQ==
	\begin{tikzcd}[cramped, row sep = 1.5em]
		L & {L/{\sim}} \\
		& M.
		\arrow["{\mu_\sim}", from=1-1, to=1-2]
		\arrow["h"', from=1-1, to=2-2]
		\arrow["{\exists! \bar h}", dashed, from=1-2, to=2-2]
	\end{tikzcd}
	\]
	%and moreover for all $s\in L/{\sim}$ we have $\bar h (s)= h(s)$.
\end{theorem}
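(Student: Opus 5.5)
The plan is to prove the three claims in turn: $\mu_\sim$ is a frame map, it equalises $\sim$, and it has the universal property. We may take as given (it is recalled just before the statement) that $L/{\sim}$ is a sublocale set and that $\mu_\sim$ is its nucleus. This means $\mu_\sim$ is a closure operator preserving binary meets. Its fixed-point set $L/{\sim}$ is then a frame, with meets computed in $L$ and joins given by $\bigvee^{L/\sim} s_i=\mu_\sim(\bigvee s_i)$. Being a left adjoint to the inclusion $L/{\sim}\hookrightarrow L$, the map $\mu_\sim$ preserves all joins, and it preserves finite meets by the nucleus property. So $\mu_\sim$ is a frame map.

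For equalisation, let $a\sim b$. Take any $s\in L/{\sim}$ with $a\leq s$. Saturation with $c=\top$ gives $b\leq s$, and symmetrically. Hence $a$ and $b$ have the same saturated upper bounds, so $\mu_\sim(a)=\mu_\sim(b)$.

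For the universal property, let $h\colon L\to M$ equalise $\sim$. The candidate is $\bar h:=h|_{L/{\sim}}$. Uniqueness is immediate: $\mu_\sim$ is surjective, since it fixes $L/{\sim}$, so any factorisation must agree with $h$ on $L/{\sim}$. The main step is showing $h\circ\mu_\sim=h$, which I would do as follows.
\begin{itemize}
\item Let $h_*$ be the right adjoint of $h$. I claim that for every $m\in M$ the element $h_*(m)$ is $\sim$-saturated.
\item Indeed, suppose $a\sim b$ and $a\wedge c\leq h_*(m)$. Then $h(a)\wedge h(c)\leq m$. Since $h(a)=h(b)$, this gives $h(b\wedge c)\leq m$, i.e.\ $b\wedge c\leq h_*(m)$.
\item Now $x\leq h_*h(x)$, and $h_*h(x)$ is saturated, so $\mu_\sim(x)\leq h_*h(x)$. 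Hence $h\mu_\sim(x)\leq h(x)$.
\item The reverse inequality $h(x)\leq h\mu_\sim(x)$ holds because $\mu_\sim$ is inflationary.
\end{itemize}
So $h=h\circ\mu_\sim$, giving $h=\bar h\circ\mu_\sim$.

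Finally, $\bar h$ is a frame map. Finite meets in $L/{\sim}$ are those of $L$, so $h$ preserves them. For joins, $\bar h(\mu_\sim(\bigvee s_i))=h(\bigvee s_i)=\bigvee h(s_i)$. The main obstacle is the saturation of $h_*(m)$; everything else is formal.
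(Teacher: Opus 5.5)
Your proof is correct and matches the standard argument of the cited source (the paper gives no proof of its own, deferring to Picado--Pultr): the key point is that each $h_*(m)$ is $\sim$-saturated, i.e.\ $h_*[M]\subseteq L/{\sim}$, and then $x\leq\mu_\sim(x)\leq h_*h(x)$ gives $h\circ\mu_\sim=h$ and $\bar h=h|_{L/{\sim}}$. One small point: saturation is a biconditional and $\sim$ need not be symmetric, so you should say that the converse implication $b\wedge c\leq h_*(m)\Rightarrow a\wedge c\leq h_*(m)$ follows by the same computation using $h(a)=h(b)$.
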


%~~~~~~~~~~~~~~~~~~~~~~~~~~~~~
\subsection{Coproduct frames}
\label{section:coproduct frames}
A relation on a locale~$X$ is to be a sublocale of the product locale~$X\times X$, whose frame of opens is the \emph{co}product frame~$\Opens X\tensor \Opens X$. If~$L,M$ are frames then their \emph{coproduct}~$L\tensor M$ is the frame freely generated by the symbols~$x\tensor y$, where~$x\in L$ and~$y\in M$, subject to the following equations~\cite{dowker1977SumsCategoryFrames,johnstone1991PreframePresentationsPresent,picado2015NotesProductLocales}:
	\begin{gather*}
		(x_1\tensor y_1)\wedge (x_2\tensor y_2) = (x_1\wedge x_2)\tensor (y_1\wedge y_2),
		\\
		\bigvee (x_i\tensor y) = \left(\bigvee x_i\right)\tensor y,
		\quad\text{and}\quad
		\bigvee(x\tensor y_i)= x\tensor \left(\bigvee y_i\right).
	\end{gather*}
Recall that a~\emph{join-basis} is a subset~$\calB\subseteq L$ of a complete lattice so that every element in~$L$ can be written as a join over elements in~$\calB$. Thus $L\tensor M$ has join-basis $\{x\tensor y: x\in L,y\in M\}$, whose elements are called \emph{(basic) rectangles}. The following result shows how to explicitly calculate the coproduct injections~$\iota_1,\iota_2$ and coproducts of frame maps~$h,k$ on basic rectangles.

\begin{lemma}[{\cite[\S IV.5.5]{picado2012FramesLocalesTopology}}]
	\label{lemma:coproduct structure}
	The coproduct structure is given on rectangles by
	\begin{align*}
		\iota_1(x)&=x\tensor\top,
		&
		\iota_2(y)&=\top\tensor y,
		\\
		(h\tensor k)(x\tensor y)&=h(x)\tensor k(y),
		&
		[h,k](x\tensor y)&=h(x)\wedge k(y).
	\end{align*}
	% In particular, the codiagonal~$[\id_L,\id_L]\colon L\tensor L\to L$ maps~$x\tensor y\mapsto x\wedge y$.
\end{lemma}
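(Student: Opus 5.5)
The formulas for $\iota_1$, $\iota_2$, $h\tensor k$ and $[h,k]$ on rectangles are forced by the universal property of the coproduct. So the plan is to construct each map as a frame map on $L\tensor M$ and then read off its value on generators. By the presentation of $L\tensor M$, a frame map out of $L\tensor M$ is uniquely determined by an assignment on the generators $x\tensor y$ that respects the three defining equations. This is an instance of~\cref{theorem:frame quotient theorem} applied to the free frame on the symbols $x\tensor y$ modulo those relations. Equivalently, $L\tensor M$ can be presented as the frame of saturated down-sets of $L\times M$.

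\textbf{Coproduct injections.} I would define $\iota_1\colon L\to L\tensor M$ by $x\mapsto x\tensor\top$ and check that it is a frame map. Finite meets are preserved by the first equation: $(x_1\tensor\top)\wedge(x_2\tensor\top)=(x_1\wedge x_2)\tensor\top$. The top is $\top\tensor\top$, since every rectangle lies below it. Arbitrary joins are preserved by the second equation. The injection $\iota_2$ is symmetric. This is exactly the standard construction of the coproduct injections, so these formulas are the definitions.

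\textbf{Copairing.} Given frame maps $h\colon L\to N$ and $k\colon M\to N$, I would assign $x\tensor y\mapsto h(x)\wedge k(y)$ and verify the three relations in $N$. Meets follow from commutativity and associativity of $\wedge$ together with $h,k$ preserving meets. The join equations follow from $h,k$ preserving joins and the infinite distributive law in $N$: for instance $\bigvee_i (h(x_i)\wedge k(y)) = h(\bigvee_i x_i)\wedge k(y)$. This yields a frame map $[h,k]$. For the identity $[h,k]\circ\iota_1=h$, note that $[h,k](x\tensor\top)=h(x)\wedge k(\top)=h(x)$, and symmetrically for $\iota_2$. Uniqueness holds because any frame map $g$ satisfying these identities has $g(x\tensor y)=g\bigl((x\tensor\top)\wedge(\top\tensor y)\bigr)=h(x)\wedge k(y)$, and rectangles form a join-basis. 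Hence the formula for $[h,k]$ is correct.

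\textbf{Tensor of maps.} By definition, $h\tensor k=[\iota_1\circ h,\iota_2\circ k]$ for $h\colon L\to L'$ and $k\colon M\to M'$. The copairing formula then gives
\[
(h\tensor k)(x\tensor y)=(h(x)\tensor\top)\wedge(\top\tensor k(y))=h(x)\tensor k(y).
\]

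\textbf{Main obstacle.} The only genuinely delicate step is the well-definedness claim: that a generator assignment respecting the relations extends uniquely to a frame map. Constructively, I would justify this through the explicit model of $L\tensor M$ as the frame of down-sets of $L\times M$ that are saturated with respect to the join relations, and extend the assignment to a down-set $D$ by $D\mapsto\bigvee_{(x,y)\in D}h(x)\wedge k(y)$. Everything else is routine equational checking.
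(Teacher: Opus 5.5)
Your proof is correct. The paper gives no argument of its own beyond the citation \cite[\S IV.5.5]{picado2012FramesLocalesTopology}, and your route is the standard one behind that reference: the coproduct is realised as a frame of saturated down-sets of $L\times M$, with injections $x\mapsto x\tensor\top$, copairing $U\mapsto\bigvee_{(x,y)\in U}h(x)\wedge k(y)$, and $h\tensor k=[\iota_1\circ h,\iota_2\circ k]$.

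One small point deserves to be made explicit. The meet equation must be read for all finite families, including the empty one, so that $\top\tensor\top=\top$. That is what actually makes $\top\tensor\top$ the top and the rectangles a join-basis, which your step for $\iota_1$ implicitly uses. Your copairing respects this relation trivially, since $h(\top)\wedge k(\top)=\top$.
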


%~~~~~~~~~~~~~~~~~~~~~~~
\subsection{Coherent and Stone frames}
\label{section:coherent and stone frames}
For the general theory of coherent frames we refer to~\cite[\S II.3]{johnstone1982StoneSpaces}, from which we collect some definitions and results. For background on Stone frames we refer to~\cite{banaschewski1989UniversalZero,korostenski2007LaxProperMaps,bezhanishvili2020ChoiceFreeStone}. In the following we formulate all definitions frame-theoretically, but take all definitions to apply dually to the localic setting. So, a Stone locale is a locale whose underlying frame is Stone.

Recall that an element~$x$ in a distributive lattice~$A$ is called \emph{complemented} if there exists a~(necessarily unique) element~$y$ such that~${x\wedge y =\bot}$ and~$x\vee y =\top$. The element~$y$ is called the \emph{complement} of~$x$. In fact,~$A$ is a Boolean algebra iff every element is complemented, so we also denote the complement of~$x$ by~$\neg x$.

An element $x\in L$ of a frame is called \emph{compact} if for every subset~$I\subseteq L$ such that~$x\sqleq\bigvee I$, there exists a finite subset~$F\subseteq I$ such that $x\sqleq \bigvee F$.
The set of compact elements of~$L$ is denoted~$K(L)$.

\begin{lemma}[{\cite[\S II.3.2]{johnstone1982StoneSpaces}}]
	\label{lemma:finite joins of compact elements are compact}
	Compact elements are closed under finite joins.
\end{lemma}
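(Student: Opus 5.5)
We argue directly from the definition of compactness, treating the empty join and binary joins separately; an arbitrary Kuratowski-finite join then follows by induction.

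\emph{Empty join.} The empty join is~$\bot$. If~$\bot\sqleq\bigvee I$, then the empty subset~$F=\emptyset\subseteq I$ already satisfies~$\bot\sqleq\bigvee F=\bot$, so~$\bot$ is compact.

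\emph{Binary joins.} Let~$x,y\in K(L)$, and let~$I\subseteq L$ with~$x\vee y\sqleq\bigvee I$. Then~$x\sqleq\bigvee I$ and~$y\sqleq\bigvee I$. Compactness of~$x$ and~$y$ gives finite subsets~$F_1,F_2\subseteq I$ with~$x\sqleq\bigvee F_1$ and~$y\sqleq\bigvee F_2$. The union~$F:=F_1\cup F_2$ is again a finite subset of~$I$, since Kuratowski-finite subsets are closed under binary unions. We then have
\[
x\vee y\sqleq \bigvee F_1\vee\bigvee F_2=\bigvee F,
\]
so~$x\vee y$ is compact.

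\emph{General finite joins.} Kuratowski-finite subsets are generated from~$\emptyset$ and singletons by binary unions. Moreover, the property ``$\bigvee S$ is compact'' is preserved under these operations: it holds for~$\emptyset$ and singletons by the two cases above (and the assumption that the elements are compact), and~$\bigvee(S\cup T)=\bigvee S\vee\bigvee T$. Induction on Kuratowski-finite sets therefore gives the result.

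The only subtle point is constructive: we must not decide equality or count elements. We avoid this by using only the closure of Kuratowski-finite sets under unions and the corresponding induction principle. Nothing else in the argument presents an obstacle.
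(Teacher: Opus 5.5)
Your proof is correct and is the standard argument (bottom is compact via the empty witness, binary joins via the union of the two finite witnesses, then Kuratowski induction). The paper gives no proof of its own and simply cites Johnstone's \emph{Stone Spaces}, \S II.3.2, which rests on the same reasoning, and your constructive care with Kuratowski-finiteness matches the paper's stated convention.
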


\begin{definition}
	\label{definition:coherent}\label{definition:stone}
	A frame~$L$ is called:
	\begin{itemize}
		\item \emph{algebraic} if~$K(L)$ forms a join-basis for~$L$;
		\item \emph{coherent} if it is algebraic and~$K(L)$ is a bounded sublattice~of~$L$;
		\item \emph{compact} if~$\top\in L$ is a compact element;
		\item \emph{zero-dimensional} if its complemented elements form a join-basis;
		\item \emph{Stone} if it is compact and zero-dimensional.
	\end{itemize}
\end{definition}

Hence for any coherent frame~$L$ we get a distributive lattice~$K(L)$, whose lattice operations are the same as those of the ambient frame.
Moreover, a frame~$L$ is Stone iff it is coherent and~$K(L)$ forms a Boolean algebra, in which case the compact elements are precisely the complemented ones.
Thus, a coherent frame~$L$ is Stone iff every compact element is complemented (in which case its complement is necessarily compact)~\cite[p.258]{banaschewski1989UniversalZero}.

%~~~~~~~~~~~~~~~~~~~~~~~~~~~~~~
\subsection{Ideals}
\label{section:ideals}
We briefly recall the constructive equivalence of categories between coherent frames~$\CohFrm$ and distributive lattices~$\DLat$ from~\cite[\S II.3]{johnstone1982StoneSpaces}, since the resulting natural transformations are used in subsequent sections.

An~\emph{ideal} in a distributive lattice~$A$ is a downclosed subset~$I\subseteq A$ closed under finite joins. Denote the set of ideals of~$A$ by~$\idl (A)$, which is a frame under subset inclusion. If~$g\colon A\to B$ is a map of distributive lattices, we get a map of frames~$\idl(g)\colon \idl(A)\to \idl(B)$ defined by
\[
	\idl(g)\colon I\longmapsto \down g[I] = \{b\in B: \exists a\in I: b\sqleq g(a)\}.
\]
Together, this forms a functor~$\idl\colon \DLat\to \Frm$, left adjoint to the forgetful functor~${\Frm\to\DLat}$.

Define on the other hand the category~$\CohFrm$ consisting of coherent frames and \emph{coherent (frame) morphisms}: those frame maps that preserve compact elements. Thus a coherent morphism~$h\colon L\to M$ induces a map of distributive lattices~${h|_{K(L)}\colon K(L)\to K(M)}$, and so we get a functor~${K\colon \CohFrm\to \DLat}$.

The functors~$K$ and~$\idl$ define an equivalence of categories~$\CohFrm\simeq\DLat$ by~\cite[Corollary~III.3.3]{johnstone1982StoneSpaces}. First, there is an isomorphism~${\pidl{-}\colon \id\Longrightarrow K\circ \idl}$ that identifies the elements of a distributive lattice~$A$ with their principal ideals, which are precisely the compact elements in~$\idl(A)$:
\[
	\pidl{-}_A\colon A\longrightarrow K \idl(A);
	\qquad
	a\longmapsto \pidl{a}:= \{b\in A: b\sqleq a\}.
\]
On the other hand, there is a natural isomorphism~$\jidl\colon \idl\circ K\Longrightarrow \id$ that for a coherent frame~$L$ just takes joins over ideals:
\[
	\jidl_L\colon \idl K(L)\longrightarrow L;
	\qquad
	I\longmapsto \bigvee I.
\]

%~~~~~~~~~~~~~~~~~~~~~~~~~~~~~~
\subsection{Priestley and Esakia duality}
\label{section:esakia duality}
We recall some basics of Priestley and Esakia duality that will provide the spatial intuition for the point-free definitions and results to come in later sections. For details we refer to the recent accounts~\cite{esakia2019HeytingAlgebrasDuality,gehrke2024TopologicalDualityDistributive}.
% A brief overview is also in~\cite{landsman2021LogicQuantumMechanics}, and a discussion from the point of view of bitopology is in~\cite{bezhanishvili2010BitopologicalDualityDistributive}.
For this work we follow the conventions of~\cite[\S 2]{bezhanishvili2023FrametheoreticPerspectiveEsakia}, aligning best with those in~\cite{townsend1997LocalicPriestleyDuality}.

In terms of general order-theoretic notation, if~$X$ is a set equipped with a relation~$R$, we define its~\emph{cones}~$\up,\down\colon\Powerset(X)\to \Powerset(X)$ on the powerset of~$X$ by:
\[
\up A
:=
\{ y\in X : \exists a\in A : a R y \},
\quad\text{and}\quad
\down A
:=
\{ x\in X : \exists a\in A : x R a\}.
\]

To define Priestley and Esakia spaces, recall first that a subset of a topological space is called \emph{clopen} if it is both open and closed, and that a \emph{Stone space} is a compact space that is \emph{totally separated}, meaning that clopen subsets separate distinct points~\cite[\S II.4.2]{johnstone1982StoneSpaces}. 

\begin{definition}
\label{definition:priestley space}
	We say a partially ordered space~$(X,\leq)$ is:
	\begin{itemize}
		\item a \emph{Priestley space} if~$X$ is Stone and \emph{Priestley separation} holds: for all $x\not\leq y$ there exists a clopen $U=\up U$ such that~$x\in U$ and~$y\notin U$;
		\item an \emph{Esakia space} if it is a Priestley space and~$\down$~preserves open subsets.
	\end{itemize}
\end{definition}

\begin{remark}
	\label{remark:open source motivation}
	For this work, the condition that~$\down$ preserves open subsets will play a central role. But we need a translation that is more susceptible to internalisation. For that, take a space~$X$ equipped with a relation~$R$, which we in turn equip with the subspace topology inherited from the product space~$X\times X$. It can be seen from a straightforward calculation that if~$s\colon R \to X$ is the \emph{source projection} map~$(x,y)\mapsto x$, then for subsets~$A,B\subseteq X$ we get 
	\[
	s\left[(A\times B)\cap R\right] = A\cap \down B.
	\]
	From this it can be seen that~$s$ is an \emph{open} continuous function, meaning its image map preserves open subsets, iff~$\down$ preserves open subsets. See~\cite[Lemma~3.7]{schaaf2026LocalicRelationsOpenCones} for more details. Hence Esakia spaces are precisely the Priestley spaces with open source projection.
\end{remark}

Important for this work is the spatial motivation behind the construction of a distributive lattice from a Priestley space, and a Heyting algebra from an Esakia~space.

\begin{definition}
	\label{definition:clopup}
	Let~$(X,\leq)$ be a Priestley space. A subset~$C\subseteq X$ is called a \emph{clopen upper set} if~$C$ is clopen and~$\up C= C$. The collection of clopen upper sets is denoted~$\ClopUp(X)$, and forms a distributive lattice under set-theoretic intersections and unions.

	If~$(X,\leq)$ is an Esakia space, then~$\ClopUp(X)$ moreover forms a Heyting algebra, with implication given for clopen upper sets~$A,B\subseteq X$ by~(\cite[\S 2]{bezhanishvili2023FrametheoreticPerspectiveEsakia}):
	\[
	A\to B
	:=
	X\setminus \down (A\setminus B).
	\]
\end{definition}

Lastly, we recall the notion of a~\emph{p-morphism}. The standard class of morphisms between Priestley spaces are the continuous monotone functions, which induce distributive lattice morphisms on the clopen upper sets. However, for Esakia spaces these do not correspond to Heyting morphisms on the algebraic side. Instead we need the following notion.

\begin{definition}
	\label{definition:p morphism pointwise}
	A function~$f\colon (S,R)\to (T,Q)$ between preordered spaces is called a~\emph{p-morphism} if for~$x\in S$ and~$y\in T$:
	\[
		f(x)Q y
		\quad\iff\quad
		\exists x'\in S: xR x' \text{~and~} f(x')=y.
	\]
\end{definition}

Esakia spaces with continuous monotone functions are dual to the category of Heyting algebras, but where the morphisms are maps of distributive lattices. The category of Esakia spaces with p-morphisms is dual to the category of Heyting algebras and Heyting morphisms. In this work, we shall similarly distinguish between these `weak' and `strong' levels of morphisms for the point-free versions of Heyting algebras and Esakia spaces. The internalised, localic analogue of p-morphisms is defined in~\cref{section:internal p morphisms}, and the corresponding weak and strong categories are defined in~\cref{section:ELoc and EFrm,section:HFrm and Heyt}.

%~~~~~~~~~~~~~~~~~~~~~~~~~~~~~
\section{Localic partial orders}
\label{section:localic partial orders}
A relation~$R$ on a space~$X$ is just a subset~$R\subseteq X\times X$ of the product space, usually equipped with the subspace topology. The notion of a localic relation is just the localic analogue of this, using instead sublocales and product locales. In this section we set up the basic theory of localic relations, giving the category~$\rLoc$, and show how the open source assumption gives rise to the induced cone~$\Down$ operation, and a full subcategory~$\rosLoc\subseteq\rLoc$. Important for the present work are localic preorders and partial orders, which are defined following the standard internalisation of relations and their properties into categories with finite limits, see for example~\cite[p.120]{riehl2016CategoryTheoryContext} or~\mbox{\cite[\S D]{schaaf2024TowardsPointFreeSpacetimes}}.

\begin{definition}
	A \emph{(localic) relation} on a locale~$X$ is a sublocale $r\colon R\rightarrowtail X\times X$.
\end{definition}

\begin{definition}
	\label{definition:source and target}
	The \emph{source} and \emph{target maps} of a localic relation $r\colon R\rightarrowtail X\times X$ are defined as the maps $s,t\colon R\to X$ given by $s:=\pr_1\circ r$ and $t:=\pr_2\circ r$, where~$\pr_i\colon X\times X\to X$ are the product projections. For~$U,V\in \Opens X$ we get in particular that~(\cref{lemma:coproduct structure}):
	\begin{gather*}
		r^{-1}(U\tensor V) = s^{-1}(U)\wedge t^{-1}(V),\\
		s^{-1}(U) = r^{-1}(U\tensor \top)
		\qquad\text{and}\qquad
		t^{-1}(V) = r^{-1}(\top\tensor V).
	\end{gather*}
\end{definition}

\begin{example}
	\label{example:diagonal relation}
	The \emph{diagonal sublocale}~${\delta\colon X\rightarrowtail X\times X}$ is the sublocale ~$\Delta$ defined by the unique map such that $\pr_i\circ \delta=\id_X$~\cite[\S IV.5.3]{picado2012FramesLocalesTopology}, with frame map given on rectangles by~(\cref{lemma:coproduct structure}):
	\[
	\delta^{-1}(U\tensor V) = U\wedge V.
	\]
\end{example}

\begin{definition}
	\label{definition:monotone internal}
	A \emph{monotone map} $f\colon (X,R)\to (Y,Q)$ between locales equipped with localic relations consists of a locale map~${f\colon X\to Y}$ satisfying:
	\[
	% https://q.uiver.app/#q=WzAsNCxbMCwwLCJSIl0sWzAsMSwiWFxcdGltZXMgWCJdLFsxLDEsIllcXHRpbWVzIFkuIl0sWzEsMCwiUSJdLFswLDMsIlxcZXhpc3RzIFxcYmFyIGYiLDAseyJzdHlsZSI6eyJib2R5Ijp7Im5hbWUiOiJkYXNoZWQifX19XSxbMCwxLCIiLDIseyJzdHlsZSI6eyJ0YWlsIjp7Im5hbWUiOiJtb25vIn19fV0sWzMsMiwiIiwwLHsic3R5bGUiOnsidGFpbCI6eyJuYW1lIjoibW9ubyJ9fX1dLFsxLDIsImZcXHRpbWVzIGYiLDJdXQ==
	\begin{tikzcd}[cramped]
		R & Q \\
		{X\times X} & {Y\times Y.}
		\arrow["{\exists \bar f}"', dashed, from=1-1, to=1-2]
		\arrow[tail, from=1-1, to=2-1]
		\arrow[tail, from=1-2, to=2-2]
		\arrow["{f\times f}"', from=2-1, to=2-2]
	\end{tikzcd}
	\]
	The map $\bar f$ is unique if it exists. The commutativity of the square above is equivalent to saying that the \emph{source} and \emph{target squares} commute:
	\[\begin{tikzcd}[cramped]
		R & Q \\
		X & Y
		\arrow["{\bar f}", from=1-1, to=1-2]
		\arrow["{s_R}"', from=1-1, to=2-1]
		\arrow["{s_Q}", from=1-2, to=2-2]
		\arrow["f", from=2-1, to=2-2]
	\end{tikzcd}
	\qquad\text{and}\qquad
	\begin{tikzcd}[cramped]
		R & Q \\
		X & {Y.}
		\arrow["{\bar f}", from=1-1, to=1-2]
		\arrow["{t_R}"', from=1-1, to=2-1]
		\arrow["{t_Q}", from=1-2, to=2-2]
		\arrow["f", from=2-1, to=2-2]
	\end{tikzcd}\]
\end{definition}

\begin{remark}
	Note that $f\colon (X,R)\to (Y,Q)$ is monotone iff there is an inclusion~$(f\times f)[R]\subseteq Q$, which in point-set intuition just unpacks to the usual monotonicity condition:~$xRy\implies f(x)Qf(y)$.
\end{remark}

\begin{definition}
	\label{definition:rLoc}
	The category consisting of locales equipped with localic relations and monotone locale maps between them is denoted~$\rLoc$.
\end{definition}

This category forms the general backdrop of localic order theory, including localic preorders and partial orders, which we define next.

%~~~~~~~~~~~~~~~~~~~~~~~~~~~~~~~~~~~~~~~~
\subsection{Partial orders}
Recall that a set-theoretic relation~$R$ on a space is called:
\begin{itemize}
	\item \emph{reflexive} if $xRx$ for all $x$;
	\item \emph{transitive} if $xRy$ and $yRz$ implies $xRz$;
	\item \emph{anti-symmetric} if $xRy$ and $yRx$ implies $x=y$.	
\end{itemize}
These axioms can be internalised using internal composition of relations in any category with finite limits. This is done as follows. Recall the diagonal relation~$\Delta$ from~\cref{example:diagonal relation}. Given any localic relation~$R$ with inclusion~$r=(s,t)$, its opposite~$R^\op$ is the relation defined by the map~$(t,s)$. For localic relations~$R$ and~$Q$ on~$X$, their composition $R\circ Q$ is defined in the standard way via the pullback~$R\times_X^{t_R,s_Q}Q$. See for example~\cite{klein1970RelationsCategories} or~\cite[\S 10.1]{schaaf2026LocalicRelationsOpenCones}.

\begin{samepage}
\begin{definition}
	\label{definition:localic relation properties}
	A localic relation~$R$ is called:
	\begin{itemize}
		\item \emph{reflexive} if $\Delta\subseteq R$;
		\item \emph{transitive} if $R\circ R\subseteq R$;
		\item \emph{anti-symmetric} if $R\cap R^\op \subseteq \Delta$.
	\end{itemize}
	Combined, we say~$R$ is a:
	\begin{itemize}
		\item \emph{preorder} if it is reflexive and transitive;
		\item \emph{partial order} if it is an anti-symmetric preorder.
	\end{itemize}
\end{definition}
\end{samepage}

\begin{remark}
	Localic preorders and partial orders are defined and used in~\cite{townsend1996preframeTechniquesConstructiveLocale}, and localic equivalence relations are studied in~\cite{kock1989GodementTheoremLocales}.
\end{remark}

We will see in~\cref{proposition:Rdc preorder iff closure operator} that for fixed point relations reflexivity and transitivity can be characterised in terms of the cone~$\Down$. Anti-symmetry cannot in general be characterised in terms of cones, but we will see an alternative characterisation in the setting of ordered Stone locales in~\cref{section:esakia locales and esakia frames}.

%~~~~~~~~~~~~~~~~~~~~~~~~~~~~~~~~~~~~~~
\subsection{Open source}
For this work, we particularly need the class of localic relations whose source map is open. This corresponds to the fact that the downwards cone~$\down$ of an Esakia space preserves open subsets~\cref{definition:priestley space}. To state the definition, we first recall the notion of open maps for locales.

\begin{definition}
	\label{definition:open map}
	A map of locales $f\colon X\to Y$ is called \emph{open} if it admits a left adjoint~${f_!\dashv f^{-1}}$ satisfying \emph{Frobenius reciprocity}:
	\[
	f_!\left(U\wedge f^{-1}(V)\right)=f_!(U)\wedge V.
	\]
\end{definition}

\begin{remark}
	Open maps of locales were introduced and studied in~\cite{joyal1984ExtensionGaloisTheory}. Relating back to spatial intuition, that~$f$ is open can equivalently be characterised by saying that its image map preserves open sublocales, or put yet differently, that~$f^{-1}$ is a morphism of complete Heyting algebras~\cite[Proposition~V.1.1]{joyal1984ExtensionGaloisTheory}.
\end{remark}

\begin{definition}
	\label{definition:open cones}
	A localic relation~$R$ is said to have \emph{open source} if the source map~$s\colon R\to X$ is open.
	
	Denote by~$\rosLoc$ the full subcategory of~$\rLoc$ of locales equipped with open source localic relations.
\end{definition}

\begin{remark}
	Defining analogously the class of \emph{open target} localic relations, and denoting their resulting full subcategory by~$\rotLoc$, we get the full subcategory ${\rosLoc\cap\rotLoc=\rocLoc}$ of \emph{open cone} relations studied in~\cite{schaaf2026LocalicRelationsOpenCones}.
\end{remark}

The openness of the source map gives explicitly a left adjoint~$s_!\dashv s^{-1}$ that we use to define the localic down closure operator. For more motivation and intuition on this definition we refer to~\cite[\S 3.1]{schaaf2026LocalicRelationsOpenCones}. The basic idea is that in the spatial setting, the downwards closure cone~$\down$ of a relation on a set is recovered via point-set images and preimages as~$\down A = s\left[t^{-1}[A]\right]$. Using openness, we can adopt this purely in frame-theoretic language.

\begin{definition}
	\label{definition:induced cones}
	Let $X$ be a locale equipped with an open source relation $R$. The~\emph{(induced) cone} is defined as the function
	\[
	\Down := s_!\circ t^{-1}\colon \Opens X\longrightarrow \Opens X.
	\]
\end{definition}

\begin{example}
	\label{example:diagonal relation cones}
	We saw that the diagonal relation~$\Delta\rightarrowtail X\times X$ from~\cref{example:diagonal relation} has source and target map~$\id_X$, which are clearly open. The induced cone is just~$\Down =\id_{\Opens X}$.
\end{example}

\begin{proposition}
	\label{proposition:induced cone is join-preserving}
	If~$R$ is an open source relation, the cone~$\Down$ preserves all joins.
\end{proposition}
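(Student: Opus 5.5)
The plan is to write $\Down = s_!\circ t^{-1}$ as a composite of two join-preserving maps and conclude, since a composite of join-preserving maps is again join-preserving.

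First, $t^{-1}\colon \Opens X\to \Opens R$ is the frame map underlying the locale map $t=\pr_2\circ r$. Frame maps preserve arbitrary joins by definition, so $t^{-1}\bigl(\bigvee U_i\bigr)=\bigvee t^{-1}(U_i)$.

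Second, $s_!\colon \Opens R\to\Opens X$ is, by \cref{definition:open map}, a left adjoint to the monotone map $s^{-1}$. Left adjoints between complete lattices preserve all joins. This is the standard fact recalled in the preliminaries, \cite[\S 7.34]{davey2002IntroductionLatticesOrder}, and it also follows directly from the adjunction. For any $V\in\Opens X$ we have $s_!\bigl(\bigvee W_i\bigr)\leq V$ iff $\bigvee W_i\leq s^{-1}(V)$. This holds iff $W_i\leq s^{-1}(V)$ for all $i$, which holds iff $s_!(W_i)\leq V$ for all $i$, which holds iff $\bigvee s_!(W_i)\leq V$. Hence $s_!\bigl(\bigvee W_i\bigr)=\bigvee s_!(W_i)$.

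Combining the two steps gives $\Down\bigl(\bigvee U_i\bigr)=s_!\bigl(\bigvee t^{-1}(U_i)\bigr)=\bigvee s_!t^{-1}(U_i)=\bigvee\Down U_i$.

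Nothing here is really an obstacle. Frobenius reciprocity is not needed; it becomes relevant only for properties involving meets. The only point worth stating explicitly is that the adjunction argument is constructive. It uses only the universal property of joins, so it involves no choice or excluded middle, which fits the paper's constructive setting. The empty join is covered as well: $\Down\bot=s_!(\bot)=\bot$.
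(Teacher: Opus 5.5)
Your proposal is correct and matches the paper's proof, which likewise notes that $t^{-1}$ is a frame map and $s_!$ is a left adjoint, so their composite preserves all joins. Your explicit derivation of join-preservation from the adjunction, and your remark that Frobenius reciprocity is not needed, simply spell out what the paper takes as standard.
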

\begin{proof}
	This follows since~$t^{-1}$ is a frame map and~$s_!$ is a left adjoint.
\end{proof}

%~~~~~~~~~~~~~~~~~~~~~~~~~~~~~
\section{Conic frames}
\label{section:conic frames}
\emph{Conic frames}~$(L,\uc,\dc)$ were studied in~\cite{schaaf2026LocalicRelationsOpenCones} and provide a frame-theoretic dual to localic relations whose source and target maps are open. Here we briefly describe a slightly generalised version of this theory, using instead pairs of the form~$(L,\dc)$. The resulting localic relation~$R_{\dc}$ has an open source map, but not necessarily an open target map. The conclusion of this section is an adjunction~$\cone\dashv\rel$ between~$\rosLoc$ and the opposite of the category~$\dcFrm$ of conic frames. The general proof strategy here is directly analogous to the two-sided theory, so our exposition here will be light, and we refer to~\cite{schaaf2026LocalicRelationsOpenCones} for more intuition. However, since some proofs need to be genuinely altered, we still present all technical details.

Conic frames should be the frame-theoretic analogue of the induced cone~$\Down$ coming from an open source localic relation. By~\cref{proposition:induced cone is join-preserving} this cone preserves joins, and for the purposes of this paper this is all the properties we axiomatise.

\begin{definition}
	A \emph{conic frame} $(L,\dc)$ is a frame~$L$ equipped with a join-preserving function~$\dc\colon L\to L$, called its \emph{cone}.
\end{definition}

\begin{example}
	By definition, if~$(X,R)$ is a locale~$X$ equipped with an open source localic relation~$R$, then the pair~$(\Opens X,\Down)$ forms a conic frame. We will see in~\cref{theorem:induced relation} that in fact all conic frames are of this form.
\end{example}

\begin{definition}
	\label{definition:conic morphism}
	A \emph{conic morphism} $h\colon (L,\dc)\to (M,\dcx)$ between conic frames consists of a frame homomorphism~$h\colon L\to M$ such that
	\[
	\dcx\circ h \sqleq h\circ \dc.
	\]

% \begin{definition}
	The category of conic frames and morphisms is denoted~$\dcFrm$.
% \end{definition}
\end{definition}

\begin{remark}
		The spatial intuition is that if~$f$ is a function between sets equipped with relations, then~$f$ is monotone iff~$\down \circ f^{-1}\subseteq f^{-1}\circ \down$ holds in the powerset of the domain. This is a standard result, see for example~\cite[Lemma~3.11]{schaaf2026LocalicRelationsOpenCones}.
\end{remark}

%~~~~~~~~~~~~~~~~~~~~~~~~~~~
\subsection{The induced relation}
\label{section:induced relation}
\label{section:Sigma}
The main task of this section is to describe the construction of an open source localic relation~$R_{\dc}$ from a conic frame~$(L,\dc)$. Since this should be a sublocale on the product locale, the relation~$R_{\dc}$ will be represented by a frame congruence on the coproduct frame~${L\tensor L}$~(recall~\cref{section:sublocales}). In turn, we can generate such a frame congruence by a more elementary binary relation~$\sim$, in this case defined on the basic rectangles. The spatial motivation for~$\sim$ follows from the point-set fact that for rectangles in the product space~${A\times B\cap R = C\times D\cap R}$ iff~${\up A\cap B=\up C\cap D}$ and~${A\cap \down B = C\cap \down D}$. The former equation describes the frame congruence~(on rectangles) corresponding to the sublocale induced by the subspace~$R$, and the latter condition is expressed purely in terms of the cones of~$R$, and hence has an analogue in any conic frame. This leads to the following definition for~$\sim$. See~\cite[\S 3.2]{schaaf2026LocalicRelationsOpenCones} for more details on the spatial motivation.
To help notation, we define the \emph{reduced cone} of~$\dc$ as the function
\[
\Dc\colon L\times L\longrightarrow L;
\qquad
(x,y)\longmapsto x\wedge \dc y.
\]

\begin{definition}
	\label{definition:sim}
	For any conic frame~$(L,\dc)$, the \emph{generating relation}~$\sim$ is the relation on the coproduct~${L\tensor L}$ defined by
	\[
	\forall x,y\in L: x\tensor y \sim \Dc(x,y)\tensor y.
	\] 
\end{definition}

Suppose now that we have a locale~$X$ equipped with the structure of a conic frame~$(\Opens X,\dc)$. We use the~$\sim$-saturated elements from~\cite{moshier2017GeneratingSublocalesSubsets} described in~\cref{section:sublocales} to define the induced relation.

\begin{definition}
	\label{definition:induced relation}
	The \emph{induced relation} is the sublocale~$R_{\dc}$ generated by~$\sim$:
	\[
	\Opens R_{\dc} := \Opens X\tensor \Opens X/{\sim}.
	\]
	Denote its sublocale inclusion by~$r_{\dc}\colon R_{\dc}\rightarrowtail X\times X$, with~$r_{\dc}^{-1}=\mu_\sim$, and the corresponding source and target maps by~$s_{\dc},t_{\dc}\colon R_{\dc}\to X$.
\end{definition}

The rest of this section stands in service of proving~\cref{theorem:induced relation}, showing that~$R_{\dc}$ has open source and that its induced cone~$\Down$ precisely recovers~$\dc$. One of the main ingredients is the construction of a left adjoint~$s_!\dashv s_{\dc}^{-1}$ satisfying Frobenius reciprocity. This map~${s_!\colon \Opens R_{\dc}\to \Opens X}$ should mirror the pointwise intuition that~$s\left[(A\times B)\cap R\right]=A\cap \down B$~(\cref{remark:open source motivation}), so we expect:
\[
s_!(x\tensor y)= x\wedge \dc y.
\]
For the rest of this section, we fix a conic frame~$(L,\dc)$, and establish the technical results necessary to prove the existence of the desired~$s_!$.

\begin{lemma}
	\label{lemma:Sigma exists}
	There is a join-preserving~map~$\Sigma\colon L\tensor L\to L$ with~${\Sigma(x\tensor y)=\Dc(x,y)}$.
\end{lemma}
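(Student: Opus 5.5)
The plan is to use the universal property of the coproduct frame $L\tensor L$ as a frame presented by generators and relations, but for sup-lattices rather than frames. Since $\Sigma$ only has to preserve joins, not finite meets, I will not go through a frame map. Instead I use the standard fact (see \cite{johnstone1991PreframePresentationsPresent,picado2015NotesProductLocales}) that $L\tensor L$ can be constructed as the set of \emph{C-ideals}. These are the downsets $D\subseteq L\times L$ that are closed under the two join rules $\{x_i\}\times\{y\}\subseteq D\Rightarrow(\bigvee x_i,y)\in D$ and $\{x\}\times\{y_i\}\subseteq D\Rightarrow(x,\bigvee y_i)\in D$. In this picture, $x\tensor y$ is the C-ideal generated by $(x,y)$, and every element is the join of the rectangles below it. Equivalently, $L\tensor L$ is the tensor product of $L$ with itself in the category of sup-lattices. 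Consequently, join-preserving maps $L\tensor L\to M$ into a complete lattice $M$ correspond bijectively to maps $\phi\colon L\times L\to M$ that preserve joins in each variable separately. The correspondence sends $\phi$ to $\Sigma(D)=\bigvee_{(x,y)\in D}\phi(x,y)$.

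The key step is therefore to verify that $\Dc(x,y)=x\wedge\dc y$ is join-preserving in each variable separately.
\begin{itemize}
\item In the first variable: $\Dc(\bigvee x_i,y)=(\bigvee x_i)\wedge\dc y=\bigvee(x_i\wedge \dc y)$, by the frame distributivity law.
\item In the second variable: $\Dc(x,\bigvee y_i)=x\wedge\bigvee\dc y_i=\bigvee(x\wedge\dc y_i)$, first because $\dc$ preserves joins and then by distributivity.
\end{itemize}
The empty-join cases give $\Dc(\bot,y)=\bot=\Dc(x,\bot)$, since $\dc\bot=\bot$. Bimonotonicity follows from the same two computations.

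With bilinearity established, I define $\Sigma(D):=\bigvee_{(x,y)\in D}x\wedge\dc y$ on C-ideals. This map preserves joins because joins of C-ideals are generated by unions, and a bilinear $\phi$ takes the same supremum over a set as over the C-ideal closure of that set. The last point is where bilinearity is used: adding $(\bigvee x_i,y)$ to $D$ does not increase the supremum, because $\phi(\bigvee x_i,y)=\bigvee\phi(x_i,y)$. Finally, $\Sigma(x\tensor y)=x\wedge\dc y$, because every pair in the down-closure of $(x,y)$ has $\phi$-value at most $\phi(x,y)$ by monotonicity.

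The main obstacle is justifying this sup-lattice universal property of $L\tensor L$ constructively. I would cite the C-ideal description from \cite[\S IV.5]{picado2012FramesLocalesTopology} rather than rederive it. If this description is unavailable, a fallback is to define $\Sigma(z):=\bigvee\{x\wedge\dc y: x\tensor y\leq z\}$ directly. Showing that this fallback preserves joins would again require knowing that every rectangle below a join $\bigvee z_i$ is covered by rectangles below the $z_i$ in the C-ideal sense, which brings the argument back to the same description.
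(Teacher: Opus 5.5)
Your proposal is correct and follows the same route as the paper, which likewise identifies $L\tensor L$ with the suplattice tensor product (citing Johnstone's \emph{Stone Spaces}, \S II.2.12) and extends the suplattice bihomomorphism $\Dc$; your explicit bilinearity check and C-ideal sketch fill in details the paper leaves implicit. One small tightening: the C-ideal closure is not a finite process, so instead of arguing that ``adding'' pairs to $D$ does not raise the supremum, note that for any $s\in L$ the set $\{(a,b):\Dc(a,b)\sqleq s\}$ is itself a C-ideal by monotonicity and bilinearity, hence contains the C-ideal generated by any set on which $\Dc$ is bounded by $s$, which gives both join-preservation of $\Sigma$ and $\Sigma(x\tensor y)=\Dc(x,y)$ constructively.
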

\begin{proof}
	By~\cite[\S II.2.12]{johnstone1982StoneSpaces} we know that the frame coproduct~$L\tensor L$ is really their suplattice tensor product. See also~\cite[\S 1]{johnstone1991PreframePresentationsPresent}. Since~$\dc$ preserves joins we see that~$\Dc\colon L\times L\to L$ is a suplattice bihomomorphism, so extends to the desired join-preserving map~$\Sigma\colon L\tensor L\to L$.
\end{proof}

It should be clear that~$s_!$ is to be the restriction of the map~$\Sigma$ to the sublocale set~$\Opens R_{\dc}$. On the other hand, we need to ensure that its right adjoint agrees with the preimage map~$s_{\dc}^{-1}$. For this, first we calculate the right adjoint of~$\Sigma$.

\begin{definition}
	For~$z\in L$, its \emph{separating open} in~$L\tensor L$ is defined as
	\[
	O^\Dc_z:=\bigvee\{x\tensor y: \Dc(x,y)\sqleq z \}.
	\]
\end{definition}

\begin{lemma}
	The right adjoint~$\Sigma\dashv \sigma$ is given by~$\sigma\colon z\mapsto O^\Dc_z$.
\end{lemma}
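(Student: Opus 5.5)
Since $\Sigma$ preserves all joins and $L\tensor L$ and $L$ are complete lattices, $\Sigma$ has a right adjoint $\sigma$, given by the standard formula
\[
\sigma(z)=\bigvee\{w\in L\tensor L:\Sigma(w)\sqleq z\}.
\]
The plan is to show that this join equals $O^\Dc_z$. We do this by comparing the two joins, using that basic rectangles form a join-basis of $L\tensor L$ and that $\Sigma$ preserves joins.

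For the inequality $O^\Dc_z\sqleq\sigma(z)$, take a rectangle $x\tensor y$ with $\Dc(x,y)\sqleq z$. By \cref{lemma:Sigma exists}, $\Sigma(x\tensor y)=\Dc(x,y)\sqleq z$, so $x\tensor y$ occurs among the elements in the join defining $\sigma(z)$. Taking the join over all such rectangles gives the inequality.

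For $\sigma(z)\sqleq O^\Dc_z$, take any $w$ with $\Sigma(w)\sqleq z$ and write it as a join of rectangles, $w=\bigvee_i x_i\tensor y_i$. Since $\Sigma$ preserves joins, $\Sigma(w)=\bigvee_i\Dc(x_i,y_i)$. So each $\Dc(x_i,y_i)\sqleq z$, and hence each $x_i\tensor y_i\sqleq O^\Dc_z$. Therefore $w\sqleq O^\Dc_z$, and taking the join over all such $w$ gives the inequality.

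Alternatively, one can check the adjunction directly: $\Sigma(w)\sqleq z$ if and only if $w\sqleq O^\Dc_z$. The forward direction is the argument just given. For the converse, apply $\Sigma$, which is join-preserving and hence monotone, to get $\Sigma(w)\sqleq\Sigma(O^\Dc_z)=\bigvee\{\Dc(x,y):\Dc(x,y)\sqleq z\}\sqleq z$.

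There is no real obstacle here. The only point that needs care is that rectangles form a join-basis of $L\tensor L$, which the paper has already recorded in \cref{section:coproduct frames}. The value of $\Sigma$ on rectangles is fixed by \cref{lemma:Sigma exists}, so no well-definedness issues arise.
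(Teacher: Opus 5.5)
Your proof is correct and follows essentially the same approach as the paper: you take the standard formula for the right adjoint of a join-preserving map and identify it with $O^\Dc_z$ using the rectangle join-basis and $\Sigma(x\tensor y)=\Dc(x,y)$. Your version simply writes out both inequalities that the paper compresses into a single sentence.
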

\begin{proof}
	Since~$\Sigma$ preserves joins, there exists a right adjoint~$\Sigma\dashv \sigma$ determined by the general formula~(see for example~\cite[\S 7.34]{davey2002IntroductionLatticesOrder})
	\[
	\sigma(z) = \bigvee \{o\in L\tensor L: \Sigma(o)\sqleq z\},
	\] 
	and using the rectangle join-basis and~$\Sigma(x\tensor y)=\Dc(x,y)$ we see the right-hand side is equal to the separating open~$O^\Dc_z$. 
\end{proof}

\begin{corollary}
	\label{corollary:separating open rectangle adjunction}
	We have~$x\tensor y\sqleq O^\Dc_z$ iff~$\Dc(x,y)\sqleq z$.
\end{corollary}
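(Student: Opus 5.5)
This is immediate from the adjunction~$\Sigma\dashv\sigma$ established in the preceding lemma, together with the rectangle formula~$\Sigma(x\tensor y)=\Dc(x,y)$ from~\cref{lemma:Sigma exists}. The plan is simply to instantiate the defining property of a Galois adjunction at the element~$x\tensor y\in L\tensor L$ and at~$z\in L$.

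Concretely, the adjunction gives, for every~$o\in L\tensor L$ and~$z\in L$, that~$\Sigma(o)\sqleq z$ iff~$o\sqleq\sigma(z)$. Taking~$o=x\tensor y$, the left-hand side becomes~$\Dc(x,y)\sqleq z$ by~\cref{lemma:Sigma exists}, and the right-hand side becomes~$x\tensor y\sqleq O^\Dc_z$ since~$\sigma(z)=O^\Dc_z$. This is exactly the claimed equivalence.

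There is no real obstacle. The only point worth flagging is the direction~$x\tensor y\sqleq O^\Dc_z\Rightarrow\Dc(x,y)\sqleq z$. It should not be attempted directly from the definition of~$O^\Dc_z$ as a join of rectangles, because a rectangle lying below a join of rectangles need not lie below any single one of them. The adjunction handles this automatically: since~$\Sigma$ preserves joins, we get~$\Sigma(O^\Dc_z)=\bigvee\{\Dc(x',y'):\Dc(x',y')\sqleq z\}\sqleq z$. Monotonicity of~$\Sigma$ then yields~$\Dc(x,y)=\Sigma(x\tensor y)\sqleq z$. The converse direction is trivial, since if~$\Dc(x,y)\sqleq z$ then~$x\tensor y$ is one of the rectangles in the join defining~$O^\Dc_z$.
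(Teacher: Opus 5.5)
Your proposal is correct and is the paper's own argument: the paper states this corollary without separate proof, as the instance $o=x\tensor y$ of the adjunction $\Sigma\dashv\sigma$ with $\sigma(z)=O^\Dc_z$, using $\Sigma(x\tensor y)=\Dc(x,y)$. Your remark on the direction $x\tensor y\sqleq O^\Dc_z\Rightarrow\Dc(x,y)\sqleq z$, via join-preservation of $\Sigma$ giving $\Sigma(O^\Dc_z)\sqleq z$, is also correct; it just unfolds what the adjunction already guarantees.
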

	
\begin{lemma}
	\label{lemma:separating opens are sim saturated}
	The separating opens~$O^\Dc_z$ are~$\sim$-saturated.
\end{lemma}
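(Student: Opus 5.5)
We need to show: for all $a,b,c\in L\tensor L$ with $a\sim b$, we have $a\wedge c\sqleq O^\Dc_z$ iff $b\wedge c\sqleq O^\Dc_z$. The generating pairs are $a=x\tensor y$ and $b=\Dc(x,y)\tensor y$. Note that $b\sqleq a$, since $\Dc(x,y)=x\wedge\dc y\sqleq x$. The direction from $a\wedge c\sqleq O^\Dc_z$ to $b\wedge c\sqleq O^\Dc_z$ is therefore immediate.

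For the converse, we first reduce $c$ to basic rectangles. Write $c=\bigvee_i (p_i\tensor q_i)$. By infinite distributivity in $L\tensor L$, we get $a\wedge c=\bigvee_i (x\wedge p_i)\tensor(y\wedge q_i)$, and similarly for $b$. A join lies below $O^\Dc_z$ iff each term does. So it suffices to treat a single rectangle $c=p\tensor q$. In that case, by \cref{corollary:separating open rectangle adjunction}, the condition $a\wedge c\sqleq O^\Dc_z$ becomes $\Dc(x\wedge p,\,y\wedge q)\sqleq z$, that is, $x\wedge p\wedge \dc(y\wedge q)\sqleq z$.

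Likewise, $b\wedge c\sqleq O^\Dc_z$ becomes
\[
x\wedge \dc y\wedge p\wedge \dc(y\wedge q)\sqleq z.
\]
Since $\dc$ preserves joins, it is monotone, so $\dc(y\wedge q)\sqleq \dc y$. Hence the factor $\dc y$ is absorbed, and the two inequalities coincide. This proves the claim for rectangle $c$, and hence for all $c$.

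The only point needing care is the reduction from arbitrary $c$ to rectangles. It uses two facts: that $O^\Dc_z$ is an upper bound test on joins, and that meets distribute over joins in the frame $L\tensor L$. One might also worry whether the generating relation $\sim$ should be closed under symmetry. The saturation condition is an ``iff'', so it is symmetric in $a,b$, and checking the generating pairs suffices.
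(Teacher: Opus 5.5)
Your proof is correct and follows essentially the same route as the paper: reduce the arbitrary element to basic rectangles using distributivity, convert the inequalities via \cref{corollary:separating open rectangle adjunction}, and absorb the factor $\dc y$ using monotonicity of $\dc$. The only cosmetic difference is that you show the two rectangle-level conditions coincide outright, whereas the paper treats the trivial direction separately and carries out the rectangle reduction only for the converse.
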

\begin{proof}
	We need to show that for any~$x,y,z\in L$ and arbitrary~$o\in L\tensor L$ we have
	\[
	x\tensor y \wedge o \sqleq O^\Dc_z
	\iff
	\Dc(x,y)\tensor y \wedge o \sqleq O^\Dc_z.
	\]
	Since~$\Dc(x,y)\sqleq x$, the `only if' direction is immediate. For the converse, suppose~$\Dc(x,y)\tensor y \wedge o \sqleq O^\Dc_z$. Write~$o$ as the join over basic rectangles~$\bigvee x_i\tensor y_i$,%
	\pagebreak[3]
	so that~$x\tensor y \wedge o = \bigvee (x\wedge x_i)\tensor (y\wedge y_i)$, from which we see it suffices to show $(x\wedge x_i)\tensor (y\wedge y_i)\sqleq O^\Dc_z$ for all~$i$, which by~\cref{corollary:separating open rectangle adjunction} is equivalent to~$\Dc(x\wedge x_i,y\wedge y_i)\sqleq z$. 
	Note that for each~$i$ we get
	\[
	(\Dc(x,y)\wedge x_i)\tensor (y\wedge y_i) \sqleq \Dc(x,y)\tensor y \wedge o \sqleq O^\Dc_z,
	\]
	which again by~\cref{corollary:separating open rectangle adjunction} is equivalent to~$\Dc\left(\Dc(x,y)\wedge x_i,y\wedge y_i\right)\sqleq z$. Using that~$\dc$ is monotone, the term on the left simplifies to
	\begin{align*}
		\Dc\left(\Dc(x,y)\wedge x_i,y\wedge y_i\right)
		&=
		\Dc(x,y)\wedge x_i \wedge \dc(y\wedge y_i)
		\\&=
		x\wedge \dc y \wedge x_i \wedge \dc(y\wedge y_i)
		\\&=
		x\wedge x_i\wedge \dc(y\wedge y_i)
		\\&=
		\Dc(x\wedge x_i,y\wedge y_i),
	\end{align*}
	and the desired inclusion follows.
\end{proof}

\begin{lemma}
	\label{lemma:mu recovers separating opens}
	We have~$\mu_\sim(z\tensor \top)= s_{\dc}^{-1}(z) = O^\Dc_z$.
\end{lemma}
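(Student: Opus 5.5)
The first equality is immediate. By \cref{definition:source and target} together with \cref{definition:induced relation}, $s_{\dc}=\pr_1\circ r_{\dc}$, and therefore
\[
s_{\dc}^{-1}(z)=r_{\dc}^{-1}(z\tensor\top)=\mu_\sim(z\tensor\top).
\]
The real content is the identity $\mu_\sim(z\tensor\top)=O^\Dc_z$. Recall that $\mu_\sim(o)$ is the least $\sim$-saturated element above $o$. So it suffices to show two things: $O^\Dc_z$ is $\sim$-saturated and lies above $z\tensor\top$, and every $\sim$-saturated $s$ with $z\tensor\top\sqleq s$ also satisfies $O^\Dc_z\sqleq s$.

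Saturation is exactly \cref{lemma:separating opens are sim saturated}. For $z\tensor\top\sqleq O^\Dc_z$ we use \cref{corollary:separating open rectangle adjunction}: it reduces the claim to $\Dc(z,\top)=z\wedge\dc\top\sqleq z$, which holds trivially. Hence $\mu_\sim(z\tensor\top)\sqleq O^\Dc_z$.

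For the reverse inequality, fix a $\sim$-saturated $s$ with $z\tensor\top\sqleq s$. Since $O^\Dc_z$ is a join of rectangles, it is enough to show $x\tensor y\sqleq s$ whenever $\Dc(x,y)\sqleq z$. This is the main step, and the plan is to use the generator $x\tensor y\sim\Dc(x,y)\tensor y$ inside the saturation condition of \cref{definition:saturated}, taking the test element $c=\top$. That gives
\[
x\tensor y\sqleq s \iff \Dc(x,y)\tensor y\sqleq s.
\]
The right-hand side holds because $\Dc(x,y)\tensor y\sqleq z\tensor\top\sqleq s$, using monotonicity of $\tensor$ in each variable together with $\Dc(x,y)\sqleq z$ and $y\sqleq\top$. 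Therefore $x\tensor y\sqleq s$ for every such rectangle, so $O^\Dc_z\sqleq s$.

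Taking the meet over all such $s$ yields $O^\Dc_z\sqleq\mu_\sim(z\tensor\top)$, and combining the two inequalities gives the equality.

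I do not expect any genuine obstacle. The only points that need checking are that the saturation condition may be applied with $c=\top$ directly to the generating pair, and that $\tensor$ is monotone in each argument. The latter follows from the join equations of the coproduct frame in \cref{section:coproduct frames}.
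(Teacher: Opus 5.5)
Your proof is correct and follows essentially the same route as the paper. One inequality comes from saturation of $O^\Dc_z$ together with $z\tensor\top\sqleq O^\Dc_z$; the other applies saturation to the generating pair $x\tensor y\sim\Dc(x,y)\tensor y$. The only cosmetic differences are that the paper applies the saturation condition directly to $\mu_\sim(z\tensor\top)$ instead of to an arbitrary saturated upper bound $s$, and leaves the identity $s_{\dc}^{-1}(z)=\mu_\sim(z\tensor\top)$ implicit, which you spell out.
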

\begin{proof}
	Note that~$\Dc(z,\top) =z\wedge \dc\top\sqleq z$, so~$z\tensor\top \sqleq O^\Dc_z$. Since the separating opens~$O^\Dc_z$ are~$\sim$-saturated~(\cref{lemma:separating opens are sim saturated}) we get~$\mu_\sim(z\tensor \top)\sqleq \mu_\sim(O^\Dc_z)=O^\Dc_z$.
	 
	Conversely, for any rectangle~$x\tensor y$ with $\Dc(x,y)\sqleq z$, we get
	\[
	x\tensor y \sim \Dc(x,y)\tensor y \sqleq z\tensor \top \sqleq \mu_\sim(z\tensor \top).
	\]
	Since~$\mu_\sim(z\tensor \top)$ is~$\sim$-saturated it follows that~$x\tensor y\sqleq \mu_\sim(z\tensor\top)$, and taking joins over~$x\tensor y$ we get~$O^\Dc_z\sqleq \mu_\sim(z\tensor\top)$.
\end{proof}

Thus~$s_{\dc}^{-1}$ is just the codomain restriction of~$\sigma$, hence preserves all meets, and so admits a left adjoint~$s_!\dashv s_{\dc}^{-1}$~(\cite[\S 7.34]{davey2002IntroductionLatticesOrder}). We claim this is the desired map.

\begin{lemma}
	\label{lemma:left adjoint is Sigma up to mu}
	We have~$s_!\circ \mu_\sim = \Sigma$.
\end{lemma}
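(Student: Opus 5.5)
We show that the two join-preserving maps $s_!\circ\mu_\sim$ and $\Sigma$ from $L\tensor L$ to $L$ have the same right adjoint; since left adjoints are unique, they then coincide. Here $\mu_\sim$ is viewed as a map $L\tensor L\to \Opens R_{\dc}$. Both composites are left adjoints, so the comparison is legitimate: $\mu_\sim$ is left adjoint to the sublocale inclusion of $\Opens R_{\dc}$ into $L\tensor L$, being the nucleus/reflection, and $s_!$ is left adjoint to $s_{\dc}^{-1}$.

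First, compute the right adjoint of the composite $s_!\circ\mu_\sim$. It is the composite of the right adjoints in reverse order, namely $z\mapsto s_{\dc}^{-1}(z)$ regarded as an element of $L\tensor L$. By \cref{lemma:mu recovers separating opens}, $s_{\dc}^{-1}(z)=O^\Dc_z$. The right adjoint of $\Sigma$ is also $z\mapsto O^\Dc_z$, by the lemma identifying $\sigma$. Hence both maps have right adjoint $\sigma$, and uniqueness of adjoints between complete lattices gives $s_!\circ\mu_\sim=\Sigma$.

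If one prefers a direct check, the adjunction yields, for $o\in L\tensor L$ and $z\in L$:
\[
s_!\mu_\sim(o)\sqleq z
\iff
\mu_\sim(o)\sqleq s_{\dc}^{-1}(z)=O^\Dc_z
\iff
o\sqleq O^\Dc_z
\iff
\Sigma(o)\sqleq z.
\]
The middle equivalence holds because $O^\Dc_z$ is $\sim$-saturated (\cref{lemma:separating opens are sim saturated}), so it is a fixed point of $\mu_\sim$, and $\mu_\sim$ is the least saturated element above $o$. The last equivalence is the adjunction $\Sigma\dashv\sigma$.

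The only delicate point is bookkeeping. The adjunction for $\mu_\sim$ is between $L\tensor L$ and the sublocale set $\Opens R_{\dc}$, with the order inherited from $L\tensor L$, and we must confirm that $s_{\dc}^{-1}(z)$, computed as $\mu_\sim(z\tensor\top)$, really lands in that set. This is exactly what \cref{lemma:mu recovers separating opens} supplies. No further computation is needed.
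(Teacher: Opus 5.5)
Your proof is correct and follows the paper's own argument: the paper runs the same chain $s_!\mu_\sim(o)\sqleq z \iff \mu_\sim(o)\sqleq s_{\dc}^{-1}(z) \iff o\sqleq \sigma(z) \iff \Sigma(o)\sqleq z$. It uses the reflection $\mu_\sim\dashv i$ together with \cref{lemma:mu recovers separating opens} to identify $i\circ s_{\dc}^{-1}=\sigma$, and then takes meets over $z$. Your uniqueness-of-adjoints phrasing is the same argument packaged differently.
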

\begin{proof}
	Write the subset inclusion~$i\colon \Opens R_{\dc}\hookrightarrow L\tensor L$ of saturated elements, forming the right adjoint~$\mu_\sim\dashv i$. For elements~$z\in L$ and~$o\in L\tensor L$ then consider:
	\begin{align*}
		s_!\circ \mu_\sim (o)\sqleq z
		&\iff
		\mu_\sim(o) \sqleq s_{\dc}^{-1}(z)
		&&(s_!\dashv s_{\dc}^{-1})
		\\&\iff
		o\sqleq i\circ s_{\dc}^{-1}(z)
		&&(\mu_\sim\dashv i)
		\\&\iff
		o\sqleq \sigma(z)
		&&(i\circ s_{\dc}^{-1}=\sigma)
		\\&\iff 
		\Sigma(o)\sqleq z,
		&&(\Sigma\dashv \sigma)
	\end{align*}
	from which the desired equation follows by taking meets over~$z$.
\end{proof}

\begin{corollary}
	\label{corollary:left adjoint on rectangles}
	For all~$x,y\in L$ we have~$s_!\circ \mu_\sim(x\tensor y)=\Dc(x,y)$.
\end{corollary}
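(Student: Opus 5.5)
This is immediate from \cref{lemma:left adjoint is Sigma up to mu}. That lemma gives the equation of maps $s_!\circ\mu_\sim=\Sigma$ on all of $L\tensor L$. We evaluate both sides at a basic rectangle $x\tensor y$, with $x,y\in L$.

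The left-hand side is then $s_!\circ\mu_\sim(x\tensor y)$. By the defining property of $\Sigma$ from \cref{lemma:Sigma exists}, the right-hand side is $\Sigma(x\tensor y)=\Dc(x,y)=x\wedge\dc y$. This gives the claim.

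No step is a real obstacle, since all the work was done earlier. \cref{lemma:mu recovers separating opens} identifies $s_{\dc}^{-1}$ with the codomain restriction of $\sigma$. The chain of adjunctions in \cref{lemma:left adjoint is Sigma up to mu} then gives $s_!\circ\mu_\sim=\Sigma$. The only point to watch is that $\Sigma$ is specified on rectangles by \cref{lemma:Sigma exists}, and rectangles are exactly the elements at which we evaluate.
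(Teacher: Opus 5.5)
Your proposal is correct and matches the paper's route. The corollary is \cref{lemma:left adjoint is Sigma up to mu} evaluated at a basic rectangle $x\tensor y$, combined with $\Sigma(x\tensor y)=\Dc(x,y)$ from \cref{lemma:Sigma exists}.
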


\begin{lemma}
	\label{lemma:frobenius}
	The pair~$s_!\dashv s_{\dc}^{-1}$ satisfies Frobenius reciprocity.
\end{lemma}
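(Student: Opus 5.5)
We must show that $s_!\bigl(w\wedge s_{\dc}^{-1}(z)\bigr)=s_!(w)\wedge z$ for all $w\in \Opens R_{\dc}$ and $z\in L$. The inequality $\sqleq$ holds in any adjunction: $s_!$ is monotone, and $s_!s_{\dc}^{-1}(z)\sqleq z$ by the counit. So the work is in the reverse inequality $s_!(w)\wedge z\sqleq s_!(w\wedge s_{\dc}^{-1}(z))$.

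The plan is to reduce to basic rectangles.

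First, every $w\in\Opens R_{\dc}$ is of the form $\mu_\sim(o)$ for some $o\in L\tensor L$, since $\mu_\sim$ is surjective onto the saturated elements. Write $o=\bigvee_i x_i\tensor y_i$. Then $w=\bigvee^{R}_i\mu_\sim(x_i\tensor y_i)$, with the join taken in $\Opens R_{\dc}$, because $\mu_\sim$ is a frame map.

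Both sides of the desired equation are join-preserving in $w$:
\begin{itemize}
\item $s_!$ is a left adjoint;
\item $\Opens R_{\dc}$ is a frame, so meets distribute over its joins;
\item in $L$, meeting with $z$ distributes over joins.
\end{itemize}
Hence it suffices to treat $w=\mu_\sim(x\tensor y)$.

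For such $w$, use that $\mu_\sim$ preserves finite meets together with \cref{lemma:mu recovers separating opens}:
\[
w\wedge s_{\dc}^{-1}(z)=\mu_\sim(x\tensor y)\wedge\mu_\sim(z\tensor\top)=\mu_\sim\bigl((x\wedge z)\tensor y\bigr).
\]
Now apply \cref{corollary:left adjoint on rectangles} to both sides:
\[
s_!\bigl(w\wedge s_{\dc}^{-1}(z)\bigr)=\Dc(x\wedge z,y)=x\wedge z\wedge\dc y=\Dc(x,y)\wedge z=s_!(w)\wedge z.
\]
This gives the equation, in fact with equality directly, so the first inequality is not even needed.

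The main obstacle is the join-reduction step. One must check that joins in $\Opens R_{\dc}$, which are $\mu_\sim$ applied to joins computed in $L\tensor L$, are still preserved by $s_!$, and that $\wedge$ distributes over them. Both facts hold because $\Opens R_{\dc}$ is a frame and $s_!$ is a left adjoint on it.

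Alternatively, this step can be sidestepped using \cref{lemma:left adjoint is Sigma up to mu}. Since $s_!\mu_\sim=\Sigma$ is join-preserving on all of $L\tensor L$, it is enough to verify
\[
\Sigma\bigl(o\wedge (z\tensor\top)\bigr)=\Sigma(o)\wedge z
\]
for all $o\in L\tensor L$. This identity is join-preserving in $o$ on both sides and holds on rectangles by the computation above.
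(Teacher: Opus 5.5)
Your proof is correct and follows the paper's own argument: reduce by join-preservation in $w$ to $w=\mu_\sim(x\tensor y)$, use that $\mu_\sim$ preserves binary meets to get $\mu_\sim((x\wedge z)\tensor y)$, and evaluate both sides with \cref{corollary:left adjoint on rectangles}. Your alternative via $s_!\circ\mu_\sim=\Sigma$ is also valid; it only moves the join-reduction from $\Opens R_{\dc}$ into $L\tensor L$, using that $\mu_\sim$ is surjective onto $\Opens R_{\dc}$.
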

\begin{proof}
	We need to show for all~$o\in \Opens R_{\dc}$ and~$z\in L$ that
	\[
	s_!\left(o\wedge s_{\dc}^{-1}(z)\right) = s_!(o)\wedge z.
	\]
	Both sides preserve joins in both arguments, so it suffices to check equality in the case that~$o$ is a basic rectangle~$\mu_\sim(x\tensor y)$ for some~$x,y\in L$. Since~$\mu_\sim$ preserves binary meets we get
	\[
	o\wedge s_{\dc}^{-1}(z) = \mu_\sim(x\tensor y)\wedge \mu_\sim(z\tensor\top) = \mu_\sim((x\wedge z)\tensor y).
	\]
	Applying~$s_!$ on both sides and using~\cref{corollary:left adjoint on rectangles} gives the desired equation:
	\[
	s_! \mu_\sim((x\wedge z)\tensor y) = \Dc(x\wedge z,y) = \Dc(x,y)\wedge z = s_! \mu_\sim(x\tensor y)\wedge z.
	\qedhere
	\]
\end{proof}
% \begin{proof}
% 	We need to show for all~$O\in \Opens R_{\dc}$ and~$U\in \Opens X$ that
% 	\[
% 	s_!\left(O\wedge s_{\dc}^{-1}(U)\right) = s_!(O)\wedge U.
% 	\]
% 	Both sides preserve joins in both arguments, so it suffices to check equality in the case that~$O$ is a basic rectangle~$\mu_\sim(A\tensor B)$ for some~$A,B\in\Opens X$. Since~$\mu_\sim$ preserves binary meets we get
% 	\[
% 	O\wedge s_{\dc}^{-1}(U) = \mu_\sim(A\tensor B)\wedge \mu_\sim(U\tensor\top) = \mu_\sim((A\wedge U)\tensor B).
% 	\]
% 	Applying~$s_!$ on both sides and using~\cref{corollary:left adjoint on rectangles} gives the desired equation:
% 	\[
% 	s_!\mu_\sim((A\wedge U)\tensor B) = \Dc(A\wedge U,B) = \Dc(A,B)\wedge U = s_!\mu_\sim(A\tensor B)\wedge U.
% 	\qedhere
% 	\]
% \end{proof}

\begin{theorem}
	\label{theorem:induced relation}
	The relation~$R_{\dc}$ has open source with induced cone~$\Down = \dc$.
\end{theorem}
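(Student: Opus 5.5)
The plan is to assemble the lemmas just proven. By \cref{lemma:mu recovers separating opens}, $s_{\dc}^{-1}$ is the codomain restriction of $\sigma$, so it preserves all meets. Hence it has a left adjoint $s_!\dashv s_{\dc}^{-1}$ (\cite[\S 7.34]{davey2002IntroductionLatticesOrder}). By \cref{lemma:frobenius} this adjoint satisfies Frobenius reciprocity, so $s_{\dc}$ is open in the sense of \cref{definition:open map}. Thus $R_{\dc}$ has open source, and the induced cone $\Down = s_!\circ t_{\dc}^{-1}$ of \cref{definition:induced cones} is well defined.

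It remains to compute $\Down$. For $y\in L$, \cref{definition:source and target} gives $t_{\dc}^{-1}(y) = r_{\dc}^{-1}(\top\tensor y) = \mu_\sim(\top\tensor y)$. Applying \cref{corollary:left adjoint on rectangles} then yields
\[
\Down y = s_!\,\mu_\sim(\top\tensor y) = \Dc(\top,y) = \top\wedge \dc y = \dc y.
\]
Hence $\Down=\dc$.

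No step should be a real obstacle. The only point needing care is that $s_!$ is identified with the left adjoint of the genuine frame map $s_{\dc}^{-1}$, and not just of $\sigma$. This is exactly what \cref{lemma:left adjoint is Sigma up to mu} handles, and \cref{corollary:left adjoint on rectangles} depends on it. Using $s_!\circ\mu_\sim=\Sigma$ together with $\Sigma(\top\tensor y)=\Dc(\top,y)$ from \cref{lemma:Sigma exists}, the computation is immediate.
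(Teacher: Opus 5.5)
Your proposal is correct and follows the paper's proof: openness of $s_{\dc}$ via the left adjoint $s_!\dashv s_{\dc}^{-1}$ and \cref{lemma:frobenius}, then $\Down y = s_!\,\mu_\sim(\top\tensor y) = \Dc(\top,y) = \dc y$ by \cref{corollary:left adjoint on rectangles}. Your extra remark that $s_!$ must be the left adjoint of $s_{\dc}^{-1}$ itself, secured by \cref{lemma:left adjoint is Sigma up to mu}, is exactly the right point to flag.
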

\begin{proof}
	That~$R_{\dc}$ has open source follows from~\cref{lemma:frobenius}, and its cone can be calculated using~\cref{corollary:left adjoint on rectangles}:
	\[
	\Down x = s_! t_{\dc}^{-1}(x) = s_! \mu_\sim(\top\tensor x) = \Dc(\top,x) = \dc x.
	\qedhere
	\]
\end{proof}

%~~~~~~~~~~~~~~~~~~~~~~~~
\subsection{Induced relation of~$\Down$}
We can apply the general construction of~$R_{\dc}$ in~\cref{section:induced relation} to the specific case where~$\dc=\Down$ is the cone coming from some open source relation~$R$. The resulting relation is denoted~$\RDown$. Here we show that~$R\subseteq \RDown$.

For this section, fix an open source relation~$R$ on~$X$, with inclusion map~$r$ and source and target maps~$s,t$. By~\cref{proposition:induced cone is join-preserving} the cone~$\Down$ defines the structure of a conic frame~$(\Opens X,\Down)$. Denote its reduced cone by~$\DownR\colon \Opens X\times \Opens X\to \Opens X$, and the corresponding generating relation on~$\Opens X\tensor \Opens X$ by~$\sim$. From~\cref{theorem:induced relation} we get an open source relation~$\RDown$ on~$X$ defined by the map~$\mu_\sim$. First we prove some lemmas relating the primitive structure of~$R$ to the induced structure~$\Down$ and~$\sim$.

\begin{lemma}
	\label{lemma:reduced cone from s r}
	We have $s_!r^{-1}(U\tensor V)= \DownR(U,V)$.
\end{lemma}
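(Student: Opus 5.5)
We need $s_!r^{-1}(U\tensor V)=U\wedge \Down V$. The plan is a direct computation from \cref{definition:source and target}, Frobenius reciprocity (\cref{definition:open map}) for the open source map~$s$, and the definition of the cone~$\Down=s_!\circ t^{-1}$ (\cref{definition:induced cones}).

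First, I split the basic rectangle into its two legs. Since $r^{-1}$ is a frame map and $U\tensor V=(U\tensor\top)\wedge(\top\tensor V)$, \cref{definition:source and target} gives
\[
r^{-1}(U\tensor V)=s^{-1}(U)\wedge t^{-1}(V).
\]

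Second, I apply $s_!$ and use Frobenius reciprocity with the open $t^{-1}(V)\in\Opens R$ in the first slot and $U$ in the second:
\[
s_!\bigl(t^{-1}(V)\wedge s^{-1}(U)\bigr)=s_!\bigl(t^{-1}(V)\bigr)\wedge U.
\]

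Third, I recognise $s_!t^{-1}(V)$ as $\Down V$ by definition of the induced cone. Hence
\[
s_!r^{-1}(U\tensor V)=U\wedge\Down V=\DownR(U,V).
\]

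There is no real obstacle here. The only care needed is to put $t^{-1}(V)$, not $s^{-1}(U)$, in the slot of Frobenius reciprocity that carries an arbitrary open of~$R$, and to note that meets commute so the order of the factors is harmless.
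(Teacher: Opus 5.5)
Your proof is correct and is the same as the paper's. You write $r^{-1}(U\tensor V)$ as $s^{-1}(U)\wedge t^{-1}(V)$, apply Frobenius reciprocity for $s_!\dashv s^{-1}$ to get $s_!t^{-1}(V)\wedge U$, and then recognise $s_!t^{-1}(V)=\Down V$.
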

\begin{proof}
	This follows straightforwardly from Frobenius reciprocity of~$s_!\dashv s^{-1}$:
	\[
	s_! r^{-1}(U\tensor V) = s_!\left(s^{-1}(U)\wedge t^{-1}(V)\right)
	= s_!t^{-1}(V)\wedge U =\DownR(U,V).
	\qedhere
	\]
\end{proof}

\begin{lemma}
	\label{lemma:r equalises sim}
	We have $r^{-1}(U\tensor V) = r^{-1}\left(\DownR(U,V)\tensor V\right)$.
\end{lemma}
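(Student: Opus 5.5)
We must show $r^{-1}(U\tensor V) = r^{-1}(\DownR(U,V)\tensor V)$. Unfolding with \cref{definition:source and target}, the left side is $s^{-1}(U)\wedge t^{-1}(V)$. The right side is $s^{-1}(U\wedge \Down V)\wedge t^{-1}(V) = s^{-1}(U)\wedge s^{-1}(\Down V)\wedge t^{-1}(V)$, since $s^{-1}$ preserves binary meets. So the right side is at most the left side, and it suffices to prove
\[
t^{-1}(V)\sqleq s^{-1}(\Down V) = s^{-1}s_!t^{-1}(V).
\]
Meeting both sides of this with $s^{-1}(U)$ then gives the reverse inequality.

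This inequality is exactly the unit of the adjunction $s_!\dashv s^{-1}$ from \cref{definition:open map}, applied to the element $t^{-1}(V)\in\Opens R$. Indeed $s_!t^{-1}(V)\sqleq s_!t^{-1}(V)$ transposes to $t^{-1}(V)\sqleq s^{-1}s_!t^{-1}(V)$. Combining the two inequalities gives the equality.

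There is an alternative route through \cref{lemma:reduced cone from s r}: write $o=r^{-1}(U\tensor V)$ and use the unit $o\sqleq s^{-1}s_!(o) = s^{-1}(\DownR(U,V))$. Then
\[
o\sqleq s^{-1}(\DownR(U,V))\wedge t^{-1}(V) = r^{-1}(\DownR(U,V)\tensor V).
\]
Either way, the only real step is recognising the unit of $s_!\dashv s^{-1}$, and there is no genuine obstacle. Frobenius reciprocity is not even needed for the first route.
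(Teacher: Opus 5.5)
Your proof is correct, and your alternative route is exactly the paper's argument: apply the unit of $s_!\dashv s^{-1}$ to $r^{-1}(U\tensor V)$, identify $s_!r^{-1}(U\tensor V)=\DownR(U,V)$ via \cref{lemma:reduced cone from s r}, and then use $\DownR(U,V)\sqleq U$ to absorb the extra meet. Your primary route applies the unit to $t^{-1}(V)$ instead. This is a small streamlining that bypasses \cref{lemma:reduced cone from s r}, and with it Frobenius reciprocity, since it only needs the existence of the left adjoint $s_!$.
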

\begin{proof}
	Since $s_!\dashv s^{-1}$, we have $O\sqleq s^{-1}s_!(O)$ for all $O\in\Opens R$, and so:
	\[
	s^{-1}(U)\wedge t^{-1}(V)
	\sqleq
	s^{-1}s_!\bigl(s^{-1}(U)\wedge t^{-1}(V)\bigr)
	=
	s^{-1}(\DownR(U,V)),
	\]
	where we used that $r^{-1}(U\tensor V)=s^{-1}(U)\wedge t^{-1}(V)$~(\cref{lemma:coproduct structure}) together with the previous~\cref{lemma:reduced cone from s r}. From this we get using~$\DownR(U,V)\sqleq U$ that:
	\begin{align*}
		r^{-1}(U\tensor V)
		&=
		s^{-1}(U)\wedge t^{-1}(V)
		\\&=
		s^{-1}(\DownR(U,V))\wedge s^{-1}(U)\wedge t^{-1}(V)
		\\&=
		s^{-1}(\DownR(U,V))\wedge t^{-1}(V)
		\\&=
		r^{-1}(\DownR(U,V)\tensor V).
		\qedhere
	\end{align*}
\end{proof}

\begin{proposition}
	\label{proposition:R in RDown}
	If~$R$ is a localic relation with open source, then~$R\subseteq \RDown$.
\end{proposition}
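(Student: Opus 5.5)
The inclusion $R\subseteq \RDown$ of sublocales of $X\times X$ means that the frame map $r^{-1}\colon \Opens X\tensor\Opens X\to \Opens R$ factors through the quotient $\mu_\sim\colon \Opens X\tensor \Opens X\to \Opens X\tensor\Opens X/{\sim}=\Opens \RDown$. So the plan is to apply the universal property of the quotient in \cref{theorem:frame quotient theorem}. It suffices to show that $r^{-1}$ equalises the generating relation $\sim$ of the conic frame $(\Opens X,\Down)$.

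By \cref{definition:sim}, the relation $\sim$ consists exactly of the pairs $U\tensor V\sim \DownR(U,V)\tensor V$ for $U,V\in\Opens X$. Applying $r^{-1}$ to both sides of such a pair gives equal results by \cref{lemma:r equalises sim}. So $r^{-1}$ equalises $\sim$.

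\Cref{theorem:frame quotient theorem} then provides a unique frame map $\overline{r^{-1}}\colon \Opens\RDown\to\Opens R$ with $\overline{r^{-1}}\circ\mu_\sim = r^{-1}$. Dually, this is a locale map $R\to \RDown$ whose composite with the sublocale inclusion $r_{\Down}\colon\RDown\rightarrowtail X\times X$ is $r$. A sublocale of $X\times X$ whose inclusion factors through another sublocale is contained in it, so $R\subseteq\RDown$.

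There is no real obstacle here: the substantive computation, the use of Frobenius reciprocity, was already done in \cref{lemma:reduced cone from s r,lemma:r equalises sim}. The only point needing care is that equalising the generating relation on rectangles is enough, because \cref{theorem:frame quotient theorem} is stated for an arbitrary binary relation and its generated congruence.
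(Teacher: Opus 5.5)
Your proposal is correct and matches the paper's proof: both use \cref{lemma:r equalises sim} to show that $r^{-1}$ equalises the generating relation $\sim$, then invoke \cref{theorem:frame quotient theorem} to factor $r^{-1}$ through $\mu_\sim$. That factorisation exhibits $R\subseteq\RDown$.
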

\begin{proof}
	By~\cref{lemma:r equalises sim} the frame map $r^{-1}\colon \Opens X\tensor \Opens X\to \Opens R$ equalises the generating relation~$\sim$. Therefore, by the frame quotient~\cref{theorem:frame quotient theorem}, there exists a unique frame map $i^{-1}\colon \Opens X\tensor \Opens X/{\sim}\to \Opens R$ such that $i^{-1}\circ \mu_\sim = r^{-1}$, exhibiting the desired sublocale inclusion~$i\colon R\rightarrowtail \RDown$.
\end{proof}

%~~~~~~~~~~~~~~~~~~~~~~~~
\subsection{The adjunction}
\label{section:adjunction}
We put together the maps~${R\mapsto \Down}$ and~${\dc\mapsto R_{\dc}}$, make them functorial, and show they define an adjunction~${\cone\dashv\rel}$. Lastly, we show that closed localic relations with open source maps form fixed points of the adjunction.

The key technical lemma is the following result that shows conic morphisms are precisely those that preserve the generating relations. More precisely, take conic frames~$(L,\dc)$ and~$(M,\dcx)$. Denote by~$\simeq$ the equivalence relation generated by~$\sim$ coming from~$\dcx$ on~$M$. We then have the following.

\begin{lemma}
	\label{lemma:conic morphism iff preserves sim}
	A frame map $h\colon (L,\dc)\to (M,\dcx)$ is conic iff~for~all~${x,y\in L}$:
	\[
	h(x)\tensor h(y)\simeq  h(\Dc(x,y))\tensor h(y).
	\]
\end{lemma}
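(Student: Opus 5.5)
We prove the two directions separately, using that $\simeq$ is the congruence generated by $\sim$ on $M\tensor M$, which is exactly the kernel of $\mu_\sim\colon M\tensor M\to M\tensor M/{\sim}$. The tool for both directions is the separating opens $O^{\Dcx}_z$ for $z\in M$: they are $\sim$-saturated (\cref{lemma:separating opens are sim saturated}), and by \cref{corollary:separating open rectangle adjunction} we have $a\tensor b\sqleq O^{\Dcx}_z$ iff $a\wedge\dcx b\sqleq z$.

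For the `only if' direction, assume $\dcx\circ h\sqleq h\circ\dc$. Put $a=h(x)$, $b=h(y)$ and $a'=h(\Dc(x,y))=h(x)\wedge h(\dc y)$, so that $a'\sqleq a$. It suffices to exhibit a chain in the generated congruence. We have the generating instance $a\tensor b\sim (a\wedge\dcx b)\tensor b$. Conicity gives $a\wedge\dcx b\sqleq a'$, and hence also $a\wedge\dcx b = a'\wedge \dcx b$. So $a'\tensor b\sim (a'\wedge\dcx b)\tensor b=(a\wedge\dcx b)\tensor b$. By transitivity and symmetry of $\simeq$ this gives $a\tensor b\simeq a'\tensor b$.

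For the `if' direction, assume the displayed congruence for all $x,y\in L$, and fix $y$. We aim to show $\dcx h(y)\sqleq h(\dc y)$.
\begin{itemize}
\item Take $x=\top$. The hypothesis gives $\top\tensor h(y)\simeq h(\dc y)\tensor h(y)$.
\item Take $z=h(\dc y)$. The saturated element $O^{\Dcx}_z$ is a fixed point of the nucleus $\mu_\sim$, and congruent elements have the same $\mu_\sim$-image. So for congruent $o\simeq o'$ we get $o\sqleq O^{\Dcx}_z$ iff $\mu_\sim(o)\sqleq O^{\Dcx}_z$ iff $o'\sqleq O^{\Dcx}_z$.
\item Apply this with $o=h(\dc y)\tensor h(y)$. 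We have $o\sqleq O^{\Dcx}_z$ because $h(\dc y)\wedge\dcx h(y)\sqleq h(\dc y)=z$.
\item Transferring along the congruence gives $\top\tensor h(y)\sqleq O^{\Dcx}_z$. By \cref{corollary:separating open rectangle adjunction} this means $\dcx h(y)=\top\wedge\dcx h(y)\sqleq h(\dc y)$, which is conicity.
\end{itemize}

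The main obstacle is the step where membership below a saturated element is transferred along the generated congruence $\simeq$, rather than along $\sim$ alone. This rests on identifying $\simeq$ with the kernel of $\mu_\sim$, using the fact recalled in \cref{section:sublocales} that $L/{\sim}$ is the sublocale set of $\langle\sim\rangle$. Once that identification is in place, both directions reduce to the short calculations above.
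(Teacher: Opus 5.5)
Your proof is correct and is essentially the paper's. The `only if' chain through $(h(x)\wedge\dcx h(y))\tensor h(y)$ is identical, and in the `if' direction your test against the saturated element $O^{\Dcx}_{h(\dc y)}$ is just the adjoint form of the paper's step of applying $s_!\circ\mu_\sim=\Sigma$ (\cref{corollary:left adjoint on rectangles}) to $\top\tensor h(y)\simeq h(\dc y)\tensor h(y)$, an identity the paper itself derives from the saturation of the separating opens. One small correction to your closing remark: the paper takes $\simeq$ to be the \emph{equivalence relation} generated by $\sim$, not the congruence, but your argument only uses $\simeq\subseteq\ker\mu_\sim$, which follows at once from $\mu_\sim$ equalising $\sim$ (\cref{theorem:frame quotient theorem}), so there is no real obstacle there and the proof works under either reading.
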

\begin{proof}
	Suppose first that~$h$ is a conic morphism, fix elements~$x,y\in L$, and abbreviate~$\Dc=\Dc(x,y)$. We claim that under the reduced cone of~$\dcx$ we have
	\[
	\Dcx(h(x),h(y))=\Dcx(h(\Dc),h(y)).
	\]
	To see this, first observe that~$\Dc\sqleq x$, so by monotonicity we get the~$\sqgeq$ inclusion. For the converse inclusion, we use~$\dcx\circ h\sqleq h\circ \dc$ to get
	\[
	\Dcx(h(x),h(y)):= h(x)\wedge \dcx h(y) \sqleq h(x)\wedge h(\dc y) =: h(\Dc),
	\]
	which together with the trivial inclusion~$\Dcx(h(x),h(y))\sqleq \dcx h(y)$ gives
	\[
	\Dcx(h(x),h(y))\sqleq h(\Dc)\wedge \dcx h(y) =: \Dcx(h(\Dc),h(y)),
	\]
	proving the claim. Applying this to the definition of the generating relation~$\sim$ of~$(M,\dcx)$ we find
	\[
	h(x)\tensor h(y)
	\sim
	\Dcx(h(x),h(y))\tensor h(y)
	=
	\Dcx(h(\Dc),h(y))\tensor h(y)
	\simeq
	h(\Dc)\tensor h(y),
	\]
	implying the relation under~$\simeq$, as desired.
	
	Conversely, suppose the displayed condition holds. Setting~$x=\top$ gives in particular that~$\top\tensor h(y)\simeq h(\dc y)\tensor h(y)$. Denote by~$\mu_\sim$ the quotient map and by~$s_!\dashv s_{\dcx}^{-1}$ the left adjoint corresponding to the induced relation of~$\dcx$. Then calculate:
	\begin{align*}
		\dcx h(y)
		&=:
		\Dcx(\top,h(y))
		\\&=
		s_!\mu_\sim (\top\tensor h(y))
		&&(\text{\cref{corollary:left adjoint on rectangles}})
		\\&=
		s_!\mu_\sim(h(\dc y)\tensor h(y))
		&&(\text{$\mu_\sim$ equalises $\simeq$})
		\\&=
		\Dcx(h(\dc y),h(y))
		&&(\text{\cref{corollary:left adjoint on rectangles}})
		\\&:=
		h(\dc y)\wedge \dcx h(y),
	\end{align*}
	giving the inclusion $\dcx\circ h\sqleq h\circ \dc$, as desired.
\end{proof}

We now construct the desired functors and prove that they are adjoint. Note that in terms of functoriality there is nothing to check, since on morphisms we are really just taking the opposite categories. All there is to prove is well-definedness on morphisms.

\begin{proposition}
	\label{proposition:functor ucdcFrm to rosLoc}
	There is a functor
	\begin{align*}
		\rel\colon \dcFrm^\op &\longrightarrow \rosLoc;
		\\
		(\Opens X,\dc)&\longmapsto (X, R_{\dc});
		\\
		f^{-1} &\longmapsto f.
	\end{align*}
\end{proposition}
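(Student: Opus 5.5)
By \cref{theorem:induced relation}, every conic frame $(\Opens X,\dc)$ yields an open source relation $R_{\dc}$ on $X$, so $\rel$ is well-defined on objects and lands in $\rosLoc$. As the paper notes, functoriality is automatic, since on morphisms $\rel$ sends $f^{-1}$ to $f$, which preserves identities and composition. What must be shown is that a conic morphism $f^{-1}\colon(\Opens Y,\dc)\to(\Opens X,\dcx)$ gives a monotone map $f\colon(X,R_{\dcx})\to(Y,R_{\dc})$ in the sense of \cref{definition:monotone internal}. That is, we need a factorisation $\bar f$ of $(f\times f)\circ r_{\dcx}$ through $r_{\dc}$.

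Frame-theoretically, we need a frame map $\bar f^{-1}\colon \Opens Y\tensor\Opens Y/{\sim}\to \Opens X\tensor \Opens X/{\simeq}$ with $\bar f^{-1}\circ\mu_\sim = \mu_\simeq\circ(f^{-1}\tensor f^{-1})$. Here $\sim$ is the generating relation of $\dc$ on $\Opens Y\tensor\Opens Y$, and $\simeq$ is the congruence generated by the generating relation of $\dcx$, with quotient map $\mu_\simeq$ (the same as the quotient map of the generating relation itself). By the frame quotient \cref{theorem:frame quotient theorem}, such a $\bar f^{-1}$ exists uniquely as soon as the frame map $\mu_\simeq\circ(f^{-1}\tensor f^{-1})$ equalises $\sim$. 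This means checking, for all $U,V\in\Opens Y$,
\[
\mu_\simeq\bigl(f^{-1}(U)\tensor f^{-1}(V)\bigr)=\mu_\simeq\bigl(f^{-1}(\Dc(U,V))\tensor f^{-1}(V)\bigr),
\]
where we used $(f^{-1}\tensor f^{-1})(U\tensor V)=f^{-1}(U)\tensor f^{-1}(V)$ from \cref{lemma:coproduct structure}. This is exactly the forward direction of \cref{lemma:conic morphism iff preserves sim} applied to the conic morphism $h=f^{-1}$: it gives $f^{-1}(U)\tensor f^{-1}(V)\simeq f^{-1}(\Dc(U,V))\tensor f^{-1}(V)$, and $\mu_\simeq$ equalises $\simeq$.

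The resulting $\bar f$ satisfies $r_{\dc}\circ\bar f=(f\times f)\circ r_{\dcx}$, so $f$ is monotone. The main (mild) subtlety is bookkeeping of direction: conic morphisms go $\Opens Y\to\Opens X$ in $\dcFrm$, hence $Y\leftarrow X$ in $\dcFrm^\op$, matching $f\colon X\to Y$. Also, $\mu_\simeq$ equalising the generated equivalence follows since it is a congruence containing the generating relation. Everything else is routine.
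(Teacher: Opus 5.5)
Your proof is correct and follows essentially the same route as the paper's: well-definedness on objects comes from \cref{theorem:induced relation}. You then apply the forward direction of \cref{lemma:conic morphism iff preserves sim} to show that the quotient map composed with $f^{-1}\tensor f^{-1}$ equalises the generating relation on $\Opens Y\tensor\Opens Y$, and obtain $\bar f^{-1}$ from the frame quotient \cref{theorem:frame quotient theorem}. The only difference is cosmetic: your names for the two cones are swapped relative to the paper's.
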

\begin{proof}
	That this is well-defined on objects is~\cref{theorem:induced relation}. Now take a map of locales $f\colon X\to Y$, where $\Opens X$ and $\Opens Y$ are equipped with join-preserving cones~$\dc$ and~$\dcx$, respectively, and denote the induced relations by~$R_{\dc}$ and~$R_{\dcx}$. We need to construct a frame map $\bar f^{-1}\colon \Opens R_{\dcx}\to \Opens R_{\dc}$ such that
	\[
	\bar f^{-1}\circ r_{\dcx}^{-1} = r_{\dc}^{-1}\circ (f\times f)^{-1}.
	\]
	That $f^{-1}$ is a conic morphism means equivalently by~\cref{lemma:conic morphism iff preserves sim} that
	\[
	f^{-1}(V)\tensor f^{-1}(W) \simeq f^{-1}(\Dcx(V,W))\tensor f^{-1}(W)
	\]
	for all~$V,W\in\Opens Y$, where~$\simeq$ is the equivalence relation on~$\Opens X\tensor \Opens X$ generated by~$\dc$. Now~$r_{\dc}^{-1}=\mu_\sim$ equalises~$\simeq$, in turn~$r_{\dc}^{-1}\circ(f\times f)^{-1}$ equalises the generating relation of~$\dcx$, so we get the desired~$\bar{f}^{-1}$ by the frame quotient~\cref{theorem:frame quotient theorem}.
\end{proof}

\begin{proposition}
	\label{proposition:functor rosLoc to ucdcFrm}
	There is a functor
	\begin{align*}
		\cone\colon \rosLoc &\longrightarrow \dcFrm^\op;
		\\
		(X,R)&\longmapsto (\Opens X,\Down);
		\\
		f &\longmapsto f^{-1}.
	\end{align*}
\end{proposition}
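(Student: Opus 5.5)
Well-definedness on objects is already settled by \cref{proposition:induced cone is join-preserving}, so the real work is on morphisms. As noted before \cref{proposition:functor ucdcFrm to rosLoc}, functoriality is automatic because on arrows we only pass to opposite categories. What remains is to show that for a monotone locale map $f\colon (X,R)\to (Y,Q)$ between open source relations, the frame map $f^{-1}\colon \Opens Y\to\Opens X$ is a conic morphism. Writing $\Down_R$ and $\Down_Q$ for the two induced cones, this means proving
\[
\Down_R\circ f^{-1}\sqleq f^{-1}\circ \Down_Q .
\]

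The plan is to use the lift $\bar f\colon R\to Q$ from \cref{definition:monotone internal}, with commuting source and target squares
\[
f\circ s_R = s_Q\circ \bar f \qquad\text{and}\qquad f\circ t_R = t_Q\circ \bar f,
\]
and to rewrite everything through the adjunctions $s_{R!}\dashv s_R^{-1}$ and $s_{Q!}\dashv s_Q^{-1}$. Fix $V\in\Opens Y$. By the adjunction $s_{R!}\dashv s_R^{-1}$, the desired inequality $s_{R!}t_R^{-1}f^{-1}(V)\sqleq f^{-1}s_{Q!}t_Q^{-1}(V)$ is equivalent to
\[
t_R^{-1}f^{-1}(V)\sqleq s_R^{-1}f^{-1}s_{Q!}t_Q^{-1}(V).
\]
Both sides can be rewritten using the two squares:
\begin{align*}
t_R^{-1}f^{-1}(V) &= \bar f^{-1}t_Q^{-1}(V),\\
s_R^{-1}f^{-1}(W) &= \bar f^{-1}s_Q^{-1}(W) \quad\text{for } W=s_{Q!}t_Q^{-1}(V).
\end{align*}
The inequality therefore becomes
\[
\bar f^{-1}t_Q^{-1}(V)\sqleq \bar f^{-1}s_Q^{-1}s_{Q!}t_Q^{-1}(V).
\]
This follows by applying the monotone map $\bar f^{-1}$ to the unit inequality $t_Q^{-1}(V)\sqleq s_Q^{-1}s_{Q!}t_Q^{-1}(V)$ of the adjunction $s_{Q!}\dashv s_Q^{-1}$.

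I expect no real obstacle. The only point needing care is the orientation of the adjunctions: the conic inequality runs $\dcx\circ h\sqleq h\circ\dc$ with $h=f^{-1}$, where $\dcx=\Down_R$ on the domain frame $\Opens X$ and $\dc=\Down_Q$ on $\Opens Y$, so we must transpose along $s_R$ and then use the unit for $s_Q$. Frobenius reciprocity is not needed here; openness serves only to make $s_{R!}$ and $s_{Q!}$ exist. Finally, composites and identities are sent to composites and identities by $f\mapsto f^{-1}$, which gives the functor.
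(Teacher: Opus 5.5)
Your proof is correct, and it differs from the paper's in a small but real way. The paper does not transpose across $(s_R)_!\dashv s_R^{-1}$. Instead it starts from the rectangle identity $q^{-1}(\top\tensor V)=q^{-1}(\Downsub{Q}V\tensor V)$ of \cref{lemma:r equalises sim}. It pulls this back along $\bar f^{-1}$, using $r^{-1}\circ(f\times f)^{-1}=\bar f^{-1}\circ q^{-1}$, to get $r^{-1}(\top\tensor f^{-1}(V))=r^{-1}(f^{-1}(\Downsub{Q}V)\tensor f^{-1}(V))$. It then applies $(s_R)_!$ together with \cref{lemma:reduced cone from s r} to obtain the equation $\Downsub{R}f^{-1}(V)=f^{-1}(\Downsub{Q}V)\wedge\Downsub{R}f^{-1}(V)$.

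The underlying ingredients are the same as yours: the lift $\bar f$, the two squares, and the unit of $(s_Q)_!\dashv s_Q^{-1}$. Indeed, the case $U=\top$ of \cref{lemma:r equalises sim} is exactly the unit inequality $t_Q^{-1}(V)\sqleq s_Q^{-1}\Downsub{Q}V$. The difference is the final push-forward step, which uses Frobenius reciprocity for $s_R$ via \cref{lemma:reduced cone from s r}. Your transposition replaces that push-forward, so your argument needs only the existence of the left adjoints $(s_R)_!$ and $(s_Q)_!$. As with \cref{proposition:induced cone is join-preserving} on objects, $\cone$ would therefore be well defined even for relations whose source maps admit $s_!\dashv s^{-1}$ without Frobenius.

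The paper's version buys something else. It reuses the lemmas already built for the adjunction $\cone\dashv\rel$, and it phrases conicity as compatibility with the generating relation, in the spirit of \cref{lemma:conic morphism iff preserves sim}.
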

\begin{proof}
	That this is well-defined on objects is just~\cref{proposition:induced cone is join-preserving}. We are left to show that if~$f\colon (X,R)\to (Y,Q)$ is internally monotone, then its underlying frame map is a conic morphism. Monotonicity gives~$\bar f\colon R\to Q$ that at the level of frames satisfies the commutative square~$r^{-1} \circ(f\times f)^{-1} = \bar f^{-1} \circ q^{-1}$. For~${V\in\Opens Y}$ we get by~\cref{lemma:r equalises sim} in particular that~${q^{-1}(\top\tensor V)=q^{-1}(\Downsub{Q}V\tensor V)}$. Applying~$\bar f^{-1}$ and simplifying using~\cref{lemma:coproduct structure} and the commutative square gives:
	\[
	r^{-1}\left(\top \tensor f^{-1}(V)\right) = r^{-1}\left(f^{-1}(\Downsub{Q}V)\tensor f^{-1}(V)\right).
	\]
	In turn applying the left adjoint~$s_!$ of the source map of~$R$ and using~\cref{lemma:reduced cone from s r} gives:
	\[
	\Downsub{R}f^{-1}(V) = f^{-1}(\Downsub{Q}V)\wedge \Downsub{R}f^{-1}(V),
	\]
	which is precisely the conic morphism condition of~$f^{-1}$.
\end{proof}

\begin{theorem}
	\label{theorem:cone rel adjunction}
	There is an adjunction with identity counit
		\[
	\begin{tikzcd}[cramped,column sep=1.2cm]
		\rosLoc & {\dcFrm^\op}.
		\arrow[""{name=0, anchor=center, inner sep=0}, "{\cone}", shift left=1.6, from=1-1, to=1-2]
		\arrow[""{name=1, anchor=center, inner sep=0}, "{\rel}", shift left=1.6, from=1-2, to=1-1]
		\arrow["\dashv"{anchor=center, rotate=-90}, draw=none, from=0, to=1]
	\end{tikzcd}
	\]
\end{theorem}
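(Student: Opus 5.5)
We establish the adjunction by exhibiting unit and counit and checking the triangle identities. Since $\dcFrm^\op$ and $\rosLoc$ both have morphisms given by locale maps, and both $\cone$ and $\rel$ act as the identity on underlying locale maps, all the content lies in the objects.

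\emph{Counit.} For a conic frame $(\Opens X,\dc)$ we have $\cone\circ\rel(\Opens X,\dc)=(\Opens X,\Down)$, where $\Down$ is the cone induced by $R_{\dc}$. By \cref{theorem:induced relation}, $\Down=\dc$. Hence $\cone\circ\rel=\id$ on objects, and trivially also on morphisms. So we can take the counit $\epsilon$ to be the identity; it is natural automatically.

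\emph{Unit.} For $(X,R)\in\rosLoc$ we have $\rel\circ\cone(X,R)=(X,\RDown)$. We take as unit $\eta_{(X,R)}:=\id_X\colon (X,R)\to (X,\RDown)$. This is a morphism of $\rosLoc$ exactly when $\id_X$ is monotone, i.e.\ when there is a factorisation $R\to\RDown$ over $X\times X$. \cref{proposition:R in RDown} provides precisely this inclusion $R\subseteq\RDown$.

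\emph{Naturality of $\eta$.} For a monotone $f\colon(X,R)\to(Y,Q)$, the two composites $(X,R)\to(Y,\RDown_Q)$ are $\rel\cone(f)\circ\eta=f$ and $\eta\circ f=f$. These agree as locale maps. Since monotonicity is a property of a locale map rather than extra structure (the map $\bar f$ is unique when it exists), the square commutes. The same observation disposes of the triangle identities $\epsilon_{\cone}\circ\cone(\eta)=\id$ and $\rel(\epsilon)\circ\eta_{\rel}=\id$: every component has identity underlying locale map, and the relevant morphisms exist by the preceding points.

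The main obstacle is already absorbed in the earlier results, chiefly \cref{theorem:induced relation} (via Frobenius and the $\sim$-saturation arguments) and the well-definedness of the functors in \cref{proposition:functor ucdcFrm to rosLoc} and \cref{proposition:functor rosLoc to ucdcFrm}. What remains is to present the hom-set bijection cleanly. Alternatively, one can state it directly: a locale map $f\colon X\to Y$ is monotone $(X,R)\to(Y,R_{\dcx})$ iff $f^{-1}$ is conic $(\Opens Y,\dcx)\to(\Opens X,\Down_R)$. The forward direction follows from $\cone$ being a functor together with $\cone\rel=\id$. The backward direction is the composite $(X,R)\subseteq(X,\RDown)\to(Y,R_{\dcx})$, using $\rel$ on the conic morphism.
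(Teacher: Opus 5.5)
Your proof is correct and follows the same route as the paper. The counit is the identity because the induced cone of $R_{\dc}$ is $\dc$ itself (\cref{theorem:induced relation}). The unit is $\id_X\colon (X,R)\to (X,\RDown)$, which is monotone by \cref{proposition:R in RDown}. Naturality and the triangle identities are then immediate, because every component has identity underlying locale map and monotonicity is a property rather than extra structure.
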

\begin{proof}
	Given~\cref{theorem:induced relation}, the counit~$\epsilon\colon \cone\circ\rel\Longrightarrow \id$ is taken as the identity natural transformation. The unit~$\eta\colon \id\Longrightarrow \rel\circ\cone$ is given by the identity locale maps~$\id_X\colon(X,R)\to (X,\RDown)$, which is monotone by~\cref{proposition:R in RDown}. Since the components of the unit and counit are both defined in terms of identity maps, the triangle identities are straightforwardly verified, as in~\cite[Theorem~8.3]{schaaf2026LocalicRelationsOpenCones}.
\end{proof}

\begin{corollary}
	\label{corollary:universality Rdc}
	We have~$R\subseteq R_{\dc}$ iff~$\Down\sqleq \dc$.
\end{corollary}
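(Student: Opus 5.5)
The plan is to read the corollary off the adjunction $\cone\dashv\rel$ of \cref{theorem:cone rel adjunction}, applied to the identity locale map $\id_X$. Here $R$ is an open source relation on $X$ with induced cone $\Down$, and $\dc$ is any cone on $\Opens X$. The inclusion $R\subseteq R_{\dc}$ means precisely that $\id_X\colon (X,R)\to (X,R_{\dc})$ is monotone, i.e.\ a morphism in $\rosLoc$. Likewise, $\Down\sqleq\dc$ means precisely that $\id_{\Opens X}\colon (\Opens X,\dc)\to(\Opens X,\Down)$ is a conic morphism, since the condition $\Down\circ\id\sqleq\id\circ\dc$ unpacks to exactly this. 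This is a morphism $\cone(X,R)\to(\Opens X,\dc)$ in $\dcFrm^\op$.

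The adjunction gives a natural bijection
\[
\rosLoc\bigl((X,R),\rel(\Opens X,\dc)\bigr)\cong\dcFrm^\op\bigl(\cone(X,R),(\Opens X,\dc)\bigr).
\]
This bijection is given by composing with the unit and applying $\rel$. Since the unit is the identity locale map and both functors act as the identity on underlying (frame) maps, the bijection sends a locale map $f$ to $f^{-1}$. In particular, $\id_X$ lies in the left-hand hom-set iff $\id_{\Opens X}$ lies in the right-hand one, which is exactly the claimed equivalence.

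If the reader prefers to avoid the hom-set bookkeeping, I would also give both directions directly.

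For the forward direction, suppose $R\subseteq R_{\dc}$, so $\id_X$ is monotone. Applying the functor $\cone$ of \cref{proposition:functor rosLoc to ucdcFrm} shows that $\id$ is conic from $(\Opens X,\Down_{R_{\dc}})$ to $(\Opens X,\Down_R)$. By \cref{theorem:induced relation}, $\Down_{R_{\dc}}=\dc$, so $\Down\sqleq\dc$.

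For the converse, suppose $\Down\sqleq\dc$. Then $\id$ is conic from $(\Opens X,\dc)$ to $(\Opens X,\Down)$. Applying $\rel$ (\cref{proposition:functor ucdcFrm to rosLoc}) shows $\RDown\subseteq R_{\dc}$, and composing with $R\subseteq\RDown$ from \cref{proposition:R in RDown} gives $R\subseteq R_{\dc}$.

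The only point needing care is getting the direction of the conic morphism right under the opposite category, since $\Down\sqleq\dc$ corresponds to the frame map going from $\dc$ to $\Down$. There is no real obstacle beyond this.
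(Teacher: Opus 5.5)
Your proof is correct and takes essentially the same route as the paper: both read $R\subseteq R_{\dc}$ as monotonicity of $\id_X\colon (X,R)\to\rel(\Opens X,\dc)$, and transfer this across $\cone\dashv\rel$ to the conic morphism $\id_{\Opens X}\colon(\Opens X,\dc)\to(\Opens X,\Down)$. Your direct two-direction argument, via \cref{theorem:induced relation} and \cref{proposition:R in RDown}, simply writes out that hom-set bijection explicitly, using that the unit and counit are identity maps.
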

\begin{proof}
	The inclusion $R\subseteq R_{\dc}$ holds iff $\id_X\colon (X,R)\to (X,R_{\dc})=\rel(\Opens X,\dc)$ is monotone, and by $\cone\dashv \rel$ this holds in turn iff $\id_{\Opens X}\colon (\Opens X,\dc)\to (\Opens X,\Down)$ is a conic morphism, which is equivalent to~$\Down\sqleq \dc$.
\end{proof}

%~~~~~~~~~~~~~~~~~~~~~~~~~~~~~~~~~~~~
\subsection{Closed relations are fixed points}
\label{section:closed relations are fixed points}
Recall that the \emph{fixed points} of an adjunction are those objects in the respective categories whose component in the unit or counit are isomorphisms. Since for~$\cone\dashv\rel$ the counit is the identity, all conic frames are fixed points. On the other hand, an open source relation~$R$ defines a fixed point precisely when~$\id_X\colon (X,R)\to (X,\RDown)$ is an isomorphism, which holds just when~$R\cong \RDown$ as subobjects of~$X\times X$. A general criterion for being a fixed point is unknown, but in this section we show that it suffices to assume~$R$ is \emph{closed}. First, we also record the following important class of examples.

\begin{example}
	\label{example:diagonals are fixed points}
	All diagonal relations~$\Delta$ are fixed points of~$\cone\dashv\rel$, and their corresponding cone is the identity relation on the underlying frame. The proof is analogous to~\cite[Example~9.3]{schaaf2026LocalicRelationsOpenCones}.
\end{example}

\begin{remark}
\label{remark:closed sublocales}
Recall from~\cite[\S III.6.1]{picado2012FramesLocalesTopology} that the \emph{closed} sublocales of~$X$ are those of the form~${\up u = \{o\in\Opens X : u\sqleq o\}}$ for~$u\in \Opens X$, where the frame map of the sublocale inclusion is given by~$o\mapsto o\vee u$, which has corresponding congruence~$\cc{u}$ given by
\[
	x\cc{u} y
	\quad\iff\quad
	x\vee u = y\vee u.
\]
In fact, the congruence~$\cc{u}$ is precisely the principal one generated by the singleton relation~$\{(u,\bot)\}$. In particular, a localic relation~$R$ on~$X$ is \emph{closed} when there exists~$u\in \Opens X\tensor \Opens X$ such that the inclusion~$r$ is given (up to unique isomorphism) as~$r^{-1}(o)=o\vee u$. The element~$u$ is called the \emph{open complement} of~$R$, also generally denoted by~$\ocomplement{R}$.
\end{remark}

\begin{proposition}
	\label{proposition:closed relation fixed point}
	If~$R$ is a closed relation with open source, then~$R\cong \RDown$.
\end{proposition}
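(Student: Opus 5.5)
Since $R\subseteq\RDown$ holds by \cref{proposition:R in RDown}, it remains to prove $\RDown\subseteq R$. As $R$ is closed with open complement $u:=\ocomplement{R}$, its congruence is $\cc{u}$, which is generated by the single pair $(u,\bot)$. So it suffices to show that the quotient map $\mu_\sim$ of $\RDown$ identifies $u$ with $\bot$, i.e.\ $\mu_\sim(u)=\mu_\sim(\bot)$. The universal property of quotients (\cref{theorem:frame quotient theorem}, applied to the relation $\{(u,\bot)\}$) then gives a frame map $\Opens R\to\Opens\RDown$ commuting with the inclusions, hence $\RDown\subseteq R$ as sublocales.

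To show that $\mu_\sim$ kills $u$, write $u=\bigvee_i x_i\tensor y_i$ as a join of basic rectangles. Since $\mu_\sim$ preserves joins and each $x_i\tensor y_i\sqleq u$, it suffices to show $\mu_\sim(x\tensor y)=\bot$ whenever $x\tensor y\sqleq u$. By the generating relation, $x\tensor y\sim \DownR(x,y)\tensor y$, so $\mu_\sim(x\tensor y)=\mu_\sim(\DownR(x,y)\tensor y)$. It therefore suffices to prove $\DownR(x,y)=\bot$, since then $\bot\tensor y=\bot$.

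For this I use \cref{lemma:reduced cone from s r}: $\DownR(x,y)=s_!r^{-1}(x\tensor y)$. Because $r^{-1}(o)=o\vee u$ in the closed sublocale presentation, the rectangle $x\tensor y\sqleq u$ satisfies $r^{-1}(x\tensor y)=u=r^{-1}(\bot)$, which is the bottom of $\Opens R$ (the sublocale set $\up u$ has bottom $u$). A left adjoint preserves the empty join, so $s_!(\bot)=\bot$ and hence $\DownR(x,y)=\bot$.

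The main obstacle is bookkeeping rather than depth: one has to identify $\Opens R$ with the sublocale set $\up u$ so that $r^{-1}(x\tensor y)$ really is the bottom, and check that factoring through $\cc{u}$ yields an inclusion of sublocales in the correct direction. Together with \cref{proposition:R in RDown}, this gives mutual inclusions of subobjects and therefore $R\cong\RDown$.
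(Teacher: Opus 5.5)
Your proposal is correct and follows essentially the same route as the paper: both combine $R\subseteq\RDown$ with the observation that every rectangle $x\tensor y\sqleq u$ satisfies $\DownR(x,y)=s_!r^{-1}(x\tensor y)=s_!(\bot)=\bot$. From this, $x\tensor y\sim\bot\tensor y=\bot$, so $u\langle\sim\rangle\bot$. You phrase the last step through $\mu_\sim$ and the quotient universal property, whereas the paper phrases it as the congruence inclusion $\langle(u,\bot)\rangle\subseteq\langle\sim\rangle$; these are equivalent.
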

\begin{proof}
	Suppose that $R$ is the closed sublocale represented by~$u\in\Opens X\tensor\Opens X$, and hence corresponding to the frame congruence~$\langle(u,\bot)\rangle$ generated by~${\{(u,\bot)\}}$. Since~$R\subseteq \RDown$~(\cref{proposition:R in RDown}) it suffices to show $\langle(u,\bot)\rangle \subseteq\langle\sim\rangle$.
	
	The element~$u$ is the bottom in~$\Opens R=\up u$, so for any rectangle~${A\tensor B\sqleq u}$ we get~$r^{-1}(A\tensor B)= u=r^{-1}(\bot)$. Applying~$s_!$ and using~\cref{lemma:reduced cone from s r} this in turn reduces to~$\DownR(A,B)=\bot$. Hence by definition of the generating relation we get~$A\tensor B\sim \DownR(A,B)\tensor B = \bot\tensor B=\bot$. Since~$u$ is precisely the join over such rectangles~$A\tensor B\sqleq u$, under the generated frame congruence~$\langle\sim\rangle$ we will get that~$u\langle\sim\rangle \bot$, and the result follows.
\end{proof}

\begin{remark}
	It was shown in~\cite[\S 9.3]{schaaf2026LocalicRelationsOpenCones} that in the two-sided setting this statement actually generalises to \emph{weakly closed} relations. Weak closure (and \emph{strong density}) were introduced in~\cite{johnstone1989ConstructiveClosedSubgroup} to provide a constructively appropriate notion of closure for a localic closed subgroup theorem. While~\cref{proposition:closed relation fixed point} also generalises to weakly closed relations, the stronger version of closedness really seems essential for the present work, since open complements are explicitly used to define ordered Stone locales~(\cref{definition:ordered stone locale}).
\end{remark}

%~~~~~~~~~~~~~~~~~~~~~~~~~~~~~~
\subsection{Closed conic frames}
\label{section:closed conic frames}
We saw in~\cref{proposition:closed relation fixed point} that closed localic relations with open source maps are fixed points of the adjunction~$\cone\dashv\rel$. However, not all fixed points are necessarily closed (take for instance the diagonal relation on any non-Hausdorff locale). Thus we will need a further criterion that determines when the induced relation~$R_{\dc}$ is closed in terms of the conic frame~$(L,\dc)$. 

Spatially, note that a rectangle~$x\tensor y$ is disjoint from the relation~$R$ precisely when~${x\wedge \dc y =\bot}$, which are precisely the rectangles that generate the open~$O^\Dc_\bot$. Thus, if $R_{\dc}$ were to be closed, we would expect it to be the one with open complement~$O^\Dc_\bot$. Then we get (up to isomorphism) that~$r_{\dc}^{-1}\colon o\mapsto o\vee O^\Dc_\bot$, and this map equalises the generating relation~$\sim$, so we get in particular that~$x\tensor y \vee O^\Dc_\bot$ is equal to~$\Dc(x,y)\tensor y \vee O^\Dc_\bot$. This condition is adopted into a definition as follows.

\begin{definition}
	\label{definition:closed conic frame}
	A conic frame~$(L,\dc)$ is called \emph{closed} if for all~$x,y\in L$:
	\[
	x\tensor y \sqleq \Dc(x,y)\tensor y \vee O^\Dc_\bot. 
	\]
\end{definition}

\begin{proposition}
	\label{proposition:closed iff Rdc closed}
	A conic frame~$(L,\dc)$ is closed iff~$R_{\dc}$ is closed.
\end{proposition}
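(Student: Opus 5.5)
We prove both directions by comparing the congruence $\langle\sim\rangle$ defining $R_{\dc}$ with the closed congruence $\cc{u}$ for $u:=O^\Dc_\bot$, as in \cref{remark:closed sublocales}.

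\emph{($\Rightarrow$).} Suppose $(L,\dc)$ is closed. We show $\langle\sim\rangle = \cc{u}$, so that $R_{\dc}$ is the closed sublocale with open complement $O^\Dc_\bot$.

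First we show $\cc{u}\subseteq\langle\sim\rangle$. By \cref{lemma:mu recovers separating opens} we have $\mu_\sim(\bot\tensor\top)=O^\Dc_\bot$, and trivially $\mu_\sim(\bot\tensor\top)=\mu_\sim(\bot)$. Hence $\mu_\sim(u)=\mu_\sim(\bot)$, so $(u,\bot)\in\langle\sim\rangle$. Since $\cc{u}$ is generated by the pair $(u,\bot)$, the inclusion follows.

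Next we show $\langle\sim\rangle\subseteq\cc{u}$. It suffices to check that the frame map $o\mapsto o\vee u$ equalises $\sim$, i.e.\ that
\[
x\tensor y\vee u=\Dc(x,y)\tensor y\vee u.
\]
The inequality $\sqgeq$ holds because $\Dc(x,y)\sqleq x$. The inequality $\sqleq$ is exactly the closedness axiom of \cref{definition:closed conic frame}. The quotient theorem (\cref{theorem:frame quotient theorem}) then gives the inclusion. Equivalently, every $\cc{u}$-saturated element is $\sim$-saturated, since such elements are fixed points of $-\vee u$.

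With both inclusions, the two congruences coincide and $R_{\dc}$ is closed.

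\emph{($\Leftarrow$).} Suppose $R_{\dc}$ is closed with some open complement $w$, so that $r_{\dc}^{-1}=\mu_\sim$ is, up to the canonical isomorphism, $o\mapsto o\vee w$. We first identify $w$ with $O^\Dc_\bot$.

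To see $w\sqleq O^\Dc_\bot$: the element $w$ is the bottom of $\Opens R_{\dc}$, so $\mu_\sim(w)=\mu_\sim(\bot)$. Applying \cref{lemma:left adjoint is Sigma up to mu} gives $\Sigma(w)=\Sigma(\bot)=\bot$. By the adjunction $\Sigma\dashv\sigma$ this means $w\sqleq\sigma(\bot)=O^\Dc_\bot$.

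To see $O^\Dc_\bot\sqleq w$: we have $\mu_\sim(O^\Dc_\bot)=\mu_\sim(\bot\tensor\top)=\mu_\sim(\bot)$, which says $O^\Dc_\bot\vee w=w$.

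Since $\mu_\sim$ equalises $\sim$, we get $x\tensor y\vee w=\Dc(x,y)\tensor y\vee w$. Substituting $w=O^\Dc_\bot$ gives the closedness inequality.

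The main obstacle is bookkeeping "up to isomorphism". The sublocale set $L\tensor L/{\sim}$ must literally equal the closed sublocale set ${\up w}$, not merely be isomorphic to it. This holds because sublocales are determined by their sublocale sets, and isomorphic subobjects give equal sets. So we may take $\mu_\sim(o)=o\vee w$ on the nose, which is what the argument above uses.
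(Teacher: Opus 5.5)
Your proof is correct and follows the same route as the paper. In both, you identify $\langle\sim\rangle$ with $\cc{u}$ for $u=O^\Dc_\bot$, using closedness for $\langle\sim\rangle\subseteq\cc{u}$ and $u\langle\sim\rangle\bot$ for the reverse inclusion, and for the converse you pin down the open complement as $O^\Dc_\bot$ before reading off the closedness inequality. The only difference is packaging: you replace the paper's rectangle-by-rectangle arguments with \cref{lemma:mu recovers separating opens} at $z=\bot$ and with $s_!\circ\mu_\sim=\Sigma$. The first gives $\mu_\sim(\bot)=O^\Dc_\bot$ directly, and in the converse direction this alone already yields $w=\mu_\sim(\bot)=O^\Dc_\bot$.
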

\begin{proof}
	First assume that~$R_{\dc}$ is closed, so it admits an open complement~$u\in L\tensor L$ so that~$R_{\dc}$ can be presented by the principal congruence~$\cc{u}=\langle\sim\rangle$. We thus get in particular that
	\[
	x\tensor y \vee u = \Dc(x,y)\tensor y \vee u
	\]	
	for all $x,y\in L$. Since~$\Dc(x,y)\sqleq x$, this is equivalent to~${x\tensor y\sqleq \Dc(x,y)\tensor y\vee u}$. We claim that indeed~$u=O^\Dc_\bot$. Set in particular~$x,y\in L$ such that~$x\wedge \dc y =\bot$, forming a rectangle that generates~$O^\Dc_\bot$. Then
	\[
	x\tensor y \sqleq \Dc(x,y)\tensor y \vee u = \bot \tensor y\vee u = \bot \vee u = u,
	\]
	so taking joins over such rectangles we get~$O^\Dc_\bot \sqleq u$. Conversely, since the congruence~$\cc{u}$ is generated by~$\{(u,\bot)\}$ we get~$u\langle\sim\rangle \bot$, and since~$r_{\dc}^{-1}=\mu_\sim$ equalises this relation we get in particular for all rectangles~$x\tensor y \sqleq u$ that
	\[
	r_{\dc}^{-1}(x\tensor y)\sqleq r_{\dc}^{-1}(u)= r_{\dc}^{-1}(\bot)=\bot.
	\]
	Applying the left adjoint~$s_!\dashv s_{\dc}^{-1}$, which preserves joins, hence gives using~\cref{corollary:left adjoint on rectangles} that
	\[
	x\wedge \dc y = s_!r_{\dc}^{-1}(x\tensor y) = s_!(\bot)=\bot,
	\]
	showing that~$x\tensor y \sqleq O^\Dc_\bot$. Taking joins over such rectangles hence gives~$u\sqleq O^\Dc_\bot$, and thus equality. This proves~$(L,\dc)$ is closed.
	
	Now assume that~$(L,\dc)$ satisfies~$x\tensor y \sqleq \Dc(x,y)\tensor y \vee O^\Dc_\bot$ for all~$x,y\in L$. Abbreviating~$u:=O^\Dc_\bot$, this clearly implies~$x\tensor y \vee u = \Dc(x,y)\tensor y \vee u$, so~${\sim}\subseteq \cc{u}$ and hence~$\langle \sim\rangle \subseteq \cc{u}$. For the converse, since~$\cc{u}$ is generated by~$\{(u,\bot)\}$, it suffices to show~$u\langle \sim\rangle \bot$. To see this, take~$x\tensor y\sqleq u$, which by~$\Sigma\dashv \sigma$ holds precisely when~$\Dc(x,y)\sqleq \bot$. Thus~$x\tensor y \sim \Dc(x,y)\tensor y =\bot$, and after passing to the generated congruence and taking joins gives~$u\langle \sim\rangle \bot$, as desired. This shows~$R_{\dc}$ is closed with open complement~$O^\Dc_\bot$.
\end{proof}

%~~~~~~~~~~~~~~~~~~~~~~~~~~~~~~~
\subsection{Preorders in terms of cones}
\label{section:preorders in terms of cones}
The axioms of a preorder can be expressed in terms of certain properties of the cone. Recall that a function~$c\colon P\to P$ on a partially ordered set is called:
	\begin{itemize}
		\item \emph{inflationary} if $x\leq c(x)$ for all $x\in P$;
		\item \emph{subidempotent} if $c(c(x))\leq c(x)$ for all $x\in P$;
		\item \emph{idempotent} if $c(c(x))=c(x)$ for all $x\in P$;
		\item a \emph{closure operator} if it is monotone, inflationary and subidempotent.
	\end{itemize}
Mirroring point-set intuition, the next sequence of results show that fixed points~$R_{\dc}$ of~$\cone\dashv \rel$ are preorders iff their cone~$\dc$ is a closure operator.

\begin{lemma}
	\label{lemma:relation inclusion implies cone inclusion}
	If $Q\subseteq R$ for open source relations, then~$\Downsub{Q}\sqleq \Downsub{R}$.
\end{lemma}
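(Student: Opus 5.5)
The plan is to unpack the sublocale inclusion $Q\subseteq R$ at the level of frames and push it through the definition $\Down = s_!\circ t^{-1}$. Concretely, $Q\subseteq R$ as sublocales of $X\times X$ means there is a locale map $i\colon Q\rightarrowtail R$ with $r\circ i = q$, where $q,r$ are the inclusions. Composing with the projections gives $s_R\circ i = s_Q$ and $t_R\circ i = t_Q$, that is, $s_Q^{-1} = i^{-1}\circ s_R^{-1}$ and $t_Q^{-1} = i^{-1}\circ t_R^{-1}$.

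The key step is an adjunction comparison. Since $s_R^{-1}$ has left adjoint $(s_R)_!$ and $i^{-1}$ preserves joins, $i^{-1}$ has a right adjoint $i_*$. The composite adjunction then gives $(s_R)_!\circ i^{-1}\dashv \ldots$; but it is cleaner to argue directly from the unit. For $V\in\Opens X$ we have
\[
t_R^{-1}(V)\sqleq s_R^{-1}\bigl((s_R)_! t_R^{-1}(V)\bigr)
\]
by the unit of $(s_R)_!\dashv s_R^{-1}$. Applying the monotone map $i^{-1}$ gives $t_Q^{-1}(V)\sqleq s_Q^{-1}\bigl(\Downsub{R}V\bigr)$. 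By the adjunction $(s_Q)_!\dashv s_Q^{-1}$ this is equivalent to $(s_Q)_! t_Q^{-1}(V)\sqleq \Downsub{R}V$, which is exactly $\Downsub{Q}V\sqleq\Downsub{R}V$.

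Alternatively, one could derive this from the adjunction $\cone\dashv\rel$: $Q\subseteq R\subseteq\RDown$ by \cref{proposition:R in RDown}, so \cref{corollary:universality Rdc} with $\dc=\Downsub{R}$ yields $\Downsub{Q}\sqleq\Downsub{R}$ immediately. This is probably the slickest route, and I would present it as the main proof, with the direct unit computation as a sanity check.

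There is no real obstacle. The only point needing care is that the sublocale inclusion yields the factorisations of source and target maps, which follows from $s=\pr_1\circ r$, $t=\pr_2\circ r$ and $r\circ i=q$. Frobenius reciprocity is not even needed here.
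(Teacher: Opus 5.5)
Your proposal is correct. Of your two routes, the direct unit computation is genuinely different from the paper's proof; the one you would present as the main proof is essentially the paper's argument with an unnecessary detour.

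The paper's proof is a one-liner. It notes that $Q\subseteq R$ makes $\id_X\colon(X,Q)\to(X,R)$ monotone, and applies the functor $\cone$ (\cref{proposition:functor rosLoc to ucdcFrm}) to get the conic morphism $\id\colon(\Opens X,\Downsub{R})\to(\Opens X,\Downsub{Q})$, i.e.\ $\Downsub{Q}\sqleq\Downsub{R}$. Your \emph{slick} route through $R\subseteq\RDown$ and \cref{corollary:universality Rdc} amounts to applying $\cone$ to $\id\colon(X,Q)\to(X,\RDown)$. It then uses that the cone of $\RDown$ is $\Downsub{R}$ (\cref{theorem:induced relation}). So the intermediate step through $\RDown$ buys nothing, and you can apply $\cone$ to $Q\subseteq R$ directly.

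Your direct unit computation is more elementary. It needs only the adjunctions $(s_R)_!\dashv s_R^{-1}$ and $(s_Q)_!\dashv s_Q^{-1}$ and the factorisations $s_Q=s_R\circ i$ and $t_Q=t_R\circ i$. The paper's one-liner instead rests on $\cone$ being well defined on morphisms, which is proved via \cref{lemma:r equalises sim,lemma:reduced cone from s r} and hence uses Frobenius reciprocity. Running your computation along $\bar f\colon R\to Q$ for any monotone $f\colon(X,R)\to(Y,Q)$ gives $\Downsub{R}\circ f^{-1}\sqleq f^{-1}\circ\Downsub{Q}$. That is a Frobenius-free proof of \cref{proposition:functor rosLoc to ucdcFrm} itself. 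The paper's version is shorter given its machinery and fits the categorical narrative. Yours is self-contained and shows that only the existence of the left adjoints $s_!$ matters here.

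One cleanup: drop the aside about $(s_R)_!\circ i^{-1}$. It does not typecheck, since $i^{-1}$ lands in $\Opens Q$ rather than $\Opens R$.
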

\begin{proof}
	The inclusion~$Q\subseteq R$ holds iff~$\id_X\colon (X,Q)\to (X,R)$ is monotone, so applying the functor $\cone$ we get a conic morphism $\id_{\Opens X}\colon (\Opens X,\Downsub{R})\to (\Opens X,\Downsub{Q})$, which by~\cref{definition:conic morphism} holds iff~$\Downsub{Q}\sqleq \Downsub{R}$.
\end{proof}

\begin{lemma}[{\cite[\S 10.1]{schaaf2026LocalicRelationsOpenCones}}]
	\label{lemma:cones of composition}
	If~$R,Q$ have open source, then so does~$R\circ Q$, and
	\[
	\Downsub{R\circ Q}=\Downsub{R}\circ\Downsub{Q}.
	\]
\end{lemma}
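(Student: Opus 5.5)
The plan is to realise $R\circ Q$ as an image of the pullback $P:=R\times_X^{t_R,s_Q}Q$ and to compute its source and cone through the projections. Write $p_1\colon P\to R$ and $p_2\colon P\to Q$ for the pullback projections, so that $t_R\circ p_1=s_Q\circ p_2$. The source of $P$ is $s_R\circ p_1$ and its target is $t_Q\circ p_2$. The composite $R\circ Q$ is the image of $(s_Rp_1,t_Qp_2)\colon P\to X\times X$, and there is a surjection $e\colon P\to R\circ Q$ through which these source and target maps factor.

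First step: openness of $P$'s source. Open maps of locales are stable under pullback (Joyal--Tierney), so $p_1$, being the pullback of the open map $s_Q$ along $t_R$, is open. The Beck--Chevalley condition then gives
\[
(p_1)_!\,p_2^{-1}=t_R^{-1}\,(s_Q)_!.
\]
Since open maps compose, $s_Rp_1$ is open with left adjoint $(s_R)_!(p_1)_!$. The cone of $P$ is therefore
\[
(s_R)_!(p_1)_!\,p_2^{-1}\,t_Q^{-1}=(s_R)_!\,t_R^{-1}\,(s_Q)_!\,t_Q^{-1}=\Downsub{R}\circ\Downsub{Q}.
\]

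Second step: transfer to the image $R\circ Q$. Write $s,t$ for its source and target, so that $s_Rp_1=s\circ e$ and $t_Qp_2=t\circ e$. Because $e$ is an epimorphism in $\Loc$, the frame map $e^{-1}$ is injective. I would set $s_!:=(s_Rp_1)_!\circ e^{-1}$ and verify three things: that $s_!\dashv s^{-1}$, that Frobenius reciprocity holds, and that $s_!t^{-1}=(s_Rp_1)_!(t_Qp_2)^{-1}$. The last equality is immediate, and it yields the cone formula.

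For the adjunction one direction is easy: $s_!(o)\le z$ iff $e^{-1}(o)\le e^{-1}s^{-1}(z)$, and this follows from $o\le s^{-1}(z)$. The converse requires that $e^{-1}$ reflect this order, which is not automatic for an arbitrary injective frame map. This is the step I expect to be the main obstacle. I would try to resolve it by describing $\Opens(R\circ Q)$ concretely as the sublocale set of $\Opens X\tensor\Opens X$ fixed by the nucleus $a\mapsto(\text{right adjoint of }(s_Rp_1,t_Qp_2)^{-1})$ applied to $(s_Rp_1,t_Qp_2)^{-1}(a)$, so that $e^{-1}$ is the restriction of that frame map and has a right adjoint $e_*$ with $e_*e^{-1}=\id$.

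With $e_*e^{-1}=\id$ in hand, $e^{-1}(o)\le e^{-1}(o')$ implies $o\le o'$, which gives the adjunction. Frobenius then follows directly: $e^{-1}$ preserves meets, and Frobenius already holds for $s_Rp_1$.
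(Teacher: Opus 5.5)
Your proof is correct and follows the standard route behind the cited result. You use pullback-stability of open maps together with Beck--Chevalley (exactly \cref{lemma:JT pullback}) to get the cone of the pullback, and then push openness down along the epimorphism onto the image, using the same order-reflection argument as in the proof of \cref{lemma:epi pullback of opens}. The step you flag as the main obstacle is not one: any injective map $h$ preserving binary meets reflects order, since $h(a)\sqleq h(b)$ gives $h(a\wedge b)=h(a)$ and hence $a\wedge b=a$. So the detour through the explicit nucleus is unnecessary; indeed $e_*e^{-1}=\id$ is just a restatement of injectivity of $e^{-1}$.
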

%\begin{proof}
%	The proof is the same as in~\cite[\S 10.1]{schaaf2026LocalicRelationsOpenCones}.
%\end{proof}

\begin{lemma}
	\label{lemma:preorder implies closure operator}
	For an open source preorder~$R$ the cone~$\Down$ is a closure~operator.
\end{lemma}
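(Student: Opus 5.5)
Three properties are needed: monotonicity, inflationarity and subidempotence. Each comes from an earlier structural result together with \cref{lemma:relation inclusion implies cone inclusion}.

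\emph{Monotonicity.} By \cref{proposition:induced cone is join-preserving}, $\Down$ preserves all joins. In particular it preserves binary joins, and is therefore monotone.

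\emph{Inflationarity.} Reflexivity says $\Delta\subseteq R$. The diagonal has open source and its induced cone is $\id_{\Opens X}$ (\cref{example:diagonal relation cones}). Applying \cref{lemma:relation inclusion implies cone inclusion} to the inclusion $\Delta\subseteq R$ gives $\id_{\Opens X}=\Downsub{\Delta}\sqleq\Downsub{R}$, that is, $U\sqleq\Down U$ for all $U\in\Opens X$.

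\emph{Subidempotence.} Transitivity says $R\circ R\subseteq R$. By \cref{lemma:cones of composition}, $R\circ R$ again has open source, with cone $\Downsub{R\circ R}=\Down\circ\Down$. Applying \cref{lemma:relation inclusion implies cone inclusion} to $R\circ R\subseteq R$ then gives $\Down\circ\Down\sqleq\Down$.

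The one point to check is that the inclusions of sublocales in the definitions of reflexivity and transitivity are exactly the inclusions to which \cref{lemma:relation inclusion implies cone inclusion} applies. That lemma requires both relations to have open source, and this is guaranteed by \cref{example:diagonal relation cones} for $\Delta$ and by \cref{lemma:cones of composition} for $R\circ R$. With this in place, the argument is a direct assembly of earlier results and I expect no real obstacle.
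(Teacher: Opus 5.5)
Your proof is correct and takes the same approach as the paper. The paper also applies \cref{lemma:relation inclusion implies cone inclusion,lemma:cones of composition} to the reflexivity and transitivity inclusions. Your separate treatment of monotonicity (via join-preservation) and your explicit use of the fact that the diagonal has identity cone only spell out steps the paper leaves implicit.
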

\begin{proof}
	This follows immediately by applying~\cref{lemma:relation inclusion implies cone inclusion,lemma:cones of composition} to the inclusions in~\cref{definition:localic relation properties}.
\end{proof}

\begin{proposition}
	\label{proposition:Rdc preorder iff closure operator}
	A fixed point~$R_{\dc}$ is a preorder iff~$\dc$ is a closure operator.
\end{proposition}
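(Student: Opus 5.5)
The forward direction is immediate from earlier results. If $R_{\dc}$ is a preorder, then it has open source by \cref{theorem:induced relation}, so \cref{lemma:preorder implies closure operator} shows that its induced cone is a closure operator. By \cref{theorem:induced relation} that induced cone is exactly $\dc$. (Monotonicity of $\dc$ is automatic, since it preserves joins.) This direction does not even need the fixed point hypothesis.

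For the converse, assume $\dc$ is a closure operator. The plan is to derive reflexivity and transitivity of $R_{\dc}$ from the universality of \cref{corollary:universality Rdc}: for any open source relation $Q$ on $X$, we have $Q\subseteq R_{\dc}$ iff $\Downsub{Q}\sqleq\dc$.

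\emph{Reflexivity.} The diagonal $\Delta$ has open source with $\Downsub{\Delta}=\id$ (\cref{example:diagonal relation cones}). Since $\dc$ is inflationary, $\id\sqleq\dc$, and so $\Delta\subseteq R_{\dc}$.

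\emph{Transitivity.} By \cref{lemma:cones of composition}, $R_{\dc}\circ R_{\dc}$ has open source, and its cone is $\Downsub{R_{\dc}}\circ\Downsub{R_{\dc}}=\dc\circ\dc$, again using \cref{theorem:induced relation}. Subidempotence gives $\dc\circ\dc\sqleq\dc$. Then \cref{corollary:universality Rdc} yields $R_{\dc}\circ R_{\dc}\subseteq R_{\dc}$.

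The fixed point assumption is only used to make sense of the phrasing: $R_{\dc}$ is in the image of $\rel$, which is exactly what \cref{corollary:universality Rdc} requires. If the statement is instead read as being about a fixed point $R\cong\RDown$ with $\dc=\Down$, we first transfer along this isomorphism of subobjects and then argue as above.

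The main point needing care is the use of \cref{corollary:universality Rdc} for the composite relation. We must check that $R_{\dc}\circ R_{\dc}$ is a sublocale of $X\times X$ with open source, so that it is an object of $\rosLoc$ and the adjunction applies. This is exactly what \cref{lemma:cones of composition} supplies, together with the standard fact that the internal composite is taken as the image of the pullback in $X\times X$. Everything else is formal.
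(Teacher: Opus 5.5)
Your proof is correct and follows the paper's argument essentially verbatim: the forward direction uses \cref{lemma:preorder implies closure operator} together with $\Down=\dc$, and transitivity uses \cref{lemma:cones of composition} and \cref{corollary:universality Rdc}. The only minor deviation is reflexivity, where you apply \cref{corollary:universality Rdc} directly to $\Delta$ using $\Downsub{\Delta}=\id$, whereas the paper applies $\rel$ to $\id\sqleq\dc$ and identifies $R_{\id}=\Delta$; your version is slightly more economical because it does not need the fact that the diagonal is a fixed point.
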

\begin{proof}
	By~\cref{lemma:preorder implies closure operator} we are left to show that if~$\dc$ is a closure operator then~$R_{\dc}$ is a preorder. That~$\dc$ is inflationary means~$\id\sqleq \dc$, so applying the functor~$\rel$ we get~$R_{\id}\subseteq R_{\dc}$, and we saw in~\cref{example:diagonal relation} that~$R_{\id}=\Delta$, so~$R_{\dc}$ is reflexive.
	
	On the other hand, that~$\dc$ is subidempotent means~$\dc^2\sqleq\dc$. By~\cref{lemma:cones of composition} the cone of the composition~$R_{\dc}\circ R_{\dc}$ is precisely~$\dc^2$, so by~\cref{corollary:universality Rdc} this implies~$R_{\dc}\circ R_{\dc}\subseteq R_{\dc}$, proving transitivity.
\end{proof}

%~~~~~~~~~~~~~~~~~~~~~~~~~~~~
\section{Internal p-morphisms}
\label{section:internal p morphisms}
As explained in~\cref{section:esakia duality}, morphisms between Esakia spaces are not always taken to be just the monotone continuous functions. Corresponding to Heyting algebra morphisms are instead the p-morphisms between Esakia spaces~(\cref{definition:p morphism pointwise}). In this section we define the internal analogue of this condition for localic relations, which will be used in~\cref{section:esakia locales and esakia frames} to define categories of Esakia locales.

We start by internalising the first-order~\cref{definition:p morphism pointwise} of p-morphisms. The main task of this section will be to translate the resulting internal notion to an external one involving cones, which follows the spatial result~\cite[Proposition 1.4.12(3)]{esakia2019HeytingAlgebrasDuality} that~$f$ is a p-morphism iff~$\down\circ f^{-1}=f^{-1}\circ \down\,$.

\begin{definition}
	\label{definition:epi pullback}
	In a category with pullbacks, an \emph{epi-pullback}~(also known as \emph{quasi-pullback}) is a commutative~square
	% https://q.uiver.app/#q=WzAsNCxbMCwwLCJBIl0sWzAsMSwiQiJdLFsxLDAsIkMiXSxbMSwxLCJEIl0sWzAsMV0sWzAsMl0sWzIsM10sWzEsM11d
	\[\begin{tikzcd}[cramped]
		A & C \\
		B & D
		\arrow[from=1-1, to=1-2]
		\arrow[from=1-1, to=2-1]
		\arrow[from=1-2, to=2-2]
		\arrow[from=2-1, to=2-2]
	\end{tikzcd}\]
	such that the canonical comparison map~$A\to B\times_D C$ is an epimorphism.
\end{definition}

\begin{definition}
	\label{definition:p morphism}
	A map~$f\colon (X,R)\to (Y,Q)$ in~$\rLoc$ is called a \mbox{\emph{p-morphism}} if its source square~(recall~\cref{definition:monotone internal}) is an epi-pullback:
	\[\begin{tikzcd}[cramped]
		R & Q \\
		X & {Y.}
		\arrow["{\bar f}", from=1-1, to=1-2]
		\arrow["{s_R}"', from=1-1, to=2-1]
		\arrow["{s_Q}", from=1-2, to=2-2]
		\arrow["f", from=2-1, to=2-2]
	\end{tikzcd}\]
\end{definition}

\begin{remark}
	We check that this internal condition unpacks to the pointwise definition of p-morphisms in~\cref{definition:p morphism pointwise}.
	That~$f\colon (X,R)\to (Y,Q)$ is internally monotone unpacks in this case to the existence of the function~$\bar f\colon R\to Q$ given by~$(x_1,x_2)\mapsto (f(x_1),f(x_2))$. On the other hand, we see that the pullback~$X\times_YQ$ of the source square may be identified with the set containing pairs~$(x_1,y_2)$ such that~$f(x_1)Qy_2$. In this form, the canonical comparison map becomes~${R\ni (x_1,x_2)\mapsto (x_1,f(x_2))}$. Requiring this to be an epimorphism says that for every~$f(x_1)Qy_2$ there exists~$x_2\in X$ with~$x_1 Rx_2$ and~$f(x_2)=y_2$, which together with monotonicity is precisely the p-morphism condition.
\end{remark}

We are particularly interested in the setting that the source maps are open, meaning that the vertical legs of the commutative source square are open. In this case we can say more, using the following well-known result that pullbacks of open locale maps are open. Using this result, we establish some lemmas that will help us translate the internal p-morphism condition into a statement about the involved induced cones.

\begin{lemma}[{\cite[Proposition V.4.1]{joyal1984ExtensionGaloisTheory}}]
	\label{lemma:JT pullback}
	Suppose $B\times_D^{b,c} C$ is the pullback of~$b,c$, with projections~$\pi_B,\pi_C$. If~$c$ is open then~$\pi_B$ is open, and
	\[
	(\pi_B)_!\circ \pi_C^{-1} = b^{-1}\circ c_!.
	\]
\end{lemma}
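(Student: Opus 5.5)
The plan is to work entirely on the frame side, imitating the construction of $s_!$ in \cref{section:induced relation}. The pullback $B\times_D C$ in $\Loc$ corresponds to the pushout of frames. Concretely, $\Opens(B\times_D C)$ is the quotient of $\Opens B\tensor\Opens C$ by the relation
\[
(x\wedge b^{-1}(w))\tensor y \;\sim\; x\tensor (c^{-1}(w)\wedge y),\qquad x\in\Opens B,\ y\in\Opens C,\ w\in\Opens D .
\]
The projections are $\pi_B^{-1}(x)=\mu_\sim(x\tensor\top)$ and $\pi_C^{-1}(y)=\mu_\sim(\top\tensor y)$. Guided by the expected Beck--Chevalley formula, the candidate left adjoint is
\[
(\pi_B)_!\,\mu_\sim(x\tensor y) \;=\; x\wedge b^{-1}c_!(y).
\]

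First, I would define this map upstairs. The assignment $\Phi(x,y):=x\wedge b^{-1}c_!(y)$ preserves joins in each variable separately: in $x$ by frame distributivity, and in $y$ because $c_!$ is a left adjoint and $b^{-1}$ is a frame map. As in \cref{lemma:Sigma exists}, it therefore extends to a join-preserving map $\Phi\colon\Opens B\tensor\Opens C\to\Opens B$. Its right adjoint is
\[
\varphi(z)=\bigvee\{x\tensor y : x\wedge b^{-1}c_!(y)\sqleq z\},
\]
and on rectangles we have $x\tensor y\sqleq\varphi(z)$ iff $\Phi(x\tensor y)\sqleq z$, as in \cref{corollary:separating open rectangle adjunction}.

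The key computation is that $\Phi$ equalises $\sim$. This is exactly where Frobenius reciprocity for $c$ enters:
\[
\Phi\bigl(x\tensor(c^{-1}(w)\wedge y)\bigr)=x\wedge b^{-1}c_!(c^{-1}(w)\wedge y)=x\wedge b^{-1}(w\wedge c_!y)=\Phi\bigl((x\wedge b^{-1}(w))\tensor y\bigr).
\]
Arguing as in \cref{lemma:separating opens are sim saturated}, with arbitrary $o=\bigvee x_i\tensor y_i$ and the rectangle adjunction, this should show that each $\varphi(z)$ is $\sim$-saturated. I expect this saturation step to be the main obstacle. The generating relation now moves meets across the tensor in both directions, so the reduction to rectangles must be done carefully, using that $\Phi$ is a frame-meet-compatible bihomomorphism on rectangles.

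Next, I would show that $\mu_\sim(z\tensor\top)=\varphi(z)$, following \cref{lemma:mu recovers separating opens}. The inequality $\sqleq$ holds because $z\wedge b^{-1}c_!\top\sqleq z$ and $\varphi(z)$ is saturated. For $\sqgeq$, suppose $x\wedge b^{-1}c_!(y)\sqleq z$. The unit gives $y\sqleq c^{-1}c_!(y)$, so
\[
x\tensor y = x\tensor (c^{-1}c_!(y)\wedge y)\sim (x\wedge b^{-1}c_!(y))\tensor y\sqleq z\tensor\top .
\]
Hence $x\tensor y\sqleq\mu_\sim(z\tensor\top)$. It follows that $\pi_B^{-1}$ is the codomain restriction of $\varphi$, so it has a left adjoint $(\pi_B)_!$. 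Exactly as in \cref{lemma:left adjoint is Sigma up to mu}, this left adjoint satisfies $(\pi_B)_!\circ\mu_\sim=\Phi$.

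Frobenius reciprocity for $(\pi_B)_!$ then reduces by join-preservation to rectangles, as in \cref{lemma:frobenius}:
\[
\Phi((x\wedge z)\tensor y)=\Phi(x\tensor y)\wedge z .
\]
So $\pi_B$ is open. Finally, the Beck--Chevalley identity is immediate on evaluating at $\top\tensor y$:
\[
(\pi_B)_!\,\pi_C^{-1}(y)=\Phi(\top\tensor y)=b^{-1}c_!(y).
\]
All steps are constructive, since they use only adjunctions and join-bases.
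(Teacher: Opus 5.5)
Your proof is correct. It differs from the paper mainly in that the paper gives no argument for this lemma and simply imports it from Joyal--Tierney. You instead prove it from scratch by running the template of \cref{section:induced relation} on the pushout presentation of $\Opens(B\times_D C)$:
\begin{enumerate}
\item extend $(x,y)\mapsto x\wedge b^{-1}c_!(y)$ to a join-preserving map $\Phi$;
\item show that its right adjoint $\varphi$ takes saturated values and equals $\pi_B^{-1}$;
\item read off Frobenius reciprocity and the Beck--Chevalley identity on rectangles.
\end{enumerate}
Conceptually, $\Phi$ is $c_!$ tensored over $\Opens D$ with $\Opens B$, followed by the action $\Opens B\otimes_{\Opens D}\Opens D\cong\Opens B$. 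Your equalising computation is exactly the $\Opens D$-balancedness that Frobenius reciprocity for $c$ supplies.

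The citation buys brevity and relies on the general theory of open maps. Your route makes the paper self-contained and keeps constructivity visible. It also shows that the construction of $s_!$ for induced relations and the pullback stability of open maps are two instances of one pattern.

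Two small points. First, say in one line why your relation presents the pushout. It generates the same congruence as $b^{-1}(w)\otimes\top\sim\top\otimes c^{-1}(w)$: set $x=y=\top$ for one direction, and meet with $x\otimes y$ for the other.

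Second, the saturation step you flag as the main obstacle is actually easier than \cref{lemma:separating opens are sim saturated}. Meeting a generating pair with a rectangle $x_i\otimes y_i$ yields another generating pair, with $x\wedge x_i$ and $y\wedge y_i$ in place of $x$ and $y$. Hence, by join-preservation of $\Phi$, we get $\Phi(a\wedge o)=\Phi(a'\wedge o)$ for every generating pair $a\sim a'$ and every $o$. Saturation of $\varphi(z)$ then follows directly from $\Phi\dashv\varphi$.
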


\begin{lemma}
	\label{lemma:epi pullback of opens}
	If a commutative square of locales of the form
	% https://q.uiver.app/#q=WzAsNCxbMCwwLCJBIl0sWzAsMSwiQiJdLFsxLDAsIkMiXSxbMSwxLCJEIl0sWzAsMSwiXFx0ZXh0e29wZW5+fWYiLDJdLFswLDIsImciXSxbMiwzLCJjXFx0ZXh0e35vcGVufSJdLFsxLDMsImIiXV0=
	\[\begin{tikzcd}[cramped]
		A & C \\
		B & D
		\arrow["g", from=1-1, to=1-2]
		\arrow["{\text{\normalfont open~}f}"', from=1-1, to=2-1]
		\arrow["{c\text{\normalfont~open}}", from=1-2, to=2-2]
		\arrow["b", from=2-1, to=2-2]
	\end{tikzcd}
	\qquad
	\begin{array}{c}
		\text{is an epi-pullback, then:}\\[1.5ex]
		f_!\circ g^{-1} = b^{-1}\circ c_!.
	\end{array}
	\]
\end{lemma}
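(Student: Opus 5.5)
Form the pullback $P:=B\times_D^{b,c}C$ with projections $\pi_B,\pi_C$, and let $e\colon A\to P$ be the canonical comparison map, so that $f=\pi_B\circ e$ and $g=\pi_C\circ e$. By \cref{lemma:JT pullback}, since $c$ is open, $\pi_B$ is open and
\[
(\pi_B)_!\circ\pi_C^{-1}=b^{-1}\circ c_!.
\]
It therefore suffices to prove $f_!\circ g^{-1}=(\pi_B)_!\circ\pi_C^{-1}$. Substituting $g^{-1}=e^{-1}\pi_C^{-1}$, this in turn reduces to showing that for every $W\in\Opens P$,
\[
f_!\bigl(e^{-1}(W)\bigr)=(\pi_B)_!(W).
\]
We then apply this with $W=\pi_C^{-1}(V)$.

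For the inequality $\sqleq$, we use $f^{-1}=e^{-1}\pi_B^{-1}$ together with the unit $W\sqleq\pi_B^{-1}(\pi_B)_!(W)$. Applying $e^{-1}$ gives $e^{-1}(W)\sqleq f^{-1}\bigl((\pi_B)_!(W)\bigr)$, and the adjunction $f_!\dashv f^{-1}$ then yields $f_!e^{-1}(W)\sqleq(\pi_B)_!(W)$. This direction uses no hypothesis beyond the factorisation.

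The reverse inequality is where the epi hypothesis enters, and it is the main obstacle. In $\Loc$, $e$ being an epimorphism means precisely that $e^{-1}\colon\Opens P\to\Opens A$ is injective (a monomorphism of frames). We set $U:=f_!e^{-1}(W)$ and consider two opens of $P$:
\[
W \qquad\text{and}\qquad W\wedge\pi_B^{-1}(U).
\]
Since $e^{-1}(W)\sqleq f^{-1}(U)=e^{-1}\pi_B^{-1}(U)$, both have the same image under $e^{-1}$, namely $e^{-1}(W)$. Injectivity of $e^{-1}$ then gives $W\sqleq\pi_B^{-1}(U)$. Adjointness for $\pi_B$ turns this into $(\pi_B)_!(W)\sqleq U$, which is the desired inequality.

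This argument uses only that $e^{-1}$ is injective and that $(\pi_B)_!\dashv\pi_B^{-1}$ exists; neither Frobenius reciprocity nor the openness of $f$ beyond the existence of $f_!$ is needed. The one point I would double-check is that epimorphisms in $\Loc$ correspond exactly to injective frame maps. This holds because monomorphisms in $\Frm$ are the injective homomorphisms, which can be seen via the free frame on one generator.
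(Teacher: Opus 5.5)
Your proof is correct and is essentially the paper's argument. Both factor through the comparison map $e$, invoke \cref{lemma:JT pullback} for $\pi_B$, and then use that the injective frame map $e^{-1}$ preserves and reflects order, together with the adjunctions $f_!\dashv f^{-1}$ and $(\pi_B)_!\dashv\pi_B^{-1}$; the paper merely packages your two inequalities into one chain of equivalences $f_!g^{-1}(W)\sqleq V\iff\pi_C^{-1}(W)\sqleq\pi_B^{-1}(V)$, where order reflection does the job of your meet $W\wedge\pi_B^{-1}(U)$.
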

\begin{proof}
	Abbreviate by~$P:=B\times_D C$ the actual pullback of~$b$ and~$c$, and denote the projections by~${\pi_B\colon P\to B}$ and~$\pi_C\colon P \to C$. Let~$e\colon A\tworightarrow P$ denote the canonical comparison map, uniquely defined by~$\pi_B\circ e = f$ and~$\pi_C \circ e= g$. It is assumed that~$e$ is an epimorphism, which we recall means that~$e^{-1}$ is an injective frame map~(\cite[Lemma~IX.4.2]{maclane1994SheavesGeometryLogic}), or equivalently that~$e^{-1}$ reflects the orders on the frames. Since~$c$ is open, by~\cref{lemma:JT pullback} the projection~$\pi_B$ is also open, and
	\[
	(\pi_B)_!\circ \pi_C^{-1}
	=
	b^{-1}\circ c_!.
	\]
	Hence it suffices to show~$f_!\circ g^{-1}$ equals the map on the left-hand side. To see this, pick opens~$V\in\Opens B$ and~$W\in\Opens C$ and observe:
	\begin{align*}
		f_!\circ g^{-1}(W)\sqleq V
		&\iff
		g^{-1}(W)\sqleq f^{-1}(V)
		&&(f_!\dashv f^{-1})
		\\&\iff
		g^{-1}(W)\sqleq e^{-1}\circ \pi_B^{-1}(V)
		&&(f= \pi_B\circ e)
		\\&\iff
		e^{-1}\circ \pi_C^{-1}(W)\sqleq e^{-1}\circ \pi_B^{-1}(V)
		&&(g=\pi_C\circ e)
		\\&\iff
		\pi_C^{-1}(W)\sqleq \pi_B^{-1}(V)
		&&(e\text{~epi})
		\\&\iff
		(\pi_B)_!\circ \pi_C^{-1}(W)\sqleq V,
		&&((\pi_B)_!\dashv \pi_B^{-1})
	\end{align*}
	from which the claim follows.
\end{proof}

\begin{lemma}
	\label{lemma:p morphism implies cone equality}
	If a map~$f\colon (X,R)\to (Y,Q)$ in~$\rosLoc$ is a p-morphism, then
	\[
	\Downsub{R}\circ f^{-1}= f^{-1}\circ \Downsub{Q}.
	\]
\end{lemma}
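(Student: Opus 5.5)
The plan is to apply \cref{lemma:epi pullback of opens} directly to the source square of $f$, and then precompose with a target preimage. The p-morphism assumption says exactly that the square with top edge $\bar f\colon R\to Q$, left edge $s_R\colon R\to X$, right edge $s_Q\colon Q\to Y$ and bottom edge $f\colon X\to Y$ is an epi-pullback. Since $f$ is a map in $\rosLoc$, both $s_R$ and $s_Q$ are open, so both vertical legs of this square are open and \cref{lemma:epi pullback of opens} applies. Taking $A=R$, $B=X$, $C=Q$, $D=Y$, with $g=\bar f$, $b=f$ and $c=s_Q$, it yields
\[
(s_R)_!\circ \bar f^{-1} = f^{-1}\circ (s_Q)_!.
\]

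Next I precompose both sides with $t_Q^{-1}\colon \Opens Y\to\Opens Q$. The right-hand side becomes $f^{-1}\circ (s_Q)_!\circ t_Q^{-1}=f^{-1}\circ\Downsub{Q}$ by \cref{definition:induced cones}. On the left I use the target square from \cref{definition:monotone internal}, which commutes by monotonicity of $f$. That square gives $t_Q\circ\bar f = f\circ t_R$, and hence $\bar f^{-1}\circ t_Q^{-1} = t_R^{-1}\circ f^{-1}$. The left-hand side therefore becomes $(s_R)_!\circ t_R^{-1}\circ f^{-1}=\Downsub{R}\circ f^{-1}$, which is the claimed equation.

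I expect no genuine obstacle, because the substantive work is already contained in \cref{lemma:epi pullback of opens}. The one point to check is that the hypotheses of that lemma match: the openness of $f$'s source leg there is played by $s_R$, and openness of $c$ by $s_Q$, which both hold because we are working in $\rosLoc$.
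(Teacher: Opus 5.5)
Your proposal is correct and follows the paper's own proof. The paper likewise applies \cref{lemma:epi pullback of opens} to the source square to get $(s_R)_!\circ\bar f^{-1}=f^{-1}\circ(s_Q)_!$, then precomposes with $t_Q^{-1}$ and uses the commuting target square $t_Q\circ\bar f=f\circ t_R$ to obtain $\Downsub{R}\circ f^{-1}=f^{-1}\circ\Downsub{Q}$.
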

\begin{proof}
	That~$f$ is internally monotone means we get a map of locales~$\bar f\colon R\to Q$ such that~$s_Q\circ \bar f = f\circ s_R$ and~$t_Q\circ \bar f = f\circ t_R$. Next, since~$R,Q$ have open source, the vertical legs of the source square are open, so the p-morphism condition now gives through~\cref{lemma:epi pullback of opens} that
	\[
	\tag{$\star$}
	(s_R)_!\circ \bar f^{-1} = f^{-1}\circ (s_Q)_!.
	\]
	Using this, we obtain the desired equation:
	\begin{align*}
		\Downsub{R}\circ f^{-1}
		&=
		(s_R)_!\circ t_R^{-1}\circ f^{-1}
		&&(\text{by definition})
		\\&=
		(s_R)_! \circ \bar f^{-1}\circ t_Q^{-1}
		&&(t_Q\circ \bar f = f\circ t_R)
		\\&=
		f^{-1}\circ (s_Q)_! \circ t_Q^{-1}
		&&(\star)
		\\&=
		f^{-1}\circ \Downsub{Q}.
		&&(\text{by definition})\qedhere
	\end{align*}
\end{proof}

We will also need a converse of this in the setting of Esakia locales, where the equality~$\Downsub{R}\circ f^{-1}= f^{-1}\circ \Downsub{Q}$ of cones needs to imply that~$f$ is a p-morphism. To show this, we use the fact that a dense map from a compact locale into a regular locale is an epimorphism, and then show that the canonical comparison map is indeed dense. The rest of this section is dedicated to proving this.

Recall that a map of locales~$f$ is \emph{dense} if~$f^{-1}(V)=\bot$ implies~${V=\bot}$~(see~\cite[\S III.8.2]{picado2012FramesLocalesTopology}). Recall also the notion of a \emph{regular} locale from~\cite[\S V.5.3]{picado2012FramesLocalesTopology}, which is the point-free analogue of regular topological spaces. We will not need the explicit definition here, save to note the following fact.

\begin{lemma}[{\cite[\S III.1.1]{johnstone1982StoneSpaces}}]
	\label{lemma:zero dimensional is regular}
	Every zero-dimensional locale is regular.
\end{lemma}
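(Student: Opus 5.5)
To prove that every zero-dimensional locale is regular, I will work frame-theoretically and check regularity directly from the complemented join-basis. Recall that regularity of a frame $L$ means every $a\in L$ is the join of the elements $b$ \emph{well inside} it, written $b\prec a$. Here $b\prec a$ means there is some $c\in L$ with $b\wedge c=\bot$ and $c\vee a=\top$; equivalently, $\neg b\vee a=\top$. So the task is to show that each $a$ is a join of elements well inside it.

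The first step is the observation that a complemented element is well inside itself. If $b$ has complement $\neg b$, then $b\wedge\neg b=\bot$ and $\neg b\vee b=\top$. Taking $c:=\neg b$ witnesses $b\prec b$.

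The second step is that $\prec$ is stable under enlarging the right-hand side. If $b\prec b'\sqleq a$, then $b\prec a$, because the same witness $c$ gives $c\vee a\sqgeq c\vee b'=\top$. Combining the two steps, every complemented $b\sqleq a$ satisfies $b\prec a$.

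Now assume $L$ is zero-dimensional and fix $a\in L$. By definition, the complemented elements form a join-basis, so
\[
a=\bigvee\{b\sqleq a: b \text{ complemented}\}.
\]
By the two steps above, each such $b$ satisfies $b\prec a$. Therefore
\[
a\sqleq\bigvee\{b: b\prec a\}.
\]
The reverse inequality is automatic, since $b\prec a$ implies $b=b\wedge(c\vee a)=(b\wedge c)\vee(b\wedge a)=b\wedge a\sqleq a$. Hence $a$ is the join of the elements well inside it, and $L$ is regular.

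The only point that needs care is constructive validity. The argument uses the join-basis property exactly as stated, with no case distinctions and no choice principles, and every complement involved is given explicitly by hypothesis. I expect no real obstacle; the main thing to get right is matching the exact form of the definition of regularity used in~\cite[\S V.5.3]{picado2012FramesLocalesTopology}.
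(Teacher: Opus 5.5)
Your proof is correct, and it is the standard argument behind the paper's citation of \cite[\S III.1.1]{johnstone1982StoneSpaces}; the paper gives no proof of its own. The argument runs: complemented elements are well inside themselves, $\prec$ is stable under enlarging the right-hand side, and the complemented join-basis then writes every element as a join of elements well inside it. Your remaining steps also check out constructively, namely the equivalence of the two forms of $\prec$ and the fact that $b\prec a$ forces $b\sqleq a$.
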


Using this, the following result says that to prove a morphism between Stone locales is an epimorphism, it suffices to show it is dense.

\begin{lemma}[{\cite[Proposition~VII.2.2.2]{picado2012FramesLocalesTopology}}]
	\label{lemma:dense implies epimorphism}
	A dense map~$e\colon X\to Y$ from a compact locale~$X$ to a regular locale~$Y$ is an epimorphism.
\end{lemma}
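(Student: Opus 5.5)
The plan is to show directly that the frame map $e^{-1}\colon \Opens Y\to\Opens X$ is injective. This is equivalent to $e$ being an epimorphism in $\Loc$, as recalled in the proof of \cref{lemma:epi pullback of opens}. Let $e_*$ denote the right adjoint of $e^{-1}$. Injectivity of $e^{-1}$ is equivalent to $e_*e^{-1}(V)=V$ for all $V\in\Opens Y$. The inequality $V\sqleq e_*e^{-1}(V)$ holds for any adjunction, so the whole task is the reverse inclusion.

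I will use regularity in its standard form. Write $U\prec W$ (``$U$ is rather below $W$'') if there exists $c$ with $c\wedge U=\bot$ and $c\vee W=\top$. Regularity of $Y$ says that every $W$ is the join of the elements rather below it. Moreover, $\{U : U\prec W\}$ is closed under finite joins: take the meet of the two witnesses. So this join is directed.

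Fix $V$ and put $W:=e_*e^{-1}(V)$, so that $e^{-1}(W)=e^{-1}(V)$. By regularity it suffices to show that $U\sqleq V$ for every $U\prec W$. Fix such a $U$, with witness $c$, so $c\wedge U=\bot$ and $c\vee W=\top$. Applying $e^{-1}$ to $c\vee W=\top$ gives
\[
\top=e^{-1}(c)\vee e^{-1}(V)=\bigvee_{j} e^{-1}(c\vee V_j),
\]
where $V=\bigvee_j V_j$ is the directed join of the $V_j\prec V$. Compactness of $X$ and directedness then give a single $V_j\prec V$ with $e^{-1}(c\vee V_j)=\top$. Let $d$ witness $V_j\prec V$, so $d\wedge V_j=\bot$ and $d\vee V=\top$.

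The key computation is
\[
e^{-1}(U\wedge d)=e^{-1}(U\wedge d)\wedge e^{-1}(c\vee V_j)=e^{-1}\bigl((U\wedge d\wedge c)\vee(U\wedge d\wedge V_j)\bigr)=e^{-1}(\bot)=\bot .
\]
Density of $e$ then yields $U\wedge d=\bot$. Hence $U=U\wedge(d\vee V)=U\wedge V\sqleq V$. Taking the join over all $U\prec W$ gives $W\sqleq V$, which completes the argument.

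The main obstacle I expect is the compactness step. It is what lets us pass from the equation involving the possibly non-rather-below element $V$ to a single $V_j$ that has a complementing witness $d$. Without that witness, density cannot be applied, because we need an element whose preimage is provably $\bot$. This step requires the directedness of $\{V_j\prec V\}$, which must be checked constructively; it follows from Kuratowski-finite joins of rather-below elements being rather below. Everything else is routine manipulation of adjunctions and frame distributivity.
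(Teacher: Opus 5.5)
Your proof is correct, including the constructive details. The family $\{V_j : V_j\prec V\}$ is inhabited (it contains $\bot$, witnessed by $\top$) and closed under finite joins. So $e^{-1}(c\vee V)=\bigvee_j e^{-1}(c\vee V_j)$ is a genuine directed join, and Kuratowski-finite compactness of $X$ does yield a single $V_j$.

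The paper gives no argument of its own here, only the citation to Picado--Pultr, so your proof makes the lemma self-contained. A common way to organise the argument uses two steps:
\begin{enumerate}
	\item Show that $e^{-1}$ is \emph{codense}, meaning $e^{-1}(a)=\top$ implies $a=\top$. This uses exactly your compactness-plus-density trick, applied to the decomposition of $a$ into elements rather below it.
	\item Show that a codense frame map out of a regular frame is injective: from $e^{-1}(c\vee V)=\top$ conclude $c\vee V=\top$, hence $U\sqleq V$.
\end{enumerate}
You bypass the intermediate claim $c\vee V=\top$. You decompose only $V$ rather than $c\vee V$, and you apply density to $U\wedge d$ rather than to a witness alone, which gives $U\sqleq V$ directly.

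The ingredients are identical: regularity twice, compactness and density once each. The two-step version isolates a reusable lemma about codense maps, while yours is a single, slightly shorter computation.
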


To show that the source square of a map~$f\colon (X,R)\to (Y,Q)$ in~$\rLoc$ is an epi-pullback, we need to show that the comparison map~$R\to X\times_Y Q$ is an epimorphism. In order to use the previous lemma, we need to show that the pullback~$X\times_Y Q$ is a regular locale, in case that~$X,Y$ are Stone locales and~$Q$ is a closed relation. The following lemma will allow us to obtain that.

Recall that in any category, a map is called a \emph{regular monomorphism} if it is the equaliser of some parallel pair of maps. By~\cite[\S IV.1.3]{picado2012FramesLocalesTopology}, sublocale inclusions are precisely the regular monomorphisms in~$\Loc$. We need the following pullback stability behaviour.

\begin{lemma}
	\label{lemma:regular mono pullback}
	In a category with finite limits, let~$q=(s,t)\colon Q\rightarrowtail Z\times Y$ be a regular monomorphism, and~$f\colon X\to Z$ any morphism. If~$P:=X\times_Z Q$ is the pullback of~$s$ and~$f$ with projections~$\pi_X,\pi_Q$, then~$j:=(\pi_X,t\circ\pi_Q)$ is a regular monomorphism.
\end{lemma}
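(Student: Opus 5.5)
The plan is to exhibit $j$ directly as a pullback of $q$ along a suitable map, and then use that regular monomorphisms are stable under pullback in any category with finite limits. Concretely, consider the morphism
\[
f\times\id_Y\colon X\times Y\longrightarrow Z\times Y,
\]
and form the pullback of $q\colon Q\rightarrowtail Z\times Y$ along it. The claim is that this pullback is, up to canonical isomorphism, exactly $P=X\times_Z Q$ with the map $j=(\pi_X,t\circ\pi_Q)$ into $X\times Y$.

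First I will verify that the square with top map $\pi_Q\colon P\to Q$, left map $j\colon P\to X\times Y$, right map $q$ and bottom map $f\times\id_Y$ commutes. It suffices to check this after composing with the two product projections of $Z\times Y$. The first component gives $f\circ\pi_X=s\circ\pi_Q$, which is the defining pullback square of $P$. The second component is $t\circ\pi_Q$ on both sides.

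Next I will check the universal property. Suppose given $a\colon T\to X\times Y$ and $b\colon T\to Q$ with $(f\times\id_Y)\circ a=q\circ b$, and write $a=(a_1,a_2)$. Comparing components yields $f\circ a_1=s\circ b$ and $a_2=t\circ b$. The first equation gives a unique $u\colon T\to P$ with $\pi_X u=a_1$ and $\pi_Q u=b$. Then $j u=(a_1,t b)=(a_1,a_2)=a$. Uniqueness holds because any $u'$ with $\pi_Q u'=b$ and $j u'=a$ also satisfies $\pi_X u'=a_1$, so $u'=u$ by the uniqueness in the pullback $P$.

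Finally, I need the standard fact that a pullback of a regular monomorphism is a regular monomorphism. Write $q$ as the equaliser of $g,h\colon Z\times Y\rightrightarrows W$. A routine diagram chase then shows that $j$ is the equaliser of $g\circ(f\times\id_Y)$ and $h\circ(f\times\id_Y)$.

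There is no real obstacle here. The only point needing care is the identification of $P$ as the pullback along $f\times\id_Y$. The key step there is reading off the two component equations, which is exactly where the second coordinate $t\circ\pi_Q$ of $j$ comes from.
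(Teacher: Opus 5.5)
Your proposal is correct and follows the paper's proof essentially step for step. Both exhibit $j$ as the pullback of $q$ along $f\times\id_Y$, check commutativity componentwise, obtain existence from the pullback $P$ with uniqueness because $\pi_X,\pi_Q$ are jointly monic, and conclude by pullback-stability of regular monomorphisms. The only difference is that you sketch that stability fact directly, via the equaliser of $g\circ(f\times\id_Y)$ and $h\circ(f\times\id_Y)$, where the paper cites Borceux.
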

\begin{proof}
	We claim that the following square is a pullback:
	% https://q.uiver.app/#q=WzAsNSxbMywxLCJRIl0sWzMsMywiWlxcdGltZXMgWS4iXSxbMSwzLCJYXFx0aW1lcyBZIl0sWzEsMSwiUCJdLFswLDAsIkEiXSxbMiwxLCJmXFx0aW1lcyBcXGlkX1kiLDJdLFswLDEsInEiLDAseyJzdHlsZSI6eyJ0YWlsIjp7Im5hbWUiOiJtb25vIn19fV0sWzMsMCwiXFxwaV9RIl0sWzMsMiwiaiIsMl0sWzQsMiwiYSIsMix7ImN1cnZlIjozfV0sWzQsMCwiYiIsMCx7ImN1cnZlIjotM31dLFs0LDMsIlxcZXhpc3RzIT8iLDAseyJzdHlsZSI6eyJib2R5Ijp7Im5hbWUiOiJkYXNoZWQifX19XV0=
	\[\begin{tikzcd}[cramped,sep=small]
		A &&& \\
		& P && Q \\
		\\
		& {X\times Y} && {Z\times Y.}
		\arrow["{\exists!?}", dashed, from=1-1, to=2-2]
		\arrow["b", bend left = 20, from=1-1, to=2-4]
		\arrow["a"', bend right = 20, from=1-1, to=4-2]
		\arrow["{\pi_Q}", from=2-2, to=2-4]
		\arrow["j"', from=2-2, to=4-2]
		\arrow["q", tail, from=2-4, to=4-4]
		\arrow["{f\times \id_Y}"', from=4-2, to=4-4]
	\end{tikzcd}\]
	Observe first that the square commutes, since
	\begin{align*}
		(f\times \id_Y)\circ j
		&=
		(f\times \id_Y) \circ (\pi_X,t\circ \pi_Q)
		&&(\text{by definition})
		\\&=
		(f\circ \pi_X, t\circ \pi_Q)
		\\&=
		(s\circ \pi_Q,t\circ \pi_Q)
		&&(f\circ \pi_X= s\circ \pi_Q)
		\\&=
		(s,t)\circ \pi_Q.
	\end{align*}
	Now take the span of arrows~$a,b$, as depicted. Writing~$a=(a_1,a_2)$, that the outer square commutes implies~$f\circ a_1 = s\circ b$ and~$a_2 = t\circ b$. The first equation invokes the universal property of~$P$, so there exists a unique~$c\colon A\to P$ such that~$\pi_X\circ c = a_1$ and~$\pi_Q\circ c = b$. Together with the second equation, this gives
	\[
	j\circ c
	=
	(\pi_X\circ c,t\circ\pi_Q\circ c)
	=
	(a_1,t\circ b)
	=
	(a_1,a_2)
	=a.
	\]
	Therefore,~$c$ completes the diagram above, and that it is the unique such arrow follows since~$\pi_X,\pi_Q$ are jointly monic.
	Thus~$j$ is a pullback of~$q$, and the result follows since regular monomorphisms are preserved under pullback~(see for example~\cite[Proposition~4.3.8]{borceux1994Handbook1}).
\end{proof}

\begin{proposition}
	\label{proposition:p morphism on compact regular locales iff cones}
	Let~$f\colon (X,R)\to (Y,Q)$ be a morphism in~$\rosLoc$ where~$X$ is compact regular,~$Y$ is regular, and~$R,Q$ are closed localic relations. Then~$f$ is a p-morphism iff~${\Downsub{R}\circ f^{-1}= f^{-1}\circ \Downsub{Q}}$.
\end{proposition}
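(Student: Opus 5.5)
The forward direction is exactly \cref{lemma:p morphism implies cone equality}, so only the converse needs work. Assume $\Downsub{R}\circ f^{-1}= f^{-1}\circ \Downsub{Q}$. Let $P:=X\times_Y Q$ be the pullback of $f$ and $s_Q$, with projections $\pi_X,\pi_Q$, and let $e\colon R\to P$ be the canonical comparison map, so $\pi_X\circ e=s_R$ and $\pi_Q\circ e=\bar f$. We must show $e$ is an epimorphism. By \cref{lemma:dense implies epimorphism} it is enough to know three things: $R$ is compact, $P$ is regular, and $e$ is dense.

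For compactness: $R$ is a closed sublocale of $X\times X$, and $X$ is compact, so $X\times X$ is compact and hence so is $R$. For regularity: \cref{lemma:regular mono pullback}, applied with $Z=Y$ and $q\colon Q\rightarrowtail Y\times Y$, shows that $j=(\pi_X,t_Q\circ\pi_Q)\colon P\rightarrowtail X\times Y$ is a sublocale. Since $X,Y$ are regular, so is $X\times Y$, and regularity passes to sublocales, so $P$ is regular. I will cite the standard facts from \cite{picado2012FramesLocalesTopology} here: closed sublocales of compact locales are compact, products of compact or of regular locales are compact or regular, and sublocales of regular locales are regular.

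The main step is density of $e$. Let $W\in\Opens P$ with $e^{-1}(W)=\bot$; we want $W=\bot$. Since $j$ is a sublocale inclusion, $W$ is a join of elements $j^{-1}(U\tensor V)=\pi_X^{-1}(U)\wedge\pi_Q^{-1}t_Q^{-1}(V)$. So it suffices to treat $W$ of this form. Then
\[
e^{-1}(W)=s_R^{-1}(U)\wedge \bar f^{-1}t_Q^{-1}(V)=s_R^{-1}(U)\wedge t_R^{-1}f^{-1}(V).
\]
Apply $(s_R)_!$ and use Frobenius. This vanishes iff $U\wedge \Downsub{R}f^{-1}(V)=\bot$, which by hypothesis means $U\wedge f^{-1}(\Downsub{Q}V)=\bot$.

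On the other hand, $s_Q$ is open, so by \cref{lemma:JT pullback} $\pi_X$ is open with $(\pi_X)_!\circ\pi_Q^{-1}=f^{-1}\circ (s_Q)_!$. By Frobenius for $\pi_X$,
\[
(\pi_X)_!(W)=U\wedge (\pi_X)_!\pi_Q^{-1}t_Q^{-1}(V)=U\wedge f^{-1}(\Downsub{Q}V)=\bot.
\]
Since $(\pi_X)_!$ is a left adjoint and $(\pi_X)_!(W)=\bot$, we get $W\leq\pi_X^{-1}(\bot)$. This alone does not give $W=\bot$, and it is the obstacle: I need $\pi_X^{-1}(\bot)=\bot$, which holds because $\pi_X^{-1}$ is a frame map and preserves $\bot$. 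Hence $W=\bot$.

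A further subtlety is the reduction to basic $W$. Density must be checked for arbitrary $W$, which is a join $\bigvee_i W_i$ of basic opens. From $e^{-1}(W)=\bot$ we get $e^{-1}(W_i)=\bot$ for every $i$, because $e^{-1}$ is monotone and preserves joins. So the basic case applied to each $W_i$ gives $W_i=\bot$, and therefore $W=\bot$. With density established, \cref{lemma:dense implies epimorphism} shows $e$ is epi, so $f$ is a p-morphism.
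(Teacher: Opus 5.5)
Your proposal is correct and follows the paper's proof essentially step for step. It proves density of the comparison map $e\colon R\to P$ using the join-basis $j^{-1}(U\tensor V)$ from \cref{lemma:regular mono pullback}, runs the two Frobenius computations with $(s_R)_!$ and with $(\pi_X)_!$ (via \cref{lemma:JT pullback}), and then uses compactness of $R$ and regularity of $P$ to invoke \cref{lemma:dense implies epimorphism}; the step you flag as an obstacle is not one, since $\pi_X^{-1}(\bot)=\bot$ holds for any frame map, exactly as the paper uses it.
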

\begin{proof}
	That p-morphisms satisfy this equation of cones follows from~\cref{lemma:p morphism implies cone equality}, so we are left to prove the converse. Hence, assume that~$\Downsub{R}\circ f^{-1}= f^{-1}\circ \Downsub{Q}$ holds. As before, abbreviate by~$P:= X\times_Y Q$ the pullback of~$f$ and~$s_Q$, and let~${e\colon R\to P}$ be the resulting comparison map of the source square, the unique morphism satisfying~${\pi_X\circ e = s_R}$ and~$\pi_Q\circ e= \bar f$. 
	
	We claim that~$e$ is dense. Take~$W\in\Opens P$ such that~$e^{-1}(W)=\bot$, so we need to show~$W=\bot$. For this, we pick a join-basis of~$\Opens P$ as follows. By~\cref{lemma:regular mono pullback} the map~$j:=(\pi_X,t_Q\circ \pi_Q)\colon P\rightarrowtail X\times Y$ is a sublocale inclusion, and hence the elements of the form~$j^{-1}(U\tensor V)$ for~$U\in \Opens X$ and~$V\in\Opens Y$ form a join-basis of~$\Opens P$. Now, to show that~$W=\bot$ it suffices to show that~$j^{-1}(U\tensor V)\sqleq W$ implies~$j^{-1}(U\tensor V)=\bot$. To see this, we calculate how~$e^{-1}$ evaluates on these basis elements:
	\begin{align*}
		e^{-1}\circ j^{-1}(U\tensor V)
		&=
		e^{-1}\circ\left(\pi_X^{-1}(U)\wedge \pi_Q^{-1}\circ t_Q^{-1}(V)\right)
		&&(\text{\cref{lemma:coproduct structure}})
		\\&=
		e^{-1}\circ \pi_X^{-1}(U)\wedge e^{-1}\circ \pi_Q^{-1}\circ t_Q^{-1}(V)
		&&(e^{-1}\text{~preserves~}\wedge)
		\\&=
		s_R^{-1}(U)\wedge e^{-1}\circ \pi_Q^{-1}\circ t_Q^{-1}(V)
		&&(\pi_X\circ e = s_R)
		\\&=
		s_R^{-1}(U)\wedge \bar f^{-1}\circ t_Q^{-1}(V)
		&&(\pi_Q\circ e = \bar f)
		\\&=
		s_R^{-1}(U)\wedge t_R^{-1}\circ f^{-1}(V).
		&&(f\text{~monotone})
	\end{align*}
	Hence, if~$j^{-1}(U\tensor V)\sqleq W$, this evaluates to~$\bot$, and in turn applying the left adjoint~$(s_R)_!$ we get
	\begin{align*}
		\bot
		&=
		(s_R)_!(\bot)
		&&((s_R)_!\text{~preserves joins})
		\\&=
		(s_R)_!\left(s_R^{-1}(U)\wedge t_R^{-1}\circ f^{-1}(V)\right)
		&&(\text{previous equation})
		\\&=
		U\wedge (s_R)_!\circ t_R^{-1} \circ f^{-1} (V)
		&&(\text{Frobenius})
		\\&=
		U\wedge \Downsub{R}\circ f^{-1}(V)
		&&(\text{by definition})
		\\&=
		U\wedge f^{-1}\circ \Downsub{Q}V.
		&&(\text{by hypothesis})
	\end{align*}
	On the other hand,~$\pi_X$ is open as the pullback of~$s_Q$~(\cref{lemma:JT pullback}), and using this we calculate
	\begin{align*}
		(\pi_X)_!\circ j^{-1}(U\tensor V)
		&=
		(\pi_X)_!\circ\left(\pi_X^{-1}(U)\wedge \pi_Q^{-1}\circ t_Q^{-1}(V)\right)
		&&(\text{\cref{lemma:coproduct structure}})
		\\&=
		U\wedge (\pi_X)_!\circ \pi_Q^{-1}\circ t_Q^{-1}(V)
		&&(\text{Frobenius})
		\\&=
		U\wedge f^{-1}\circ (s_Q)_!\circ t_Q^{-1}(V)
		&&(\text{\cref{lemma:JT pullback}})
		\\&=
		U\wedge f^{-1}\circ\Downsub{Q}V
		&&(\text{by definition})
		\\&=
		\bot.
		&&(\text{previous equation})
	\end{align*}
	Lastly, by the adjunction~$(\pi_X)_!\dashv \pi_X^{-1}$ this implies~$j^{-1}(U\tensor V)\sqleq \pi_X^{-1}(\bot)=\bot$, so~$j^{-1}(U\tensor V)=\bot$, as was to be shown. Hence~$e$ is dense.
	
	To conclude that~$e$ is an epimorphism, by~\cref{lemma:dense implies epimorphism} it now suffices to show that~$R$ is compact and~$P$ is regular. For the domain, note that since~$X$ is compact, the product~$X\times X$ is compact~(by the localic Tychonoff theorem,~\cite[\S III.1.7]{johnstone1982StoneSpaces}), and so~$R$ is a closed sublocale of a compact locale, and must hence itself be compact~(\cite[\S III.1.2]{johnstone1982StoneSpaces}). For the codomain, since~$X$ and~$Y$ are regular, the product~${X\times Y}$ is regular~(\cite[\S III.1.6]{johnstone1982StoneSpaces}), and by~\cref{lemma:regular mono pullback} the pullback~$P$ is a sublocale of~$X\times Y$, and hence is regular by~\cite[\S III.1.2]{johnstone1982StoneSpaces}. 
\end{proof}

\begin{corollary}
	\label{corollary:p morphism on Esakia locales}
	Let~$f\colon (X,R)\to (Y,Q)$ be a locale map between Stone locales equipped with closed localic relations~$R,Q$ with open source. Then~$f$ is a p-morphism iff~$f^{-1}\circ \Downsub{Q}=\Downsub{R}\circ f^{-1}$.
\end{corollary}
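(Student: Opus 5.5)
The corollary is a direct specialisation of \cref{proposition:p morphism on compact regular locales iff cones}, so the plan is to check that its hypotheses hold in the present setting and then invoke it.

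First, the morphism: $f$ is assumed to be a map $(X,R)\to(Y,Q)$ between objects of $\rosLoc$, since $R$ and $Q$ have open source. Being a morphism there means $f$ is internally monotone, which is implicit in speaking of $f$ being a p-morphism, since \cref{definition:p morphism} is stated for maps in $\rLoc$. So $f$ is a morphism in the full subcategory $\rosLoc$.

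Second, the topological hypotheses. A Stone locale is by \cref{definition:stone} compact and zero-dimensional. By \cref{lemma:zero dimensional is regular}, zero-dimensional locales are regular. Hence $X$ is compact regular and $Y$ is regular. The relations $R,Q$ are closed by assumption.

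All hypotheses of \cref{proposition:p morphism on compact regular locales iff cones} are therefore satisfied, and it gives that $f$ is a p-morphism iff $\Downsub{R}\circ f^{-1}= f^{-1}\circ\Downsub{Q}$. This is the stated equation with its two sides swapped.

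There is no real obstacle. The only point needing care is confirming that the compactness used in the proposition is the frame-theoretic compactness of $\top$, which is exactly what the Stone condition supplies.
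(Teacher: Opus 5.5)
You correctly verify the topological hypotheses of \cref{proposition:p morphism on compact regular locales iff cones}, but the `if' direction has a gap. The corollary only assumes that $f$ is a \emph{locale map}; it does not assume $f$ is internally monotone. \cref{proposition:p morphism on compact regular locales iff cones}, however, requires $f$ to be a morphism in $\rosLoc$. Saying monotonicity is `implicit in speaking of $f$ being a p-morphism' is fine when being a p-morphism is the hypothesis, i.e.\ the `only if' direction, which is anyway just \cref{lemma:p morphism implies cone equality}. When being a p-morphism is the conclusion, the argument is circular. You must first derive monotonicity from the cone equation, and only then can the proposition be invoked.

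The missing step is the following. The equation gives in particular $\Downsub{R}\circ f^{-1}\sqleq f^{-1}\circ\Downsub{Q}$, so $f^{-1}\colon(\Opens Y,\Downsub{Q})\to(\Opens X,\Downsub{R})$ is a conic morphism. Transposing along $\cone\dashv\rel$ (\cref{theorem:cone rel adjunction}) shows that $f$ is internally monotone as a map $(X,R)\to(Y,R_{\Downsub{Q}})$. Since $Q$ is closed with open source, it is a fixed point, $Q\cong R_{\Downsub{Q}}$ (\cref{proposition:closed relation fixed point}). Hence $f\colon(X,R)\to(Y,Q)$ is a morphism of $\rosLoc$. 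Equivalently, as the paper does, apply $\rel$ and use that both $R$ and $Q$ are fixed points. This is where closedness of $Q$ genuinely does work; the proof of the proposition itself only uses closedness of $R$, to get compactness of $R$. With this step added, your reduction to the proposition goes through exactly as in the paper.
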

\begin{proof}
	The `only if' direction is just~\cref{lemma:p morphism implies cone equality}. For the `if' direction, assume that the equation~${f^{-1}\circ \Downsub{Q}=\Downsub{R}\circ f^{-1}}$ holds. In particular, this means~$f^{-1}$ defines a morphism of conic frames~$(\Opens Y,\Downsub{Q})\to (\Opens X,\Downsub{R})$. Since~$R,Q$ are closed relations, they are fixed points~(\cref{proposition:closed relation fixed point}), and applying the functor~$\rel$ shows that~$f$ is internally monotone. The locales~$X,Y$ are Stone, so in particular compact regular by~\cref{lemma:zero dimensional is regular}, and hence~$f$ is a p-morphism by~\cref{proposition:p morphism on compact regular locales iff cones}.
\end{proof}

%~~~~~~~~~~~~~~~~~~~~~~~~~~~~~
\section{Esakia locales and frames}
\label{section:esakia locales and esakia frames}
In this section we define the localic and frame-theoretic analogues of Esakia spaces. This builds on the localic analogue of Priestley spaces introduced in~\cite{townsend1996preframeTechniquesConstructiveLocale,townsend1997LocalicPriestleyDuality}.
There, \emph{ordered Stone locales} are defined as Stone locales~$X$ equipped with certain localic partial orders~$R$ that satisfy a suitable point-free analogue of the Priestley separation axiom.
Adopting the spatial intuition of~\cref{definition:priestley space} that Esakia spaces are precisely the Priestley spaces with open source map, we then define~\emph{Esakia locales} as the ordered Stone locales with open source map~(\cref{section:esakia locales}). As such, we can apply the formalism of conic frames to provide a frame-theoretic dual to Esakia locales, which we call~\emph{Esakia frames}~(\cref{section:esakia frames}). This section establishes a dual equivalence between these two notions by restricting the adjunction~$\cone\dashv\rel$. The main work is showing that the external, frame-theoretic notions translate back to the corresponding internal localic notions.

%~~~~~~~~~~~~~~~~~~~~~~~~~~~~~~
\subsection{Ordered Stone locales}
To state the definitions, we need a notion of downwards closure~$\down$ for general localic relations. Let~$R$ be a relation on~$X$, and pick a sublocale~${A\in\Sl(X)}$. We can no longer assume that~$\down A$ defines an open sublocale, but following the same spatial intuition we used to define~$\Down$ in~\cref{definition:induced cones}, it is possible to give a more general notion of cone using internal images and preimages of localic maps~\cite[\S III.4]{picado2012FramesLocalesTopology}.

\begin{definition}
	For a localic relation~$R$ on a locale~$X$, its \emph{internal (down) cone} is defined as the function
	\[
	\down\colon \Sl(X)\longrightarrow \Sl(X);
	\qquad
	A\longmapsto s\left[t^{-1}[A]\right].
	\]
\end{definition}

\begin{remark}
	Since~$X\cong X\times 1$, we can equivalently view~$A$ as a localic relation from~$X$ to~$1$. Composing with~$R$ we thus get a relation~$R\circ A$ from~$X$ to~$1$, which in turn by universality can be identified with the sublocale~$\down A\in \Sl(X)$.
\end{remark}

In the case that~$R$ has open source map, we can see that the cone~$\Down$ is just the restriction of~$\down$ to open sublocales. Recall from~\cite[\S III.6.1.1]{picado2012FramesLocalesTopology} that the~\emph{open sublocale}~$\mathfrak{o}(u)$ associated to~$u\in \Opens X$ is defined by the frame congruence
\[
	x\Delta_u y
	\quad\iff\quad
	x\wedge u = y\wedge u.
\]

\begin{lemma}
	\label{lemma:Down = down}
	The cone~$\Down$ is the restriction of~$\down$ to open sublocales.
\end{lemma}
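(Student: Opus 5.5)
The claim to prove is that for every $u\in\Opens X$ we have $\down\mathfrak{o}(u)=\mathfrak{o}(\Down u)$ as sublocales of~$X$. The argument unpacks the internal cone $\down A=s\bigl[t^{-1}[A]\bigr]$ step by step, using two standard facts about images and preimages of sublocales from~\cite[\S III]{picado2012FramesLocalesTopology}.

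First I compute the preimage. For any locale map $g$, the sublocale preimage of an open sublocale is again open: $g^{-1}[\mathfrak{o}(u)]=\mathfrak{o}(g^{-1}(u))$. Applied to $g=t$ this gives
\[
t^{-1}[\mathfrak{o}(u)]=\mathfrak{o}\bigl(t^{-1}(u)\bigr)\in\Sl(R).
\]
To justify the general fact, I check that the open sublocale $\mathfrak{o}(g^{-1}(u))$ is the largest sublocale $B$ whose image under $g$ lies in $\mathfrak{o}(u)$. Equivalently, the frame quotient corresponding to $B$ should identify $g^{-1}(u)$ with $\top$. This follows from the congruence description $\Delta_u$: a map factors through $\mathfrak{o}(u)$ exactly when it sends $u$ to $\top$.

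Next I compute the image along the open map $s$. For an open locale map $f$ with left adjoint $f_!$, the image of an open sublocale is open: $f[\mathfrak{o}(w)]=\mathfrak{o}(f_!(w))$. The image $f[\mathfrak{o}(w)]$ is the smallest sublocale $C$ such that $\mathfrak{o}(w)\subseteq f^{-1}[C]$. I compare it with $\mathfrak{o}(f_!w)$ in two directions.
\begin{itemize}
\item For $\mathfrak{o}(w)\subseteq f^{-1}[\mathfrak{o}(f_!w)]=\mathfrak{o}(f^{-1}f_!w)$, it suffices that $w\le f^{-1}f_!w$, which is the unit of the adjunction $f_!\dashv f^{-1}$.
\item For minimality, it is cleanest to work with the image as a sublocale set: $f[S]=f_*[S]$, where $f_*$ is the right adjoint of $f^{-1}$, and $\mathfrak{o}(w)=\{w\to a : a\in\Opens\}$. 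For $b\in\Opens X$ we have $f_*(w\to f^{-1}b)=f_!w\to b$, which is exactly Frobenius reciprocity in its adjoint form. So $f_*$ maps the generators of $\mathfrak{o}(w)$ onto the generators of $\mathfrak{o}(f_!w)$.
\item The remaining generators $w\to a$, for general $a\in\Opens R$, need a separate check. Since $f_*(w\to a)=f_*(w\to a')$ with $a'=f^{-1}f_*(w\to a)$ by adjunction manipulations, these elements also land in $\mathfrak{o}(f_!w)$. I expect this to be the fiddliest point.
\end{itemize}
With $f=s$ and $w=t^{-1}(u)$, combining the two steps gives
\[
\down\mathfrak{o}(u)=s\bigl[\mathfrak{o}(t^{-1}u)\bigr]=\mathfrak{o}\bigl(s_!t^{-1}u\bigr)=\mathfrak{o}(\Down u).
\]

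The main obstacle is the image step, because image sublocales are defined as least sublocales and so need the right-adjoint description. I would first look for the statement ``open maps send open sublocales to open sublocales via $f_!$'' in~\cite{picado2012FramesLocalesTopology} or~\cite{joyal1984ExtensionGaloisTheory} and cite it. Only if no such reference is available would I carry out the $f_*$ and Frobenius computation sketched above.
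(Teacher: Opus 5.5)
Your proposal is correct and follows the paper's proof exactly. The paper likewise combines $t^{-1}[\mathfrak{o}(U)]=\mathfrak{o}(t^{-1}(U))$ (citing Picado--Pultr \S III.6.3) with $s[\mathfrak{o}(V)]=\mathfrak{o}(s_!(V))$ (citing the proof of Mac Lane--Moerdijk, Proposition IX.7.3), and your fallback computation is also sound, though your third bullet is redundant: the first bullet already gives $f[\mathfrak{o}(w)]\subseteq\mathfrak{o}(f_!w)$, and the Frobenius identity $f_*(w\to f^{-1}b)=f_!w\to b$ from your second bullet alone gives the reverse inclusion.
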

\begin{proof}
	This follows from~\cite[\S III.6.3]{picado2012FramesLocalesTopology} that~$t^{-1}\left[\mathfrak{o}(U)\right]=\mathfrak{o}\left(t^{-1}(U)\right)$, and from the proof of~\cite[Proposition~IX.7.3]{maclane1994SheavesGeometryLogic} that~$s[\mathfrak{o}(V)]=\mathfrak{o}(s_!(V))$. Combined:
	\[
	\down \mathfrak{o}(U)
	=
	s\left[t^{-1}[\mathfrak{o}(U)]\right]
	=
	\mathfrak{o}\left(s_!t^{-1}(U)\right)
	=
	\mathfrak{o}(\Down U).
	\qedhere
	\]
\end{proof}

For Priestley duality, a distributive lattice is constructed from the clopen upsets of the ordered space. A localic analogue can now be defined using the internal cone~$\down\mspace{3mu}$.
Spatially, a (clopen) subset~$C\subseteq X$ is upwards closed~$\up C = C$ iff its complement is downwards closed:
\[
\down(X\setminus C)=X\setminus C.
\]
For a Stone locale~$X$, a compact element~$c\in K(\Opens X)$ represents the clopen sublocale~$\mathfrak{o}(c)$, whose complement is~$\mathfrak{o}(\neg c)$. Directly translating the previous equation into localic language, we say~$c$ is an \emph{upper element} if:
\[
\down\mathfrak{o}(\neg c)=\mathfrak{o}(\neg c).
\]
This is also the convention followed in~\cite[p.326]{townsend1997LocalicPriestleyDuality}. For us, defining upwards closure in terms of downwards closure of the complement serves the independent purpose that the open source condition gives a downwards cone~$\Down$, so an analogous upwards closure condition~$\Up c= c$ is generally unavailable.

\begin{definition}
	\label{definition:K down}
	For any localic relation~$R$ on a locale~$X$, we define the set of \emph{compact upper elements} as
	\[
	K_{\down}(\Opens X):=\{c\in K(\Opens X): \down \mathfrak{o}(\neg c)=\mathfrak{o}(\neg c)\}.
	\]
\end{definition}

Townsend uses upper compact elements to give a localic analogue of the Priestley separation axiom. Recall from~\cref{definition:priestley space} that the spatial version of the axiom says for all $x\not\leq y$ there is a clopen~$C=\up C$ such that~$x\in C$ and~$y\notin C$. Note that this is a characterisation of the complement~$\not\leq$, and since Priestley relations are closed, the relation~$\not\leq$ should be represented by an open subspace. By~\cite[Lemma~2.1]{townsend1997LocalicPriestleyDuality} we see that this open subspace can be written as follows:
\[
{\not\leq}
=
\bigcup\{C\times (X\setminus C)
:
\up C = C\text{~compact open}
\}.
\]
This now has a direct point-free analogue. Recall from~\cref{remark:closed sublocales} that if~$R$ is a closed localic relation, we denote by~$\ocomplement{R}$ its open complement.

\begin{definition}[{\cite[\S 3]{townsend1997LocalicPriestleyDuality}}]
	\label{definition:ordered stone locale}
	An \emph{ordered Stone locale} $(X,R)$ is a Stone locale~$X$ equipped with a closed localic partial order~$R$ satisfying the \emph{localic Priestley separation axiom:}
	\[
	\ocomplement{R}
	=
	\bigvee\{c\tensor \neg c: c\in K_{\down}(\Opens X) \}.
	\]
\end{definition}

%~~~~~~~~~~~~~~~~~~~~~~~~~~~~~~
\subsection{Esakia locales}
\label{section:esakia locales}
Given this definition of ordered Stone locales as the localic analogue of Priestley spaces and directly mirroring the spatial intuition that Esakia spaces are the Priestley spaces with open source map, we immediately obtain the localic analogue of Esakia spaces.

\begin{definition}
	\label{definition:esakia locale}
	An \emph{Esakia locale} is an ordered Stone locale with open source map.
\end{definition}

In particular, an Esakia locale~$(X,R)$ induces a cone~$\Down$. We verify that the upper compact sets defined in terms of~$\Down$ and~$\down$ agree. For that, we first define a frame-theoretic analogue of upper compact elements.

\begin{definition}
	\label{definition:Kdc}
	For any frame~$L$ and function~$\dc\colon L\to L$ we define the set:
	\[
	\Kdc(L) = \left\{c\in K(L): \dc\neg c =\neg c\right\}.
	\]
\end{definition}

\begin{corollary}
	\label{corollary:KDown = Kdown}
	If~$R$ is an open source relation on~$X$ we have~${K_{\down}(\Opens X)=\KDown(\Opens X)}$.
\end{corollary}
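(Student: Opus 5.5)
This is essentially immediate from \cref{lemma:Down = down}, so the plan is to unfold the two definitions and compare them elementwise. Fix $c\in K(\Opens X)$. By \cref{definition:K down}, $c\in K_{\down}(\Opens X)$ iff $\down\mathfrak{o}(\neg c)=\mathfrak{o}(\neg c)$. By \cref{definition:Kdc} applied to the function $\Down$, $c\in\KDown(\Opens X)$ iff $\Down\neg c=\neg c$. Both sets are subsets of $K(\Opens X)$, so it suffices to show these two conditions are equivalent for each such $c$.

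Since $R$ has open source, \cref{lemma:Down = down} gives $\down\mathfrak{o}(\neg c)=\mathfrak{o}(\Down\neg c)$. The condition defining $K_{\down}(\Opens X)$ therefore becomes $\mathfrak{o}(\Down\neg c)=\mathfrak{o}(\neg c)$.

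It remains to check that $u\mapsto\mathfrak{o}(u)$ is injective, that is, $\mathfrak{o}(u)=\mathfrak{o}(v)$ as sublocales implies $u=v$. This is the only step with any content, and it is standard: the assignment $u\mapsto\mathfrak{o}(u)$ is an order embedding of $\Opens X$ into $\Sl(X)$ (see \cite[\S III.6]{picado2012FramesLocalesTopology}). Directly, $\mathfrak{o}(u)$ corresponds to the congruence $\Delta_u$, and $\Delta_u=\Delta_v$ forces $u=v$: we have $u\,\Delta_u\,\top$, so $u\,\Delta_v\,\top$, giving $u\wedge v=v$, and symmetrically $u\wedge v=u$.

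Combining these steps, $\down\mathfrak{o}(\neg c)=\mathfrak{o}(\neg c)$ holds iff $\Down\neg c=\neg c$, so the two sets coincide. No real obstacle is expected.
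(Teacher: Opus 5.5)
Your proposal is correct and follows the paper's proof: both use \cref{lemma:Down = down} to rewrite $\down\mathfrak{o}(\neg c)$ as $\mathfrak{o}(\Down\neg c)$, and then use injectivity of $u\mapsto\mathfrak{o}(u)$. The paper cites this injectivity from Picado--Pultr \S III.6.1.4, whereas you also verify it directly via the congruences $\Delta_u$.
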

\begin{proof}
	A compact element~$c\in K(\Opens X)$ is in~$K_{\down}(\Opens X)$ iff~$\down \mathfrak{o}(\neg c)=\mathfrak{o}(\neg c)$, which by~\cref{lemma:Down = down} holds in turn iff~$\mathfrak{o}(\Down\neg c)=\mathfrak{o}(\neg c)$, which is equivalent to the equality~$\Down\neg c=\neg c$ in~$\Opens X$ by~\cite[\S III.6.1.4]{picado2012FramesLocalesTopology}. But this just means~$c\in\KDown(\Opens X)$.
\end{proof}

%~~~~~~~~~~~~~~~~~~~~~~~~~~~~~
\subsection{Esakia frames}
\label{section:esakia frames}
The idea is to `externalise' the notion of Esakia locales, a localic notion, into frame-theoretic language. The partial order of an Esakia locale is a closed localic relation with open source, so it induces a conic frame~$(E,\Down)$ that defines a fixed point of~$\cone\dashv\rel$ by~\cref{proposition:closed relation fixed point}. Moreover, the frame~$E$ is Stone, the conic frame is closed by~\cref{proposition:closed iff Rdc closed}, and the cone~$\Down$ is a closure operator by~\cref{lemma:preorder implies closure operator}. Axiomatising these properties, together with an axiom that will correspond to anti-symmetry, we introduce the notion of an Esakia frame.

\begin{samepage} % force on same page
\begin{definition}
\label{definition:esakia frame}
	An \emph{Esakia frame} is a pair~$(E,\dc)$ such that:
	\begin{enumerate}[label=(E\arabic*),start=0]
		\item $(E,\dc)$ is a conic frame;
		\item $E$ is a Stone frame;
		\item $\dc$ is a closure operator;
		\item $(E,\dc)$ is closed;
		% \item $K(E)$ is generated as a Boolean algebra by $\Kdc(E)$.
		\item $\Kdc(E)$ Boolean generates~$K(E)$.
	\end{enumerate}
\end{definition}
\end{samepage}

Note the new axiom~(E4). Since~$E$ is a Stone frame, the compact elements~$K(E)$ form a Boolean algebra, of which~$\Kdc(E)$ forms a subset. To say that this subset generates~$K(E)$ as a Boolean algebra means that every element in the latter can be written as a finite Boolean combination of elements in the former. As mentioned, this corresponds to the anti-symmetry of the localic relation that is part of an Esakia locale. Spatially,~$\Kdc(E)$ represent the clopen upper sets and~$K(E)$ arbitrary clopen sets. If~$x\leq y$ and~$y\leq x$ holds in an ordered space, it is easy to see~$x$ and~$y$ are contained in precisely the same upper sets, and hence in particular in the same finite Boolean combinations of clopen upper sets. Under the assumption that finite Boolean combinations of clopens generate arbitrary clopens, which is what~(E4) models, it follows that~$x$ and~$y$ must be contained in the same clopens. But in a Stone space this implies~$x=y$, so~$\leq$ is anti-symmetric. See for example~\cite[Theorem~3.4.4]{esakia2019HeytingAlgebrasDuality}.

The frame-theoretic analogue of this story will be explained in~\cref{section:antisymmetry}. The corresponding localic statement was already observed for ordered Stone locales by Townsend, where the claim follows from the following description of compact elements in terms of clopen upper cones. This is also the last missing ingredient needed to prove Esakia locales induce Esakia frames under the functor~$\cone$.

\begin{theorem}[{\cite[Theorem~5.1]{townsend1997LocalicPriestleyDuality}}]
	\label{theorem:townsend normal form}
	If $(X,R)$ is an ordered Stone locale and $a\in K(\Opens X)$, then~$a=\bigwedge_{i\in I}(a_i\vee \neg b_i)$ for some finite~$I$ and~$a_i,b_i\in K_{\down}(\Opens X)$.
\end{theorem}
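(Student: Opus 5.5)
The plan is to reduce the statement to an inequality between rectangles in $\Opens X\tensor\Opens X$ and then use compactness and finite Boolean algebra arguments inside $K(\Opens X)$. Throughout, the spatial argument is our guide: a clopen set is separated from its complement using clopen upper sets, which relies on anti-symmetry together with Priestley separation.

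\emph{Step 1: a covering inequality.} Let $a\in K(\Opens X)$. Since $X$ is Stone, $a$ is complemented, and $a\wedge\neg a=\bot$ gives $\delta^{-1}(a\tensor\neg a)=\bot$ (\cref{example:diagonal relation}). So the rectangle $a\tensor\neg a$ lies in the open complement $\ocomplement{\Delta}$ of the diagonal. Here the diagonal is closed because Stone locales are regular, hence Hausdorff.
\begin{itemize}
\item Anti-symmetry $R\cap R^\op\subseteq\Delta$ is an inclusion between closed sublocales. An intersection of closed sublocales is closed, with open complement the join of the open complements, and $\ocomplement{R^\op}$ is the image of $\ocomplement{R}$ under the swap automorphism. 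Hence $\ocomplement{\Delta}\sqleq\ocomplement{R}\vee\swap(\ocomplement{R})$.
\item Inserting the localic Priestley separation axiom (\cref{definition:ordered stone locale}) then gives
\[
a\tensor\neg a\;\sqleq\;\bigvee_{c\in K_{\down}(\Opens X)}c\tensor\neg c\;\vee\;\bigvee_{d\in K_{\down}(\Opens X)}\neg d\tensor d .
\]
\item The coproduct of Stone frames is Stone by the localic Tychonoff theorem, and a rectangle of compact elements is compact. So finitely many terms suffice: $a\tensor\neg a\sqleq\bigvee_{k\in F}e_k\tensor\neg e_k$, with $F$ finite and each $e_k$ either in $K_{\down}(\Opens X)$ or the complement of such an element.
\end{itemize}

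\emph{Step 2: $a$ lies in a finite Boolean subalgebra.} Let $B\subseteq K(\Opens X)$ be the finite Boolean subalgebra generated by the $e_k$. Its atoms-in-form are the finitely many meets $p=\bigwedge_k e_k^{\pm}$, whose join is $\top$.
\begin{itemize}
\item For such a $p$, each rectangle $p\tensor p$ is disjoint from every $e_k\tensor\neg e_k$, since $p\sqleq e_k$ or $p\sqleq\neg e_k$.
\item Therefore $(p\wedge a)\tensor(p\wedge\neg a)\sqleq(a\tensor\neg a)\wedge(p\tensor p)=\bot$.
\item In the coproduct frame, a rectangle $x\tensor y$ is $\bot$ iff $x=\bot$ or $y=\bot$, read off from the Dowker--Strauss presentation by saturated downsets. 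So for each $p$ either $p\wedge a=\bot$ or $p\sqleq a$.
\end{itemize}
Consequently $a=\bigvee_p(p\wedge a)$ is the join of those $p$ with $p\sqleq a$, and $a\in B$. This is the step I expect to be the main obstacle. One must check constructively that the disjunction ``$x=\bot$ or $y=\bot$'' really holds for rectangles in the coproduct, and avoid any appeal to decidability of equality in $K(\Opens X)$. If needed, I would instead work with Kuratowski-finite covers directly, which is the convention the paper fixes.

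\emph{Step 3: conjunctive normal form.} Any element of the Boolean algebra generated by a bounded sublattice $D$ can be written as $\bigwedge_{i\in I}(a_i\vee\neg b_i)$ with $I$ finite and $a_i,b_i\in D$. This is standard: write it in conjunctive normal form and group positive and negative literals, using that $D$ is closed under finite joins and meets. So it remains to show that $K_{\down}(\Opens X)$ is a bounded sublattice of $K(\Opens X)$, i.e.\ that $\down$ fixes $\mathfrak o(\neg c\wedge\neg c')$ and $\mathfrak o(\neg c\vee\neg c')$ whenever it fixes $\mathfrak o(\neg c)$ and $\mathfrak o(\neg c')$.
\begin{itemize}
\item Joins are preserved because sublocale images and preimages preserve joins.
\item For meets, I would use that $\down$ is inflationary (reflexivity) and monotone. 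This gives $\neg c\wedge\neg c'\sqleq$ the cone of $\neg c\wedge\neg c'$, which is below both $\neg c$ and $\neg c'$.
\end{itemize}
Applying this to $B$ and $a$ then yields the stated normal form.
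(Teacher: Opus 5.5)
The paper does not prove this statement; it only cites Townsend's Theorem~5.1. Your route does not appear in the paper, and it is a natural one. Step~1 is \cref{lemma:anti-symmetric iff complements contain diagonal} combined with Priestley separation and \cref{lemma:compact rectangles}. Steps~2--3 run \cref{lemma:boolean generated complement is swap} in reverse. Together with the paper's lemmas, you would therefore show that, under Priestley separation, anti-symmetry is equivalent to $K_{\down}(\Opens X)$ Boolean generating $K(\Opens X)$, which explains axiom (E4) directly.

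Steps~1 and~3 are fine, with two repairs in Step~3:
\begin{itemize}
\item Preimage of sublocales is a right adjoint and need not preserve joins. What you actually use is $t^{-1}[\mathfrak{o}(u)]=\mathfrak{o}(t^{-1}(u))$, together with the fact that $\mathfrak{o}$ and $s[-]$ preserve joins.
\item You should also check that $\top,\bot\in K_{\down}(\Opens X)$.
\end{itemize}

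\textbf{The gap is the disjunction in Step~2.} All the content of the proof sits there, and you leave it open. The Dowker--Strauss justification is classical. It takes $\bot$ in $L\tensor M$ to be the down-set $\{(x,y): x=\bot \text{ or } y=\bot\}$. Showing this set is saturated means passing from ``for all $i$, $x_i=\bot$ or $y=\bot$'' to ``(for all $i$, $x_i=\bot$) or $y=\bot$'', which is not constructive.

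For arbitrary elements the claim is in fact constructively false. We have $\Omega\tensor\Omega\cong\Omega$ via $x\tensor y\mapsto x\wedge y$, so the claim would give $\neg(x\wedge y)\Rightarrow\neg x\vee\neg y$. Decidability does not rescue it either: $x=\bot$ is undecidable even for compact elements of Stone frames. For example, take $\top$ in the frame $\{q\in\Omega:\phi\sqleq q\}$ of a closed sublocale of the point. Working with Kuratowski-finite covers does not address this. What you need is a positive disjunction for each sign pattern $p_\sigma$, so that you can decide which $p_\sigma$ make up $a$.

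\textbf{How to close it.} The step is saved by the fact that $x=p_\sigma\wedge a$ and $y=p_\sigma\wedge\neg a$ are compact.
\begin{itemize}
\item Since $\idl$ is a left adjoint, it preserves coproducts. Hence $\Opens X\tensor\Opens X\cong\idl\bigl(K(\Opens X)\otimes K(\Opens X)\bigr)$, with $\otimes$ the coproduct in $\DLat$, and $x\tensor y$ corresponds to $\pidl{x\otimes y}$.
\item It therefore suffices to show that $x\otimes y=\bot$ implies $x=\bot$ or $y=\bot$ in a $\DLat$-coproduct $D_1\otimes D_2$.
\item Write $A_i:=A\cap D_i$, and define $A\vdash B$ on finite subsets of $D_1\sqcup D_2$ by: $\bigwedge A_1\sqleq\bigvee B_1$ or $\bigwedge A_2\sqleq\bigvee B_2$.
\item Cut for $z\in D_1$ holds by plain case analysis. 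If either premise holds by its $D_2$-disjunct, you are done. Otherwise $\bigwedge A_1\wedge z\sqleq\bigvee B_1$ and $\bigwedge A_1\sqleq z\vee\bigvee B_1$ give $\bigwedge A_1\sqleq\bigvee B_1$ by distributivity. The case $z\in D_2$ is symmetric.
\item So this is the least entailment relation containing the orders of $D_1$ and $D_2$, and it presents $D_1\otimes D_2$. By the fundamental theorem of entailment relations (Cederquist--Coquand), $x\otimes y\sqleq\bot$ iff $\{x,y\}\vdash\emptyset$ iff $x=\bot$ or $y=\bot$, and this holds constructively.
\end{itemize}
Finite choice over the $2^n$ sign patterns then gives a decidable $P'$ with $a=\bigvee_{\sigma\in P'}p_\sigma$. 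Hence $a=\bigwedge_{\sigma\notin P'}\neg p_\sigma$, which is already a conjunctive normal form in literals from $K_{\down}(\Opens X)$. Grouping the literals as in Step~3 gives the required form.
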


\begin{corollary}
	\label{corollary:Kdown generates K}
	In any ordered Stone locale,~$K_{\down}(\Opens X)$ Boolean generates~$K(\Opens X)$.
\end{corollary}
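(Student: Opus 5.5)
The corollary follows almost directly from Townsend's normal form in \cref{theorem:townsend normal form}. The plan is to verify two things: that $K_{\down}(\Opens X)$ is a subset of the Boolean algebra $K(\Opens X)$, and that every element of $K(\Opens X)$ is a finite Boolean combination of elements of this subset.

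First, since $X$ is a Stone locale, $\Opens X$ is a Stone frame. By the remarks following \cref{definition:stone}, $K(\Opens X)$ is therefore a Boolean algebra. Its lattice operations are those of the ambient frame, and its complement is the Heyting negation $\neg$ of $\Opens X$, with complements of compact elements again compact. By \cref{definition:K down}, $K_{\down}(\Opens X)\subseteq K(\Opens X)$ holds by definition. Let $B$ denote the Boolean subalgebra of $K(\Opens X)$ generated by $K_{\down}(\Opens X)$, i.e.\ the closure of this subset under finite meets, finite joins and $\neg$ taken inside $K(\Opens X)$.

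Next, take any $a\in K(\Opens X)$. \cref{theorem:townsend normal form} gives a finite (Kuratowski finite) index set $I$ and elements $a_i,b_i\in K_{\down}(\Opens X)$ with $a=\bigwedge_{i\in I}(a_i\vee\neg b_i)$. Each $\neg b_i$ is the Boolean complement of a generator, so it lies in $B$. Each $a_i\vee\neg b_i$ is a binary join of elements of $B$, so it lies in $B$. A finite meet of elements of $B$ lies in $B$, hence $a\in B$.

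The finite meet here needs a constructive check: $B$ must be closed under Kuratowski-finite meets. The plan is to argue this by induction on Kuratowski-finite subsets. The empty meet is $\top\in B$; singletons are covered by the previous step; and a union of two finite families gives a binary meet of their meets. Each step uses only the meet of the ambient frame, which agrees with the meet in $K(\Opens X)$. Altogether this gives $K(\Opens X)=B$, so $K_{\down}(\Opens X)$ Boolean generates $K(\Opens X)$. No step is a real obstacle, since all the substance lies in Townsend's theorem.
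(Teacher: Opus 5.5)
Your proposal is correct and follows the paper's proof: both apply Townsend's normal form $a=\bigwedge_{i\in I}(a_i\vee\neg b_i)$ with $a_i,b_i\in K_{\down}(\Opens X)$ and observe that the right-hand side lies in the Boolean subalgebra generated by $K_{\down}(\Opens X)$. Your induction over Kuratowski-finite index sets just spells out the closure under finite meets that the paper treats as evident.
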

\begin{proof}
	Take the normal form of a compact element~$a\in K(\Opens X)$ from Townsend's~\cref{theorem:townsend normal form}, meaning~$a=\bigwedge_{i\in I}(a_i\vee \neg b_i)$ for~$a_i,b_i\in K_{\down}(\Opens X)$ and~$I$ finite. Clearly the expression on the right-hand side lands in the Boolean algebra generated by~$K_{\down}(\Opens X)$, so the result follows.
\end{proof}

\begin{proposition}
	\label{proposition:esakia locale gives esakia frame}
	If~$(X,R)$ is an Esakia locale then~$(\Opens X,\Down)$ is an Esakia frame.
\end{proposition}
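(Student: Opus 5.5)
We verify the axioms (E0)--(E4) of \cref{definition:esakia frame} one at a time for $(\Opens X,\Down)$, each by citing an earlier result; essentially no new computation is needed. For (E0), $R$ has open source by \cref{definition:esakia locale}, so $\Down$ is join-preserving by \cref{proposition:induced cone is join-preserving}. For (E1), $\Opens X$ is a Stone frame because an ordered Stone locale has underlying Stone locale $X$ (\cref{definition:ordered stone locale}). For (E2), $R$ is an open source localic partial order, in particular a preorder, so \cref{lemma:preorder implies closure operator} shows that $\Down$ is a closure operator.

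For (E3) we pass through the fixed point property. Since $R$ is closed with open source, \cref{proposition:closed relation fixed point} gives $R\cong\RDown$ as sublocales of $X\times X$. Hence $\RDown=R_{\Down}$ is a closed relation. Applying \cref{proposition:closed iff Rdc closed} to the conic frame $(\Opens X,\Down)$ then shows it is closed. The only care needed is that ``closed'' is invariant under isomorphism of subobjects. This holds because the open complement is determined by the congruence, which depends only on the subobject.

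For (E4), \cref{corollary:Kdown generates K} shows that $K_{\down}(\Opens X)$ Boolean generates $K(\Opens X)$ for any ordered Stone locale. By \cref{corollary:KDown = Kdown}, which applies since $R$ has open source, $K_{\down}(\Opens X)=\KDown(\Opens X)$. Therefore $\KDown(\Opens X)$ Boolean generates $K(\Opens X)$.

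None of these steps is a real obstacle. The substantive content is carried by Townsend's normal form (\cref{theorem:townsend normal form}) and the fixed point result. The step needing the most attention is (E3): one must make sure that the closedness of $R$ transfers to $R_{\Down}$ along the isomorphism $R\cong\RDown$, and only then invoke \cref{proposition:closed iff Rdc closed}.
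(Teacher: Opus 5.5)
Your proposal is correct and matches the paper's proof. The paper also checks (E0)--(E4) one by one with the same citations, including the fixed point property combined with the closed-conic-frame characterisation for (E3), and Townsend's normal form combined with $K_{\down}(\Opens X)=\KDown(\Opens X)$ for (E4).
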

%\begin{proof}
%	Since~$R$ has open source, it induces a conic frame~$(\Opens X,\Down)$ where~$\Opens X$ is Stone. Since~$R$ is closed, by~\cref{proposition:closed relation fixed point} it defines a fixed point of~$\cone\dashv\rel$, so~$R\cong\RDown$. From this, it follows by~\cref{proposition:closed iff Rdc closed,proposition:Rdc preorder iff closure operator} that~$(\Opens X,\Down)$ is closed as a conic frame, and~$\Down$ is a closure operator. Lastly, by~\cref{corollary:KDown = Kdown,corollary:Kdown generates K} $\KDown(\Opens X)$ generates~$K(\Opens X)$.
%	Hence~$(\Opens X,\Down)$ defines an Esakia frame.
%\end{proof}
\begin{proof}
	Observe:
	\begin{enumerate}[label=(E\arabic*),start=0]
		\item $(\Opens X,\Down)$ is a conic frame by~\cref{proposition:induced cone is join-preserving};
		\item $\Opens X$ is a Stone frame since~$X$ is Stone;
		\item $\Down$ is a closure operator by~\cref{lemma:preorder implies closure operator};
		\item $(\Opens X,\Down)$ is closed by~\cref{proposition:closed relation fixed point,proposition:closed iff Rdc closed};
		\item $\KDown(\Opens X)$ generates $K(\Opens X)$ by~\cref{corollary:KDown = Kdown,corollary:Kdown generates K}.\qedhere
	\end{enumerate}
\end{proof}

The rest of this section is dedicated to the converse construction, that an Esakia frame induces an Esakia locale under~$\rel$. While the axioms of an Esakia frame are clearly motivated by the axioms of an Esakia locale, it is not immediate that the locale~$\rel(E,\dc)$ obtained from an Esakia frame~$(E,\dc)$ will indeed form an Esakia locale. The main two missing ingredients are localic Priestley separation, and anti-symmetry. The next two subsections establish these properties, respectively.

%~~~~~~~~~~~~~~~~~~~~~~~~~~~~
\subsection{Priestley separation}
\label{section:priestley separation}
Note the absence of a direct frame-theoretic analogue of the Priestley separation axiom in the~\cref{definition:esakia frame} of an Esakia frame. In fact, it can be derived. This is analogous to the spatial fact that any Stone space equipped with a closed preorder with open source map automatically satisfies the Priestley separation axiom. See for example~\cite[Theorem~3.2.22(1)]{esakia2019HeytingAlgebrasDuality}. This section establishes the frame-theoretic analogue of this statement, and concludes with an explicit formula for the open complement of the resulting localic preorder.

\begin{definition}
\label{definition:priestley separation frames}
	A conic frame~$(L,\dc)$ satisfies the \emph{Priestley separation axiom} if for all compact~$a,b\in K(L)$:
	\[
	a\wedge \dc b = \bot
	\quad\implies\quad
	\exists c\in \Kdc(L): a\sqleq c\text{ and } b\sqleq \neg c.
	\]
\end{definition}

We now show that any closed conic Stone frame where the cone is a closure operator satisfies the Priestley separation axiom. First we need a general lemma about compact elements in coproduct frames.

\begin{lemma}
	\label{lemma:compact rectangles}
	If~$a\in K(L)$ and~$b\in K(M)$ then~${a\tensor b\in K(L\tensor M)}$.
\end{lemma}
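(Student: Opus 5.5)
The plan is to prove compactness of $a\tensor b$ directly from a suitable description of joins in $L\tensor M$. The cleanest route uses that the coproduct $L\tensor M$ can be presented as the frame of \emph{C-ideals}: downsets $D\subseteq L\times M$ that are closed under the two ``rectangle join'' rules $(\bigvee x_i,y)$ and $(x,\bigvee y_j)$. This description is in \cite[\S IV.5]{picado2012FramesLocalesTopology} and \cite[\S II.2.12]{johnstone1982StoneSpaces}. In this presentation $a\tensor b$ is the C-ideal generated by $(a,b)$. A join $\bigvee_k o_k$ is the C-ideal generated by $\bigcup_k o_k$, which can be built as a transfinite (or inductively defined) closure under the two rules.

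The key step is to show that the following set $\mathcal{S}$ is itself a C-ideal containing $\bigcup_k o_k$:
\[
\{(x,y): \text{for all } x'\sqleq x \text{ compact and } y'\sqleq y \text{ compact, } (x',y')\in \textstyle\bigvee F \text{ for some finite } F\}.
\]
Once this is established, $(a,b)\in\bigvee_k o_k\subseteq \mathcal{S}$, so taking $x'=a$ and $y'=b$ gives a finite subfamily $F$ with $a\tensor b\sqleq\bigvee F$, as required.

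Verifying that $\mathcal{S}$ is a C-ideal is where the work lies. Containment of each $o_k$ and downward closure are immediate. For the rule $(\bigvee_i x_i, y)$, suppose each $(x_i,y)\in\mathcal{S}$ and let $x'\sqleq\bigvee_i x_i$ and $y'\sqleq y$ be compact. Compactness of $x'$ gives finitely many $i$ with $x'\sqleq\bigvee_{i\in G}x_i$. However, we then need each piece $(x'\wedge x_i, y')$ to lie in a finite join, and $x'\wedge x_i$ need not be compact. This is the main obstacle.

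To get around it, I would avoid compactness inside $\mathcal{S}$. Instead I would fix $a,b$ and define
\[
\mathcal{S}'=\{(x,y): x\wedge a\tensor y\wedge b\sqleq \textstyle\bigvee F\text{ for some finite }F\}.
\]
I then prove it is closed under the two rules using a Kuratowski-finite induction: from $a\sqleq\bigvee_{i\in G}x_i$ with $G$ finite, write $a\tensor (y\wedge b)=\bigvee_{i\in G}(a\wedge x_i)\tensor(y\wedge b)$, and take the finite union of the finite $F_i$. This needs $x=a$-type elements, so I would actually state $\mathcal{S}'$ as the set of $(x,y)$ such that \emph{every} pair $x'\sqleq x\wedge a$ and $y'\sqleq y\wedge b$ with $x'\tensor y'$ below some finite join is handled. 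Since this is getting delicate, my fallback is the standard suplattice argument. Because $L\tensor M$ is the suplattice tensor product, one has $a\tensor b\sqleq\bigvee_k o_k$ iff the bilinear indicator $\phi(x,y)=[\,x\tensor y\sqleq\bigvee F$ for some finite $F]$ behaves correctly. Concretely, I would check via the universal property that the map $L\times M\to\Omega$ sending $(x,y)$ to ``$a\sqleq x$ and $b\sqleq y$ fail to be finitely covered'' extends. In either form, the heart of the argument is iterating compactness of $a$ and of $b$ once each and using Lemma~\ref{lemma:finite joins of compact elements are compact}-style finite unions.
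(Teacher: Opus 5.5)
There is a genuine gap: none of your three attempts is completed, and they all stall for the same reason. With $a=\top_L$ and $b=\top_M$ in compact frames, the lemma \emph{is} the binary localic Tychonoff theorem. The predicate ``$x\otimes y$ is finitely covered'' (like $x\mapsto[a\leq x]$ for compact $a$) is stable under finite meets and directed joins, but not under arbitrary joins. So it cannot be pushed through the C-ideal rules, or through the suplattice universal property, in one pass.

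Here is how each attempt fails.
\begin{itemize}
\item Your set $\mathcal{S}$ only tests compact elements, and in an arbitrary frame these need not form a basis. In $L=M=\mathcal{O}([0,1])$ one has $K(L)=\{\bot,\top\}$. For the empty family $(o_k)$, every $(x,y)$ with $x\neq\top$ then lies in $\mathcal{S}$ trivially. Thus $([0,2/3),\top)$ and $((1/3,1],\top)$ lie in $\mathcal{S}$, but $(\top,\top)$ does not, so $\mathcal{S}$ is not a C-ideal.
\item Your $\mathcal{S}'$ fails as well. For $a=b=\top$ it is just the set of finitely covered rectangles. For $o_n:=[0,1-1/n)\otimes\top$ it contains each $([0,1-1/n),\top)$ but not $([0,1),\top)$, although the latter lies in $\bigvee_n o_n$. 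The rule $(\bigvee_i x_i,y)$ hands you $\bigvee_i x_i$, never $a\leq\bigvee_i x_i$, so compactness of $a$ has nothing to act on. The reformulation that follows is circular.
\item The suplattice fallback does not apply. The universal property of $L\otimes M$ as a suplattice tensor product requires maps preserving \emph{arbitrary} joins in each variable, whereas $x\mapsto[a\leq x]$ preserves only finite meets and directed joins. Making that idea rigorous needs the preframe coverage theorem of Johnstone--Vickers, which is exactly the machinery behind localic Tychonoff.
\end{itemize}

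If $L$ and $M$ are algebraic, your first idea does work after one more application of compactness. This covers the paper's only use of the lemma, where $L$ is Stone. Given compact $x'\leq\bigvee_i x_i$, write each $x_i$ as the join of the compact elements below it. Compactness of $x'$ then gives finitely many compact $c_m\leq x_{i_m}$ with $x'\leq\bigvee_m c_m$. Since $(x_{i_m},y)\in\mathcal{S}$, each $c_m\otimes y'$ is finitely covered, hence so is $x'\otimes y'$.

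For arbitrary frames you need a different idea, namely the paper's reduction to Tychonoff. An element $c$ lies in $K(L)$ iff the frame $\down c$ of the open sublocale $\mathfrak{o}(c)$ is compact, since its top is $c$. Moreover $\mathfrak{o}(a)\times\mathfrak{o}(b)=\mathfrak{o}(a\otimes b)$, i.e.\ $\down(a\otimes b)\cong\down a\otimes\down b$. This frame is compact by the localic Tychonoff theorem \cite[\S III.1.7]{johnstone1982StoneSpaces}.
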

\begin{proof}
	The open sublocale~$\mathfrak{o}(a)$ of~$a\in L$ can be represented by the subframe of~$L$ of all elements contained in~$a$~(\cite[\S III.6.1.1]{picado2012FramesLocalesTopology}), whose top element is~$a$ itself. Hence~$a\in K(L)$ iff~$\mathfrak{o}(a)$ is a compact frame. If also~$b\in K(M)$, by the localic Tychonoff theorem~(\cite[\S III.1.7]{johnstone1982StoneSpaces}) the frame coproduct~$\mathfrak{o}(a)\tensor\mathfrak{o}(b)\cong \mathfrak{o}(a\tensor b)$ is again compact, from which it follows that~$a\tensor b\in L\tensor M$ is compact.
\end{proof}

\begin{lemma}
	\label{lemma:dc preserves compact elements}
	If~$(L,\dc)$ is a closed conic Stone frame, then~$\dc$ preserves compacts.
\end{lemma}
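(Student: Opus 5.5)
The idea is to use the closedness inequality of \cref{definition:closed conic frame} with $x=\top$ and $y=b$, compress it to a finite inequality using compactness, and then push it back down to $L$ with the join-preserving map $\Sigma$ of \cref{lemma:Sigma exists}. This should show that $\dc b$ lies below a finite join of compact elements, each of which is itself below $\dc b$.

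Fix $b\in K(L)$. Since $L$ is Stone, $\top$ is compact, so $\top\tensor b\in K(L\tensor L)$ by \cref{lemma:compact rectangles}. Closedness with $x=\top$, $y=b$ gives
\[
\top\tensor b\sqleq \Dc(\top,b)\tensor b\vee O^\Dc_\bot=\dc b\tensor b\vee O^\Dc_\bot .
\]
Since $L$ is zero-dimensional and hence algebraic, I write $\dc b=\bigvee_j c_j$ with $c_j\in K(L)$. Then $\dc b\tensor b=\bigvee_j c_j\tensor b$ by the coproduct relations. Likewise $O^\Dc_\bot$ is a join of rectangles $x\tensor y$ with $x\wedge\dc y=\bot$. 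Each such rectangle is a join of rectangles $a\tensor a'$ with $a\sqleq x$ and $a'\sqleq y$ compact, and these still satisfy $a\wedge\dc a'=\bot$ by monotonicity of $\dc$. So the right-hand side is a join of rectangles, and compactness of $\top\tensor b$ yields finitely many $c_{j_1},\dots,c_{j_n}$ and rectangles $a_i\tensor a_i'$ ($i$ ranging over a finite set) with $a_i\wedge\dc a_i'=\bot$, such that
\[
\top\tensor b\sqleq \bigvee_{k} c_{j_k}\tensor b\vee\bigvee_i a_i\tensor a_i'.
\]

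Next I apply $\Sigma$, which is monotone and join-preserving with $\Sigma(x\tensor y)=x\wedge\dc y$. The left side becomes $\dc b$. Each term $c_{j_k}\tensor b$ goes to $c_{j_k}\wedge\dc b\sqleq c_{j_k}$, and each $a_i\tensor a_i'$ goes to $\bot$. Setting $c:=\bigvee_k c_{j_k}$, this gives $\dc b\sqleq c$. Conversely, $c\sqleq\dc b$ because each $c_j\sqleq\dc b$. Hence $\dc b=c$, which is a finite join of compact elements and so compact by \cref{lemma:finite joins of compact elements are compact}.

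The only step needing care is the reduction of $O^\Dc_\bot$ to a join of compact rectangles with the same annihilation property. That step is handled by monotonicity of $\dc$, so I expect no real obstacle. Notably, the argument does not seem to need that $\dc$ is a closure operator.
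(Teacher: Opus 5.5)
Your proof is correct and is essentially the paper's argument: apply closedness at $(\top,b)$, use compactness of $\top\tensor b$ to extract finitely many compact $c_{j_k}\sqleq\dc b$ together with finitely many annihilating compact rectangles, then push through $\Sigma$ to get $\dc b=\bigvee_k c_{j_k}$. One small correction: algebraicity of $L$ comes from $L$ being Stone (compact and zero-dimensional, hence coherent), not from zero-dimensionality alone. Your monotonicity argument that $O^\Dc_\bot$ is generated by compact annihilating rectangles usefully spells out a step the paper only asserts.
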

\begin{proof}
	Start with a compact element~$c\in K(L)$. Since~$L$ is Stone and in particular coherent, the element~$\dc c$ may be written as the join over the compact elements it contains, so in particular
	\[
	\dc c\tensor c
	=
	\bigvee\{d\in K(L): d\sqleq \dc c\}\tensor c
	=
	\bigvee\left\{d\tensor c: d\in K(L): d\sqleq \dc c \right\}. 		
	\]
	For the same reason,~$O^\Dc_\bot$ is precisely the join over the compact rectangles~${a\tensor b}$, where~$a,b\in K(L)$, such that~$\Dc(a,b)=\bot$. Since~$c$ is compact, the rectangle~$\top\tensor c$ is compact in~$L\tensor L$~(\cref{lemma:compact rectangles}), and since by closedness we have the inclusion
	\[
	\top\tensor c\sqleq \dc c\tensor c \vee O^\Dc_\bot,
	\]
	it follows there are finitely many compact elements~$d_i\sqleq \dc c$,~$i\in I$, and finitely many compact rectangles~$a_j\tensor b_j$,~$j\in J$ satisfying~$\Dc(a_j,b_j)=\bot$ for all~$j\in J$, such that:
	\[
	\top\tensor c
	\sqleq
	\bigvee_{i\in I} d_i\tensor c \vee \bigvee_{j\in J	}a_j\tensor b_j.
	\]
	Applying the join-preserving map~$\Sigma\colon L\tensor L\to L$ from~\cref{section:Sigma} this implies
	\begin{align*}
		\dc c
		&=
		\Sigma(\top\tensor c)
		&&(\text{\cref{lemma:Sigma exists}})
		\\&\sqleq
		\Sigma\left(
		\bigvee_{i\in I} d_i\tensor c \vee \bigvee_{j\in J} a_j\tensor b_j
		\right)
		\\&=
		\bigvee_{i\in I} \Dc(d_i,c) \vee \bigvee_{j\in J} \Dc(a_j,b_j)
		&&(\text{$\Sigma$ preserves joins})
		\\&=
		\bigvee_{i\in I} d_i\wedge \dc c,
		&&(\forall j:\Dc(a_j,b_j)=\bot)
	\end{align*}
	from which it follows that~$\dc c=\bigvee_{i\in I} d_i$, so~$\dc c$ is compact by~\cref{lemma:finite joins of compact elements are compact}.
\end{proof}

\begin{lemma}
	\label{lemma:closed Stone implies Priestley separation}
	Any closed conic Stone frame~$(L,\dc)$ where~$\dc$ is a closure operator satisfies the Priestley separation axiom.
\end{lemma}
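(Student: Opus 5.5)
The plan is to take the separating element to be the complement of the down-cone of $b$. Fix compact $a,b\in K(L)$ with $a\wedge \dc b=\bot$, and set
\[
c := \neg\dc b.
\]
I will check three things: that $c$ is compact, that $c\in\Kdc(L)$, and that $a\sqleq c$ and $b\sqleq\neg c$.

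First, compactness. Since $(L,\dc)$ is a closed conic Stone frame, \cref{lemma:dc preserves compact elements} shows that $\dc b\in K(L)$. In a Stone frame every compact element is complemented, and its complement is again compact (\cref{section:coherent and stone frames}). Hence $c=\neg\dc b$ is compact, and $\neg c=\neg\neg\dc b=\dc b$ because $K(L)$ is a Boolean algebra whose operations agree with those of $L$.

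Second, membership in $\Kdc(L)$. We need $\dc\neg c=\neg c$, that is, $\dc\dc b=\dc b$. Subidempotence of the closure operator $\dc$ gives $\dc\dc b\sqleq\dc b$. Inflationarity applied to $\dc b$ gives $\dc b\sqleq\dc\dc b$. So $c\in\Kdc(L)$.

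Third, the separation itself. From $a\wedge\dc b=\bot$ and the Heyting adjunction we get $a\sqleq \dc b\to\bot=\neg\dc b=c$. Inflationarity gives $b\sqleq\dc b=\neg c$. This completes the argument.

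The only nontrivial input is that $\dc b$ is compact. That is exactly where closedness of the conic frame enters, and it is already handled by \cref{lemma:dc preserves compact elements}. The remaining steps are routine.
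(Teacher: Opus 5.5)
Your proposal is correct and is essentially the paper's own proof: the same witness $c:=\neg\dc b$, with compactness from \cref{lemma:dc preserves compact elements} plus the Stone property, $b\sqleq\dc b=\neg c$ by inflationarity, and $c\in\Kdc(L)$ from idempotence of the closure operator. You additionally spell out why $a\sqleq c$ (via the Heyting adjunction) and why $\dc\dc b=\dc b$ (subidempotence plus inflationarity), which the paper leaves implicit.
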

\begin{proof}
	Suppose~$a,b\in K(L)$ are compact elements satisfying~$a\wedge\dc b =\bot$. By~\cref{lemma:dc preserves compact elements},~$\dc b$ is compact, and by definition of a Stone frame~$c:=\neg\dc b$ is compact. By construction~$a\sqleq c$, and since~$\dc$ is inflationary we have~$b\sqleq \dc b= \neg c$. We are left to show~$c\in \Kdc(L)$. For that, since~$\dc$ is a closure operator we have in particular that~$\dc^2=\dc$, so $\dc \neg c = \dc \dc b =\dc b =\neg c$, as desired.
\end{proof}

Recall from~\cref{section:closed conic frames} that a closed conic frame~$(L,\dc)$ models a localic relation whose open complement is~$O^\Dc_\bot$. The next results establish that the frame-theoretic Priestley separation axiom of~\cref{definition:priestley separation frames} implies Townsend's localic Priestley separation axiom from~\cref{definition:ordered stone locale}.

\begin{lemma}
	\label{lemma:priestley separation implies ODc from Kdc}
	If~$(L,\dc)$ is a conic Stone frame satisfying Priestley separation, then
	\[
	O^\Dc_\bot = \bigvee\{c\tensor \neg c:c\in \Kdc(L)\}.
	\]
\end{lemma}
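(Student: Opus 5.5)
We prove the two inclusions separately, working throughout with the characterisation of rectangles below a separating open from \cref{corollary:separating open rectangle adjunction}: $x\tensor y\sqleq O^\Dc_\bot$ iff $x\wedge\dc y=\bot$.

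For the inclusion $\sqgeq$, fix $c\in\Kdc(L)$. It suffices to show $c\tensor\neg c\sqleq O^\Dc_\bot$, that is, $c\wedge\dc\neg c=\bot$. Since $c\in\Kdc(L)$ we have $\dc\neg c=\neg c$, so $c\wedge\dc\neg c=c\wedge\neg c=\bot$. Taking joins over $c$ gives the inclusion. Note that this direction uses neither compactness nor Priestley separation.

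For the inclusion $\sqleq$, first reduce to compact rectangles. Since $L$ is Stone, hence coherent, every element is a join of compact elements. Each rectangle $x\tensor y$ with $x\wedge\dc y=\bot$ is therefore the join of the rectangles $a\tensor b$ with $a\sqleq x$, $b\sqleq y$ compact. By monotonicity of $\dc$, each such pair still satisfies $a\wedge\dc b\sqleq x\wedge\dc y=\bot$. Hence
\[
O^\Dc_\bot=\bigvee\{a\tensor b: a,b\in K(L),\ a\wedge\dc b=\bot\}.
\]
This is the same observation already used in the proof of \cref{lemma:dc preserves compact elements}.

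Now take such compact $a,b$ with $a\wedge\dc b=\bot$. By the Priestley separation axiom (\cref{definition:priestley separation frames}) there is $c\in\Kdc(L)$ with $a\sqleq c$ and $b\sqleq\neg c$. Then $a\tensor b\sqleq c\tensor\neg c$ by monotonicity of $\tensor$ in each variable, and so $a\tensor b$ lies below the right-hand join. Taking joins over all such compact rectangles yields $O^\Dc_\bot\sqleq\bigvee\{c\tensor\neg c:c\in\Kdc(L)\}$.

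The only step needing care is the reduction to compact rectangles. It relies on the rectangle join-basis of $L\tensor L$, on the distributivity of $\tensor$ over joins in each argument, and on monotonicity of $\dc$. These facts are all available, so there is no real obstacle. Compactness matters only because the Priestley separation axiom is stated for compact elements.
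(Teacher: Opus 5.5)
Your proof is correct and follows essentially the same route as the paper. The $\sqgeq$ direction uses $\dc\neg c=\neg c$ to get $c\wedge\dc\neg c=\bot$, and the $\sqleq$ direction reduces to compact rectangles via coherence and then applies Priestley separation. Your explicit justification of that reduction, via join-preservation of $\tensor$ in each argument and monotonicity of $\dc$, spells out what the paper states in a single sentence.
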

\begin{proof}
	Abbreviate~$u:=\bigvee\{c\tensor \neg c: c\in \Kdc(L)\}$, so it is to be shown that~${u=O^\Dc_\bot}$. Observe first that if~$c\in \Kdc(L)$ then~$\Dc(c,\neg c) = c\wedge \dc \neg c= c\wedge \neg c =\bot$, so it follows immediately that~$u\sqleq O^\Dc_\bot$.
	
	For the converse, pick a compact rectangle~$a\tensor b\sqleq O^\Dc_\bot$, so by~\cref{corollary:separating open rectangle adjunction} we get~$\Dc(a,b)=\bot$. Applying the Priestley separation axiom, there exists~$c\in \Kdc(L)$ such that~$a\sqleq c$ and~$b\sqleq \neg c$, and so~$a\tensor b\sqleq c\tensor \neg c\sqleq u$. Since~$L$ is Stone and hence coherent, $O^\Dc_\bot$ is generated by these~$a\tensor b$, so taking joins gives~$O^\Dc_\bot\sqleq u$.	
\end{proof}

\begin{corollary}
	\label{corollary:closed conic Stone frame ODc from Kdc}
	If~$(L,\dc)$ is a closed conic Stone frame where~$\dc$ is a closure operator, then~$O^\Dc_\bot = \bigvee\{c\tensor \neg c:c\in \Kdc(L)\}$.
\end{corollary}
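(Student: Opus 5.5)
This corollary follows by chaining the two immediately preceding results. First, since $(L,\dc)$ is a closed conic Stone frame and $\dc$ is a closure operator, \cref{lemma:closed Stone implies Priestley separation} shows that $(L,\dc)$ satisfies the frame-theoretic Priestley separation axiom of \cref{definition:priestley separation frames}. Second, a closed conic Stone frame is in particular a conic Stone frame. So \cref{lemma:priestley separation implies ODc from Kdc} applies and gives
\[
O^\Dc_\bot=\bigvee\{c\tensor\neg c: c\in\Kdc(L)\}.
\]

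The closedness hypothesis enters only through the first step. There it is used, via \cref{lemma:dc preserves compact elements}, to ensure that $\dc b$ is compact whenever $b$ is compact. This is what makes $\neg\dc b$ a compact element of $\Kdc(L)$ separating $a$ from $b$. The closure-operator hypothesis is used in the same step, for inflation and idempotence.

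There is no genuine obstacle. The only point to check is that all hypotheses of the two lemmas are matched exactly. Idempotence $\dc^2=\dc$ follows from $\dc$ being monotone, inflationary and subidempotent, which is what \cref{lemma:closed Stone implies Priestley separation} requires.

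Finally, combined with \cref{proposition:closed iff Rdc closed}, the corollary says that the open complement of $R_{\dc}$ is exactly Townsend's separating join $\bigvee\{c\tensor\neg c: c\in\Kdc(L)\}$. By \cref{corollary:KDown = Kdown}, this is the localic Priestley separation axiom of \cref{definition:ordered stone locale} for $\rel(L,\dc)$.
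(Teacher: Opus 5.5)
Your proposal is correct and follows the paper's proof exactly: it chains \cref{lemma:closed Stone implies Priestley separation} to obtain Priestley separation, then applies \cref{lemma:priestley separation implies ODc from Kdc}. Your remarks about where closedness and the closure-operator hypothesis enter are also accurate.
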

\begin{proof}
	This follows from~\cref{lemma:closed Stone implies Priestley separation,lemma:priestley separation implies ODc from Kdc}.
\end{proof}

%~~~~~~~~~~~~~~~~~~~~~~~~~~~~~~
\subsection{Anti-symmetry}
\label{section:antisymmetry}
This section proves how the axiom~(E4) of an Esakia frame ensures the induced localic relation is anti-symmetric. This allows us to conclude in~\cref{proposition:esakia frame gives esakia locale} that~$\rel(E,\dc)$ is an Esakia locale for every Esakia frame~$(E,\dc)$.

The strategy is to translate the internal anti-symmetry condition~${R\cap R^\op \subseteq \Delta}$ from~\cref{definition:localic relation properties} into a condition involving the open complements~$\ocomplement{R},\ocomplement{R^\op}$ of~$R$ and its opposite, and the open complement~$\ocomplement{\Delta}$ of the diagonal relation. We first have the following general observation about the opposite of a closed relation.

\begin{remark}
	\label{remark:swap}
	\label{lemma:Rop is swap preimage}
	Denote by~$\swap\colon X\times X\to X\times X$ the swap map of~$X$ in~$\Loc$, defined generally as~$\swap=(\pr_2,\pr_1)$. Explicitly, by~\cref{lemma:coproduct structure} we get the corresponding frame map on rectangles as
	\[
	\swap^{-1}(x\tensor y) = y\tensor x.
	\]
	From general categorical arguments we also get that if~$R$ is a localic relation with inclusion~$r=(s,t)$, its opposite relation~$R^\op$ with inclusion~$(t,s)$ is just the internal preimage~$\swap^{-1}[R]$. In particular, if~$R$ is closed with open complement~$\ocomplement{R}$ it follows by~\cite[\S III.6.3]{picado2012FramesLocalesTopology} that the preimage~$\swap^{-1}[R]$ is again closed with open complement
	\[
	\ocomplement{R^\op}= \swap^{-1}(\ocomplement{R}).
	\]
\end{remark}

Using this, since complements reverse order, the expected condition for anti-symmetry~${R\cap R^\op \subseteq \Delta}$ of a closed localic relation should be
\[
	\ocomplement{\Delta}\sqleq \ocomplement{R}\vee \swap^{-1}(\ocomplement{R}).
\]
To prove this, first we show that in a Stone locale the diagonal relation~$\Delta$ is indeed closed, and that~$\ocomplement{\Delta}$ is generated by the rectangles~$c\tensor \neg c$ for~${c\in K(\Opens X)}$.

\begin{lemma}
	\label{lemma:diagonal on Stone is closed}
	The diagonal~$\Delta$ on a Stone locale~$X$ is closed with open~complement
	\[
	\ocomplement{\Delta} = \bigvee\{c\tensor \neg c: c\in K(\Opens X)\}.
	\]
\end{lemma}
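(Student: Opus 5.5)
The plan is to describe the diagonal as a congruence on $\Opens X\tensor\Opens X$ and show that it coincides with the closed congruence $\cc{u}$, where $u:=\bigvee\{c\tensor\neg c: c\in K(\Opens X)\}$. By \cref{example:diagonal relation}, the diagonal $\delta$ has frame map $\delta^{-1}(x\tensor y)=x\wedge y$, the codiagonal. So the claim amounts to showing that the kernel of this codiagonal is exactly $\cc{u}$. Equivalently, I will show that $o\mapsto o\vee u$ factors through $\delta^{-1}$ as an isomorphism onto $\up u$.

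\emph{Step 1: $\Delta\subseteq$ the closed sublocale $\up u$.} For $c\in K(\Opens X)$ we have $\delta^{-1}(c\tensor\neg c)=c\wedge\neg c=\bot$, so $\delta^{-1}(u)=\bot$. Hence $\delta^{-1}$ equalises $(u,\bot)$. By \cref{theorem:frame quotient theorem} and \cref{remark:closed sublocales}, $\delta^{-1}$ then factors through $\up u$.

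\emph{Step 2: the converse, $\ker\delta^{-1}\subseteq\cc{u}$.} It suffices to show that every element $o$ satisfies $o\vee u=\delta^{-1}(o)\tensor\top\vee u$, and likewise with $\top\tensor\delta^{-1}(o)$. Both sides preserve joins, and $X$ is Stone, so rectangles $a\tensor b$ with $a,b$ compact (hence complemented) form a join-basis. It therefore suffices to prove
\[
a\tensor b\sqleq (a\wedge b)\tensor(a\wedge b)\vee u
\]
for complemented $a,b$. I will use the Boolean decomposition
\[
a\tensor b=(a\wedge b)\tensor(a\wedge b)\vee (a\wedge b)\tensor(b\wedge\neg a)\vee(a\wedge\neg b)\tensor b,
\]
which holds because $b=(a\wedge b)\vee(b\wedge\neg a)$ and $a=(a\wedge b)\vee(a\wedge\neg b)$, combined with distributivity of $\tensor$ over finite joins. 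The second rectangle lies below $a\tensor\neg a\sqleq u$, and the third below $\neg b\tensor b=\neg b\tensor\neg\neg b\sqleq u$, since $\neg b$ is again compact. This gives the inequality.

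\emph{Step 3: conclude.} If $\delta^{-1}(o)=\delta^{-1}(o')=:w$, then Step 2 gives $o\vee u=w\tensor w\vee u=o'\vee u$. This uses the refined inequality $o\sqleq \delta^{-1}(o)\tensor\delta^{-1}(o)\vee u$, proved on basic rectangles as in Step 2 and using that the diagonal element below $o$ is included. Hence $\ker\delta^{-1}=\cc{u}$, and it follows that $\Delta$ is the closed sublocale with open complement $u$.

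The main obstacle is bookkeeping in Step 2. Passing from basic rectangles to arbitrary $o$ requires care: the map $o\mapsto\delta^{-1}(o)\tensor\delta^{-1}(o)$ is not join-preserving. The way around this is to show that $o\vee u$ depends only on $\delta^{-1}(o)$ via the two inequalities $o\sqleq\delta^{-1}(o)\tensor\top\vee u$ and $\delta^{-1}(o)\tensor\delta^{-1}(o)\sqleq o\vee u$. Both sides of each inequality are join-preserving in the relevant sense, so each reduces to rectangles, or for the second to the elements $w\tensor w$ with $w=\bigvee(a_i\wedge b_i)$, using compact decompositions.
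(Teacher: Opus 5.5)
Your route differs from the paper's. The paper cites Johnstone for the closedness of the diagonal of a Stone locale, and then reads off the complement from the conic machinery: $\Delta=R_{\id}$ is a fixed point, so its open complement is $O^{\Dc}_\bot=\bigvee\{x\otimes y: x\wedge y=\bot\}$, and the Priestley-separation corollary with the identity cone rewrites this as $\bigvee\{c\otimes\neg c\}$. Computing the kernel of the codiagonal directly, as you do, would prove closedness and the formula simultaneously. It is self-contained and uses only that complemented elements form a join-basis. Step 1 is correct. So is your decomposition giving $a\otimes b\leq (a\wedge b)\otimes(a\wedge b)\vee u$, and monotonicity then gives $o\leq w\otimes w\vee u$ for $w:=\delta^{-1}(o)$. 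However, the converse half, $\ker\delta^{-1}\subseteq\cc{u}$, is not actually established.

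In Step 2, the displayed inequality only yields $o\vee u\leq \delta^{-1}(o)\otimes\top\vee u$, not the equality you need, so ``it therefore suffices'' does not follow. The missing direction is $w\otimes w\leq o\vee u$, as used in Step 3, and your last paragraph does not prove it. The map $o\mapsto w\otimes w$ is not join-preserving, as you note yourself. Expanding $o=\bigvee_i a_i\otimes b_i$ produces cross terms $(a_i\wedge b_i)\otimes(a_j\wedge b_j)$ with $i\neq j$; these are not below $o$ and must be absorbed into $u$. The missing observation is that for compact $c$,
\[
c\otimes\top=c\otimes c\vee c\otimes\neg c\leq c\otimes c\vee u.
\]
Set $c_i:=a_i\wedge b_i$, which is compact. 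Then
\[
w\otimes w\leq w\otimes\top=\bigvee_i c_i\otimes\top\leq\bigvee_i (c_i\otimes c_i)\vee u\leq o\vee u.
\]
Together with $o\leq w\otimes w\vee u$, this gives $o\vee u=w\otimes w\vee u$, and Step 3 then goes through as you intend.
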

%\begin{proof}
%	That the diagonal of a Stone locale is closed follows by~\cite[\S III.1]{johnstone1982StoneSpaces}. Its open complement is given by~(\cite[\S V.2]{picado2012FramesLocalesTopology}):
%	\[
%	v:= \bigvee \{x\tensor y: x,y\in \Opens X: x\wedge y =\bot\}.
%	\]
%	Clearly~$\ocomplement{\Delta}\sqleq v$, so we are left to show~$v\sqleq \ocomplement{\Delta}$. Since~$\Opens X$ is in particular coherent, elements can be written as the join over the compact elements they contain. Then, given~$x,y\in \Opens X$ such that~$x\wedge y =\bot$, for any compact~$c\in K(\Opens X)$ with~$c\sqleq x$ we have~$c\wedge y=\bot$, or equivalently~$y\sqleq \neg c$. Hence~$c\tensor y \sqleq c\tensor \neg c \sqleq \ocomplement{\Delta}$, so taking joins over~$c$ we get~$x\tensor y \sqleq \ocomplement{\Delta}$, and hence~$v\sqleq \ocomplement{\Delta}$.
%\end{proof}
\begin{proof}
	The diagonal of a Stone locale is closed by~\cite[\S III.1]{johnstone1982StoneSpaces}. On the other hand, from~\cref{example:diagonals are fixed points} we know that~$\Delta$ is a fixed point of~$\cone\dashv\rel$ generated by~${\dc=\id}$, so by~\cref{proposition:closed iff Rdc closed}~$(\Opens X,\id)$ is closed as a conic frame and~$\Delta$ has open complement~$O^\Dc_\bot$, which has the desired form by~\cref{corollary:closed conic Stone frame ODc from Kdc}.
\end{proof}

\begin{lemma}
	\label{lemma:anti-symmetric iff complements contain diagonal}
	On a Stone locale~$X$, a closed relation~$R$ is anti-symmetric~iff
	\[
	\ocomplement{\Delta}\sqleq \ocomplement{R}\vee\swap^{-1}(\ocomplement{R}).
	\]
\end{lemma}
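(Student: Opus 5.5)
We reduce anti-symmetry to a statement about closed sublocales of $X\times X$ and their open complements. By \cref{remark:swap}, $R^\op=\swap^{-1}[R]$ is closed with open complement $\swap^{-1}(\ocomplement{R})$. By \cref{lemma:diagonal on Stone is closed}, $\Delta$ is closed with open complement $\ocomplement{\Delta}$. So all three relations in the inequality $R\cap R^\op\subseteq\Delta$ are closed sublocales.

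The first key step is the intersection of the two closed sublocales. In the coframe of sublocales, closed sublocales satisfy $\mathfrak{c}(u)\cap\mathfrak{c}(v)=\mathfrak{c}(u\vee v)$ (\cite[\S III.6]{picado2012FramesLocalesTopology}). Concretely, the congruence generated by $\{(u,\bot),(v,\bot)\}$ equals the one generated by $\{(u\vee v,\bot)\}$. Hence $R\cap R^\op$ is the closed sublocale with open complement $\ocomplement{R}\vee\swap^{-1}(\ocomplement{R})$.

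The second key step is to use the fact that the assignment $u\mapsto\mathfrak{c}(u)$ is order-reversing and reflects order: $\mathfrak{c}(u)\subseteq\mathfrak{c}(v)$ iff $v\sqleq u$. As sublocale sets this reads $\up u\subseteq\up v$ iff $v\sqleq u$. Forward: $u\in\up u\subseteq \up v$ gives $v\sqleq u$. Backward: $v\sqleq u$ immediately gives $\up u\subseteq\up v$.

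Combining the two steps:
\[
R\cap R^\op\subseteq\Delta
\iff
\mathfrak{c}\bigl(\ocomplement{R}\vee\swap^{-1}(\ocomplement{R})\bigr)\subseteq\mathfrak{c}(\ocomplement{\Delta})
\iff
\ocomplement{\Delta}\sqleq\ocomplement{R}\vee\swap^{-1}(\ocomplement{R}).
\]
This is the claimed characterisation.

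The main obstacle is bookkeeping rather than mathematics. Subobject inclusion in the category sense, as used in \cref{definition:localic relation properties}, must match inclusion of sublocale sets. Likewise, the categorical intersection, which is the pullback of the two monos, must match the meet of sublocales, which is the intersection of sublocale sets. We handle this by invoking the standard identification of sublocales with regular subobjects (\cite[\S IV.1.3]{picado2012FramesLocalesTopology}), under which pullbacks of monos correspond to meets in $\Sl(X\times X)$. If we want to avoid this, the closed-sublocale intersection formula can be checked directly at the level of congruences, as indicated in the first step. Note that the Stone hypothesis enters only through the closedness of $\Delta$.
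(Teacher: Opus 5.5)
Your proposal is correct and is essentially the paper's proof: you identify $R^\op$ as the closed sublocale with open complement $\swap^{-1}(\ocomplement{R})$, use that $\Delta$ is closed on a Stone locale, apply $\mathfrak{c}(u)\cap\mathfrak{c}(v)=\mathfrak{c}(u\vee v)$, and turn inclusion of closed sublocales into the reverse inequality of their open complements. The only difference is that you spell out the congruence computation and the order-reflection $\up u\subseteq\up v\iff v\sqleq u$, which the paper leaves implicit in its citation of Picado--Pultr.
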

\begin{proof}
	By definition,~$R$ is anti-symmetric precisely when~$R\cap R^\op\subseteq \Delta$. By~\cref{lemma:Rop is swap preimage}, we see~$R^\op$ is the closed sublocale with open complement~$\swap^{-1}(\ocomplement{R})$.
	Together with the complement of the diagonal from~\cref{lemma:diagonal on Stone is closed} and the fact that binary meets of closed sublocales equal the closed sublocale defined by the binary join of the underlying opens~\cite[\S III.6.1.5]{picado2012FramesLocalesTopology}, we get the desired~inclusion.
\end{proof}

Now that we have a characterisation of anti-symmetry in terms of the open complements, we show using axiom~(E4) that the relation~$R_{\dc}$ induced by an Esakia frame is anti-symmetric. Observe that on an ordered Stone locale~$(X,R)$, the open complements~$\ocomplement{\Delta}$ and~$\ocomplement{R}$ are both generated by rectangles~$c\tensor \neg c$, the first with~$c\in K(\Opens X)$ and the second with~$c\in K_{\down}(\Opens X)$. Since~$K(\Opens X)$ is Boolean generated by~$K_{\down}(\Opens X)$~(\cref{corollary:Kdown generates K}), this suggests the following more general pattern. For any subset~$D\subseteq L$ of a frame, define in~$L\tensor L$:
\[
	\ocomplement{D}:= \bigvee\{d\tensor \neg d: d\in D\}.
\]

\begin{lemma}
	\label{lemma:boolean generated complement is swap}
	Let~$L$ be a frame with Boolean subalgebra~$G$. If the subset~$D\subseteq G$ Boolean generates~$G$, then
	\[
	\ocomplement{G} = \ocomplement{D}\vee \swap^{-1}(\ocomplement{D}).
	\]
\end{lemma}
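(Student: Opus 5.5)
The inclusion $\ocomplement{D}\vee\swap^{-1}(\ocomplement{D})\sqleq\ocomplement{G}$ is immediate. Since $D\subseteq G$, every rectangle $d\tensor\neg d$ with $d\in D$ is a generator of $\ocomplement{G}$. Moreover $\swap^{-1}(d\tensor\neg d)=\neg d\tensor d=\neg d\tensor\neg\neg d$, and $\neg d\in G$ because $G$ is a Boolean subalgebra. Its complement computed in $G$ agrees with the Heyting negation in $L$, since a complement in a sublattice is still a complement in $L$ and complements are unique. The substance is therefore the converse: writing $w:=\ocomplement{D}\vee\swap^{-1}(\ocomplement{D})$, we must show $g\tensor\neg g\sqleq w$ for every $g\in G$.

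Consider the set $G':=\{g\in G: g\tensor\neg g\sqleq w \text{ and } \neg g\tensor g\sqleq w\}$. It contains $D$ by the first paragraph, and it is closed under $\neg$ by construction, since the defining condition is symmetric in $g$ and $\neg g$. It contains $\top$ and $\bot$ trivially, because $\top\tensor\bot=\bot=\bot\tensor\top$. If we show $G'$ is closed under binary meets, then closure under $\neg$ gives closure under binary joins by De Morgan. Hence $G'$ is a Boolean subalgebra of $G$ containing $D$, and so equals $G$ because $D$ Boolean generates $G$. Constructively, "Boolean generated" should be read as every element being a finite Boolean combination of elements of $D$; the closure argument then becomes an induction on such terms, which is fine.

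The key step is closure under $\wedge$. Take $g,h\in G'$. Then $\neg(g\wedge h)=\neg g\vee\neg h$, so
\[
(g\wedge h)\tensor\neg(g\wedge h)=\bigl((g\wedge h)\tensor\neg g\bigr)\vee\bigl((g\wedge h)\tensor\neg h\bigr)\sqleq (g\tensor\neg g)\vee(h\tensor\neg h)\sqleq w.
\]
For the other rectangle,
\[
\neg(g\wedge h)\tensor(g\wedge h)=\bigl(\neg g\tensor(g\wedge h)\bigr)\vee\bigl(\neg h\tensor(g\wedge h)\bigr)\sqleq(\neg g\tensor g)\vee(\neg h\tensor h)\sqleq w.
\]
Both calculations use only that $\tensor$ distributes over finite joins in each variable and is monotone. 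So this step goes through cleanly, and there is no real obstacle.

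The only care points are bookkeeping. First, check that $\neg$ in $G$ coincides with the Heyting negation in $L$ used in the definition of $\ocomplement{D}$, which follows from uniqueness of complements as noted above. Second, phrase "Boolean generated" so that the subalgebra argument (or the term induction) is constructively valid.
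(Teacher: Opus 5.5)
Your proof is correct and follows essentially the paper's argument: the easy inclusion comes from $D\subseteq G$ and closure of $G$ under $\neg$, and the converse shows that the set of $g\in G$ with $g\tensor\neg g\sqleq w$ is a Boolean subalgebra containing $D$. The differences are cosmetic: you put the symmetric condition $\neg g\tensor g\sqleq w$ into the definition of the set, whereas the paper gets closure under $\neg$ from $\swap^{-1}(w)=w$; and you get closure under joins by De Morgan, whereas the paper computes it directly.
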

\begin{proof}
	Abbreviate~$v:=\ocomplement{D}\vee\swap^{-1}(\ocomplement{D})$. Since~$D\subseteq G$ we get~$\ocomplement{D}\sqleq \ocomplement{G}$, and since~$G$ is a Boolean algebra it is closed under~$\neg$, so~$\swap^{-1}(\ocomplement{G})=\ocomplement{G}$. Thus we immediately get the inclusion~$v\sqleq \ocomplement{G}$.
	
	For the converse inclusion, consider the following subset:
	\[
	S:=\{a\in G: a\tensor \neg a \sqleq v\}.
	\]
	We claim that~$S$ is a Boolean subalgebra of~$G$. For that, we need to show that~$S$ is closed under finite meets, finite joins, and negation. First, observe that trivially~$\top,\bot\in S$, since the corresponding rectangles are just the bottom. Next, observe that if~$a\in S$ then~$\neg a\in S$, since applying the involutive map~$\swap^{-1}$ to~$a\tensor \neg a \sqleq v$ we get
	\[
	\neg a\tensor \neg \neg a= \neg a\tensor a \sqleq \swap^{-1}(v) = \swap^{-1}(\ocomplement{D})\vee \swap^{-1}\swap^{-1}(\ocomplement{D})= v.
	\]
	We are left to show~$S$ is closed under binary meets and joins. For meets of~$a,b\in S$:
	\[
	(a\wedge b)\tensor \neg(a\wedge b)
	=
	(a\wedge b)\tensor (\neg a\vee \neg b)
	\sqleq
	(a\tensor \neg a)\vee (b\tensor \neg b)\sqleq v,
	\]
	and similarly for joins:
	\[
	(a\vee b)\tensor \neg(a\vee b)
	=
	(a\vee b) \tensor (\neg a\wedge \neg b)
	\sqleq
	(a\tensor \neg a)\vee (b\tensor \neg b)
	\sqleq v.
	\]
	Thus~$S$ is a Boolean subalgebra of~$G$, and note moreover that it contains~$D$, since for~$d\in D$ we have by construction~$d\tensor \neg d\sqleq \ocomplement{D}\sqleq v$. Since~$D$ Boolean generates~$G$, it follows~$S=G$, and therefore~$\ocomplement{G}\sqleq v$, as was left to show.
\end{proof}

\begin{lemma}
	\label{lemma:closed conic Stone implies Rdc anti-symmetric}
	Let~$(L,\dc)$ be a closed conic Stone frame such that~$\dc$ is a closure operator. If~$\Kdc(L)$ Boolean generates~$K(L)$, then~$R_{\dc}$ is anti-symmetric.
\end{lemma}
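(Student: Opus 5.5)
The plan is to verify the criterion of \cref{lemma:anti-symmetric iff complements contain diagonal}. First, $R_{\dc}$ is a closed relation: since $(L,\dc)$ is a closed conic frame, \cref{proposition:closed iff Rdc closed} shows that $R_{\dc}$ is closed with open complement $\ocomplement{R_{\dc}}=O^\Dc_\bot$. Because $L$ is Stone, $L=\Opens X$ for a Stone locale $X$. It therefore suffices to prove
\[
\ocomplement{\Delta}\sqleq O^\Dc_\bot\vee\swap^{-1}(O^\Dc_\bot).
\]

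Next, rewrite both sides in the notation $\ocomplement{D}=\bigvee\{d\tensor\neg d: d\in D\}$. By \cref{lemma:diagonal on Stone is closed} we have $\ocomplement{\Delta}=\ocomplement{K(L)}$. By \cref{corollary:closed conic Stone frame ODc from Kdc}, which applies because $(L,\dc)$ is a closed conic Stone frame with $\dc$ a closure operator, we have $O^\Dc_\bot=\ocomplement{\Kdc(L)}$.

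Now apply \cref{lemma:boolean generated complement is swap} with $G:=K(L)$ and $D:=\Kdc(L)$. Here $G$ is a Boolean subalgebra of $L$ because $L$ is Stone, so its compact elements are exactly the complemented ones and are closed under finite meets, finite joins and complements. Also $D\subseteq G$ Boolean generates $G$ by hypothesis. The lemma gives $\ocomplement{K(L)}=\ocomplement{\Kdc(L)}\vee\swap^{-1}(\ocomplement{\Kdc(L)})$. This is exactly the required inequality, in fact as an equality, so anti-symmetry follows.

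The main point needing care is that the identification $\ocomplement{R_{\dc}}=O^\Dc_\bot$ is the complement used in \cref{lemma:anti-symmetric iff complements contain diagonal}. This is covered by the last part of the proof of \cref{proposition:closed iff Rdc closed}, which exhibits $R_{\dc}$ as the closed sublocale with open complement $O^\Dc_\bot$. The remaining steps are direct substitutions.
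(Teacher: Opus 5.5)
Your proposal is correct and follows the paper's proof essentially step for step. You identify $\ocomplement{R_{\dc}}=O^\Dc_\bot$ via \cref{proposition:closed iff Rdc closed}, rewrite it as $\ocomplement{\Kdc(L)}$ via \cref{corollary:closed conic Stone frame ODc from Kdc}, apply \cref{lemma:boolean generated complement is swap} with $G=K(L)$ and $D=\Kdc(L)$, recognise the left-hand side as $\ocomplement{\Delta}$ by \cref{lemma:diagonal on Stone is closed}, and conclude with \cref{lemma:anti-symmetric iff complements contain diagonal}.
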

\begin{proof}
	Since~$(L,\dc)$ is closed,~\cref{proposition:closed iff Rdc closed} says~$R_{\dc}$ is closed with open complement~$\ocomplement{R_{\dc}}=O^\Dc_\bot$, which in turn by~\cref{corollary:closed conic Stone frame ODc from Kdc} is seen to equal
	\[
	\ocomplement{R_{\dc}}=\bigvee\{c\tensor \neg c:c\in \Kdc(L)\}.
	\]
	Since~$\Kdc(L)$ generates~$K(L)$ as a Boolean algebra we get from~\cref{lemma:boolean generated complement is swap} that
	\[
	\bigvee\{c\tensor \neg c: c\in K(L)\}=\ocomplement{R_{\dc}}\vee \swap^{-1}(\ocomplement{R_{\dc}}),
	\]
	where by~\cref{lemma:diagonal on Stone is closed} the left-hand side is precisely the open complement~$\ocomplement{\Delta}$ of the diagonal relation, so anti-symmetry follows from~\cref{lemma:anti-symmetric iff complements contain diagonal}.
\end{proof}

Combining this with results from the previous sections, we can prove that Esakia frames induce Esakia locales under~$\rel$.

\begin{proposition}
	\label{proposition:esakia frame gives esakia locale}
	If~$(E,\dc)$ is an Esakia frame then~$\rel(E,\dc)$ is an Esakia locale.
\end{proposition}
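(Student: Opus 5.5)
The plan is to verify, one axiom at a time, that $(X,R_{\dc}):=\rel(E,\dc)$, where $E=\Opens X$, is an ordered Stone locale with open source. Each verification reduces to a result established earlier in the paper.

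\emph{Stone and open source.} First, $X$ is a Stone locale because $E$ is a Stone frame by (E1). Second, $R_{\dc}$ has open source by \cref{theorem:induced relation}, since $(E,\dc)$ is a conic frame by (E0). The same theorem gives that the induced cone of $R_{\dc}$ is exactly $\dc$. I will use this identification repeatedly below.

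\emph{Closed partial order.} By (E3) and \cref{proposition:closed iff Rdc closed}, $R_{\dc}$ is closed, with open complement $\ocomplement{R_{\dc}}=O^\Dc_\bot$. Since $R_{\dc}$ is in the image of $\rel$ and the counit is the identity, it is a fixed point of $\cone\dashv\rel$. (Alternatively, this follows from closedness via \cref{proposition:closed relation fixed point}.) Hence \cref{proposition:Rdc preorder iff closure operator} together with (E2) shows that $R_{\dc}$ is a preorder. Anti-symmetry is exactly \cref{lemma:closed conic Stone implies Rdc anti-symmetric}, whose hypotheses are (E1)--(E4). So $R_{\dc}$ is a closed localic partial order.

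\emph{Localic Priestley separation.} This is the one step that needs some care, because it involves the internal cone $\down$ rather than $\dc$. By \cref{corollary:closed conic Stone frame ODc from Kdc}, applied under (E1)--(E3),
\[
\ocomplement{R_{\dc}}=O^\Dc_\bot=\bigvee\{c\tensor\neg c: c\in\Kdc(E)\}.
\]
The axiom of \cref{definition:ordered stone locale} is phrased with $K_{\down}(\Opens X)$ computed from $R_{\dc}$. Since $R_{\dc}$ has open source, \cref{corollary:KDown = Kdown} gives $K_{\down}(\Opens X)=\KDown(\Opens X)$. Since the induced cone is $\Down=\dc$, this set equals $\Kdc(E)$, and so the displayed formula is precisely Townsend's localic Priestley separation axiom.

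Combining these steps, $\rel(E,\dc)$ is an ordered Stone locale whose source map is open, which by \cref{definition:esakia locale} makes it an Esakia locale.
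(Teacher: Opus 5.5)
Your proof is correct and follows the paper's argument essentially step for step. You get Stone from (E1), closedness with open complement $O^\Dc_\bot$ from \cref{proposition:closed iff Rdc closed}, Priestley separation via \cref{corollary:closed conic Stone frame ODc from Kdc,corollary:KDown = Kdown}, preorder and anti-symmetry from \cref{proposition:Rdc preorder iff closure operator,lemma:closed conic Stone implies Rdc anti-symmetric}, and open source from \cref{theorem:induced relation}. Your explicit notes that $R_{\dc}$ is a fixed point and that its induced cone is exactly $\dc$ (which identifies $\KDown(\Opens X)$ with $\Kdc(E)$) simply spell out steps the paper leaves implicit.
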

\begin{proof}
	Let~$X$ be the locale defined by~$\Opens X:=E$. It is a Stone locale since~$E$ is a Stone frame. Since~$(E,\dc)$ is closed, by~\cref{proposition:closed iff Rdc closed} the relation~$R_{\dc}$ is closed with open complement~$\ocomplement{R_{\dc}}=O^\Dc_\bot$. In turn, by~\cref{corollary:KDown = Kdown,corollary:closed conic Stone frame ODc from Kdc} we get
	\[
	\ocomplement{R_{\dc}}=\bigvee\{c\tensor \neg c:c\in K_{\down}(E)\},
	\]
	so~$R_{\dc}$ satisfies the localic Priestley separation axiom. Since~$\dc$ is a closure operator,~\cref{proposition:Rdc preorder iff closure operator} shows~$R_{\dc}$ is a preorder, and since~$\Kdc(E)$ Boolean generates~$K(E)$, \cref{lemma:closed conic Stone implies Rdc anti-symmetric} shows~$R_{\dc}$ is a partial order. By~\cref{theorem:induced relation}~$R_{\dc}$ has open source, so~$(X,R_{\dc})$ is an Esakia locale.
\end{proof}

%~~~~~~~~~~~~~~~~~~~~~~~~~~~~~~~~~~~~~~~~~
\subsection{The categories}
\label{section:ELoc and EFrm}
As in~\cite{bezhanishvili2010BitopologicalDualityDistributive,bezhanishvili2023FrametheoreticPerspectiveEsakia} and as alluded to in~\cref{section:esakia duality}, we distinguish between different types of morphisms between Esakia spaces.
First, we have monotone functions, just preserving the order. Algebraically, these are just the distributive lattice morphisms between Heyting algebras. Second, we have p-morphisms, which correspond to Heyting morphisms.
Analogously, we distinguish between two different categories of Esakia locales, the first using just internally monotone locale maps, and the second using the internal notion of p-morphism from~\cref{section:internal p morphisms}.

Recall that a subcategory is called~\emph{full} when it contains the same morphisms (but a subclass of objects), and~\emph{wide} when it contains the same objects (but a subclass of morphisms).

\begin{definition}
	\label{definition:ELoc}
	We denote by:
	\begin{enumerate}
		\item $\ELoc_\weak$ the full subcategory of~$\rosLoc$ containing Esakia locales;
		\item $\ELoc_\strong$ the wide subcategory of~$\ELoc_\weak$ containing the p-morphisms.
	\end{enumerate}
\end{definition}

Mirroring to the frame-theoretic side, using the characterisation of p-morphisms in terms of cones in~\cref{corollary:p morphism on Esakia locales}, we make the following definition.

\begin{definition}
	\label{definition:EFrm}
	We denote by:
	\begin{enumerate}
		\item $\EFrm_\weak$ the full subcategory of~$\dcFrm$ whose objects are Esakia frames;
		\item $\EFrm_\strong$ the wide subcategory of~$\EFrm_\weak$ whose morphisms are the \emph{strong conic morphisms}~$h\colon (E,\dc)\to (F,\dcx)$ satisfying~$h\circ \dc = \dcx\circ h$.
	\end{enumerate}
\end{definition}

Now that the categories are defined, we show that the object-level assignments in~\cref{proposition:esakia frame gives esakia locale,proposition:esakia locale gives esakia frame} extend to functors. Subscripts~${\mode\in\{\weak,\strong\}}$ will indicate the category with either `weak' or `strong' morphisms, so for example~$\EFrm_\mode$ denotes either~$\EFrm_\weak$ or~$\EFrm_\strong$, and similarly for the other categories and functors defined below. Note that these categories have the same objects, but the strong category has a restricted class of morphisms.

%~~~~~~~~~~~~~~~~~~~~~~~~~~~~~~
\subsection{The equivalence}
\label{section:equivalence EFrm ELoc}
We show that the functors~$\cone,\rel$ from~\cref{section:conic frames} between conic frames and open source localic relations restrict to functors between Esakia frames and Esakia locales, and define a dual equivalence.

\begin{proposition}
	\label{proposition:ELoc to EFrm}
	The functor~$\cone$ restricts to functors~$\cone_\mode \colon \ELoc_\mode\to \EFrm_\mode^\op$.
\end{proposition}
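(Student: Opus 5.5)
The plan is to check the restriction on objects and on morphisms separately, for both modes $\mode\in\{\weak,\strong\}$.

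On objects, an Esakia locale $(X,R)$ is in particular an object of $\rosLoc$ (its source map is open by \cref{definition:esakia locale}). Hence $\cone(X,R)=(\Opens X,\Down)$ is defined, and it is an Esakia frame by \cref{proposition:esakia locale gives esakia frame}. This is the same for both modes, because $\ELoc_\weak$ and $\ELoc_\strong$ have the same objects, as do $\EFrm_\weak$ and $\EFrm_\strong$.

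For the weak case, the morphisms need nothing new. $\ELoc_\weak$ is a full subcategory of $\rosLoc$, and $\EFrm_\weak$ is a full subcategory of $\dcFrm$. By \cref{proposition:functor rosLoc to ucdcFrm}, any monotone map $f\colon (X,R)\to (Y,Q)$ gives a conic morphism $f^{-1}\colon(\Opens Y,\Downsub{Q})\to(\Opens X,\Downsub{R})$. This is automatically a morphism of $\EFrm_\weak$. Functoriality is inherited from $\cone$, so $\cone_\weak$ is just the corestriction of $\cone$ to these full subcategories.

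For the strong case, take a p-morphism $f\colon (X,R)\to(Y,Q)$ between Esakia locales. We must show that $f^{-1}$ is a strong conic morphism, that is, $f^{-1}\circ\Downsub{Q}=\Downsub{R}\circ f^{-1}$. Since $R$ and $Q$ have open source, \cref{lemma:p morphism implies cone equality} gives exactly this equation. Alternatively, \cref{corollary:p morphism on Esakia locales} applies, since Esakia locales are Stone locales with closed open-source relations. Identities are p-morphisms and strong conic morphisms, and composition is preserved because $\cone$ is a functor. Therefore $\cone_\strong$ is the restriction of $\cone_\weak$ to the wide subcategories.

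No step is a real obstacle, since the substantive work was done in \cref{proposition:esakia locale gives esakia frame} and \cref{lemma:p morphism implies cone equality}. The only point needing care is matching variance. A strong conic morphism in $\EFrm_\strong$ goes from $(\Opens Y,\Downsub{Q})$ to $(\Opens X,\Downsub{R})$, so the condition $h\circ\dc=\dcx\circ h$ with $h=f^{-1}$ reads $f^{-1}\circ\Downsub{Q}=\Downsub{R}\circ f^{-1}$, which is the equation supplied by the lemma.
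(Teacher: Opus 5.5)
Your proposal is correct and follows the same route as the paper: objects via \cref{proposition:esakia locale gives esakia frame}, weak morphisms via \cref{proposition:functor rosLoc to ucdcFrm}, and strong morphisms via \cref{lemma:p morphism implies cone equality}. Your variance check ($h=f^{-1}$ giving $f^{-1}\circ\Downsub{Q}=\Downsub{R}\circ f^{-1}$) is exactly what is needed.
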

\begin{proof}
	That~$\cone_\mode$ are well defined on objects is precisely~\cref{proposition:esakia locale gives esakia frame}, and that~$\cone_\weak$ is well defined on morphisms is just~\cref{proposition:functor rosLoc to ucdcFrm}. That~$\cone_\strong$ sends p-morphisms to strong conic morphisms follows from~\cref{lemma:p morphism implies cone equality}.
\end{proof}

\begin{proposition}
	\label{proposition:EFrm to ELoc}
	The functor~$\rel$ restricts to functors~$\rel_\mode\colon \EFrm_\mode^\op\to \ELoc_\mode$.
\end{proposition}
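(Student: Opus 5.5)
The proof mirrors \cref{proposition:ELoc to EFrm}, with the roles of the two functors swapped. We check well-definedness on objects, then on weak morphisms, then on strong morphisms.

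On objects, well-definedness is exactly \cref{proposition:esakia frame gives esakia locale}: for an Esakia frame~$(E,\dc)$, the pair~$\rel(E,\dc)=(X,R_{\dc})$ is an Esakia locale. Since~$\ELoc_\weak$ is a full subcategory of~$\rosLoc$, the weak case on morphisms is then immediate from \cref{proposition:functor ucdcFrm to rosLoc}. Indeed, a conic morphism~$f^{-1}\colon(\Opens Y,\dcx)\to(\Opens X,\dc)$ between Esakia frames yields an internally monotone map~$f\colon (X,R_{\dc})\to(Y,R_{\dcx})$. Functoriality is inherited, because~$\rel$ acts as the identity on underlying maps.

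The only real content is the strong case. Suppose~$f^{-1}$ is a strong conic morphism, so that~$f^{-1}\circ\dcx=\dc\circ f^{-1}$. We must show that~$f$ is an internal p-morphism~$(X,R_{\dc})\to(Y,R_{\dcx})$, and for this we invoke \cref{corollary:p morphism on Esakia locales}. Its hypotheses hold for the following reasons.
\begin{itemize}
	\item $X$ and~$Y$ are Stone locales by~(E1).
	\item $R_{\dc}$ and~$R_{\dcx}$ are closed by~(E3) and \cref{proposition:closed iff Rdc closed}.
	\item Both relations have open source by \cref{theorem:induced relation}.
\end{itemize}
It then suffices to verify the cone equation~$f^{-1}\circ\Downsub{R_{\dcx}}=\Downsub{R_{\dc}}\circ f^{-1}$. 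By \cref{theorem:induced relation} the induced cones recover the original ones, that is~$\Downsub{R_{\dc}}=\dc$ and~$\Downsub{R_{\dcx}}=\dcx$, so the required equation is precisely the strong conic morphism condition.

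The one point needing care is that \cref{corollary:p morphism on Esakia locales} is stated for locale maps between Stone locales with closed open-source relations, and the p-morphism notion presupposes internal monotonicity. Monotonicity is already supplied by the weak case, and the corollary's proof rederives it via fixed points in any event. So no genuine obstacle is expected, and the proof is a bookkeeping assembly of the cited results.
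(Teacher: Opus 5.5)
Your proposal is correct and follows the paper's own proof. Objects go through \cref{proposition:esakia frame gives esakia locale}, weak morphisms through \cref{proposition:functor ucdcFrm to rosLoc}, and strong morphisms through \cref{corollary:p morphism on Esakia locales}. You simply spell out the hypothesis checks that the paper leaves implicit: Stone, closed, open source, and $\Downsub{R_{\dc}}=\dc$ via \cref{theorem:induced relation}.
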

\begin{proof}
	That~$\rel_\mode$ are well defined on objects is precisely~\cref{proposition:esakia frame gives esakia locale}, and that~$\rel_\weak$ is well defined on morphisms is just~\cref{proposition:functor ucdcFrm to rosLoc}. Explicitly, if we have a conic morphism~$f^{-1}\colon (\Opens Y,\dcx)\to (\Opens X,\dc)$, then under~$\rel$ this induces an internally monotone map of locales~$f\colon (X,R_{\dc})\to (Y,R_{\dcx})$. If in addition~$f^{-1}$ is a strong conic morphism, it follows that~$f$ is a p-morphism by~\cref{corollary:p morphism on Esakia locales}. Hence~$\rel_\strong$ is well-defined on morphisms.
\end{proof}

We are now ready to establish the dual equivalence between Esakia locales and Esakia frames.
Recall that an \emph{equivalence} between categories~$\cat{C}$ and~$\cat{D}$ consists of functors~$F\colon \cat{C}\to \cat{D}$ and~$G\colon \cat{D}\to \cat{C}$, together with natural isomorphisms~${\eta\colon \id \Longrightarrow G\circ F}$ and~${\epsilon\colon F\circ G\Longrightarrow\id}$, referred to as the \emph{unit} and \emph{counit} of the equivalence, respectively (though we do not require them to satisfy the triangle identities). A \emph{dual} equivalence between~$\cat{C}$ and~$\cat{D}$ is an equivalence between~$\cat{C}^\op$ and~$\cat{D}$. See for example~\cite[p.18]{maclane1998CategoriesWorkingMathematician} for more details.

\begin{remark}
	\label{remark:strategy}
	In this informal remark we discuss the general proof strategy for obtaining the equivalences, which also applies in subsequent sections. First note that we need to establish equivalences~$F_\mode\colon \cat{C}_\mode\simeq\cat{D}_\mode\cocolon\mspace{1mu}G_\mode$ in both the weak and strong settings.

	Suppose for the moment that the weak equivalence~$F_\weak\colon \cat{C}_\weak\simeq \cat{D}_\weak\cocolon\mspace{1mu}G_\weak$ is already established. Since the functors~$F_\strong,G_\strong$ are just the restrictions of the functors~$F_\weak,G_\weak$ to wide subcategories, to obtain the equivalence in the strong setting it suffices to show that the unit and counit of~$\cat{C}_\weak\simeq\cat{D}_\weak$ restrict to natural isomorphisms for the strong functors. For that, in turn, it suffices that the components of the unit and counit not only define isomorphisms in the weak category, but also in the strong category. For this paper, this can be proved case-by-case.

	Moreover, it happens that the functors~$F_\weak,G_\weak$ are restrictions of yet a third pair of functors~$F,G$ that are already part of an equivalence~$\cat{C}\simeq\cat{D}$~(in this section the fixed points of~$\cone\dashv\rel$). In that case, since~$\cat{C}_\weak,\cat{D}_\weak$ are \emph{full} subcategories of~$\cat{C},\cat{D}$, the unit and counit for~$F,G$ can be used directly to define a unit and counit for~$F_\weak,G_\weak$, and the equivalence~$\cat{C}_\weak\simeq\cat{D}_\weak$ follows immediately.
\end{remark}

\begin{theorem}
	\label{theorem:ELoc EFrm equivalence}
	The adjunction~$\cone\dashv\rel$ restricts to equivalences
	\[
	\cone_\mode\colon\ELoc_\mode
	\simeq
	\EFrm_\mode^\op\cocolon \rel_\mode.
	\]
\end{theorem}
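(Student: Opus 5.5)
We follow the strategy of \cref{remark:strategy}: first obtain the weak equivalence from the fixed points of $\cone\dashv\rel$, and then check that the unit and counit components are also isomorphisms in the strong categories.

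\emph{Weak case.} By \cref{proposition:ELoc to EFrm,proposition:EFrm to ELoc} the functors restrict to $\cone_\mode\colon\ELoc_\mode\to\EFrm_\mode^\op$ and $\rel_\mode\colon\EFrm_\mode^\op\to\ELoc_\mode$. The counit of $\cone\dashv\rel$ is the identity by \cref{theorem:cone rel adjunction}, i.e.\ $\cone\circ\rel=\id$ by \cref{theorem:induced relation}. This identity counit restricts to the full subcategory $\EFrm_\weak^\op\subseteq\dcFrm^\op$. For the unit, let $(X,R)$ be an Esakia locale. Its component is $\id_X\colon(X,R)\to(X,\RDown)$. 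Since $R$ is closed with open source, \cref{proposition:closed relation fixed point} gives $R\cong\RDown$ as subobjects of $X\times X$. Hence $\id_X$ is monotone in both directions and is an isomorphism in $\rosLoc$. Because $\ELoc_\weak$ is full in $\rosLoc$, these components are isomorphisms in $\ELoc_\weak$. Naturality is inherited from the adjunction. This gives $\ELoc_\weak\simeq\EFrm_\weak^\op$.

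\emph{Strong case.} Since $\ELoc_\strong$ and $\EFrm_\strong$ are wide subcategories and the functors are restrictions, it suffices to show that the unit and counit components and their inverses are strong morphisms. On the frame side, each counit component is $\id_E\colon(E,\dc)\to(E,\dc)$, which trivially satisfies $\id\circ\dc=\dc\circ\id$. On the locale side, each component is $\id_X\colon(X,R)\to(X,\RDown)$, with inverse $\id_X\colon(X,\RDown)\to(X,R)$. To see that both are p-morphisms we use \cref{corollary:p morphism on Esakia locales}. Its hypotheses hold: $X$ is Stone, and $R$ and $\RDown$ are both closed with open source ($\RDown$ because it is isomorphic to $R$, or alternatively by \cref{proposition:closed iff Rdc closed} and \cref{theorem:induced relation}). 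The cone equality required by the corollary is then $\Downsub{R}=\Downsub{\RDown}$, which is exactly \cref{theorem:induced relation} applied to $\dc=\Down$.

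The main obstacle I expect is bookkeeping rather than new mathematics: making sure that \cref{corollary:p morphism on Esakia locales} really applies to the inverse components. An alternative route is the direct observation that an isomorphism $R\cong R'$ over $X\times X$ makes the source square of $\id_X$ a genuine pullback, hence an epi-pullback. Either argument closes the strong case, giving $\ELoc_\strong\simeq\EFrm_\strong^\op$.
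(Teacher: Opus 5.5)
Your proposal is correct and takes essentially the same route as the paper. The weak equivalence comes from restricting the fixed-point equivalence of $\cone\dashv\rel$, using that closed open-source relations are fixed points; in the strong case the counit components are identities, and the unit components $\id_X\colon(X,R)\to(X,\RDown)$ and their inverses are p-morphisms by \cref{corollary:p morphism on Esakia locales}, since $\Downsub{R}=\Downsub{\RDown}$ by \cref{theorem:induced relation}.
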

\begin{proof}
	The adjunction~$\cone\dashv\rel$ restricts to an equivalence of categories between its fixed points. Both~$\ELoc_\mode$ and~$\EFrm^\op_\mode$ are subcategories of the respective fixed points. Hence, it follows immediately that the functors~$\cone_\weak$ and~$\rel_\weak$ define an equivalence~$\ELoc_\weak\simeq \EFrm_\weak^\op$ between the full subcategories.
	
	For the strong morphisms, it suffices to show that the components of the unit and counit of~$\cone\dashv\rel$ define isomorphisms in~$\ELoc_\strong$ and~$\EFrm_\strong$, respectively. For the counit this is immediate, since its components are just identities. On the other hand, if~$(X,R)$ is an Esakia locale, its unit component is given by
	\[
	\eta_{(X,R)}= \id_X\colon (X,R)\longrightarrow (X,\RDown),
	\]
	whose underlying frame map defines a strong conic morphism, since the relations have the same cones~(\cref{theorem:induced relation}). Hence by~\cref{corollary:p morphism on Esakia locales}~$\eta_{(X,R)}$ and its inverse are p-morphisms. Thus we conclude that~$\cone_\strong$ and~$\rel_\strong$ define an equivalence~$\ELoc_\strong\simeq\EFrm_\strong^\op$.
\end{proof}

%~~~~~~~~~~~~~~~~~~~~~~~~~~~~~
\section{Heyting and Esakia frames}
\label{section:heyting frames and esakia frames}
Heyting frames were introduced in~\cite{bezhanishvili2023FrametheoreticPerspectiveEsakia} to provide~\citetitle{bezhanishvili2023FrametheoreticPerspectiveEsakia}. However, the dualities established there are ultimately in relation to Esakia \emph{spaces}. In the present work, they will play the point-free analogue of Heyting algebras, just as coherent frames play the point-free analogue of distributive lattices in~\cite{townsend1997LocalicPriestleyDuality}. To obtain a localic Esakia duality, the goal of this section will thus be to establish an equivalence between Heyting frames and Esakia frames.

We start by recalling the definition of Heyting frames from~\cite[Definition~3.1]{bezhanishvili2023FrametheoreticPerspectiveEsakia}, and then defining the relevant categories of Heyting frames and Heyting algebras. Instead of constructing the equivalence between Esakia frames and Heyting frames directly, we construct an equivalence between Esakia frames and Heyting algebras first.
Starting from an Esakia frame~$(E,\dc)$, following the spatial intuition outlined in~\cref{section:esakia duality}, the resulting Heyting algebra is the set of compact upper elements~$\Kdc(E)$ with Heyting implication given by 
\[
a\to b := \neg \dc (a\wedge \neg b).
\]
Conversely, starting with a Heyting algebra~$A$ we follow the strategy of~\cite{townsend1997LocalicPriestleyDuality} and define the Stone frame of ideals~$\idl(\bool{A})$ of the free Boolean algebra~$\bool{A}$ generated by~$A$, on which we construct a cone~$\dc{A}$ that turns it into an Esakia frame. \cref{theorem:EFrm Heyt equivalence} proves that these assignments constitute an equivalence, and with the help of the equivalence between Heyting frames and Heyting algebras from Theorems~3.11 and~3.14 in~\cite{bezhanishvili2023FrametheoreticPerspectiveEsakia} we then get the desired equivalence:
\[
\EFrm\simeq \Heyt\simeq \HFrm.
\]

\subsection{The categories}
\label{section:HFrm and Heyt}
To set the scene, we recall the notion of a Heyting frame from~\cite[Definition~3.1]{bezhanishvili2023FrametheoreticPerspectiveEsakia}. Following the same remarks at the start of~\cref{section:ELoc and EFrm}, we distinguish between categories with `weak' morphisms and `strong' morphisms of both Heyting frames and Heyting algebras.

\begin{definition}
	\label{definition:heyting frame}
	A \emph{Heyting frame} is a coherent frame~$H$ such that~$K(H)$ is a Heyting subalgebra of~$H$.
\end{definition}

Recall from~\cref{section:ideals} that~$\CohFrm$ is the category of coherent frames with coherent morphisms, and~$\DLat$ is the category of distributive lattices with distributive lattice morphisms.

\begin{definition}
	\label{definition:HFrm}
	We denote by:
	\begin{enumerate}
		\item $\HFrm_\weak$ the full subcategory of~$\CohFrm$ containing Heyting frames;
		\item $\HFrm_\strong$ the wide subcategory of~$\HFrm_\weak$ containing coherent morphisms that restrict to Heyting algebra morphisms on the compact elements.
	\end{enumerate}
\end{definition}

\begin{definition}
	\label{definition:Heyt}
	We denote by:
	\begin{enumerate}
		\item $\Heyt_\weak$ the full subcategory of~$\DLat$ containing Heyting algebras;
		\item $\Heyt_\strong$ the wide subcategory of~$\Heyt_\weak$ containing Heyting algebra maps.
	\end{enumerate}
\end{definition}

\begin{remark}
	In~\cite[Definitions~3.5]{bezhanishvili2023FrametheoreticPerspectiveEsakia} these categories are denoted by~$\HFrm^-$ and~$\HFrm$, respectively, with the same convention for categories of Heyting algebras. In general, they use a superscript~$-$ to denote categories with monotone or `weak' morphisms, and the absence of a superscript denotes the categories with p-morphisms and analogous `strong' morphisms. They treat other classes of stronger morphisms, which we will not consider here.
\end{remark}

It was shown in~\cite[Theorem~3.2]{bezhanishvili2023FrametheoreticPerspectiveEsakia} that a coherent frame~$H$ is Heyting iff~$K(H)$ is a Heyting algebra, so the Heyting structure is uniquely determined by the ambient one of~$H$. 
% Recall that for any distributive lattice~$A$ the frame of ideals~$\idl(A)$ is coherent~\cite[\S II.3.2]{johnstone1982StoneSpaces}.
Thus, a distributive lattice~$A$ is a Heyting algebra iff~$\idl(A)$ is a Heyting frame~\cite[Corollary~3.3]{bezhanishvili2023FrametheoreticPerspectiveEsakia}. To obtain a Heyting frame from an Esakia frame~$(E,\dc)$, it thus suffices to construct a Heyting algebra. For this reason, we shall first construct an equivalence~$\EFrm_\mode\simeq\Heyt_\mode$. To get the desired equivalence between Esakia frames and Heyting frames, we use the equivalences~$\HFrm_\mode\simeq\Heyt_\mode$ from~\cite{bezhanishvili2023FrametheoreticPerspectiveEsakia}, which are in turn restrictions of the equivalence~$\CohFrm\simeq\DLat$.

\begin{remark}
	As mentioned in the~\nameref{section:introduction}, this strategy mirrors the construction of a coherent frame out of an ordered Stone locale in~\cite{townsend1997LocalicPriestleyDuality}. Although Townsend does not state this explicitly, it can be seen that first a distributive lattice~$K_{\down}(\Opens X)$ is constructed, the ideals of which form the desired coherent frame. Hence the localic Priestley duality factorises similarly through~$\DLat$. Here this also has the benefit of simplifying some computations by not having to keep up with implicit identifications between principal ideals and lattice elements.
\end{remark}

\subsection{Esakia frame to Heyting algebra}
\label{section:esakia frame to heyting algebra}
Following the spatial intuition in~\cref{section:esakia locales and esakia frames}, the Heyting algebra induced by an Esakia frame~$(E,\dc)$ should consist of the clopen upper sets, which we saw in the frame-theoretic language corresponds to~$\Kdc(E)$ from~\cref{definition:Kdc}. The next sequence of results shows that this is indeed a Heyting algebra.

\begin{lemma}
	\label{lemma:Kdc disjointness}
	Let~$(L,\dc)$ be a conic Stone frame with~$\dc$ inflationary. If~$c\in \Kdc(L)$ and~$d\in K(L)$ then
	\[
	\Dc(c,d)=\bot\quad\iff\quad c\wedge d=\bot.
	\]
\end{lemma}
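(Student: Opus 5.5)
The statement is an equivalence between $\Dc(c,d)=c\wedge\dc d=\bot$ and $c\wedge d=\bot$. The plan is to prove the two implications separately. The forward one is immediate from the inflationary assumption, and the reverse one rests on the defining equation $\dc\neg c=\neg c$ of $\Kdc(L)$ together with monotonicity of $\dc$, which holds because $\dc$ preserves joins.

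For the forward direction ($\Rightarrow$), since $\dc$ is inflationary we have $d\sqleq\dc d$. Hence
\[
c\wedge d\sqleq c\wedge \dc d=\Dc(c,d)=\bot.
\]

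For the reverse direction ($\Leftarrow$), assume $c\wedge d=\bot$.
\begin{enumerate}
	\item Rewrite the assumption via the Heyting adjunction in the frame $L$ as $d\sqleq\neg c$. No Stone-ness is needed here, since $\neg c=c\to\bot$ always satisfies this universal property. Because $c$ is compact in a Stone frame, it is complemented, so $\neg c$ is also its Boolean complement; this is not strictly required.
	\item Since $\dc$ preserves joins, it is monotone, so $\dc d\sqleq\dc\neg c$.
	\item Since $c\in\Kdc(L)$ we have $\dc\neg c=\neg c$, and therefore $\dc d\sqleq\neg c$.
	\item Conclude $\Dc(c,d)=c\wedge\dc d\sqleq c\wedge\neg c=\bot$.
\end{enumerate}

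No real obstacle is expected. The only point to check is that the $\neg$ appearing in the definition of $\Kdc(L)$ is the frame's Heyting negation, so that step~1 is valid. Note also that compactness of $d$ is never used, so the lemma actually holds for arbitrary $d\in L$.
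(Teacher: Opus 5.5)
Your proof is correct and follows essentially the same route as the paper: inflationarity gives the forward direction, and $d\sqleq\neg c$ together with monotonicity of $\dc$ and $\dc\neg c=\neg c$ gives the converse. Your side remarks are also accurate: compactness of $d$ is never used, and step~1 only needs the Heyting negation of the frame.
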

\begin{proof}
	If~$\Dc(c,d)=\bot$ then~$c\wedge d\sqleq c\wedge \dc d=\bot$ by inflationarity. Conversely, if~$c\wedge d=\bot$ then~$d\sqleq \neg c$, so~$\dc d\sqleq \dc\neg c=\neg c$, or equivalently~${\Dc(c,d)=\bot}$.
\end{proof}

\begin{lemma}
	\label{lemma:Kdc is lattice}
	If~$(L,\dc)$ is a conic Stone frame with~$\dc$ inflationary, then~$\Kdc(L)$ is a bounded sublattice of~$K(L)$.
\end{lemma}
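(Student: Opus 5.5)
We must show that $\top,\bot\in\Kdc(L)$ and that $\Kdc(L)$ is closed under binary meets and joins. Since $L$ is Stone, $K(L)$ is a Boolean algebra whose operations are those of $L$, and $\neg$ on compacts is the Boolean complement. So $\neg(c\wedge d)=\neg c\vee\neg d$ and $\neg(c\vee d)=\neg c\wedge\neg d$ hold for compact $c,d$. Membership of a compact $c$ in $\Kdc(L)$ means $\dc\neg c=\neg c$. Because $\dc$ is inflationary, this is equivalent to $\dc\neg c\sqleq\neg c$.

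For the bounds, $\neg\top=\bot$, and $\dc\bot=\bot$ because $\dc$ preserves joins, in particular the empty one. Hence $\top\in\Kdc(L)$. Also $\neg\bot=\top$ and $\dc\top\sqleq\top$ trivially, so $\bot\in\Kdc(L)$.

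For meets, take $c,d\in\Kdc(L)$. Join-preservation of $\dc$ gives
\[
\dc\neg(c\wedge d)=\dc(\neg c\vee\neg d)=\dc\neg c\vee\dc\neg d=\neg c\vee\neg d=\neg(c\wedge d).
\]
Moreover $c\wedge d$ is compact since $K(L)$ is a sublattice. For joins, $c\vee d$ is compact by \cref{lemma:finite joins of compact elements are compact}. By monotonicity of $\dc$ we get $\dc(\neg c\wedge\neg d)\sqleq\dc\neg c\wedge\dc\neg d=\neg c\wedge\neg d$, and together with inflationarity this yields equality. Alternatively, one can phrase both closure steps through \cref{lemma:Kdc disjointness}.

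There is no real obstacle here. The only point needing care is the join case, where $\dc$ does not preserve meets, and there the inequality from monotonicity suffices because of inflationarity.
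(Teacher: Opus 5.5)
Your proof is correct and follows the paper's argument essentially step for step: the bounds come from $\dc\bot=\bot$ and inflationarity, meets from join-preservation and De Morgan, and joins from monotonicity plus inflationarity. The points you make explicit, that $c\wedge d$ and $c\vee d$ stay compact and that membership in $\Kdc(L)$ reduces to $\dc\neg c\sqleq\neg c$, are small clarifications the paper leaves implicit.
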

\begin{proof}
	Since~$L$ is Stone,~$K(L)$ is a Boolean algebra, and we use the De Morgan rules freely. Observe first that~$\dc\bot=\bot$ since~$\dc$ preserves joins, and~$\dc\top=\top$ since~$\dc$ is inflationary. Hence
	\[
	\dc\neg\bot=\dc\top=\top =\neg\bot
	\quad\text{and}\quad
	\dc\neg\top=\dc\bot=\bot=\neg\top,
	\]
	so~$\bot,\top\in \Kdc(L)$. For meets, take~$a,b\in \Kdc(L)$ and calculate:
	\[
	\dc\neg (a\wedge b)
	=
	\dc (\neg a \vee \neg b)
	=
	\dc\neg a \vee \dc \neg b
	=
	\neg a \vee \neg b
	=
	\neg (a\wedge b),
	\]
	so~$a\wedge b\in \Kdc(L)$. For joins, calculate:
	\[
	\dc\neg (a\vee b)
	=
	\dc (\neg a \wedge \neg b)
	\sqleq
	\dc \neg a
	=
	\neg a,
	\]
	and similarly~$\dc\neg(a\vee b)\sqleq \neg b$. Hence~$\dc\neg(a\vee b) \sqleq \neg a \wedge \neg b = \neg (a\vee b)$, which with inflationarity gives equality, and hence~$a\vee b\in \Kdc(L)$.
\end{proof}

For any Esakia frame~$(E,\dc)$ we thus get a distributive lattice~$\Kdc(E)$. To motivate its Heyting structure, recall from~\cref{section:esakia duality} that the Heyting implication of two clopen upsets~$A,B$ in an Esakia space~$X$ is defined as:
\[
A\to B
:=
X\setminus \down (A\setminus B).
\]

\begin{proposition}
	\label{proposition:Kdc Heyting algebra}
	If~$(E,\dc)$ is an Esakia frame, then~$\Kdc(E)$ is a Heyting algebra~with
	\[
	a\to b:= \neg\dc(a\wedge \neg b).
	\]
\end{proposition}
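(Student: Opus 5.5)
By \cref{lemma:Kdc is lattice} we already know that~$\Kdc(E)$ is a bounded sublattice of the Boolean algebra~$K(E)$. It therefore remains to check two things: first, that~$a\to b:=\neg\dc(a\wedge\neg b)$ lands in~$\Kdc(E)$, and second, that it satisfies the universal property $c\wedge a\sqleq b\iff c\sqleq a\to b$ for all~$c\in\Kdc(E)$.

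\emph{Well-definedness.} Take~$a,b\in\Kdc(E)$.
\begin{itemize}
\item Since~$K(E)$ is Boolean, $\neg b$ is compact, and hence so is~$a\wedge\neg b$.
\item By \cref{lemma:dc preserves compact elements}, which applies because an Esakia frame is a closed conic Stone frame, $\dc(a\wedge\neg b)$ is compact.
\item Its complement~$a\to b$ is then compact as well, again because~$K(E)$ is Boolean.
\item To show~$a\to b\in\Kdc(E)$ we need $\dc\neg(a\to b)=\neg(a\to b)$. Since $\neg\neg$ is the identity on~$K(E)$, this reads $\dc\dc(a\wedge\neg b)=\dc(a\wedge\neg b)$, which is idempotency of the closure operator~$\dc$ (axiom (E2)).
\end{itemize}

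\emph{Adjunction.} Fix~$c\in\Kdc(E)$. I would rewrite both sides of the universal property as disjointness statements and compare them using \cref{lemma:Kdc disjointness} with the compact element~$d:=a\wedge\neg b$.
\begin{itemize}
\item In the Boolean algebra~$K(E)$, the inequality $c\wedge a\sqleq b$ is equivalent to $c\wedge a\wedge\neg b=\bot$, that is, $c\wedge d=\bot$.
\item On the other side, $c\sqleq\neg\dc d$ is equivalent to $c\wedge\dc d=\bot$, that is, $\Dc(c,d)=\bot$.
\item \Cref{lemma:Kdc disjointness} (with~$\dc$ inflationary, since it is a closure operator) states exactly that $\Dc(c,d)=\bot\iff c\wedge d=\bot$ for~$c\in\Kdc(E)$ and compact~$d$. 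So the two sides agree.
\end{itemize}

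The step I expect to be the main obstacle is showing that the implication is compact, and in particular that~$\dc$ preserves compact elements. Without this, $\neg\dc(a\wedge\neg b)$ need not lie in~$K(E)$. This is where closedness (E3) genuinely enters, and it has already been handled by \cref{lemma:dc preserves compact elements}. Everything else is Boolean bookkeeping. The anti-symmetry axiom (E4) is not needed for this result.
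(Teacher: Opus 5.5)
Your proof is correct and follows the paper's proof step for step. You get closure of $\Kdc(E)$ under $a\to b$ from \cref{lemma:dc preserves compact elements} together with idempotency of $\dc$, and the adjunction from \cref{lemma:Kdc disjointness} applied to the compact element $a\wedge\neg b$. Your remark that (E4) is not needed also matches the paper's \cref{remark:closure algebras}.
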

\begin{proof}
	By~\cref{lemma:Kdc is lattice} we know~$\Kdc(E)$ is a distributive lattice, so it suffices to show the displayed formula makes it into a Heyting algebra.
	
	Fix~$a,b\in \Kdc(E)$. First we show~$a\to b\in \Kdc(E)$. Since~$E$ is Stone,~$a\wedge \neg b$ is compact, and so by~\cref{lemma:dc preserves compact elements}~$\dc(a\wedge \neg b)$ is compact, and hence~$a\to b$ is compact. Then we calculate:
	\begin{align*}
		\dc \neg (a\to b)
		&=
		\dc\neg \neg\dc (a\wedge \neg b)
		&&(\text{by definition})
		\\&=
		\dc\dc (a\wedge \neg b)
		&&(\text{$E$ Stone})
		\\&=
		\dc(a\wedge \neg b)
		&&(\text{$\dc$ idempotent})
		\\&=
		\neg (a\to b),
		&&(\text{by definition})
	\end{align*}
	so~$a\to b\in \Kdc(E)$, as claimed. To show this defines a Heyting implication, take an element~${c\in \Kdc(E)}$ and calculate
	\begin{align*}
		c\sqleq a\to b
		&\iff
		c\sqleq \neg \dc(a\wedge \neg b)
		&&(\text{by definition})
		\\&\iff
		\Dc(c,a\wedge \neg b)=\bot
		\\&\iff
		c\wedge a\wedge \neg b=\bot
		&&(\text{\cref{lemma:Kdc disjointness}})
		\\&\iff
		c\wedge a\sqleq \neg\neg b=b.
		&&(\text{$E$ Stone})\qedhere
	\end{align*}
\end{proof}

\begin{remark}
	\label{remark:closure algebras}
	For an Esakia frame~$(E,\dc)$, we get a Boolean algebra~$K(E)$, and by~\cref{lemma:dc preserves compact elements} the cone~$\dc$ restricts to a (finite) join-preserving closure operator. Thus the pair~$(K(E),\dc)$ forms a \emph{closure algebra} in the sense of~McKinsey and~Tarski~\cite{mckinsey1944AlgebraTopology,mckinsey1946ClosedElementsClosure}. See also~\cite[\S 2.2]{esakia2019HeytingAlgebrasDuality}. The dual \emph{interior operation}~$\square:=\neg\dc\neg$ further gives an \emph{interior algebra}~$(K(E),\square)$. Note then that the set of compact upper elements~$\Kdc(E)$ just equals the set of fixed points of~$\square$ in~$K(E)$, which is well-known to be a Heyting algebra with implication:
	\[
	\neg \dc (a\wedge \neg b)
	=
	a\to b
	=
	\square(\neg a\vee b).
	\]
	See for example~\cite[Proposition~2.2.4]{esakia2019HeytingAlgebrasDuality}, the proof of which is essentially that of~\cref{proposition:Kdc Heyting algebra}.

	Note also that axiom~(E4) is not needed in the construction of the Heyting algebra~$\Kdc(E)$ out of~$(E,\dc)$. However, it will be crucial in~\cref{section:counit patch Kdc} to reconstruct an Esakia frame from its induced Heyting algebra.
\end{remark}

%\begin{corollary}
%	\label{corollary:esakia frame to heyting frame}
%	If~$(E,\dc)$ is an Esakia frame then~$\idl \Kdc(E)$ is a Heyting frame.
%\end{corollary}

%~~~~~~~~~~~~~~~~~~~~~~~~~~~~~~~~~~
\subsection{Free Boolean algebras}
\label{section:free boolean algebras}
\label{remark:bool}
Starting with a Heyting algebra, to obtain an Esakia frame, we first need to construct a Stone frame, whose compact elements form a \emph{Boolean} algebra. To do this, we follow the strategy of~\cite{townsend1997LocalicPriestleyDuality} and consider the free Boolean algebra generated by the Heyting algebra, the ideals of which then generate the desired Stone frame. In this intermezzo, we recall and establish notation around free Boolean algebras.

% \begin{remark}
	A constructive proof of the existence of free Boolean algebras on distributive lattices was first obtained in~\cite{peremans1957EmbeddingDistributiveLattice}. See~\cite{maietti2012InductionPrincipleConsequence} for an alternative, more recent constructive proof.
	This constitutes a functor~${\bool\colon\DLat\to\Bool}$ into the category of Boolean algebras, sending a distributive lattice~$A$ to the Boolean algebra~$\bool{A}$. Denoting by~$U\colon \Bool\to\DLat$ the forgetful functor, the unit of the adjunction~$\bool\dashv U$ is denoted by~$\beta\colon \id\Longrightarrow U\circ \bool$, where for each distributive lattice~$A$ we get an order embedding via a distributive lattice morphism~${\beta_A\colon A\to \bool{A}}$~\cite[Theorem~32]{maietti2012InductionPrincipleConsequence}. Recall that an \emph{order embedding} $f\colon (P,\leq)\to (Q,\preccurlyeq)$ is a function between posets such that~$x\leq y$ holds iff~$f(x)\preccurlyeq f(y)$ holds. In what follows, we shall mostly suppress the forgetful functor~$U$.
	
	For a distributive lattice~$A$ and elements~$a,b\in A$, we denote the corresponding generators in the free Boolean algebra~by
	\[
	u_a:=\beta_A(a)
	\qquad\text{and}\qquad
	\ell_b:= \neg u_b.
	\]
	For a map~$g\colon A\to B$ of distributive lattices, the induced map~$\bool{g}\colon \bool{A}\to \bool{B}$ is the unique morphism of Boolean algebras characterised by~${\beta_B\circ g = \bool{g}\circ \beta_A}$, or explicitly:
	\[
	\bool{g}(u_a)= u_{g(a)}
	\qquad\text{and}\qquad
	\bool{g}(\ell_b)= \ell_{g(b)}.
	\]
% \end{remark}

The spatial intuition is that~$u_a$ represents the clopen upper set generated by~$a$, and~$\ell_b$ the clopen lower set generated by~$b$. Spatially~(\cref{section:esakia duality}), the clopen upper and lower sets generate a topology via `patches'. Here we have the lattice-theoretic analogue.

\begin{definition}
	\label{definition:patch}
	A \emph{patch} is an element in~$\bool{A}$ of the form~$u_a\wedge \ell_b$, for~$a,b\in A$.
\end{definition}

\begin{lemma}
	\label{lemma:patches generate bool}
	Every element in~$\bool{A}$ is a finite join of patches.
\end{lemma}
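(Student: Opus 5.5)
The plan is to show that the set $P\subseteq\bool{A}$ of finite joins of patches contains the generators and is closed under the Boolean operations. Since $\bool{A}$ is generated as a Boolean algebra by the image $\beta_A[A]=\{u_a:a\in A\}$, this gives $P=\bool{A}$. To make the last inference constructive, I will use the universal property of $\bool{A}$ directly: $P$ will be a Boolean subalgebra containing $\beta_A[A]$. Then the inclusion $P\hookrightarrow\bool{A}$, composed with the factorisation of $\beta_A$ through $P$, is a Boolean morphism that agrees with $\beta_A$. By uniqueness, it composes with the induced map $\bool{A}\to P$ to the identity, so $P=\bool{A}$. Finiteness is Kuratowski finiteness throughout, so I will only ever form finite joins indexed by finite sets built from singletons, unions and products.

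First, $P$ contains the generators and is a bounded sublattice. We have $u_a=u_a\wedge\ell_\bot$, since $\ell_\bot=\neg u_\bot=\neg\bot=\top$ because $\beta_A$ preserves $\bot$. Similarly $\ell_b=u_\top\wedge\ell_b$. The empty join gives $\bot$, and $\top=u_\top\wedge\ell_\bot$. Closure under finite joins holds by definition. For binary meets, distributivity reduces the problem to meets of two patches:
\[
(u_a\wedge\ell_b)\wedge(u_c\wedge\ell_d)=u_{a\wedge c}\wedge\ell_{b\vee d}.
\]
This uses that $\beta_A$ preserves meets and joins, together with the De Morgan law $\ell_b\wedge\ell_d=\neg(u_b\vee u_d)=\neg u_{b\vee d}$. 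A meet of two finite joins is the join over the finite product of index sets, so it again lies in $P$.

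The remaining step, and the only real obstacle, is closure under complement. Since $P$ is already closed under finite meets, it suffices by De Morgan to show that the complement of a single patch lies in $P$:
\[
\neg(u_a\wedge\ell_b)=\neg u_a\vee\neg\ell_b=\ell_a\vee u_b,
\]
which is a join of two elements already shown to lie in $P$. For a general element $\bigvee_i p_i$, we then get $\neg\bigvee_i p_i=\bigwedge_i\neg p_i$, a finite meet of elements of $P$, hence in $P$. This finite meet is constructed by induction on Kuratowski-finite index sets, starting from $\top\in P$.
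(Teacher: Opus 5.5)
Your proof is correct, but it is organised differently from the paper's. The paper invokes the disjunctive normal-form theorem for Boolean algebras (cited from Davey and Priestley) to write every element of $\bool{A}$ as a finite join of terms $\bigwedge_{i\in I}u_{a_i}\wedge\bigwedge_{j\in J}\ell_{b_j}$. It then collapses each such term to the single patch $u_{\bigwedge_i a_i}\wedge\ell_{\bigvee_j b_j}$, using that $\beta_A$ is a bounded lattice morphism.

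You instead show directly that the set $P$ of finite joins of patches is a Boolean subalgebra containing $\beta_A[A]$. You perform the collapse already in the closure-under-meets step, via $(u_a\wedge\ell_b)\wedge(u_c\wedge\ell_d)=u_{a\wedge c}\wedge\ell_{b\vee d}$, and then conclude $P=\bool{A}$ from the uniqueness clause of the universal property. In effect you inline the proof of the normal-form theorem, specialised to patches.

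The paper's route is shorter and delegates to a textbook result. Yours is self-contained and makes the constructive content explicit. The fact that $\beta_A[A]$ generates $\bool{A}$, which the paper uses implicitly, is derived rather than assumed. The step to complements of finite joins is handled by Kuratowski-finite induction.

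One point of wording needs fixing. The composite you need is $\iota\circ\phi$, where $\phi\colon\bool{A}\to P$ is the Boolean morphism induced by the corestriction of $\beta_A$ to $P$, and $\iota\colon P\hookrightarrow\bool{A}$ is the inclusion. It is not $\iota$ composed with the corestriction, which is just $\beta_A$ and is only a lattice map. The composite $\iota\circ\phi$ is a Boolean endomorphism of $\bool{A}$ satisfying $\iota\circ\phi\circ\beta_A=\beta_A$. By uniqueness it equals the identity, which forces $\iota$ to be surjective.
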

\begin{proof}
	By the disjunctive normal-form theorem of Boolean algebras~\cite[\S 4.24]{davey2002IntroductionLatticesOrder}, every element is a join of elements of the form
	\[
	\bigwedge_{i\in I} u_{a_i}\wedge \bigwedge_{j\in J} \ell_{b_j},
	\]
	for finite index sets~$I,J$. Since~$\beta_A$ is a morphism of distributive lattices we see that
	\[
	\bigwedge_{i\in I} u_{a_i} = u_{\bigwedge_{i\in I} a_i}
	\quad\text{and}\quad
	\bigwedge_{j\in J} \ell_{b_j}
	=
	\neg\bigvee_{j\in J} u_{b_j}
	=
	\neg u_{\bigvee_{j\in J} b_j}
	=
	\ell_{\bigvee_{j\in J}b_j},
	\]
	from which the claim follows.
\end{proof}

In our setting~$A$ will not just be a distributive lattice, but a Heyting algebra. We collect the following elementary lemma that shows how the Heyting structure interacts with the Boolean generators.

\begin{lemma}
	\label{lemma:patch inclusion implies Heyting inclusion}
	Consider a family~$a_i,b_i,c,d\in A$ indexed by finite~$I$. Then:
	\[
	u_c\wedge \ell_d\sqleq \bigvee_{i\in I} (u_{a_i}\wedge \ell_{b_i})
	\quad\implies\quad
	\ell_{c\to d} \sqleq \bigvee_{i\in I} \ell_{a_i\to b_i}.
	\]
\end{lemma}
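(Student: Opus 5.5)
The plan is to reduce the claim to an inequality in $A$ and then use that $\beta_A$ is an order embedding. Write $e:=\bigwedge_{i\in I}(a_i\to b_i)\in A$; this is a finite meet, so it exists. Since $\beta_A$ preserves finite meets and $\neg$ is an order-reversing involution on $\bool{A}$, we have
\[
\bigvee_{i\in I}\ell_{a_i\to b_i}=\neg\bigwedge_{i\in I}u_{a_i\to b_i}=\neg u_e=\ell_e .
\]
The conclusion $\ell_{c\to d}\sqleq\ell_e$ is therefore equivalent to $u_e\sqleq u_{c\to d}$. Because $\beta_A$ is an order embedding, this is in turn equivalent to $e\sqleq c\to d$ in $A$, which by the Heyting adjunction is $c\wedge e\sqleq d$. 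So it suffices to prove $c\wedge e\sqleq d$.

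To obtain this, meet both sides of the hypothesis with $u_e$ and distribute in the Boolean algebra $\bool{A}$. Using $u_c\wedge u_e=u_{c\wedge e}$ and $u_{a_i}\wedge u_e=u_{a_i\wedge e}$, this gives
\[
u_{c\wedge e}\wedge\ell_d\sqleq\bigvee_{i\in I}\bigl(u_{a_i\wedge e}\wedge\ell_{b_i}\bigr).
\]
For each $i$ we have $a_i\wedge e\sqleq a_i\wedge(a_i\to b_i)\sqleq b_i$. Monotonicity of $\beta_A$ then gives $u_{a_i\wedge e}\sqleq u_{b_i}$, so each term $u_{a_i\wedge e}\wedge\ell_{b_i}\sqleq u_{b_i}\wedge\neg u_{b_i}=\bot$. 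The right-hand side is thus a finite join of bottoms, so $u_{c\wedge e}\wedge\neg u_d=\bot$. In a Boolean algebra this means $u_{c\wedge e}\sqleq u_d$, and reflecting along the order embedding $\beta_A$ yields $c\wedge e\sqleq d$, as required.

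There is no real obstacle here. The only point requiring care is constructivity. The argument uses only finite (Kuratowski-finite) meets and joins, distributivity in $\bool{A}$, and the order-embedding property of $\beta_A$ from \cite[Theorem~32]{maietti2012InductionPrincipleConsequence}. In particular it avoids prime filters and any case analysis on $I$. The key idea to verify is the trick of meeting with $u_e$, which kills every patch on the right-hand side simultaneously.
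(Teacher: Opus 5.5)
Your proof is correct and follows essentially the same route as the paper's: both set $e=\bigwedge_{i\in I}(a_i\to b_i)$, rewrite the right-hand side as $\ell_e$, reduce to $u_e\wedge u_c\sqleq u_d$ via the order embedding $\beta_A$, and obtain this by meeting the hypothesis with $u_e$, which kills every patch $u_{a_i}\wedge\ell_{b_i}$. The only cosmetic difference is that you verify $a_i\wedge e\sqleq b_i$ in $A$ before applying $\beta_A$, while the paper does the corresponding computation directly in $\bool{A}$.
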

\begin{proof}
	First observe that the term~$\bigvee_{i\in I}\ell_{a_i\to b_i}$ on the right-hand side can be rewritten according to
	\[
	\bigvee_{i\in I}\ell_{a_i\to b_i}
	=
	\bigvee_{i\in I} \neg u_{a_i\to b_i}
	=
	\neg\bigwedge_{i\in I} u_{a_i\to b_i}
	=
	\neg u_{\bigwedge_{i\in I}{a_i\to b_i}}
	=
	\ell_{\bigwedge_{i\in I}{a_i\to b_i}}.
	\]
	Abbreviating~$e:=\bigwedge_{i\in I} (a_i\to b_i)$, we thus need to prove~$e\sqleq c\to d$. This in turn holds iff~$e\wedge c\sqleq d$, for which it suffices to prove that~$u_{e\wedge c}=u_e\wedge u_c\sqleq u_d$. To see this, note that by construction for each~$i\in I$ we have~$e\sqleq a_i\to b_i$, and so using that~$a\wedge (a\to b) = a\wedge b$ in any Heyting algebra~(\cite[\S III.3.1.1]{picado2012FramesLocalesTopology}) we get
	\begin{align*}
		u_e\wedge u_{a_i}\wedge \ell_{b_i}
		&\sqleq
		u_{a_i\to b_i}\wedge u_{a_i}\wedge \ell_{b_i}
		\\&=
		u_{a_i}\wedge u_{b_i}\wedge \ell_{b_i}
		&&
		\\&=
		\bot.
		&&(\ell_{b_i}:=\neg u_{b_i})
	\end{align*}
	Taking joins, this implies~$u_e\wedge \bigvee_{i\in I}(u_{a_i}\wedge \ell_{b_i})=\bot$. With the stated hypothesis this gives~$u_e\wedge u_c\wedge \ell_d=\bot$, which is seen to be equivalent to the desired inclusion.
\end{proof}

%~~~~~~~~~~~~~~~~~~~~~~~~~~~
\subsection{Heyting algebra to Esakia frame}
\label{section:heyting algebra to esakia frame}
As outlined, starting with a Heyting algebra~$A$, which we fix for this section, we construct a Stone frame~$E_A$ as the frame of ideals of the free Boolean algebra~$\bool{A}$ generated by~$A$. This follows~\cite{townsend1997LocalicPriestleyDuality}.
On this frame, we construct a cone~$\dc{A}$. Then follows a string of lemmas that computes the cone on~(principal ideals of) patches, proves it preserves all joins, proves it forms a closure operator, identifies its compact upper elements, allowing us to finally conclude that~$(E_A,\dc{A})$ is an Esakia frame.

\begin{definition}
	If~$A$ is a Heyting algebra, we define the following Stone frame:
	\[
	E_A:= \idl(\bool{A}).
	\]
\end{definition}

To obtain an Esakia frame, we need to construct a join-preserving cone~$\dc{A}$ defined on~$E_A$. Its definition can be motivated as follows. First, patches~${u_a\wedge \ell_b}$ are analogous to set-theoretic complements~$A\setminus B$ for clopen upsets~$A,B$ in an Esakia space~$X$. Since~(principal ideals of) patches generate the frame~$E_A$, and since~$\dc{A}$ is to be join-preserving, it suffices to specify~$\dc{A}$ on patches. For this, consider how the down cone~$\down$ of an Esakia space acts on a spatial patch~$A\setminus B$, which is obtained from the Heyting implication as follows:
\[
A\to B := X\setminus \down (A\setminus B)
\quad\implies\quad
\down (A\setminus B) = X\setminus (A\to B).
\]
Translating this to frame-theoretic language, we expect the following equation to hold, forming the intuition of the full definition of~$\dc{A}$:
\[
\dc{A}\pidl{u_a\wedge \ell_b} = \pidl{\ell_{a\to b}}.
\]

\begin{definition}
	\label{definition:dcA}
	For a Heyting algebra~$A$ we define the following function:
	\[
	\dc{A}\colon E_A\longrightarrow E_A;
	\qquad
	\dc{A} (I):= \bigvee\{\pidl{\ell_{a\to b}}
	:
	a,b\in A
	:
	\pidl{u_a\wedge \ell_b}\sqleq I
	\}.
	\]
\end{definition}

The next sequence of lemmas establishes that~$(E_A,\dc{A})$ indeed defines an Esakia frame. The proof starts by calculating~$\dc{A}$ on finite joins of patches, which in the unary case confirms that~$\dc{A}$ adheres to the spatial intuition outlined above. Then, since~$E_A$ is generated by patches, proving the desired properties of~$\dc{A}$ can be carried out at that level. The chronology follows the list of axioms of an Esakia frame in~\cref{definition:esakia frame}.

\begin{lemma}
	\label{lemma:dcA on patches}
	For any finite family~$a_i,b_i\in A$, with~$i\in I$:
	\[
	\dc{A}\bigvee_{i\in I}\pidl{u_{a_i}\wedge \ell_{b_i}}
	=
	\bigvee_{i\in I}\pidl{\ell_{a_i\to b_i}}.
	\]
\end{lemma}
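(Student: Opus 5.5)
We prove the two inclusions separately. Write $J:=\bigvee_{i\in I}\pidl{u_{a_i}\wedge \ell_{b_i}}$; since $I$ is finite, $J$ is the principal ideal $\pidl{\bigvee_i u_{a_i}\wedge\ell_{b_i}}$, so $J$ is compact in $E_A$.

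For $\sqgeq$: each generator $\pidl{u_{a_i}\wedge\ell_{b_i}}$ lies below $J$. Taking $a=a_i$, $b=b_i$ in \cref{definition:dcA} shows $\pidl{\ell_{a_i\to b_i}}$ is one of the terms of the join defining $\dc{A}J$. Joining over $i$ gives the inclusion. Only the definition is needed here.

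For $\sqleq$: by definition it suffices to show that whenever $c,d\in A$ satisfy $\pidl{u_c\wedge\ell_d}\sqleq J$, we have $\pidl{\ell_{c\to d}}\sqleq \bigvee_i\pidl{\ell_{a_i\to b_i}}$. Since $\pidl{-}_{\bool{A}}$ is an order embedding that preserves finite joins (\cref{section:ideals}), the hypothesis becomes the inequality
\[
u_c\wedge\ell_d\sqleq \bigvee_{i\in I}(u_{a_i}\wedge\ell_{b_i})
\]
in $\bool{A}$. Applying \cref{lemma:patch inclusion implies Heyting inclusion} gives $\ell_{c\to d}\sqleq\bigvee_i\ell_{a_i\to b_i}$ in $\bool{A}$. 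Passing back to principal ideals then yields
\[
\pidl{\ell_{c\to d}}\sqleq\pidl{\textstyle\bigvee_i\ell_{a_i\to b_i}}=\bigvee_i\pidl{\ell_{a_i\to b_i}}.
\]
Taking the join over all such $(c,d)$ gives $\dc{A}J\sqleq\bigvee_i\pidl{\ell_{a_i\to b_i}}$.

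The only point that needs care is the identification of finite joins of principal ideals with the principal ideal of the join. This holds because $\pidl{-}\colon\bool{A}\to K\idl(\bool{A})$ is a lattice isomorphism onto the compact elements, so finite joins agree. The case $I=\emptyset$ reduces to $\dc{A}\bot=\bot$: a patch below $\bot$ is $\bot$, and then \cref{lemma:patch inclusion implies Heyting inclusion} with empty $I$ forces $\ell_{c\to d}=\bot$. The main content is already carried by \cref{lemma:patch inclusion implies Heyting inclusion}, so I expect no real obstacle.
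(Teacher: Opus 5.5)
Your proposal is correct and takes the same route as the paper: the $\sqgeq$ inclusion comes directly from the definition of $\dc{A}$, and the $\sqleq$ inclusion reduces, via the order embedding $\pidl{-}$ and its preservation of finite joins, to \cref{lemma:patch inclusion implies Heyting inclusion}. Your explicit treatment of finite joins of principal ideals and of the empty family just spells out what the paper compresses into ``taking principal ideals''.
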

\begin{proof}
	By construction of~$\dc{A}$ we get the~$\sqgeq$ inclusion, so it suffices to prove the converse. For that, take~$c,d\in A$ such that~$u_c\wedge \ell_d\sqleq \bigvee_{i\in I}(u_{a_i}\wedge \ell_{b_i})$ in~$\bool{A}$. \cref{lemma:patch inclusion implies Heyting inclusion} directly implies that~$\ell_{c\to d}\sqleq \bigvee_{i\in I}\ell_{a_i\to b_i}$, and the result follows by taking principal ideals.
\end{proof}

\begin{corollary}
	\label{corollary:dcA on patch}
	For all~$a,b\in A$ we have
	\[
	\dc{A}\pidl{u_a\wedge \ell_b}= \pidl{\ell_{a\to b}},
	\quad\text{and in particular}\quad
	\dc{A}\pidl{\ell_b}=\pidl{\ell_b}.
	\]
\end{corollary}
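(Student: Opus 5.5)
The first equation is the singleton case $I=\{*\}$ of \cref{lemma:dcA on patches}. Taking the family to consist of the single pair $(a,b)$, the lemma reads $\dc{A}\pidl{u_a\wedge \ell_b}=\pidl{\ell_{a\to b}}$ directly. Kuratowski finiteness causes no trouble, since a singleton is finite.

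For the second equation, we instantiate the first with $a:=\top$. Two facts reduce it to the claim.

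First, since $\beta_A$ is a distributive lattice morphism, it preserves the top element, so $u_\top=\beta_A(\top)=\top$ in $\bool{A}$. Hence $u_\top\wedge \ell_b=\ell_b$.

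Second, in any Heyting algebra $\top\to b=b$. This follows from the universal property: $y\sqleq \top\to b$ iff $\top\wedge y\sqleq b$ iff $y\sqleq b$.

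Substituting both facts into the first equation gives
\[
\dc{A}\pidl{\ell_b}=\dc{A}\pidl{u_\top\wedge\ell_b}=\pidl{\ell_{\top\to b}}=\pidl{\ell_b}.
\]

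All the substantive work is already contained in \cref{lemma:dcA on patches}, which itself rests on \cref{lemma:patch inclusion implies Heyting inclusion}. The only point needing any care is checking that $u_\top=\top$, which is exactly boundedness of $\beta_A$ as a map in $\DLat$.
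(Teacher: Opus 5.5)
Your proof is correct and follows the same route as the paper: the first equation is the singleton instance of \cref{lemma:dcA on patches}, and the second follows by taking $a=\top$ and using $u_\top=\top$ and $\top\to b=b$. Your explicit justification of $\top\to b=b$ from the universal property is a small addition the paper leaves implicit.
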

\begin{proof}
	The first equation is just a special case of~\cref{lemma:dcA on patches}. For the second, under~$\beta_A$ the top element~$\top\in A$ corresponds to~${u_\top = \top}$, so
	\[
	\dc{A}\pidl{\ell_b}=\dc{A}\pidl{u_\top\wedge \ell_b}=\pidl{\ell_{\top\to b}}=\pidl{\ell_b}.
	\qedhere
	\]
\end{proof}

\begin{lemma}
	\label{lemma:dcA preserves compacts}
	$\dc{A}$ preserves compact elements.
\end{lemma}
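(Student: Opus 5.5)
The plan is to reduce to the explicit description of compact elements of $E_A=\idl(\bool{A})$ and then apply \cref{lemma:dcA on patches}. By the equivalence $\CohFrm\simeq\DLat$ recalled in \cref{section:ideals}, the compact elements of $E_A$ are precisely the principal ideals $\pidl{x}$ with $x\in\bool{A}$. So fix a compact element $\pidl{x}$; it suffices to show that $\dc{A}\pidl{x}$ is again a principal ideal.

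By \cref{lemma:patches generate bool} we can write $x=\bigvee_{i\in I}(u_{a_i}\wedge\ell_{b_i})$ for some finite $I$ and $a_i,b_i\in A$. The principal ideal map $\pidl{-}\colon\bool{A}\to K\idl(\bool{A})$ is a lattice isomorphism onto the compact elements, whose joins are computed in $E_A$. Hence
\[
\pidl{x}=\bigvee_{i\in I}\pidl{u_{a_i}\wedge\ell_{b_i}}
\]
in $E_A$. Applying \cref{lemma:dcA on patches} then gives
\[
\dc{A}\pidl{x}=\bigvee_{i\in I}\pidl{\ell_{a_i\to b_i}}=\Bigl\langle\bigvee_{i\in I}\ell_{a_i\to b_i}\Bigr\rangle.
\]
This is a principal ideal, hence compact. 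Alternatively, it is a finite join of compact elements and so compact by \cref{lemma:finite joins of compact elements are compact}.

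The only point needing care is that the finite join of principal ideals in $\idl(\bool{A})$ is the principal ideal of the join; this is standard for ideals in a lattice. There is also a subtle issue with Kuratowski-finite index sets. The normal form of \cref{lemma:patches generate bool} uses a Kuratowski-finite $I$, and both \cref{lemma:dcA on patches} and the preservation of joins by $\pidl{-}$ are stated for such families, so no decidability is needed. I expect no real obstacle beyond this bookkeeping.
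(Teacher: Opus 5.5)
Your proposal is correct and follows essentially the same route as the paper: identify the compact elements of $E_A$ with principal ideals, use \cref{lemma:patches generate bool} to write them as finite joins of principal patches, and apply \cref{lemma:dcA on patches} to obtain a finite join of compact elements. Your added remarks, that this join is itself principal and that the index sets are Kuratowski finite, are consistent with the paper's conventions and change nothing in the argument.
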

\begin{proof}
	Take a compact element~$p\in K(E_A)\cong \bool{A}$. By~\cref{lemma:patches generate bool} this can be written as a finite join of the form~$p=\bigvee_{i\in I}\pidl{u_{a_i}\wedge \ell_{b_i}}$ for~$a_i,b_i\in A$, so applying~\cref{lemma:dcA on patches} directly gives
	\[
	\dc{A} (p) = \bigvee_{i\in I}\pidl{\ell_{a_i\to b_i}},
	\]
	which as a finite join of compact elements is again compact~(\cref{lemma:finite joins of compact elements are compact}).
\end{proof}

\begin{lemma}
	\label{lemma:dcA preserves finite joins of compacts}
	$\dc{A}$ preserves finite joins of compact elements.
\end{lemma}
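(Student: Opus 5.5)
The plan is to reduce everything to \cref{lemma:dcA on patches} by writing compact elements as finite joins of principal ideals of patches. Take compact elements $p,q\in K(E_A)$. Since $K(E_A)\cong\bool{A}$ via $\pidl{-}$, and principal ideals preserve finite joins ($\pidl{x\vee y}=\pidl{x}\vee\pidl{y}$ in $\idl(\bool{A})$), \cref{lemma:patches generate bool} gives finite families $(a_i,b_i)_{i\in I}$ and $(a_j,b_j)_{j\in J}$ with
\[
p=\bigvee_{i\in I}\pidl{u_{a_i}\wedge\ell_{b_i}}
\quad\text{and}\quad
q=\bigvee_{j\in J}\pidl{u_{a_j}\wedge\ell_{b_j}}.
\]
Then $p\vee q$ is the finite join over the disjoint union index set $I\sqcup J$. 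This set is still Kuratowski finite, so no decidability is needed.

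Apply \cref{lemma:dcA on patches} three times: to $p$, to $q$, and to $p\vee q$ indexed by $I\sqcup J$. This gives
\[
\dc{A}(p\vee q)=\bigvee_{k\in I\sqcup J}\pidl{\ell_{a_k\to b_k}}
=\bigvee_{i\in I}\pidl{\ell_{a_i\to b_i}}\vee\bigvee_{j\in J}\pidl{\ell_{a_j\to b_j}}
=\dc{A}(p)\vee\dc{A}(q).
\]
For the nullary case, the empty join $\bot=\pidl{\bot}$ is the empty join of patches, so \cref{lemma:dcA on patches} with $I=\emptyset$ gives $\dc{A}\bot=\bot$. Arbitrary finite joins then follow from the nullary and binary cases, or directly by the same argument over a finite union of index sets.

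The only point needing care is well-definedness. The value $\dc{A}(p)$ is given intrinsically by \cref{definition:dcA}, so it does not depend on the chosen patch decomposition. \cref{lemma:dcA on patches} says that any decomposition computes it, which is exactly what makes this argument legitimate. There is no real obstacle: the substantive work, namely \cref{lemma:patch inclusion implies Heyting inclusion} feeding \cref{lemma:dcA on patches}, is already done.
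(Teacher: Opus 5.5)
Your proposal is correct and follows essentially the same route as the paper: decompose $p$ and $q$ into finite joins of principal patches via \cref{lemma:patches generate bool}, apply \cref{lemma:dcA on patches} to each decomposition and to their concatenation, and handle $\bot$ with the empty family. Your remark that $\dc{A}$ is defined intrinsically, so the choice of decomposition does not matter, is a good point that the paper leaves implicit.
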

\begin{proof}
	That~$\dc{A}$ preserves~$\bot$ follows by applying~\cref{lemma:dcA on patches} to the empty indexing family. For binary joins, take~$p,q\in K(E_A)\cong\bool{A}$, and use~\cref{lemma:patches generate bool} to write them as finite joins of patches:
	\[
	p=\bigvee_{i\in I}\pidl{u_{a_i}\wedge \ell_{b_i}}
	\quad\text{and}\quad
	q=\bigvee_{j\in J}\pidl{u_{c_j}\wedge \ell_{d_j}}.
	\]
	Again applying~\cref{lemma:dcA on patches} this gives:
	\[
	\dc{A}(p\vee q)
	=
	\bigvee_{i\in I}\pidl{\ell_{a_i\to b_i}} \vee\bigvee_{j\in J}\pidl{\ell_{c_j\to d_j}}
	=
	\dc{A}(p)\vee\dc{A}(q).
	\qedhere
	\]
\end{proof}

For~$p\in \bool{A}$, write~$\pidl{p}$ for the principal ideal it generates, regarded as a compact element of~$E_A$.

\begin{remark}
	Recall that in any frame of ideals~$\idl(A)$ the join of a family~$(I_\lambda)_{\lambda\in\Lambda}$ is the ideal generated by their union. Explicitly, this means~$p\in \bigvee_{\lambda\in\Lambda} I_\lambda$ iff there exists finite~$F\subseteq \bigcup_{\lambda\in\Lambda}I_\lambda$ so that~$p\sqleq \bigvee F$. See for instance the proof of~\cite[Proposition~II.3.2]{johnstone1982StoneSpaces}.
\end{remark}

\begin{lemma}
	\label{lemma:dcA decomposes as join of compacts}
	For every~$I\in E_A$ we have:
	\[
	\dc{A}(I)=\bigvee_{p\in I}\dc{A}\pidl{p}.
	\]
\end{lemma}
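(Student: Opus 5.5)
We prove the two inclusions separately, working directly from \cref{definition:dcA} and the description of joins in $\idl(\bool{A})$ recalled in the preceding remark.

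For the inclusion $\sqgeq$, fix $p\in I$. Then $\pidl{p}\sqleq I$. For any patch with $\pidl{u_a\wedge\ell_b}\sqleq\pidl{p}$ we also have $\pidl{u_a\wedge \ell_b}\sqleq I$. So every generator $\pidl{\ell_{a\to b}}$ appearing in the defining join of $\dc{A}\pidl{p}$ also appears in the defining join of $\dc{A}(I)$. Hence $\dc{A}$ is monotone, $\dc{A}\pidl{p}\sqleq \dc{A}(I)$ for every $p\in I$, and taking the join over $p\in I$ gives the claim.

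For the inclusion $\sqleq$, it suffices to show that each generator of $\dc{A}(I)$ lies below the right-hand side. So take $a,b\in A$ with $\pidl{u_a\wedge\ell_b}\sqleq I$. This means exactly that the element $p:=u_a\wedge\ell_b\in\bool{A}$ belongs to the ideal $I$. Then $\pidl{p}$ is one of the terms on the right-hand side, and by \cref{corollary:dcA on patch} we have $\dc{A}\pidl{p}=\pidl{\ell_{a\to b}}$. So the generator $\pidl{\ell_{a\to b}}$ sits below $\bigvee_{p\in I}\dc{A}\pidl{p}$, and taking joins over all such generators gives $\dc{A}(I)\sqleq\bigvee_{p\in I}\dc{A}\pidl{p}$.

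The argument is essentially definitional. The only point needing care is the identification $\pidl{q}\sqleq I\iff q\in I$ for principal ideals, which holds because $I$ is down-closed. Note that the proof does not need \cref{lemma:dcA on patches} in its full finite form; the unary case in \cref{corollary:dcA on patch} is enough. I do not expect any real obstacle. The lemma's purpose is presumably to reduce the proof of join-preservation to \cref{lemma:dcA preserves finite joins of compacts}, together with the fact that every compact element below a directed join lies below some member of it.
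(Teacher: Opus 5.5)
Your proof is correct and follows essentially the same approach as the paper. Both show the $\sqgeq$ inclusion by monotonicity of $\dc{A}$, which you additionally justify from the definition where the paper simply asserts it. Both show the $\sqleq$ inclusion by noting that $\pidl{u_a\wedge\ell_b}\sqleq I$ iff $u_a\wedge\ell_b\in I$, so each defining term $\pidl{\ell_{a\to b}}=\dc{A}\pidl{u_a\wedge\ell_b}$ already occurs on the right-hand side by \cref{corollary:dcA on patch}.
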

\begin{proof}
	Note first that~$\dc{A}$ is monotone, so for any~$p\in I$ we have the inclusion~${\pidl{p}\sqleq I}$ in~$E_A$, and we immediately get~$\dc{A}\pidl{p}\sqleq \dc{A}(I)$. Hence the~$\sqgeq$ direction holds, and we are left to prove the converse.
	
	For that, recall~$\dc{A}(I)$ is defined as the join over~$\pidl{\ell_{a\to b}}$ for those~$a,b\in A$ such that~${\pidl{u_a\wedge \ell_b}\sqleq I}$. Now, this inclusion of ideals holds iff~$u_a\wedge\ell_b\in I$, so the join~$\bigvee_{p\in I}\dc{A}\pidl{p}$ on the right-hand side already contains all the terms~$\dc{A}\pidl{u_a\wedge \ell_b}$, which by~\cref{corollary:dcA on patch} are just equal to~$\pidl{\ell_{a\to b}}$, giving the desired inclusion.
\end{proof}

\begin{lemma}
	\label{lemma:dcA preserves joins}
	$\dc{A}$ preserves joins.
\end{lemma}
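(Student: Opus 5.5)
The idea is to reduce the statement to the compact level, using \cref{lemma:dcA decomposes as join of compacts} together with \cref{lemma:dcA preserves finite joins of compacts}. Preservation of the empty join, $\dc{A}(\bot)=\bot$, is already contained in \cref{lemma:dcA preserves finite joins of compacts}. So we fix an arbitrary family $(I_\lambda)_{\lambda\in\Lambda}$ in $E_A$ and write $J:=\bigvee_\lambda I_\lambda$. We must show $\dc{A}(J)=\bigvee_\lambda \dc{A}(I_\lambda)$.

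The inequality $\sqgeq$ follows from monotonicity of $\dc{A}$, which is immediate from \cref{definition:dcA}, since $I_\lambda\sqleq J$ for each $\lambda$.

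For $\sqleq$, we first apply \cref{lemma:dcA decomposes as join of compacts}, which gives $\dc{A}(J)=\bigvee_{p\in J}\dc{A}\pidl{p}$. It therefore suffices to show $\dc{A}\pidl{p}\sqleq\bigvee_\lambda\dc{A}(I_\lambda)$ for each $p\in J$. By the preceding remark on joins in ideal frames, $p\in J$ means there is a finite $F\subseteq\bigcup_\lambda I_\lambda$ with $p\sqleq\bigvee F$. Choose, for each $q\in F$, an index $\lambda_q$ with $q\in I_{\lambda_q}$. This choice is constructively acceptable, because $F$ is Kuratowski finite and so we may induct on its finite presentation rather than invoking choice. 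We now compute
\[
\dc{A}\pidl{p}\sqleq \dc{A}\Bigl(\bigvee_{q\in F}\pidl{q}\Bigr)=\bigvee_{q\in F}\dc{A}\pidl{q}\sqleq \bigvee_{q\in F}\dc{A}(I_{\lambda_q})\sqleq\bigvee_\lambda\dc{A}(I_\lambda).
\]
The steps are justified as follows:
\begin{itemize}
\item the first inequality uses monotonicity of $\dc{A}$, since $\pidl{p}\sqleq\pidl{\bigvee F}=\bigvee_{q\in F}\pidl{q}$ in $K(E_A)$;
\item the equality is \cref{lemma:dcA preserves finite joins of compacts};
\item the third step uses monotonicity again, since $\pidl{q}\sqleq I_{\lambda_q}$.
\end{itemize}

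The main subtlety is the finite-join step. It relies on the principal-ideal map $\pidl{-}$ preserving finite joins, which holds because $\pidl{-}\colon\bool{A}\to K(E_A)$ is a lattice isomorphism (\cref{section:ideals}). It also relies on the fact that \cref{lemma:dcA preserves finite joins of compacts} applies to finite joins indexed by a Kuratowski-finite set; this follows by induction from the nullary and binary cases. Beyond this, the argument is routine bookkeeping.
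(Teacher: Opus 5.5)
Your proof is correct and follows essentially the same route as the paper. Both arguments reduce via \cref{lemma:dcA decomposes as join of compacts} to principal ideals $\pidl{p}$ with $p\in J$, cover $p$ by a finite $F\subseteq\bigcup_\lambda I_\lambda$, and conclude with monotonicity and \cref{lemma:dcA preserves finite joins of compacts}. The only cosmetic difference is at the last step: the paper bounds $\bigvee_{q\in F}\dc{A}\pidl{q}$ by $\bigvee_{q\in\bigcup_\lambda I_\lambda}\dc{A}\pidl{q}$ and applies the decomposition lemma to each $\dc{A}(I_\lambda)$, so it never chooses indices $\lambda_q$ and your (harmless) appeal to finite choice is not actually needed.
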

\begin{proof}
	Take an arbitrary family~$(I_\lambda)_{\lambda\in \Lambda}$ of ideals in~$\bool{A}$, and put~$I:=\bigvee_{\lambda\in\Lambda}I_\lambda$. Since~$\dc{A}$ is monotone, based on the decomposition of~$\dc{A}(I)$ in~\cref{lemma:dcA decomposes as join of compacts} it suffices to prove that~$\dc{A}\pidl{p}\sqleq \bigvee_{\lambda\in\Lambda}\dc{A}(I_\lambda)$ for every~$p\in I$.
	
	To see this, since~$I$ is the ideal generated by the union of~$(I_\lambda)_{\lambda\in\Lambda}$, for every~${p\in I}$ there exists a finite subset~$F\subseteq\bigcup_{\lambda\in\Lambda}I_\lambda$ such that~$p\sqleq\bigvee F$. Since~$\dc{A}$ is monotone and preserves finite joins of compact elements~(\cref{lemma:dcA preserves finite joins of compacts}), we get an inclusion:
	\[
	\dc{A}\pidl{p}
	\sqleq
	\dc{A}\bigvee_{q\in F}\pidl{q}
	=
	\bigvee_{q\in F}\dc{A}\pidl{q}.
	\]
	Lastly, now using~\cref{lemma:dcA decomposes as join of compacts} for each term~$\dc{A}(I_\lambda)$, we see that this is in turn included in the desired term:
	\[
	\bigvee_{q\in F}\dc{A}\pidl{q}
	\sqleq
	\bigvee_{q\in\bigcup_{\lambda\in\Lambda} I_\lambda} \dc{A}\pidl{q}
	=
	\bigvee_{\lambda\in\Lambda}\bigvee_{q\in I_\lambda}\dc{A}\pidl{q}
	=
	\bigvee_{\lambda\in\Lambda}\dc{A}(I_\lambda).
	\qedhere
	\]
\end{proof}

\begin{lemma}
	\label{lemma:dcA inflationary}
	$\dc{A}$ is inflationary.
\end{lemma}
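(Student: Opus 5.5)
Since $\dc{A}$ is monotone and preserves all joins (\cref{lemma:dcA preserves joins}), and every ideal $I\in E_A$ is the join of the principal ideals $\pidl{p}$ for $p\in I$, it suffices to prove $\pidl{p}\sqleq\dc{A}\pidl{p}$ for compact $p\in\bool{A}$. By \cref{lemma:patches generate bool}, such a $p$ is a finite join of patches. By \cref{lemma:dcA on patches} (or \cref{lemma:dcA preserves finite joins of compacts}), $\dc{A}$ sends such a finite join to the join of the images of the patches. So the claim reduces to the single inequality
\[
\pidl{u_a\wedge\ell_b}\sqleq\dc{A}\pidl{u_a\wedge\ell_b}=\pidl{\ell_{a\to b}}
\]
for all $a,b\in A$, where the equality is \cref{corollary:dcA on patch}.

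Since $\pidl{-}$ is an order embedding, this inequality amounts to $u_a\wedge\ell_b\sqleq\ell_{a\to b}$ in $\bool{A}$. Equivalently, $u_a\wedge\ell_b\wedge u_{a\to b}=\bot$. To see this, use that $\beta_A$ preserves binary meets and the Heyting identity $a\wedge(a\to b)=a\wedge b$:
\[
u_a\wedge u_{a\to b}\wedge\ell_b=u_{a\wedge(a\to b)}\wedge\ell_b=u_{a\wedge b}\wedge\ell_b\sqleq u_b\wedge\neg u_b=\bot.
\]
This is the same computation as in the proof of \cref{lemma:patch inclusion implies Heyting inclusion}.

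The only step requiring any care is the reduction from arbitrary ideals to patches. It follows from the decomposition in \cref{lemma:dcA decomposes as join of compacts} together with the fact that $I=\bigvee_{p\in I}\pidl{p}$ in $\idl(\bool{A})$. The remaining steps are routine Boolean and Heyting algebra calculations.
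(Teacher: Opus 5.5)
Your proof is correct and follows essentially the same route as the paper. You reduce via join-preservation (or the decomposition over compact elements) to compact elements, then via the patch normal form to single patches $\pidl{u_a\wedge\ell_b}$, and conclude from the Heyting inequality $a\wedge(a\to b)\leq b$ transported along $\beta_A$, additionally spelling out the Boolean step $u_a\wedge\ell_b\leq\ell_{a\to b}\iff u_a\wedge u_{a\to b}\wedge\ell_b=\bot$ that the paper leaves implicit.
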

\begin{proof}
	Given~\cref{lemma:dcA decomposes as join of compacts}, it suffices to show~$\dc{A}$ is inflationary on compact elements. In turn, by~\cref{lemma:patches generate bool,lemma:dcA preserves finite joins of compacts,corollary:dcA on patch} it suffices to prove~$\dc{A}$ is inflationary on patches~$\pidl{u_a\wedge \ell_b}$, for~$a,b\in A$, which by~\cref{corollary:dcA on patch} even further simplifies to the inclusion~${\pidl{u_a\wedge\ell_b}\sqleq \dc{A}\pidl{u_a\wedge\ell_b}=\pidl{\ell_{a\to b}}}$. To see this, note that~$a\wedge (a\to b)\sqleq b$ holds in any Heyting algebra, which after the embedding~$\beta_A$ and taking principal ideals rewrites to the desired inclusion.
\end{proof}

\begin{lemma}
	\label{lemma:dcA subidempotent}
	$\dc{A}$ is subidempotent.
\end{lemma}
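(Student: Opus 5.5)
We must show $\dc{A}\dc{A}(I)\sqleq\dc{A}(I)$ for every ideal $I\in E_A$. The plan is to reduce first to compact elements, then to patches, and finally to a single Heyting identity.

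\emph{Reduction to patches.} By \cref{lemma:dcA decomposes as join of compacts} and \cref{lemma:dcA preserves joins}, we have
\[
\dc{A}\dc{A}(I)=\dc{A}\bigvee_{p\in I}\dc{A}\pidl{p}=\bigvee_{p\in I}\dc{A}\dc{A}\pidl{p}.
\]
Since $\pidl{p}\sqleq I$ and $\dc{A}$ is monotone, it therefore suffices to prove $\dc{A}\dc{A}\pidl{p}\sqleq\dc{A}\pidl{p}$ for compact $\pidl{p}$. Next, write $p$ as a finite join of patches using \cref{lemma:patches generate bool}. Because $\dc{A}$ preserves finite joins of compacts (\cref{lemma:dcA preserves finite joins of compacts}) and also preserves compacts (\cref{lemma:dcA preserves compacts}), the map $\dc{A}\dc{A}$ distributes over this finite join as well. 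So it is enough to verify the inequality on a single patch $\pidl{u_a\wedge\ell_b}$.

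\emph{Computation on a patch.} By \cref{corollary:dcA on patch}, $\dc{A}\pidl{u_a\wedge\ell_b}=\pidl{\ell_{a\to b}}$. Applying the second equation of that corollary with $b$ replaced by $a\to b$ gives $\dc{A}\pidl{\ell_{a\to b}}=\pidl{\ell_{a\to b}}$, which amounts to the Heyting identity $\top\to(a\to b)=a\to b$. Hence
\[
\dc{A}\dc{A}\pidl{u_a\wedge\ell_b}=\pidl{\ell_{a\to b}}=\dc{A}\pidl{u_a\wedge\ell_b}.
\]
This is in fact idempotence on patches, which is stronger than what we need.

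\emph{Main obstacle.} The only delicate point is the bookkeeping in passing from arbitrary ideals to patches. Specifically, we need $\dc{A}\dc{A}$ to commute with the finite join of patch-ideals, which uses both that $\dc{A}$ preserves compacts and that it preserves finite joins of compacts. Alternatively, this step can be done in one stroke via \cref{lemma:dcA on patches}. Applying it twice to $p=\bigvee_i\pidl{u_{a_i}\wedge\ell_{b_i}}$ gives
\[
\dc{A}\dc{A}(p)=\dc{A}\bigvee_i\pidl{u_\top\wedge\ell_{a_i\to b_i}}=\bigvee_i\pidl{\ell_{a_i\to b_i}}=\dc{A}(p),
\]
which avoids the separate distributivity argument.
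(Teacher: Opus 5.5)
Your proof is correct and uses the same two facts as the paper's: $\dc{A}$ fixes every $\pidl{\ell_c}$ (\cref{corollary:dcA on patch}), and $\dc{A}$ preserves joins (\cref{lemma:dcA preserves joins}). The paper applies these directly to the definition of $\dc{A}(I)$ as a join of elements $\pidl{\ell_{a\to b}}$, which gives $\dc{A}^2(I)=\dc{A}(I)$ in one line; so your reduction of the input ideal to compacts and then to patches, and the bookkeeping you single out as the main obstacle, is valid but unnecessary.
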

\begin{proof}
	We know from~\cref{corollary:dcA on patch} that~$\dc{A}$ fixes all elements of the form~$\pidl{\ell_c}$.
	However, for arbitrary~$I\in E_A$, the element~$\dc{A}(I)$ is defined just as a join over such elements, so since~$\dc{A}$ preserves joins~(\cref{lemma:dcA preserves joins}) it follows that~${\dc{A}^2=\dc{A}}$. In particular, it is subidempotent.
\end{proof}

\begin{lemma}
	\label{lemma:dcA closed}
	$(E_A,\dc{A})$ is a closed conic frame.
\end{lemma}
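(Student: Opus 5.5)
We have to show that for all $I,J\in E_A$:
\[
I\tensor J\sqleq \Dc(I,J)\tensor J\vee O^\Dc_\bot,
\]
where $\Dc(I,J)=I\wedge\dc{A}J$. The plan is to reduce this to compact rectangles and then to patches.

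\emph{Reduction to compact rectangles.} Since $E_A$ is coherent, both $I$ and $J$ are joins of compact elements. The tensor distributes over joins in each variable. The right-hand side is monotone in $I$. It is not obviously monotone in $J$, so I argue directly. For compact $p\sqleq I$ and $q\sqleq J$ we have $\Dc(p,q)\sqleq\Dc(I,J)$ by monotonicity of $\dc{A}$, and $q\sqleq J$. Hence $\Dc(p,q)\tensor q\sqleq \Dc(I,J)\tensor J$. It therefore suffices to prove the inequality for compact $p,q$.

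\emph{Splitting $q$ into clopen pieces.} In the Boolean algebra $K(E_A)\cong\bool{A}$, the element $p\wedge\dc{A}q$ is compact by \cref{lemma:dcA preserves compacts}, and its complement $\neg\dc{A}q$ exists. So
\[
p = (p\wedge \dc{A}q)\vee(p\wedge\neg\dc{A}q),
\]
and thus $p\tensor q = \Dc(p,q)\tensor q\vee (p\wedge\neg\dc{A}q)\tensor q$. It remains to show that the second rectangle lies below $O^\Dc_\bot$. By \cref{corollary:separating open rectangle adjunction} this means checking
\[
(p\wedge\neg\dc{A}q)\wedge\dc{A}q=\bot,
\]
which holds trivially.

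So closedness follows almost immediately once we know that $\dc{A}$ maps compacts to compacts, which gives us a complement in the Stone frame. That input is exactly \cref{lemma:dcA preserves compacts}, combined with the fact that $E_A$ is Stone. The join-preservation of $\dc{A}$ (\cref{lemma:dcA preserves joins}) ensures $(E_A,\dc{A})$ is a conic frame at all.

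The only step needing care, and the main obstacle, is the reduction from arbitrary ideals to compact ones. One might be tempted to write $I\tensor J$ as a join of $p\tensor q$ and then argue that the right-hand side is monotone in $J$, which is false in general. The fix is the argument above: bound each compact rectangle's right-hand side by the right-hand side for $(I,J)$ using monotonicity of both $\wedge$ and $\dc{A}$ in each variable separately, and then take joins over $p\sqleq I$ and $q\sqleq J$.
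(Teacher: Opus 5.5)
Your proof is correct, and its overall structure matches the paper's: reduce to compact rectangles, split $p$ along $\dc{A}q$ and its complement, and show the leftover rectangle lies below $O^\Dc_\bot$. The two proofs differ in the last step.

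The paper works with principal patches $p=\pidl{u_a\wedge\ell_b}$ and $q=\pidl{u_c\wedge\ell_d}$. It uses the explicit value $\dc{A}q=\pidl{\ell_{c\to d}}$ from \cref{corollary:dcA on patch}, and then \emph{enlarges} the leftover piece $(p\wedge\pidl{u_{c\to d}})\tensor q$ to the rectangle $\pidl{u_{c\to d}}\tensor\pidl{\ell_{c\to d}}$. This needs inflationarity (\cref{lemma:dcA inflationary}) to get $q\sqleq\pidl{\ell_{c\to d}}$. It also needs the fixed-point identity $\dc{A}\pidl{\ell_e}=\pidl{\ell_e}$ to see that such rectangles lie in $O^\Dc_\bot$.

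You instead test the same leftover rectangle $(p\wedge\neg\dc{A}q)\tensor q$ directly with \cref{corollary:separating open rectangle adjunction}. This uses nothing about $\dc{A}$ except that $\dc{A}q$ is complemented, i.e.\ \cref{lemma:dcA preserves compacts}.

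Your route is shorter and more general. It shows that any conic Stone frame whose cone preserves compact elements is closed. Together with \cref{lemma:dc preserves compact elements}, this gives a characterisation: a conic Stone frame is closed iff its cone preserves compacts. The paper's route instead stays inside the explicit patch calculus. It exhibits the rectangles $\pidl{u_e}\tensor\pidl{\ell_e}$, which later turn out to generate $O^\Dc_\bot$ (compare \cref{lemma:townsend R open complement}), though that is not needed for closedness itself.

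One side remark of yours is mistaken, though it does no harm. The right-hand side $(I\wedge\dc{A}J)\tensor J\vee O^\Dc_\bot$ \emph{is} monotone in $J$, because $\dc{A}$ is monotone and $\tensor$ is monotone in each variable. Your ``fix'' is exactly a proof of this, so there is no real obstacle in the reduction step.
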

\begin{proof}
	Denote the reduced cone of~$\dc{A}$ by~$\Dc$. We need to show that for all~${I,J\in E_A}$:
	\[
	I\tensor J\sqleq \Dc(I,J)\tensor J \vee O^\Dc_\bot.
	\]
	Combining coherence with~\cref{lemma:patches generate bool}, the ideals~$I,J$ can be written as the join over the principal ideals generated by the patches they contain. It thus suffices to show that the defining inclusion of a closed frame above holds in the case that~$I,J$ are principal patches. That is, for arbitrary~$a,b,c,d\in A$ we need with~${p:=\pidl{u_a\wedge \ell_b}}$ and~$q:= \pidl{u_c\wedge \ell_d}$ that:
	\[
	p\tensor q\sqleq \Dc(p,q)\tensor q\vee O^\Dc_\bot.
	\]
	We simplify this expression. First, using~\cref{corollary:dcA on patch} we get
	\[
	\Dc(p,q)= p\wedge \dc{A}(q)=p \wedge \pidl{\ell_{c\to d}}.
	\]
	Using the Boolean structure, we can thus decompose~$p=\Dc(p,q)\vee (p\wedge \pidl{u_{c\to d}})$. On the other hand, since~$\dc{A}$ is inflationary~(\cref{lemma:dcA inflationary}) we get~${q\sqleq \pidl{\ell_{c\to d}}}$. Combining these two, we get:
	\[
	p\tensor q 
	=
	\Dc(p,q)\tensor q \vee (p\wedge \pidl{u_{c\to d}})\tensor q
	\sqleq
	\Dc(p,q)\tensor q \vee \pidl{u_{c\to d}}\tensor \pidl{\ell_{c\to d}}.
	\]
	We are thus left to show that this latter term~$\pidl{u_{c\to d}}\tensor \pidl{\ell_{c\to d}}$ is contained in~$O^\Dc_\bot$.
	To see this, recall from~\cref{corollary:dcA on patch} that~$\dc{A}\pidl{\ell_e}=\pidl{\ell_e}$ for all~${e\in A}$, so it follows
	\[
	\Dc(\pidl{u_e},\pidl{\ell_e})= \pidl{u_e}\wedge \dc{A}\pidl{\ell_e}=\pidl{u_e}\wedge\pidl{\ell_e} = \pidl{u_e}\wedge\neg \pidl{u_e}=\bot,
	\]
	and hence each rectangle of the form~$\pidl{u_e}\tensor \pidl{\ell_e}$ is contained in~$O^\Dc_\bot$.
\end{proof}

\begin{lemma}
	\label{lemma:KdcA}
	$\Kdc{A}(E_A)=\{\pidl{u_a}:a\in A\}$.
\end{lemma}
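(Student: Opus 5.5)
We prove the two inclusions separately, working throughout in the Boolean algebra $K(E_A)\cong\bool{A}$, where negation of principal ideals is computed as $\neg\pidl{p}=\pidl{\neg p}$.

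For $\supseteq$, fix $a\in A$. The element $\pidl{u_a}$ is compact, and its complement is $\neg\pidl{u_a}=\pidl{\ell_a}$. By \cref{corollary:dcA on patch} we have $\dc{A}\pidl{\ell_a}=\pidl{\ell_a}$, so $\pidl{u_a}\in\Kdc{A}(E_A)$ directly.

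For $\subseteq$, take a compact $p\in K(E_A)$ with $\dc{A}\neg p=\neg p$. By \cref{lemma:patches generate bool}, write $\neg p=\bigvee_{i\in I}\pidl{u_{a_i}\wedge\ell_{b_i}}$ for a finite family. Then \cref{lemma:dcA on patches} gives
\[
\neg p=\dc{A}\neg p=\bigvee_{i\in I}\pidl{\ell_{a_i\to b_i}}.
\]
As in the proof of \cref{lemma:patch inclusion implies Heyting inclusion}, this finite join can be rewritten as $\pidl{\ell_e}$ with $e:=\bigwedge_{i\in I}(a_i\to b_i)\in A$; the case $I=\emptyset$ gives $e=\top$, so that $\ell_\top=\bot$. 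Hence $\neg p=\pidl{\ell_e}=\neg\pidl{u_e}$. Since $K(E_A)$ is Boolean, we conclude $p=\pidl{u_e}$, which has the required form.

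The only step requiring care is the rewriting of the finite join of $\ell$'s as a single $\ell_e$. This uses that $\beta_A$ preserves finite meets, including the empty meet, together with De Morgan's laws in $\bool{A}$. It is the same computation as in \cref{lemma:patch inclusion implies Heyting inclusion}, so no real obstacle is expected.
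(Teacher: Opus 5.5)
Your proof is correct and follows the paper's argument essentially verbatim. For $\supseteq$ you use $\dc{A}\pidl{\ell_a}=\pidl{\ell_a}$; for $\subseteq$ you write $\neg p$ as a finite join of patches, apply \cref{lemma:dcA on patches}, and collapse $\bigvee_{i\in I}\pidl{\ell_{a_i\to b_i}}$ to $\pidl{\ell_e}$. Your explicit handling of the empty family is a small extra that the paper leaves implicit.
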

\begin{proof}
	For each~$a\in A$ we get using~\cref{corollary:dcA on patch} that:
	\[
	\dc{A}\neg \pidl{u_a} = \dc{A}\pidl{\ell_a} = \pidl{\ell_a} = \neg \pidl{u_a},
	\]
	so every~$\pidl{u_a}$ lies in~$\Kdc{A}(E_A)$. Conversely, take~$p\in \Kdc{A}(E_A)$. Using~\cref{lemma:patches generate bool} we can find~$a_i,b_i\in A$, indexed by finite~$I$, such that~${\neg p = \bigvee_{i\in I}\pidl{u_{a_i}\wedge \ell_{b_i}}}$. Applying~$\dc{A}$ and using~\cref{lemma:dcA on patches} we get~$\neg p =\dc{A}(\neg p)= \bigvee_{i\in I}\pidl{\ell_{a_i\to b_i}}$, where the latter term again simplifies to~$\pidl{\ell_e}$ with~$e:=\bigwedge_{i\in I}(a_i\to b_i)$. Hence~${p=\pidl{u_e}}$.
\end{proof}

\begin{proposition}
	\label{proposition:heyting algebra to esakia frame}
	$(E_A,\dc{A})$ is an Esakia frame.
\end{proposition}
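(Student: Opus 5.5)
The plan is to verify the axioms (E0)--(E4) of \cref{definition:esakia frame} one at a time, mostly by collecting the preceding lemmas of this section.

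\emph{(E0) and (E1).} Join-preservation is \cref{lemma:dcA preserves joins}, so $(E_A,\dc{A})$ is a conic frame. For the Stone property, $E_A=\idl(\bool{A})$ is coherent with $K(E_A)\cong\bool{A}$ via principal ideals (\cref{section:ideals}). Since $\bool{A}$ is a Boolean algebra, every compact element is complemented. By the characterisation recalled after \cref{definition:stone}, $E_A$ is Stone.

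\emph{(E2) and (E3).} Monotonicity of $\dc{A}$ follows from join-preservation. Inflationarity is \cref{lemma:dcA inflationary} and subidempotence is \cref{lemma:dcA subidempotent}, so $\dc{A}$ is a closure operator. Closedness is exactly \cref{lemma:dcA closed}.

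\emph{(E4).} This is the only step requiring a small argument. By \cref{lemma:KdcA} we have $\Kdc{A}(E_A)=\{\pidl{u_a}:a\in A\}$, and we must show that this set Boolean generates $K(E_A)\cong\bool{A}$. Under the isomorphism $\pidl{-}\colon\bool{A}\to K(E_A)$ of Boolean algebras, this amounts to showing that $\{u_a:a\in A\}=\beta_A[A]$ Boolean generates $\bool{A}$. This follows from \cref{lemma:patches generate bool}: every element of $\bool{A}$ is a finite join of patches $u_a\wedge\neg u_b$, and each such expression is a finite Boolean combination of generators. (Alternatively, it follows from the universal property: the Boolean subalgebra generated by $\beta_A[A]$ receives a factorisation of $\beta_A$, so it must be all of $\bool{A}$.)

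The main point to check is that the principal-ideal isomorphism $\pidl{-}$ preserves complements, so that Boolean generation transfers from $\bool{A}$ to $K(E_A)$. It does, because it is a lattice isomorphism between Boolean algebras.
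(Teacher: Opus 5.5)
Your proposal is correct and follows the paper's own proof. The paper likewise cites the preceding lemmas for (E0)--(E3) and obtains (E4) from \cref{lemma:KdcA} together with \cref{lemma:patches generate bool}. Your extra remarks, on why $E_A$ is Stone and on transporting Boolean generation along the principal-ideal isomorphism, simply spell out details the paper leaves implicit.
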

\begin{proof}
	This is the culmination of the previous sequence of lemmas:
	\begin{enumerate}[label=(E\arabic*),start=0]
		\item $(E_A,\dc{A})$ is a conic frame by~\cref{lemma:dcA preserves joins};
		\item $E_A$ is Stone by construction;
		\item $\dc{A}$ is a closure operator by~\cref{lemma:dcA inflationary,lemma:dcA subidempotent};
		\item $(E_A,\dc{A})$ is closed by~\cref{lemma:dcA closed};
		\item $\Kdc{A}(E_A)$ generates~$K(E_A)$ by~\cref{lemma:patches generate bool,lemma:KdcA}.\qedhere
	\end{enumerate}
\end{proof}

\begin{remark}
	\label{remark:boolean esakia frame}
	Consider the special case that~$A$ is a Boolean algebra. In that case the free Boolean embedding identifies~$\beta_A\colon A\cong \bool{A}$, and for~$a,b\in A$ the Heyting implication~$a\to b= \neg a \vee b$ translates to~$u_a\wedge \ell_b = \ell_{a\to b}$. Thus we see with~\cref{corollary:dcA on patch} that~$\dc{A}\pidl{u_a\wedge \ell_b}= \pidl{u_a\wedge \ell_b}$, so in fact~$\dc{A}=\id_{E_A}$. The induced localic relation~$R_{\dc{A}}$ is hence just the diagonal relation~$\Delta$, which conforms with the spatial situation~\cite[Corollary~3.4.9]{esakia2019HeytingAlgebrasDuality} that an Esakia space~$(X,\leq)$ corresponds to a Boolean algebra iff its partial order~$\leq$ is the identity relation.
\end{remark}

%~~~~~~~~~~~~~~~~~~~~~~~~~~~~~~~~~~~~~~~
\subsection{The functors}
Next, we construct functors between Esakia frames and Heyting algebras. The functors are the obvious extensions of the object-level assignments in~\cref{section:esakia frame to heyting algebra,section:heyting algebra to esakia frame}. For morphisms, we prove that an Esakia frame map restricts to a morphism between the sets of compact upper elements, while a map between Heyting algebras first extends via the functor~$\bool$ to a map between the free Boolean algebras, and then via~$\idl$ to a map between the corresponding Stone~frames.

\begin{lemma}
	\label{lemma:Kdc preserved by conic morphism}
	If~$k\colon (E,\dc)\to (F,\dcx)$ is a map in~$\EFrm_\weak$ then~${k[\Kdc(E)]\subseteq K_{\dcx}(F)}$.
\end{lemma}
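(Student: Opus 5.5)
Take $c\in\Kdc(E)$; we must show that $k(c)$ is compact and that $\dcx\neg k(c)=\neg k(c)$ in $F$.

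\emph{Compactness.} Since $E$ is Stone, $c$ is complemented, with complement $\neg c$. Because $k$ is a frame map, it preserves finite meets and joins, and in particular $\top$ and $\bot$. Hence $k(c)\wedge k(\neg c)=\bot$ and $k(c)\vee k(\neg c)=\top$. So $k(c)$ is complemented in $F$ with complement $k(\neg c)$, which gives $\neg k(c)=k(\neg c)$. In a Stone frame the complemented elements are exactly the compact ones, so $k(c)\in K(F)$. This step uses nothing about morphisms in $\EFrm_\weak$ beyond their being frame maps, and it also shows that $k$ restricts to a Boolean morphism $K(E)\to K(F)$.

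\emph{The upper condition.} This uses the conic morphism inequality $\dcx\circ k\sqleq k\circ\dc$. Applying it to $\neg c$ and using $\neg k(c)=k(\neg c)$ gives
\[
\dcx\neg k(c)=\dcx k(\neg c)\sqleq k(\dc\neg c)=k(\neg c)=\neg k(c),
\]
where the middle equality is the hypothesis $c\in\Kdc(E)$. For the reverse inequality, $\dcx$ is inflationary because it is a closure operator (axiom (E2) for $(F,\dcx)$), so $\neg k(c)\sqleq\dcx\neg k(c)$. Together these give $\dcx\neg k(c)=\neg k(c)$, hence $k(c)\in K_{\dcx}(F)$.

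The only point needing care is the identity $\neg k(c)=k(\neg c)$. In general a frame map preserves only complements, not Heyting negations. Here $c$ is complemented, so its Heyting negation coincides with its Boolean complement, and frame maps do preserve Boolean complements.
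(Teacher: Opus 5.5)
Your proof is correct and follows essentially the same route as the paper. The paper likewise gets compactness from the fact that frame maps preserve complemented elements, which in a Stone frame are exactly the compact ones, and gets $\dcx\neg k(c)=\neg k(c)$ from the conic inequality together with inflationarity of $\dcx$.
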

\begin{proof}
	Take~$c\in \Kdc(E)$, so~$c\in K(E)$ and $\dc\neg c = \neg c$. Since~$E$ is Stone, the compact elements are precisely the complemented elements, which are preserved by frame maps. Hence~$k(c)\in K(F)$ with complement~$\neg k(c)=k(\neg c)$. Using that~$k$ is a conic morphism we get an inclusion:
	\[
	\dcx \neg k(c)
	=
	\dcx k (\neg c)
	\sqleq
	k(\dc \neg c)
	=
	k(\neg c)
	= \neg k(c),
	\]
	and since~$\dcx$ is inflationary this is actually an equality, so~$k(c)\in K_{\dcx}(F)$.
\end{proof}

\begin{proposition}
	\label{proposition:EFrm to Heyt}
	There are functors
	\begin{align*}
		\Kdc^\mode
		\colon
		\EFrm_\mode
		&\longrightarrow
		\Heyt_\mode;
		\\
		(E,\dc)&\longmapsto \Kdc(E)
		\\
		k &\longmapsto k|_{\Kdc(E)}.
	\end{align*}
\end{proposition}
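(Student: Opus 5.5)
The plan is to check well-definedness on objects and on morphisms for each mode $\mode\in\{\weak,\strong\}$; functoriality is then automatic, since $k\mapsto k|_{\Kdc(E)}$ is restriction of functions and so preserves identities and composition. On objects, \cref{proposition:Kdc Heyting algebra} already says that $\Kdc(E)$ is a Heyting algebra for every Esakia frame $(E,\dc)$. Nothing more is needed there.

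For the weak case, let $k\colon(E,\dc)\to(F,\dcx)$ be a conic morphism between Esakia frames. By \cref{lemma:Kdc preserved by conic morphism}, $k$ maps $\Kdc(E)$ into $K_{\dcx}(F)$, so the restriction $k|_{\Kdc(E)}\colon\Kdc(E)\to K_{\dcx}(F)$ is a well-defined function. By \cref{lemma:Kdc is lattice}, the lattice operations and the bounds of both $\Kdc(E)$ and $K_{\dcx}(F)$ are those of the ambient frames. Since $k$ is a frame map, it preserves finite meets and joins, $\top$ and $\bot$. Hence $k|_{\Kdc(E)}$ is a morphism of distributive lattices, i.e.\ a morphism in $\Heyt_\weak$.

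For the strong case, assume in addition that $k\circ\dc=\dcx\circ k$. It remains to show that $k$ preserves the implication of \cref{proposition:Kdc Heyting algebra}. The one fact to use is that frame maps preserve complements of complemented elements. Here $a\wedge\neg b$ is complemented because $E$ is Stone, and $\dc(a\wedge\neg b)$ is compact by \cref{lemma:dc preserves compact elements}, hence complemented as well. For $a,b\in\Kdc(E)$ we can then compute
\[
k(a\to b)=k(\neg\dc(a\wedge\neg b))=\neg k(\dc(a\wedge\neg b))=\neg\dcx(k(a)\wedge\neg k(b))=k(a)\to k(b).
\]
The third equality uses the strong hypothesis to swap $k$ and $\dc$, together with $k(\neg b)=\neg k(b)$.

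I expect no real obstacle. The only subtle point is the justification that the complements inside the implication formula are preserved by $k$. This requires knowing that each element being negated is compact, and hence complemented in a Stone frame. That is exactly where \cref{lemma:dc preserves compact elements} is needed.
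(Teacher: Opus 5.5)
Your proposal is correct and follows essentially the same route as the paper: objects via \cref{proposition:Kdc Heyting algebra}, weak morphisms via \cref{lemma:Kdc preserved by conic morphism}, and strong morphisms via the same computation of $k(a\to b)$. You also state explicitly why $k$ commutes with $\neg$ on $\dc(a\wedge\neg b)$, namely that this element is compact and hence complemented, which the paper leaves implicit.
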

\begin{proof}
	That~$\Kdc^\mode$ are well defined on objects follows by~\cref{proposition:Kdc Heyting algebra}. For morphisms, first let~$k\colon (E,\dc)\to (F,\dcx)$ be a conic morphism between Esakia frames. By~\cref{lemma:Kdc preserved by conic morphism},~$k$ restricts to a map~${\Kdc(E)\to K_{\dcx}(F)}$ of distributive lattices, and so defines a morphism in~$\Heyt_\weak$. Thus~$\Kdc^\weak$ is well defined on morphisms.
	
	Now suppose further that~$k$ is a strong conic morphism. Recalling the Heyting structure of~$\Kdc(E)$ from~\cref{proposition:Kdc Heyting algebra}, for any~$a,b\in \Kdc(E)$ we calculate
	\begin{align*}
		k(a\to b)
		&=
		k\left(\neg \dc (a\wedge \neg b)\right)
		&&(\text{by definition})
		\\&=
		\neg k \dc(a\wedge \neg b)
		&&(k\text{~preserves~}\neg)
		\\&=
		\neg \dcx k(a\wedge \neg b)
		&&(k\text{~strong conic})
		\\&=
		\neg\dcx \left(k(a)\wedge \neg k(b)\right)
		&&(k\text{~preserves~}\wedge,\neg)
		\\&=
		k(a)\to k(b),
		&&(\text{by definition})
	\end{align*}
	so~$k$ in fact restricts to a map~$\Kdc(E)\to K_{\dcx}(F)$ of Heyting algebras, meaning it defines a morphism in~$\Heyt_\strong$. Thus~$\Kdc^\strong$ is well defined on morphisms.
\end{proof}

To define the functor in the converse direction, we recall the notation of the free Boolean functor~$\bool$ from~\cref{remark:bool}. A map~$g\colon A\to B$ in~$\Heyt_\mode$ is always a map of distributive lattices, and so induces a map between the free Boolean algebras~$\bool{g}\colon \bool{A}\to\bool{B}$ characterised by the formula~$\bool{g}(u_a) = u_{g(a)}$. Applying the functor~$\idl$ then gives a frame map~$\idl(\bool{g})\colon E_A\to E_B$ between the Esakia frames.

\begin{proposition}
	\label{proposition:Heyt to EFrm}
	There are functors
	\begin{align*}
		\patch_\mode
		\colon
		\Heyt_\mode
		&\longrightarrow
		\EFrm_\mode;
		\\
		A&\longmapsto (E_A,\dc{A});
		\\
		g &\longmapsto \idl(\bool{g}).
	\end{align*}
\end{proposition}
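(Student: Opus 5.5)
Well-definedness on objects is \cref{proposition:heyting algebra to esakia frame}. Functoriality is automatic, since $\idl$ and $\bool$ are functors. So the work is to show that $\idl(\bool{g})$ is a conic morphism for a distributive lattice map $g$ between Heyting algebras, and a strong conic morphism when $g$ is a Heyting morphism. Write $h:=\idl(\bool{g})\colon E_A\to E_B$. It is a frame map that sends principal ideals to principal ideals, $h\pidl{p}=\pidl{\bool{g}(p)}$. On patches this gives
\[
h\pidl{u_a\wedge\ell_b}=\pidl{u_{g(a)}\wedge\ell_{g(b)}}.
\]

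The plan is to reduce everything to principal patches. Both $\dc{B}\circ h$ and $h\circ\dc{A}$ preserve all joins, by \cref{lemma:dcA preserves joins} and because $h$ is a frame map. Every ideal in $E_A$ is a join of principal ideals of patches, by coherence and \cref{lemma:patches generate bool}. So it suffices to compare the two composites on $p=\pidl{u_a\wedge\ell_b}$. Using \cref{corollary:dcA on patch} we get
\[
\dc{B}h(p)=\pidl{\ell_{g(a)\to g(b)}}
\qquad\text{and}\qquad
h\dc{A}(p)=h\pidl{\ell_{a\to b}}=\pidl{\ell_{g(a\to b)}}.
\]

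In the weak case we need $\pidl{\ell_{g(a)\to g(b)}}\sqleq\pidl{\ell_{g(a\to b)}}$. Since $\ell=\neg u$ and $\beta_B$ is an order embedding, this is equivalent to $g(a\to b)\sqleq g(a)\to g(b)$. That inequality holds for any lattice map: $g(a)\wedge g(a\to b)=g(a\wedge(a\to b))\sqleq g(b)$. In the strong case $g$ preserves $\to$, so the two sides are equal and $h\circ\dc{A}=\dc{B}\circ h$.

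There is no real obstacle. The one point needing care is the reduction to patches, which relies on join-preservation of both composites.
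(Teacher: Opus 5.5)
Your proposal is correct and follows essentially the same route as the paper's proof. Both reduce to principal patches via join-preservation and \cref{lemma:patches generate bool}, compute both sides with \cref{corollary:dcA on patch}, and then use $g(a\to b)\sqleq g(a)\to g(b)$ in the weak case and equality in the strong case.
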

\begin{proof}
	That~$\patch_\mode$ are well defined on objects is precisely~\cref{proposition:heyting algebra to esakia frame}.
	Next, take a map of distributive lattices~$g\colon A\to B$, defining a morphism in~$\Heyt_\weak$. By construction, we get a coherent frame morphism~$\idl(\bool{g})\colon E_A\to E_B$, and we need to show that this is a conic morphism. Explicitly, we need for all~$I\in E_A$ that there is an inclusion
	\[
	\dc{B}\left(\down \bool{g}[I]\right)
	=
	\dc{B} \circ \idl(\bool{g})(I)
	\sqleq
	\idl(\bool{g})\circ \dc{A}(I)
	=
	\down \bool{g}\left[\dc{A}(I)\right].
	\]
	However, by~\cref{lemma:patches generate bool} any ideal~$I$ can be written as a join over patches~$\pidl{u_a\wedge \ell_b}$, for~$a,b\in A$, and since~$\dc{A},\dc{B}$ preserve joins~(\cref{lemma:dcA preserves joins}) it suffices to prove this inclusion for~$I=\pidl{u_a\wedge\ell_b}$. For that, first calculate the left-hand side:
	\begin{align*}
		\dc{B}\idl(\bool{g})(\pidl{u_a\wedge \ell_b})
		&=
		\dc{B}\pidl{\bool{g}(u_a\wedge \ell_b)}
		&&(\idl\text{~on principal ideals})
		\\&=
		\dc{B}\pidl{\bool{g}(u_a)\wedge\bool{g}(\ell_b)}
		&&(\bool{g}\text{~preserves~}\wedge)
		\\&=
		\dc{B}\pidl{u_{g(a)}\wedge \ell_{g(b)}}
		&&(\text{characterisation of~}\bool{g})
		\\&=
		\pidl{\ell_{g(a)\to g(b)}},
		&&(\text{\cref{corollary:dcA on patch}})
	\end{align*}
	and second, calculate the right-hand side:
	\begin{align*}
		\idl(\bool{g})\left(\dc{A}\pidl{u_a\wedge \ell_b}\right)
		&=
		\idl(\bool{g})(\pidl{\ell_{a\to b}})
		&&(\text{\cref{corollary:dcA on patch}})
		\\&=
		\pidl{\bool{g}(\ell_{a\to b})}
		&&(\idl\text{~on principal ideals})
		\\&=
		\pidl{\ell_{g(a\to b)}}.
		&&(\text{characterisation of~}\bool{g})
	\end{align*}
	In any Heyting algebra we have~$(a\to b)\wedge a=a\wedge b\sqleq b$~(see~\cite[\S III.3.1.1]{picado2012FramesLocalesTopology}), hence~$g(a\to b)\wedge g(a)\sqleq g(b)$, and so~${g(a\to b)\sqleq g(a)\to g(b)}$. This implies~$\pidl{\ell_{g(a)\to g(b)}}\sqleq \pidl{\ell_{g(a\to b)}}$, as was to be shown. Hence~$\patch_\weak$ is well defined on morphisms.

	In the case that~$g$ is a morphism in~$\Heyt_\strong$, we get by definition the equalities~${g(a\to b)=g(a)\to g(b)}$, and the same argument as above shows that we get the strong conic equality~${\dc{B}\circ \idl(\bool{g})=\idl(\bool{g})\circ \dc{A}}$. Thus~$\idl(\bool{g})$ is a strong conic morphism, and hence~$\patch_\strong$ is well defined on morphisms.
\end{proof}

%~~~~~~~~~~~~~~~~~~~~
\subsection{The unit}
In the next two sections we establish that these two functors define an equivalence. We construct the unit and counit explicitly:
\[
\pidl{\beta} \colon \id\Longrightarrow \Kdc^\mode\circ \patch_\mode
\quad\text{and}\quad
\epsilon \colon 
\patch_\mode\circ \Kdc^\mode \Longrightarrow \id.
\]

For the unit, the component on a Heyting algebra~$A$ is to be an isomorphism of Heyting algebras of the form
\[
\pidl{\beta}_A\colon A
\longrightarrow
K_{\dc{A}}(E_A)
=
\{
I\in K(E_A): \dc{A}\neg I= \neg I
\}.
\]
To define it, first we use the natural isomorphism~$\pidl{-}\colon \id\Longrightarrow K\circ \idl$ from~\cref{section:ideals} that identifies elements in a distributive lattice~$A$ with their principal ideals. Note that this map behaves well with respect to Heyting structure.

\begin{lemma}
	\label{lemma:pidl preserves heyting implication}
	If~$A$ is Heyting, then~$\pidl{-}_A$ is a Heyting isomorphism.
	% For any Heyting algebra~$A$, the map~$\pidl{-}_A\colon A\to K(\idl(A))$ is a Heyting algebra isomorphism.
\end{lemma}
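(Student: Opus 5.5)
We already know from \cref{section:ideals} that $\pidl{-}_A\colon A\to K\idl(A)$ is an isomorphism of distributive lattices. A lattice isomorphism between Heyting algebras automatically preserves the Heyting implication, because the implication is determined by the order through its universal property. So the plan is: first check that the codomain $K\idl(A)$ really is a Heyting algebra, and then transport the universal property of $\to$ across the order isomorphism.

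For the first step, $A$ is Heyting, so $\idl(A)$ is a Heyting frame by~\cite[Corollary~3.3]{bezhanishvili2023FrametheoreticPerspectiveEsakia}. Hence $K\idl(A)$ is a Heyting subalgebra of $\idl(A)$, with implication inherited from the frame. Alternatively, I can define the implication on $K\idl(A)$ directly by $\pidl{a}\to\pidl{b}:=\pidl{a\to b}$. This definition makes sense because every compact ideal is principal, and $\pidl{-}_A$ is a bijection onto $K\idl(A)$.

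For the second step, I verify the universal property. Every element of $K\idl(A)$ has the form $\pidl{c}$. Using that $\pidl{-}_A$ is an order embedding that preserves binary meets, I compute
\[
\pidl{c}\wedge\pidl{a}\sqleq\pidl{b}
\iff \pidl{c\wedge a}\sqleq \pidl{b}
\iff c\wedge a\sqleq b
\iff c\sqleq a\to b
\iff \pidl{c}\sqleq\pidl{a\to b}.
\]
This shows that $\pidl{a\to b}$ is the relative pseudocomplement of $\pidl{a}$ and $\pidl{b}$ within $K\idl(A)$, so $\pidl{a\to b}=\pidl{a}\to\pidl{b}$. Combined with bijectivity and preservation of finite meets and joins, $\pidl{-}_A$ is then an isomorphism in $\Heyt_\strong$.

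The only subtlety is whether the implication on $K\idl(A)$ is meant to be the intrinsic one or the one inherited from the frame $\idl(A)$. These agree because $K\idl(A)$ is a Heyting subalgebra. One can also check directly that the frame implication $\pidl{a}\to\pidl{b}=\{c: c\wedge a\sqleq b\}$ equals $\pidl{a\to b}$, which again is the same chain of equivalences read elementwise. I do not expect any genuine obstacle here.
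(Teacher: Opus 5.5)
Your proof is correct and takes the same route as the paper: since $\pidl{-}_A$ is already a lattice isomorphism, it reduces to the formula $\pidl{a}\to\pidl{b}=\pidl{a\to b}$. The paper gets this formula by citing the proof of \cite[Lemma~3.12]{bezhanishvili2023FrametheoreticPerspectiveEsakia}, whereas you verify it directly, including for the frame implication of $\idl(A)$, so your version is self-contained.
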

\begin{proof}
	It suffices to prove that~$\pidl{-}_A$ preserves Heyting implication, which follows immediately from the proof of~\cite[Lemma~3.12]{bezhanishvili2023FrametheoreticPerspectiveEsakia}, giving the formula:
	\[
	\pidl{a}\to \pidl{b}
	=
	\pidl{a\to b}.
	\qedhere
	\]
\end{proof}

Second, we need to identify the free Boolean algebra~$\bool{A}$ with its principal ideals~$K(E_A)$. This is described by the whiskering of the natural isomorphism~$\pidl{-}$ with the functor~$\bool$ to give a natural isomorphism:
\[
\pidl{-}_{\bool} \colon \bool \Longrightarrow K\circ \idl\circ \bool.
\]

Lastly, we use the natural map~$\beta\colon \id\Longrightarrow U\circ \bool$ that embeds a distributive lattice into its free Boolean algebra. Combined, this gives the desired natural transformation. Recall~\cref{remark:strategy} for the proof strategy.

\begin{proposition}
	There are natural isomorphisms
	\[
	\pidl{\beta}:= \pidl{-}_{\bool}\circ \beta
	\colon
	\id
	\Longrightarrow
	\Kdc^\mode \circ\patch_\mode.
	\]
\end{proposition}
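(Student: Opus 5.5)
Following \cref{remark:strategy}, I would first treat the weak case and then show that each component is also an isomorphism in $\Heyt_\strong$. For a Heyting algebra $A$, the component $\pidl{\beta}_A\colon A\to K_{\dc{A}}(E_A)$ sends $a\mapsto \pidl{u_a}$. This is a composite of distributive lattice morphisms: $\beta_A$, then $\pidl{-}_{\bool{A}}$. By \cref{lemma:KdcA} its image lands in, and is all of, $K_{\dc{A}}(E_A)$, so it corestricts to a surjective lattice map. It is injective because $\beta_A$ is an order embedding \cite[Theorem~32]{maietti2012InductionPrincipleConsequence} and $\pidl{-}$ is an isomorphism. A bijective lattice morphism is an isomorphism of distributive lattices, since its inverse automatically preserves finite meets and joins. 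This settles the weak components.

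For the strong setting I would check that $\pidl{\beta}_A$ preserves Heyting implication. Then its inverse, being the inverse of a bijective Heyting morphism, is also Heyting. Using \cref{proposition:Kdc Heyting algebra} and \cref{corollary:dcA on patch}, compute
\[
\pidl{u_a}\to\pidl{u_b}=\neg\dc{A}\bigl(\pidl{u_a}\wedge\neg\pidl{u_b}\bigr)=\neg\dc{A}\pidl{u_a\wedge\ell_b}=\neg\pidl{\ell_{a\to b}}=\pidl{u_{a\to b}}.
\]
Two facts are used here. First, $\neg\pidl{u_b}=\pidl{\ell_b}$ in the Stone frame $E_A$, because $\pidl{-}$ is a Boolean isomorphism onto $K(E_A)$. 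Second, meets of principal ideals are principal ideals of meets. \Cref{lemma:pidl preserves heyting implication} is not needed at this point. The formula above shows that every component lies in $\Heyt_\strong$.

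Finally, naturality. Take $g\colon A\to B$ in $\Heyt_\mode$. The functor $\Kdc^\mode\circ\patch_\mode$ sends $g$ to the restriction of $\idl(\bool{g})$. On the generator $\pidl{u_a}$ we have
\[
\idl(\bool{g})\pidl{u_a}=\pidl{\bool{g}(u_a)}=\pidl{u_{g(a)}}=\pidl{\beta}_B(g(a)).
\]
This is exactly the naturality square, and it follows from naturality of $\beta$ and of $\pidl{-}$. I expect no serious obstacle. The only delicate point is the identification $K_{\dc{A}}(E_A)=\{\pidl{u_a}\}$, which is already supplied by \cref{lemma:KdcA}, together with the computation of the implication via \cref{corollary:dcA on patch}.
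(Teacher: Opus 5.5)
Your proposal is correct and follows the paper's proof essentially step by step. The component $a\mapsto\pidl{u_a}$ is identified with $\Kdc{A}(E_A)$ via \cref{lemma:KdcA}, injectivity comes from $\beta_A$ being an order embedding, and implication is preserved by the same computation through \cref{corollary:dcA on patch} and \cref{proposition:Kdc Heyting algebra}. The differences are cosmetic: you justify $\neg\pidl{u_b}=\pidl{\ell_b}$ directly from $\pidl{-}_{\bool{A}}$ being a Boolean isomorphism instead of citing \cref{lemma:pidl preserves heyting implication}, and you check naturality on generators rather than appealing to a composite of natural transformations, which is slightly more careful given the corestriction to $\Kdc{A}(E_A)$.
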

\begin{proof}
	Clearly~$\pidl{\beta}$ is natural as the composition of two natural transformations. We are left to show that it does not just land in~$K\circ\idl\circ\bool$, but in~$\Kdc^\mode\circ\patch_\mode$, and that its components define Heyting isomorphisms.

	To see this, note for a Heyting algebra~$A$ the component of~$\pidl{\beta}$ is given by
	\[
	\pidl{\beta}_A\colon A\longrightarrow K(E_A);
	\qquad
	a\longmapsto \pidl{u_a}.
	\]
	However, we already saw in~\cref{lemma:KdcA} that this function defines an order embedding into its image~$\Kdc{A}(E_A)$. Thus~$\pidl{\beta}_A\colon A\to \Kdc{A}(E_A)$ defines an isomorphism of distributive lattices. To show that it defines a Heyting isomorphism, we are left to show the Heyting implication is preserved, which can be seen as follows:
	\begin{align*}
		\pidl{\beta}_{\!A}(a\to b)
		&=
		\pidl{u_{a\to b}}
		&&(\text{by definition})
		\\&=
		\neg\pidl{\ell_{a\to b}}
		&&(\text{\cref{lemma:pidl preserves heyting implication}})
		\\&=
		\neg \dc{A}\pidl{u_a\wedge \ell_b}
		&&(\text{\cref{corollary:dcA on patch}})
		\\&=
		\neg\dc{A}(\pidl{u_a}\wedge \neg\pidl{u_b})
		&&(\text{\cref{lemma:pidl preserves heyting implication}})
		\\&=
		\pidl{u_a}\to \pidl{u_b}.
		&&(\text{\cref{proposition:Kdc Heyting algebra}})
		\qedhere
	\end{align*}
\end{proof}

%~~~~~~~~~~~~~~~~~~~
\subsection{The counit}
\label{section:counit patch Kdc}
For the counit~$\epsilon\colon \patch_\mode\circ \Kdc^\mode\Longrightarrow \id$ we need to define strong conic isomorphisms
\[
\epsilon_E
\colon
\left(
	E_{\Kdc(E)},\dc{\Kdc(E)}
\right)
\longrightarrow (E,\dc)
\]
for Esakia frames~$(E,\dc)$. This proceeds in three stages.
The first step is to identify the free Boolean algebra generated by the Heyting algebra~$\Kdc(E)$ with the algebra of compact elements~$K(E)$. This Boolean isomorphism~${\boolrec_E\colon \bool{\Kdc(E)}\to K(E)}$ is the canonical map induced by the distributive lattice inclusion~$\Kdc(E)\hookrightarrow K(E)$. Subsequently, the map~$\boolrec_E$ lifts to an isomorphism~${E_{\Kdc(E)}\to \idl K(E)}$ of frames, and finally we use the natural isomorphism~${\jidl_E\colon \idl K(E)\to E}$ to obtain the isomorphism with the original frame~$E$.

\begin{lemma}
	\label{lemma:generating iff free Boolean algebra}
	Let~$i\colon A\to B$ be an order reflecting morphism of distributive lattices into a Boolean algebra. If~$i[A]$ generates~$B$, then the canonical comparison map~${\boolrec\colon \bool{A}\to B}$ is an isomorphism of Boolean algebras.
\end{lemma}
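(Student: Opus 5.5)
The comparison map $\boolrec$ is the unique Boolean morphism with $\boolrec\circ\beta_A = i$, which exists by the universal property of $\bool\dashv U$. I will prove surjectivity and injectivity separately.

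\emph{Surjectivity.} The image $\boolrec[\bool{A}]$ is a Boolean subalgebra of $B$, since Boolean morphisms preserve finite meets, finite joins and complements. It contains $i[A]=\boolrec[\beta_A[A]]$. Because $i[A]$ Boolean generates $B$, this image must be all of $B$.

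\emph{Injectivity.} A Boolean morphism is injective iff its kernel is trivial, that is, iff $\boolrec(p)=\bot$ implies $p=\bot$. Indeed, $\boolrec(p)=\boolrec(q)$ gives $\boolrec\bigl((p\wedge\neg q)\vee(q\wedge\neg p)\bigr)=\bot$, and the symmetric difference vanishes only when $p=q$. So I will take $p\in\bool{A}$ with $\boolrec(p)=\bot$. By \cref{lemma:patches generate bool}, $p$ is a finite join of patches $u_{a_i}\wedge\ell_{b_i}$, so each patch also maps to $\bot$. It therefore suffices to treat a single patch. If $\boolrec(u_a\wedge \ell_b)=\bot$, then
\[
i(a)\wedge\neg i(b)=\bot,
\]
so $i(a)\sqleq i(b)$ in $B$. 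Since $i$ reflects order, $a\sqleq b$ in $A$. As $\beta_A$ is monotone, $u_a\sqleq u_b$, hence $u_a\wedge\ell_b=u_a\wedge\neg u_b=\bot$. Thus $p=\bot$, and $\boolrec$ is injective.

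A bijective Boolean morphism has a Boolean inverse, so $\boolrec$ is an isomorphism. The only real content is the reduction to patches in the injectivity step, which rests on the disjunctive normal form of \cref{lemma:patches generate bool}. Order reflection of $i$ is used exactly once, to pass from $i(a)\sqleq i(b)$ back to $a\sqleq b$. No choice principle enters, since every step is a finite algebraic manipulation.
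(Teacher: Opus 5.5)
Your proposal is correct and follows essentially the same route as the paper. Surjectivity comes from the generation hypothesis. Injectivity comes from a trivial-kernel argument: you write an element as a finite join of patches $u_a\wedge\ell_b$ and use order reflection of $i$ to turn $i(a)\wedge\neg i(b)=\bot$ into $a\sqleq b$. The only cosmetic difference is that you reduce injectivity to trivial kernel via the symmetric difference, where the paper uses $x\wedge\neg y$ and then a symmetric argument.
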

\begin{proof}
	Denote by~$\beta_A\colon A\to\bool{A}$ the order embedding into the free Boolean algebra of~$A$, with~$\boolrec$ the unique morphism of Boolean algebras such that $\boolrec\circ \beta_A= i$. It suffices to show~$\boolrec$ is bijective. First, since the image of~$i$ generates~$B$ as a Boolean algebra,~$\boolrec$ is surjective.
	
	On the other hand, to show injectivity of a Boolean morphism it suffices to show it has trivial kernel, meaning~$\boolrec(x)=\bot$ implies~${x=\bot}$. Namely, if~$\boolrec(x)=\boolrec(y)$ then~$\boolrec(x\wedge\neg y)=\bot$, and hence~$x\wedge \neg y = \bot$, so~$x\sqleq y$. A symmetric argument gives~$y\sqleq x$, so~$x=y$. To prove the comparison map~$\boolrec$ has trivial kernel, take an element~${x\in\bool{A}}$ in normal form
	\[
	x=\bigvee_{j\in J}(\beta_A(a_j)\wedge\neg\beta_A(b_j)),
	\]
	for some families~$a_j,b_j\in A$ indexed by finite~$J$. Then,~$\boolrec(x)=\bot$ implies that for all $j\in J$:
	\begin{align*}
		i(a_j)\wedge\neg i(b_j)
		&=
		\boolrec\circ\beta_A(a_j)\wedge\neg\boolrec\circ\beta_A(b_j)
		&&(\boolrec\circ\beta_A=i)
		\\&=
		\boolrec\left(\beta_A(a_j)\wedge\neg\beta_A(b_j)\right)
		&&(\boolrec\text{~preserves~}\wedge,\neg)
		\\&\sqleq
		\boolrec(x)
		=
		\bot,
	\end{align*}
	or equivalently:~$i(a_j)\sqleq i(b_j)$ in~$B$. Since~$i$ is an order embedding this gives an inclusion~${a_j\sqleq b_j}$ in~$A$, and now applying~$\beta_A$ we get~$\beta_A(a_j)\wedge\neg\beta_A(b_j)=\bot$, so~$x=\bot$. This completes the proof.
\end{proof}

\begin{lemma}
	\label{lemma:bool Kdc = K}
	For every Esakia frame~$(E,\dc)$ there is a Boolean isomorphism:
	\[
	\boolrec_E\colon \bool{\Kdc(E)}\longrightarrow K(E)
	\quad\text{such that}\quad
	\boolrec_E(u_a)= a\in \Kdc(E).
	\]
\end{lemma}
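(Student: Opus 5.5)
The plan is to apply \cref{lemma:generating iff free Boolean algebra} with $A:=\Kdc(E)$, $B:=K(E)$, and $i\colon \Kdc(E)\hookrightarrow K(E)$ the subset inclusion. The lemma then gives that the canonical Boolean morphism $\boolrec_E\colon\bool{\Kdc(E)}\to K(E)$, characterised by $\boolrec_E\circ\beta_{\Kdc(E)}=i$, is an isomorphism. Unwinding the notation $u_a:=\beta_{\Kdc(E)}(a)$, this says exactly that $\boolrec_E(u_a)=i(a)=a$, which is the stated formula.

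We must check the three hypotheses of the lemma in turn.
\begin{enumerate}
\item $B=K(E)$ is a Boolean algebra, because $E$ is a Stone frame by axiom (E1). The paper recalls in \cref{section:coherent and stone frames} that for Stone frames the compact elements form a Boolean algebra under the ambient operations.
\item $i$ is a morphism of distributive lattices. By \cref{lemma:Kdc is lattice}, which applies because $\dc$ is inflationary by (E2), $\Kdc(E)$ is a bounded sublattice of $K(E)$. Hence the inclusion preserves finite meets and joins, including $\top$ and $\bot$. A subset inclusion trivially reflects the order, so $i$ is an order embedding.
\item $i[\Kdc(E)]$ Boolean generates $K(E)$. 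This is precisely axiom (E4).
\end{enumerate}

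Nothing here seems to be a real obstacle; the substantive work already sits in \cref{lemma:generating iff free Boolean algebra} and in axiom (E4). The one point needing care is that the lattice structure on $\Kdc(E)$ used to form $\bool{\Kdc(E)}$ agrees with the operations of $K(E)$. This is what makes the inclusion a lattice morphism, and it is guaranteed by \cref{lemma:Kdc is lattice}.

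Note also that the Heyting structure of $\Kdc(E)$ from \cref{proposition:Kdc Heyting algebra} plays no role in this statement. Only its underlying distributive lattice is relevant, since $\bool$ is defined on $\DLat$.
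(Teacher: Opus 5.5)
Your proposal is correct and follows the paper's proof essentially verbatim: you apply \cref{lemma:generating iff free Boolean algebra} to the inclusion $\Kdc(E)\hookrightarrow K(E)$, using \cref{lemma:Kdc is lattice} (via (E1) and (E2)) for the order-embedding lattice morphism and (E4) for generation. You then read off $\boolrec_E(u_a)=a$ from $\boolrec_E\circ\beta_{\Kdc(E)}=i$, exactly as the paper does.
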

\begin{proof}
	By~\cref{lemma:Kdc is lattice} we get an order embedding~$i\colon \Kdc(E)\hookrightarrow K(E)$ of the distributive sublattice~$\Kdc(E)$ into the Boolean algebra~$K(E)$. Axiom~(E4) of Esakia frames then guarantees via the preceding~\cref{lemma:generating iff free Boolean algebra} that the canonical map~$\boolrec_E\colon \bool{\Kdc(E)}\to K(E)$ is an isomorphism of Boolean algebras. In particular, by universality it satisfies~$\boolrec_E\circ \beta_{\Kdc(E)}=i$, which under the conventions set out at the start of~\cref{remark:bool} evaluates on~$a\in \Kdc(E)$ to~$\boolrec_E(u_a)= a$, as desired.
\end{proof}

Applying~$\idl$, this gives a family of isomorphisms~${\idl(\boolrec_E)\colon E_{\Kdc(E)}\to \idl K(E)}$.
To complete the construction, recall that~$E$ can be recovered as the frame of ideals over~$K(E)$ via the natural isomorphism~$\jidl\colon \idl\circ K\Longrightarrow \id$ from~\cref{section:ideals}. It remains to show that the resulting family is natural and respects the conic structure.

\begin{proposition}
	There are natural isomorphisms
	\[
	\epsilon
	\colon
	\patch_\mode\circ\Kdc^\mode
	\Longrightarrow
	\id.
	\]
\end{proposition}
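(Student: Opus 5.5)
The plan is to define the component as the composite
\[
\epsilon_E:=\jidl_E\circ\idl(\boolrec_E)\colon E_{\Kdc(E)}\longrightarrow \idl K(E)\longrightarrow E.
\]
Both factors are frame isomorphisms. The first is $\idl$ applied to the Boolean isomorphism of \cref{lemma:bool Kdc = K}, and the second is the component of the natural isomorphism $\jidl$ from \cref{section:ideals}. Concretely, on a principal ideal of a patch we get $\epsilon_E\pidl{u_a\wedge\ell_b}=a\wedge\neg b$ for $a,b\in\Kdc(E)$, using $\boolrec_E(u_a)=a$ and $\boolrec_E(\ell_b)=\neg b$. Following \cref{remark:strategy}, it then suffices to prove two things: that $\epsilon_E$ is a \emph{strong} conic morphism, i.e.\ $\dc\circ\epsilon_E=\epsilon_E\circ\dc{\Kdc(E)}$, and that $\epsilon$ is natural with respect to weak morphisms. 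Once the first holds, $\epsilon_E^{-1}$ is automatically strong conic as well: applying $\epsilon_E^{-1}$ on both sides of the equation gives $\epsilon_E^{-1}\circ\dc=\dc{\Kdc(E)}\circ\epsilon_E^{-1}$. Hence each component is an isomorphism in both $\EFrm_\weak$ and $\EFrm_\strong$.

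For the conic equation, both sides preserve joins. This uses \cref{lemma:dcA preserves joins} and the fact that $\dc$ is a cone. Since $E_{\Kdc(E)}$ is join-generated by principal ideals of patches (\cref{lemma:patches generate bool}), it is enough to check the equation on $\pidl{u_a\wedge\ell_b}$. By \cref{corollary:dcA on patch}, the right-hand side is
\[
\epsilon_E\pidl{\ell_{a\to b}}=\neg(a\to b)=\neg\neg\dc(a\wedge\neg b)=\dc(a\wedge\neg b),
\]
where $a\to b$ is the implication of \cref{proposition:Kdc Heyting algebra}. The double negation cancels because $\dc(a\wedge\neg b)$ is compact (\cref{lemma:dc preserves compact elements}) and hence complemented in the Stone frame $E$. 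The left-hand side is $\dc\epsilon_E\pidl{u_a\wedge\ell_b}=\dc(a\wedge\neg b)$, so the two sides agree. This step is where the specific formula for the Heyting implication pays off, and it is essentially immediate.

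For naturality, take $k\colon(E,\dc)\to(F,\dcx)$ in $\EFrm_\weak$. We must show
\[
k\circ\epsilon_E=\epsilon_F\circ\idl(\bool{k|_{\Kdc(E)}}).
\]
Both composites are frame maps, so again it suffices to evaluate them on principal ideals of patches. The left-hand side gives $k(a\wedge\neg b)$. The right-hand side gives $\epsilon_F\pidl{u_{k(a)}\wedge\ell_{k(b)}}=k(a)\wedge\neg k(b)$. These agree because frame maps preserve complements of complemented elements. For $\EFrm_\strong$ the components are the same and the morphisms form a subclass, so naturality is inherited. The main obstacle is really bookkeeping: checking that the composite $\jidl_E\circ\idl(\boolrec_E)$ acts on principal ideals as claimed (namely $\jidl_E\pidl{c}=c$ for compact $c$) and that $\boolrec_E(\ell_b)=\neg b$. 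This follows since $\boolrec_E$ is a Boolean morphism, and with it the identity $\ell_b=\neg u_b$ is transported correctly.
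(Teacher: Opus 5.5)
Your proposal is correct and follows the paper's proof. It uses the same component $\epsilon_E=\jidl_E\circ\idl(\boolrec_E)$ and the same patch-wise check of $\dc\circ\epsilon_E=\epsilon_E\circ\dc{\Kdc(E)}$ via $\neg(a\to b)=\dc(a\wedge\neg b)$. The only cosmetic differences are these: you verify naturality by evaluating both composite frame maps directly on principal ideals of patches, whereas the paper splits the square into naturality of $\jidl$ plus $\idl$ applied to a square of Boolean algebras checked on the generators $u_a$; and you spell out explicitly why $\epsilon_E^{-1}$ is also strong conic.
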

\begin{proof}
	Fixing an Esakia frame~$(E,\dc)$, we take the Boolean algebra isomorphism~$\boolrec_E\colon \bool{\Kdc(E)}\to K(E)$ from~\cref{lemma:bool Kdc = K}, which in turn induces a frame isomorphism
	\[
	\idl(\boolrec_E)
	\colon
	\idl(\bool{\Kdc(E)})
	\longrightarrow
	\idl(K(E)),
	\]
	where the domain is the underlying frame of the Esakia frame~$(E_{\Kdc(E)},\dc{\Kdc(E)})$.
	In turn, we compose with the natural isomorphism~$\jidl\colon \idl\circ K\Longrightarrow \id$ to obtain an isomorphism of frames
	\[
		\epsilon_E
		:=
		\jidl_E\circ \idl(\boolrec_E)
		\colon
		\idl(\bool{\Kdc(E)})
		\longrightarrow
		\idl (K(E))
		\longrightarrow E.
	\]
	Observe that since~$\jidl_E$ takes joins over ideals, it identifies principal ideals with underlying elements, and so together with~\cref{lemma:bool Kdc = K} we get for all~${a,b\in \Kdc(E)}$:
	\[
	\tag{$\star$}
	\label{eq:epsilon}
	\epsilon_E(\pidl{u_a})= a
	\qquad\text{and}\qquad
	\epsilon_E(\pidl{\ell_b})= \neg b.
	\]

	To show~$\epsilon$ is natural, take a conic map~$k\colon (E,\dc)\to (F,\dcx)$ in~$\EFrm_\weak$. We need to show that the outside rectangle commutes:
	% https://q.uiver.app/#q=WzAsNixbMCwwLCJcXGlkbChcXGJvb2x7XFxLZGMoRSl9KSJdLFswLDEsIlxcaWRsKFxcYm9vbHtcXEtkYyhGKX0pIl0sWzEsMCwiXFxpZGwoSyhFKSkiXSxbMSwxLCJcXGlkbChLKEYpKSJdLFsyLDAsIkUiXSxbMiwxLCJGLCJdLFs0LDUsImsiXSxbMCwxLCJcXGlkbChcXGJvb2x7a30pIiwyXSxbMCwyLCJcXGlkbChcXGJhclxcaW1hdGhfRSkiXSxbMSwzLCJcXGlkbChcXGJhclxcaW1hdGhfRikiLDJdLFsyLDQsIlxcamlkbF9FIl0sWzMsNSwiXFxqaWRsX0YiLDJdLFsyLDMsIlxcaWRsKGt8X3tLKEUpfSkiLDJdXQ==
	\[\begin{tikzcd}[cramped]
		{\idl(\bool{\Kdc(E)})} & {\idl(K(E))} & E \\
		{\idl(\bool{\Kdc(F)})} & {\idl(K(F))} & {F,}
		\arrow["{\idl(\boolrec_E)}", from=1-1, to=1-2]
		\arrow["{\idl(\bool{k})}"', from=1-1, to=2-1]
		\arrow["{\jidl_E}", from=1-2, to=1-3]
		\arrow["{\idl(k|_{K(E)})}"', from=1-2, to=2-2]
		\arrow["k", from=1-3, to=2-3]
		\arrow["{\idl(\boolrec_F)}"', from=2-1, to=2-2]
		\arrow["{\jidl_F}"', from=2-2, to=2-3]
	\end{tikzcd}\]
	where for brevity we denote by~$\bool{k}$ the Boolean morphism induced by~$k|_{\Kdc(E)}$.
	The square on the right commutes by naturality of~$\jidl$. On the other hand, the square on the left is just the functor~$\idl$ applied to the square of Boolean algebras:
	% https://q.uiver.app/#q=WzAsNCxbMCwwLCJcXGJvb2x7XFxLZGMoRSl9Il0sWzAsMSwiXFxib29se1xcS2RjKEYpfSJdLFsxLDAsIksoRSkiXSxbMSwxLCJLKEYpLiJdLFsyLDMsImt8X3tLKEUpfSJdLFswLDIsIlxcYmFyXFxpbWF0aF9FIl0sWzEsMywiXFxiYXJcXGltYXRoX0YiLDJdLFswLDEsIlxcYm9vbHtrfSIsMl1d
	\[\begin{tikzcd}[cramped]
		{\bool{\Kdc(E)}} & {K(E)} \\
		{\bool{\Kdc(F)}} & {K(F).}
		\arrow["{\boolrec_E}", from=1-1, to=1-2]
		\arrow["{\bool{k}}"', from=1-1, to=2-1]
		\arrow["{k|_{K(E)}}", from=1-2, to=2-2]
		\arrow["{\boolrec_F}"', from=2-1, to=2-2]
	\end{tikzcd}\]
	Note that~$k|_{K(E)}$ is indeed a morphism of Boolean algebras, since~$k$ is a frame map and hence preserves complemented elements and their complements.
	Any element in~$\bool{\Kdc(E)}$ is generated via finite joins, meets, and negations of elements of the form~$u_a$, for~$a\in \Kdc(E)$, and so to show this square commutes it suffices to check on these elements:
	\begin{align*}
		\boolrec_F \circ \bool{k}(u_a)
		&=
		\boolrec_F (u_{k(a)})
		&&(\text{\cref{remark:bool}})
		\\&=
		k(a)
		&&(\text{\cref{lemma:bool Kdc = K}})
		\\&=
		k\circ \boolrec_E(u_a).
		&&(\text{\cref{lemma:bool Kdc = K}})
	\end{align*}
	
	Hence we see that~$\epsilon$ defines a natural family of frame isomorphisms. We are left to show that~$\epsilon_E$ lies in~$\EFrm_\strong$, meaning:
	\[
	\epsilon_E\circ \dc{\Kdc(E)}
	=
	\dc \circ \epsilon_E.
	\]
	Since~$\idl(\bool{\Kdc(E)})$ is generated by principal ideals of patches~(\cref{lemma:patches generate bool}) and all maps are join-preserving, it suffices to check on elements of the form~${\pidl{u_a\wedge \ell_b}}$, for~${a,b\in \Kdc(E)}$. For this, calculate:
	\begin{align*}
		\epsilon_E\circ \dc{\Kdc(E)}\pidl{u_a\wedge \ell_b}
		&=
		\epsilon_E(\pidl{\ell_{a\to b}})
		&&(\text{\cref{corollary:dcA on patch}})
		\\&=
		\neg(a\to b)
		&&(\text{\ref{eq:epsilon}})
		\\&=
		\dc (a\wedge \neg b)
		&&(\text{\cref{proposition:Kdc Heyting algebra}})
		\\&=
		\dc \circ \epsilon_E(\pidl{u_a\wedge \ell_b}),
		&&(\text{\ref{eq:epsilon}})
	\end{align*}
	proving that~$\epsilon_E$ is a strong conic morphism. Since it is also a frame isomorphism, it follows that it must be an isomorphism in~$\EFrm_\strong$. This completes the proof.
\end{proof}

\begin{theorem}
	\label{theorem:EFrm Heyt equivalence}
	The functors~$\Kdc^\mode$ and~$\patch_\mode$ define equivalences
	\[
	\Kdc^\mode
	\colon
	\EFrm_\mode
	\simeq
	\Heyt_\mode
	\cocolon
	\patch_\mode.
	\]
\end{theorem}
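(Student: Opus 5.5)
The plan is to assemble the theorem directly from the two preceding propositions, following the strategy of \cref{remark:strategy}. We already have functors $\Kdc^\mode\colon\EFrm_\mode\to\Heyt_\mode$ (\cref{proposition:EFrm to Heyt}) and $\patch_\mode\colon\Heyt_\mode\to\EFrm_\mode$ (\cref{proposition:Heyt to EFrm}). We also have the two natural transformations
\[
\pidl{\beta}\colon\id\Longrightarrow\Kdc^\mode\circ\patch_\mode
\qquad\text{and}\qquad
\epsilon\colon\patch_\mode\circ\Kdc^\mode\Longrightarrow\id.
\]
The convention recalled before \cref{theorem:ELoc EFrm equivalence} does not require the triangle identities. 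So it suffices to confirm that both transformations are natural isomorphisms in both the weak and the strong categories.

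For the unit, the component $\pidl{\beta}_A$ is a Heyting isomorphism $A\to\Kdc{A}(E_A)$ by the proposition on the unit. A Heyting isomorphism is invertible both in $\Heyt_\strong$ and in $\Heyt_\weak$, since its inverse again preserves finite meets, finite joins and implication. Naturality holds because $\pidl{\beta}$ is a composite of the natural transformations $\beta$ and $\pidl{-}_{\bool}$. We only need to check that for a morphism $g$ the relevant square lands correctly: $\Kdc^\mode(\patch_\mode g)$ is the restriction of $\idl(\bool{g})$, which sends $\pidl{u_a}\mapsto\pidl{u_{g(a)}}$. This is the same computation as naturality of $\pidl{-}\circ\beta$ on the generators.

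For the counit, the component $\epsilon_E$ is a frame isomorphism satisfying the strong conic equation $\epsilon_E\circ\dc{\Kdc(E)}=\dc\circ\epsilon_E$. Its inverse therefore satisfies the same equation, obtained by pre- and post-composing with $\epsilon_E^{-1}$. Hence $\epsilon_E$ is an isomorphism in $\EFrm_\strong$, and a fortiori in $\EFrm_\weak$. Naturality was verified for all conic morphisms, so it holds in both categories.

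Combining these facts gives the equivalences for $\mode\in\{\weak,\strong\}$. The only genuinely delicate point is making sure that inverses of the components lie in the restricted (strong) categories. That check is the one-line argument above, so I expect no real obstacle: the substantive work was done in \cref{lemma:KdcA}, \cref{lemma:bool Kdc = K} and the unit and counit propositions.
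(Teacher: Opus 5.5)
Your proposal is correct and takes the same route as the paper. There the theorem is stated without a separate proof: it is immediate from the preceding propositions, which give the functors $\Kdc^\mode$ and $\patch_\mode$ and the natural isomorphisms $\pidl{\beta}$ and $\epsilon$ (with components shown to be Heyting isomorphisms and isomorphisms in $\EFrm_\strong$, respectively), under the convention that triangle identities are not required. Your explicit checks — that inverses of the components stay in the strong subcategories, and that naturality restricts along the wide inclusions — are exactly the verifications the paper leaves implicit.
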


The desired relation between Esakia frames and Heyting frames is now obtained directly by composing the equivalences from~\cref{theorem:EFrm Heyt equivalence} with those between Heyting algebras and Heyting frames established in~\cite{bezhanishvili2023FrametheoreticPerspectiveEsakia}. These equivalences are in turn restrictions of the equivalence between distributive lattices and coherent frames, so we get well-defined functors
\[
\idl\circ \Kdc^\mode
\colon
\EFrm_\mode
\longrightarrow
\HFrm_\mode
\quad\text{and}\quad
\patch_\mode\circ K
\colon
\HFrm_\mode
\longrightarrow
\EFrm_\mode.
\]

\begin{theorem}
	\label{theorem:EFrm HFrm equivalence}
	The functors~$\idl\circ \Kdc^\mode$ and~$\patch_\mode\circ K$ define equivalences
	\[
	\idl\circ \Kdc^\mode
	\colon
	\EFrm_\mode
	\simeq
	\HFrm_\mode
	\cocolon
	\patch_\mode\circ K.
	\]
\end{theorem}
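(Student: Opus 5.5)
The plan is to obtain the statement by composing two equivalences: \cref{theorem:EFrm Heyt equivalence} gives $\Kdc^\mode\colon\EFrm_\mode\simeq\Heyt_\mode\cocolon\patch_\mode$, and Theorems~3.11 and~3.14 of~\cite{bezhanishvili2023FrametheoreticPerspectiveEsakia} give $\idl\colon\Heyt_\mode\simeq\HFrm_\mode\cocolon K$. Equivalences compose, with composite unit and counit obtained by whiskering: the unit is $\pidl{\beta}$ followed by $\patch_\mode\circ\pidl{-}\circ\Kdc^\mode$-type whiskerings, and the counit is $\epsilon$ combined with $\jidl$. So the main work is checking that these cited equivalences, which in~\cite{bezhanishvili2023FrametheoreticPerspectiveEsakia} are stated with their own conventions, coincide with the restrictions of $\idl$ and $K$ from~\cref{section:ideals} to the categories of~\cref{definition:HFrm,definition:Heyt}.

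First I will check that the functors are well defined. On objects, $K(H)$ is a Heyting algebra for a Heyting frame $H$ by definition, and $\idl(A)$ is a Heyting frame for a Heyting algebra $A$ by~\cite[Corollary~3.3]{bezhanishvili2023FrametheoreticPerspectiveEsakia}. On morphisms in the weak case there is nothing to add, since $\HFrm_\weak$ and $\Heyt_\weak$ are full subcategories of $\CohFrm$ and $\DLat$. In the strong case, $K$ sends a morphism of $\HFrm_\strong$ to a Heyting morphism by definition. For $\idl(g)$ with $g$ Heyting, we need its restriction to compacts to preserve implication. Under the identification $\pidl{-}$ that restriction is $g$, and \cref{lemma:pidl preserves heyting implication} shows that $\pidl{-}$ is a Heyting isomorphism, so this holds.

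Next, the natural isomorphisms $\pidl{-}\colon\id\Rightarrow K\circ\idl$ and $\jidl\colon\idl\circ K\Rightarrow\id$ restrict to the full subcategories, which gives the weak equivalence immediately, following \cref{remark:strategy}. For the strong version, it remains to check that the components are isomorphisms in the strong categories:
\begin{itemize}
\item $\pidl{-}_A$ is a Heyting isomorphism by \cref{lemma:pidl preserves heyting implication}.
\item $\jidl_H$ restricts on compacts to the inverse of $\pidl{-}_{K(H)}$, which is again a Heyting isomorphism, so $\jidl_H$ and its inverse lie in $\HFrm_\strong$.
\end{itemize}

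Finally, composing the equivalences yields exactly the functors $\idl\circ\Kdc^\mode$ and $\patch_\mode\circ K$ of the statement.

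The only real obstacle is bookkeeping. We must ensure that "Heyting algebra morphism on compacts" in~\cref{definition:HFrm} matches the strong morphisms of~\cite{bezhanishvili2023FrametheoreticPerspectiveEsakia}, and that the unit and counit components are strong isomorphisms. Both follow from \cref{lemma:pidl preserves heyting implication}, since a bijective Heyting morphism has a Heyting inverse.
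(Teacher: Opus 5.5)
Your proposal is correct and follows the paper's proof, which likewise composes \cref{theorem:EFrm Heyt equivalence} with the equivalences $\Heyt_\mode\simeq\HFrm_\mode$ of Theorems~3.11 and~3.14 in~\cite{bezhanishvili2023FrametheoreticPerspectiveEsakia}, using that equivalences compose; your check of the strong case of the latter via \cref{lemma:pidl preserves heyting implication} re-derives what the paper simply cites. One harmless slip: the composite unit (on $\HFrm_\mode$) is built from $\jidl$ and $\pidl{\beta}$, and the composite counit (on $\EFrm_\mode$) from $\pidl{-}$ and $\epsilon$, not with the grouping you wrote, but nothing in your argument depends on this.
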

\begin{proof}
	By~Theorems~3.11 and~3.14 in~\cite{bezhanishvili2023FrametheoreticPerspectiveEsakia} we know that the functors~$K,\idl$ restrict to equivalences~$\Heyt_\mode\simeq\HFrm_\mode$. Since equivalences of categories compose~(see for example~\cite[Lemma~1.5.5]{riehl2016CategoryTheoryContext}), combining these with the equivalences defined by~$\Kdc^\mode$ and~$\patch_\mode$ from~\cref{theorem:EFrm Heyt equivalence} shows the functors~$\idl\circ \Kdc^\mode$ and~$\patch_\mode\circ K$ define an equivalence, as desired.
\end{proof}

The unit and the counit of this equivalence are obtained by combining the data of the two composite equivalences. While not needed in the rest of this paper, we include a brief explicit description of their underlying functions for completeness.
For a Heyting frame~$H$ the unit defines an isomorphism
\[
	H
	\xrightarrow{~~\jidl_H^{-1}~~}
	\idl K(H)
	\xrightarrow{~~\idl(\pidl{\beta}_{K(H)})~~}
	% \idl\Kdc{K(H)}(E_{K(H)})=
	\idl \Kdc{K(H)}\!\idl(\bool{K(H)}),
\]
which for~$x\in H$ maps as follows:
\[
	x
	\longmapsto
	\{a\in K(H): a\sqleq x\}
	\longmapsto
	\{\pidl{u_a}: a\in K(H):a\sqleq x\}.
\]
For an Esakia frame~$(E,\dc)$ the counit is defined by an isomorphism
\[
\idl\bool{K\idl\Kdc(E)}
\xrightarrow{~~
	\idl
		\bool{\pidl{-}_{\Kdc(E)}^{-1}}
~~}
\idl\bool{\Kdc(E)}
\xrightarrow{~~\idl(\boolrec_E)~~}
\idl K(E)
\xrightarrow{~~\jidl_E~~}
E,
\]
which for~$a,b\in\Kdc(E)$ maps on generators as
\[
\pidl{u_{\pidl{a}}}
\longmapsto
\pidl{u_a}
\longmapsto
\pidl{a}
\longmapsto
a,
\qquad
\pidl{\ell_{\pidl{b}}}
\longmapsto
\pidl{\ell_b}
\longmapsto
\pidl{\neg b}
\longmapsto
\neg b.
\]

%~~~~~~~~~~~~~~~~~~~~~~~~~~~~~
\section{Restricting Townsend's equivalence}
\label{section:townsends functors}
The preceding two~\cref{section:esakia locales and esakia frames,section:heyting frames and esakia frames} proved equivalences between Esakia locales, Esakia frames, and Heyting frames. Combined, this can be interpreted as a localic Esakia duality~$\HFrm_\mode^\op\simeq \ELoc_\mode$.

In this section we relate this back to Townsend's original localic Priestley duality. Namely, since Heyting frames define subcategories of coherent frames, and Esakia locales define subcategories of ordered Stone locales, we can ask if Townsend's equivalence restricts in turn to a localic Esakia duality. In this section we prove that this is indeed the case, and moreover that this agrees with the duality obtained via Esakia frames in~\cref{section:esakia locales and esakia frames,section:heyting frames and esakia frames}.

To start, we recall the functor~\cite[\S 3]{townsend1997LocalicPriestleyDuality}:
\begin{align*}
	\townsendC
	\colon
	\OStoneLoc
	&\longrightarrow
	\CohFrm^\op;
	\\
	(X,R)&\longmapsto \idl K_{\down}(\Opens X);
	\\
	f &\longmapsto \idl(f^{-1}).
\end{align*}

\begin{proposition}
\label{proposition:ELoc to HFrm}
	The functor~$\townsendC$ restricts to functors~$\townsendC_\mode\colon \ELoc_\mode\to\HFrm^\op_\mode$.
%	\begin{align*}
%		\townsendC_\mode
%		\colon
%		\ELoc_\mode
%		&\longrightarrow
%		\HFrm^\op_\mode;
%		\\
%		(X,R)&\longmapsto \idl K_{\down}(\Opens X);
%		\\
%		f &\longmapsto \idl(f^{-1}).
%	\end{align*}
\end{proposition}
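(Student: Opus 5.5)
Since $\townsendC$ factors through distributive lattices, both the object and morphism parts will be reduced to statements about the lattices $K_{\down}(\Opens X)$, which we compute via the cone $\Down$.

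On objects, take an Esakia locale $(X,R)$. By \cref{proposition:esakia locale gives esakia frame}, $(\Opens X,\Down)$ is an Esakia frame. By \cref{proposition:Kdc Heyting algebra}, $\KDown(\Opens X)$ is then a Heyting algebra, and by \cref{corollary:KDown = Kdown} it equals $K_{\down}(\Opens X)$. Hence $\townsendC(X,R)=\idl K_{\down}(\Opens X)$ is the ideal frame of a Heyting algebra, which is a Heyting frame by \cite[Corollary~3.3]{bezhanishvili2023FrametheoreticPerspectiveEsakia}. Put differently, $\townsendC$ agrees on objects with $\idl\circ\Kdc\circ\cone$.

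For weak morphisms, $\HFrm_\weak$ is a full subcategory of $\CohFrm$. So once objects are handled, any monotone map between Esakia locales is sent by $\townsendC$ to a coherent morphism between Heyting frames, because Townsend's functor is already well defined on $\OStoneLoc\supseteq\ELoc_\weak$. Functoriality is inherited, and nothing further needs checking.

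For strong morphisms, let $f\colon(X,R)\to(Y,Q)$ be a p-morphism of Esakia locales. We must show that $\idl(f^{-1})$ restricts on compact elements to a Heyting morphism $K_{\down}(\Opens Y)\to K_{\down}(\Opens X)$. Under the identification $\pidl{-}$, which preserves implication by \cref{lemma:pidl preserves heyting implication}, this amounts to showing that $f^{-1}$ restricted to $K_{\down}(\Opens Y)$ preserves the implication. By \cref{lemma:p morphism implies cone equality}, $f^{-1}$ is a strong conic morphism $(\Opens Y,\Downsub{Q})\to(\Opens X,\Downsub{R})$. The calculation in the proof of \cref{proposition:EFrm to Heyt} then applies directly: $f^{-1}$ preserves complements of compact (complemented) elements, so
\[
f^{-1}(a\to b)=\neg\Downsub{R}(f^{-1}a\wedge\neg f^{-1}b)=f^{-1}a\to f^{-1}b .
\]
Equivalently, $\townsendC_\strong$ factors as the composite of $\cone_\strong$ (\cref{proposition:ELoc to EFrm}), $\Kdc^\strong$, and $\idl$.

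The main point needing care is to make sure that the identifications are literally compatible. First, the lattice operations and the implication on $K_{\down}(\Opens X)$ that Townsend's construction uses must be exactly those of $\KDown(\Opens X)$; \cref{corollary:KDown = Kdown} gives equality as subsets of $\Opens X$, and the lattice operations are inherited from the frame on both sides. Second, the map $\idl(f^{-1})$ on principal ideals must be $\pidl{a}\mapsto\pidl{f^{-1}(a)}$, so that the definition of $\HFrm_\strong$ (restriction to compacts is Heyting) reduces to the calculation above. I expect neither point to be difficult; the substantive inputs, namely that $\Kdc$ carries a Heyting structure and that p-morphisms give the cone equality, have already been established.
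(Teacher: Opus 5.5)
Your proposal is correct and follows essentially the paper's route: objects are handled via \cref{corollary:KDown = Kdown}, \cref{proposition:Kdc Heyting algebra} and \cite[Corollary~3.3]{bezhanishvili2023FrametheoreticPerspectiveEsakia}, and strong morphisms via the cone equality for p-morphisms followed by the same implication calculation as in \cref{proposition:EFrm to Heyt}. The only small difference is in the weak case: you invoke functoriality of Townsend's $\townsendC$ on $\OStoneLoc$ together with fullness of $\HFrm_\weak$ in $\CohFrm$, whereas the paper re-verifies directly (as in \cref{lemma:Kdc preserved by conic morphism}) that $f^{-1}$ restricts to a lattice map $K_{\down}(\Opens Y)\to K_{\down}(\Opens X)$.
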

\begin{proof}
	Let~$(X,R)$ be an Esakia locale. Seen as an ordered Stone locale, Townsend's functor~$\townsendC$ assigns to this the coherent frame~$\idl K_{\down}(\Opens X)$. In this case,~$(X,R)$ induces a cone~$\Down$, and by~\cref{corollary:KDown = Kdown} we have that~${K_{\down}(\Opens X)=\KDown(\Opens X)}$. By~\cref{proposition:Kdc Heyting algebra} this is a Heyting algebra, so~$\idl K_{\down}(\Opens X)$ is a Heyting frame~(\cite[Corollary~3.3]{bezhanishvili2023FrametheoreticPerspectiveEsakia}). Hence~$\townsendC_\mode$ are well defined on objects.
	
	Now take a morphism~$f\colon (X,R)\to (Y,Q)$ in~$\ELoc_\weak$. As observed before, any frame morphism preserves complemented elements, so since~$X,Y$ are Stone, the map~$f^{-1}\colon \Opens Y\to \Opens X$ preserves compact elements. Then, following the same proof as~\cref{lemma:Kdc preserved by conic morphism} and again using~\cref{corollary:KDown = Kdown}, we see that~$f^{-1}$ restricts to a map~$K_{\down}(\Opens Y)\to K_{\down}(\Opens X)$ of distributive lattices. Hence we get the desired coherent morphism~$\idl(f^{-1})$, showing~$\townsendC_\weak$ is well defined on morphisms.
	
	Now assume additionally that~$f$ is a p-morphism, so~$f$ is in~$\ELoc_\strong$. We need to show that the restriction~$K_{\down}(\Opens Y)\to K_{\down}(\Opens X)$ of~$f^{-1}$ is a map of Heyting algebras.
	By~\cref{corollary:p morphism on Esakia locales} we have~$f^{-1}\circ \Downsub{Q}= \Downsub{R}\circ f^{-1}$, and using the definition of Heyting implication from~\cref{proposition:Kdc Heyting algebra} we calculate for~$a,b\in K_{\down}(\Opens Y)$:
	\begin{align*}
		f^{-1}(a\to b)
		&=
		f^{-1}\left(\neg\Downsub{Q}(a\wedge \neg b)\right)
		&&(\text{by definition})
		\\&=
		\neg f^{-1}\left(\Downsub{Q}(a\wedge \neg b)\right)
		&&(f^{-1}\text{~preserves~}\neg)
		\\&=
		\neg\Downsub{R}f^{-1}(a\wedge \neg b)
		&&(f\text{~is p-morphism})
		\\&=
		\neg\Downsub{R}\left(f^{-1}(a)\wedge \neg f^{-1}(b)\right)
		&&(f^{-1}\text{~preserves~}\neg,\wedge)
		\\&=
		f^{-1}(a)\to f^{-1}(b).
		&&(\text{by definition})
	\end{align*}
	Hence~$\idl(f^{-1})$ is a morphism in~$\HFrm_\strong$, and~$\townsendC_\strong$ is well defined.
\end{proof}

The converse functor is based on Townsend's assignment~\cite[\S 5]{townsend1997LocalicPriestleyDuality}, which in our notation reads as follows:
\begin{align*}
	\townsendB
	\colon
	\CohFrm^\op
	&\longrightarrow
	\OStoneLoc;
	\\
	L &\longmapsto (\idl (\bool{K(L)}), R_L);
	\\
	h &\longmapsto \idl(\bool{h}).
\end{align*}
We abuse notation somewhat in identifying the frame~$\idl (\bool{K(L)})$ with the corresponding locale. Townsend's localic partial order~$R_L$ is the closed localic relation whose open complement is defined by the directed join
\[
u_{R_L}:=\bigvee\nolimits^{\uparrow}
\left\{
	\bigwedge_{i\in I}\pidl{u_{a_i}}\pftensor \pidl{\ell_{b_i}}
:
\begin{array}{c}
	\bigwedge_{i\in I}(u_{a_i}\vee\ell_{b_i})=\bot,\\
	a_i,b_i\in K(L),\  I\text{~finite}
\end{array}
\right\}.
\]
Here we denote by~$\pftensor$ the \emph{preframe tensor}, which can be translated to the frame-theoretic tensor product as follows:
\[
x\pftensor y = (x\tensor \top) \vee (\top \tensor y)
\quad\text{and}\quad
x\tensor y = (x\pftensor \bot) \wedge (\bot\pftensor y).
\]
Our first task is to translate Townsend's preframe formula for the open complement of~$R_L$ to the language of closed conic frames. This is already alluded to in~\cite[\S 4]{townsend1997LocalicPriestleyDuality}, but we include the proof here for completeness.

\begin{lemma}
	\label{lemma:townsend R open complement}
	If~$L$ is a coherent frame, the open complement of Townsend's~$R_L$~is
	\[
	u_{R_L}
	=
	\bigvee\{\pidl{u_a}\tensor\pidl{\ell_a} : a\in K(L)\}.
	\]
\end{lemma}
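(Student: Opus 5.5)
We prove the two inclusions between $u_{R_L}$ and $v:=\bigvee\{\pidl{u_a}\tensor\pidl{\ell_a}: a\in K(L)\}$ in $E\tensor E$, where $E:=\idl(\bool{K(L)})$. First we translate a single preframe term into frame-tensor language. Using $x\pftensor y=(x\tensor\top)\vee(\top\tensor y)$ and distributivity, a finite meet $\bigwedge_{i\in I}(x_i\pftensor y_i)$ expands into a finite join over subsets $S\subseteq I$:
\[
\bigwedge_{i\in I}(x_i\pftensor y_i)=\bigvee_{S\subseteq I}\Bigl(\bigwedge_{i\in S}x_i\Bigr)\tensor\Bigl(\bigwedge_{i\notin S}y_i\Bigr).
\]
With $x_i=\pidl{u_{a_i}}$ and $y_i=\pidl{\ell_{b_i}}$, each summand equals $\pidl{u_{a_S}}\tensor\pidl{\ell_{b_{S'}}}$, where $a_S:=\bigwedge_{i\in S}a_i$ and $b_{S'}:=\bigvee_{i\notin S}b_i$. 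Here we use that $\beta$ preserves finite meets and joins, so that meets of $u$'s are $u$'s and meets of $\ell$'s are $\ell$'s of joins. Since $I$ is Kuratowski finite, we should index these summands by choice functions $I\to\{1,2\}$ rather than by decidable subsets. I expect this constructive bookkeeping to be the main obstacle, though it is only a notational nuisance.

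\textbf{Inclusion $u_{R_L}\sqleq v$.} Assume $\bigwedge_i(u_{a_i}\vee\ell_{b_i})=\bot$. Expanding this meet by distributivity in $\bool{K(L)}$ gives $u_{a_S}\wedge\ell_{b_{S'}}=\bot$ for every $S$. Since $\beta$ is an order embedding, this means $a_S\sqleq b_{S'}$ in $K(L)$. Hence $\pidl{\ell_{b_{S'}}}\sqleq\pidl{\ell_{a_S}}$, and so each summand satisfies
\[
\pidl{u_{a_S}}\tensor\pidl{\ell_{b_{S'}}}\sqleq\pidl{u_{a_S}}\tensor\pidl{\ell_{a_S}}\sqleq v.
\]
Taking joins over the summands and then over the generating set gives this inclusion.

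\textbf{Inclusion $v\sqleq u_{R_L}$.} For a single $a\in K(L)$, take $I$ a singleton with $a_1=b_1=a$. The condition $u_a\vee\ell_a=\top$ fails, so this choice does not satisfy the constraint, and we need a different presentation. Instead use the two-element index set with $(a_1,b_1)=(a,\bot)$ and $(a_2,b_2)=(\top,a)$. The constraint holds because
\[
(u_a\vee\ell_\bot)\wedge(u_\top\vee\ell_a)=u_a\wedge\ell_a=\bot,
\]
using $\ell_\bot=\neg u_\bot=\top$ and $u_\top=\top$, which hold since $\beta$ preserves bounds. By the expansion above, the corresponding preframe meet is the join of four summands. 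One of them is $\pidl{u_a}\tensor\pidl{\ell_a}$; the others involve $\pidl{\ell_\bot}=\top$ or $\pidl{u_\top}=\top$ and only make the term larger. So $\pidl{u_a}\tensor\pidl{\ell_a}$ lies below a generator of $u_{R_L}$.

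If this two-index check gets messy, the fallback is simpler. Read the preframe meet $(\pidl{u_a}\pftensor\bot)\wedge(\bot\pftensor\pidl{\ell_a})$ directly as $\pidl{u_a}\tensor\pidl{\ell_a}$ via the translation formula $x\tensor y=(x\pftensor\bot)\wedge(\bot\pftensor y)$, with $\pidl{\ell_\top}=\bot$ and $\pidl{u_\bot}=\bot$. Taking joins over $a$ then gives $v\sqleq u_{R_L}$.
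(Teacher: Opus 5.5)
The two-index argument for $v\sqleq u_{R_L}$ fails, because the family $(a_1,b_1)=(a,\bot)$, $(a_2,b_2)=(\top,a)$ violates Townsend's side condition. Since $\ell_\bot=\top$ and $u_\top=\top$, both factors are $\top$:
\[
(u_a\vee\ell_\bot)\wedge(u_\top\vee\ell_a)=\top\wedge\top=\top.
\]
So the condition $\bigwedge_i(u_{a_i}\vee\ell_{b_i})=\bot$ would force $\top=\bot$, and your claimed value $u_a\wedge\ell_a$ is wrong. Consistently with this, the associated term is $(\pidl{u_a}\pftensor\top)\wedge(\top\pftensor\pidl{\ell_a})=\top$, so admitting it would make $u_{R_L}=\top$.

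You have swapped $\top$ and $\bot$. To reduce $u_a\vee\ell_{b_1}$ to $u_a$ you need $\ell_{b_1}=\bot$, i.e.\ $b_1=\top$. To reduce $u_{a_2}\vee\ell_a$ to $\ell_a$ you need $u_{a_2}=\bot$, i.e.\ $a_2=\bot$. The admissible family is therefore $(a,\top),(\bot,a)$, which is exactly your fallback and exactly the paper's argument. Discard the two-index attempt and keep the fallback. Then add the side-condition check that the fallback leaves implicit: $(u_a\vee\ell_\top)\wedge(u_\bot\vee\ell_a)=u_a\wedge\ell_a=\bot$.

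Your inclusion $u_{R_L}\sqleq v$ is correct, but it takes a different route from the paper's. You expand each admissible term into the rectangles $\pidl{u_{a_S}}\tensor\pidl{\ell_{b_{S'}}}$ and bound each one by the generator at $a_S$. The paper never expands. From $\pidl{u_a}\vee\pidl{\ell_a}=\top$ it gets $\pidl{u_a}\tensor\top\sqleq(\top\tensor\pidl{u_a})\vee v$. Hence $\pidl{u_{a_i}}\pftensor\pidl{\ell_{b_i}}\sqleq(\top\tensor\pidl{u_{a_i}\vee\ell_{b_i}})\vee v$, and the finite meet over $i$ collapses by distributivity to $(\top\tensor\pidl{\bigwedge_i(u_{a_i}\vee\ell_{b_i})})\vee v=v$.

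The paper's route needs only complementation in $\bool{K(L)}$ and the generators at the original $a_i$, and it handles Kuratowski-finite meets with no extra work. Your route gives a finer, rectangle-by-rectangle comparison. It also does not actually need the order-embedding property of $\beta$, since $u\wedge\neg u'=\bot$ already gives $u\sqleq u'$.

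On the constructive bookkeeping you flag: indexing by functions $I\to\{1,2\}$ is not a fix, because these are just the decidable subsets again. When $I$ lacks decidable equality, the expansion of $\bigwedge_{i\in I}(A_i\vee B_i)$ over such functions can genuinely fail. Instead, first reindex along a surjection $\{1,\dots,n\}\to I$; meets are idempotent, so neither the term nor the side condition changes. Then expand over $\{1,2\}^n$.
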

\begin{proof}
	Abbreviate the right-hand side by~$U$. Observe first that for all~$a\in K(L)$ we have
	\[
	(u_a\vee \ell_\top)\wedge (u_\bot \vee \ell_a)
	=
	u_a\wedge \ell_a
	=
	\bot,
	\]
	and so~$(\pidl{u_a}\pftensor \bot) \wedge (\bot\pftensor \pidl{\ell_a}) = \pidl{u_a}\tensor \pidl{\ell_a}$ appears in the directed join defining~$u_{R_L}$. Therefore~$U\sqleq u_{R_L}$.
	
	For the converse inclusion, take a term~$\bigwedge_{i\in I}\pidl{u_{a_i}}\pftensor \pidl{\ell_{b_i}}$ for a family~${a_i,b_i\in K(L)}$ indexed by finite~$I$, and satisfying~$\bigwedge_{i\in I}(u_{a_i}\vee \ell_{b_i})=\bot$. Observe then that for all~$a\in K(L)$ we have
	\[
	\pidl{u_a}\tensor \top
	=
	\pidl{u_a}\tensor (\pidl{u_a}\vee \pidl{\ell_a})
	\sqleq
	\top\tensor \pidl{u_a} \vee U,
	\]
	where the last inclusion follows by absorbing the term~$\pidl{u_a}\tensor \pidl{\ell_a}$ into~$U$. Using this, we get for each~$i\in I$:
	\begin{align*}
		\pidl{u_{a_i}}\pftensor \pidl{\ell_{b_i}}
		&=
		(\pidl{u_{a_i}}\tensor \top)\vee (\top\tensor\pidl{\ell_{b_i}})
		&&(\pftensor\text{~to~}\tensor)
		\\&\sqleq
		(\top\tensor \pidl{u_{a_i}})\vee U \vee (\top\tensor \pidl{\ell_{b_i}})
		&&(\text{previous equation})
		\\&=
		\top \tensor \pidl{u_{a_i}\vee\ell_{b_i}}\vee U.
	\end{align*}
	In turn, taking the meet over~$I$ this gives
	\[
		\bigwedge_{i\in I}\pidl{u_{a_i}}\pftensor \pidl{\ell_{b_i}}
		\leq
		\top\tensor\pidl{\bigwedge_{i\in I}(u_{a_i}\vee \ell_{b_i})} \vee U
		=
		\top\tensor\bot \vee U
		=
		U,
	\]
	from which we conclude~$u_{R_L}\sqleq U$, as desired.
\end{proof}

\begin{lemma}
	\label{lemma:townsend R is RdcH}
	If~$H$ is a Heyting frame, then Townsend's~$R_H$ is equal to~$R_{\dc{K(H)}}$.
\end{lemma}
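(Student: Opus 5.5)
Both $R_H$ and $R_{\dc{K(H)}}$ are closed sublocales of the same product locale $X\times X$, where $\Opens X=\idl(\bool{K(H)})=E_{K(H)}$. A closed sublocale is determined by its open complement (\cref{remark:closed sublocales}). So the plan is to show that the two open complements coincide in $E_{K(H)}\tensor E_{K(H)}$.

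For Townsend's relation, \cref{lemma:townsend R open complement} already gives
\[
u_{R_H}=\bigvee\{\pidl{u_a}\tensor\pidl{\ell_a}: a\in K(H)\}.
\]
For the conic side, write $A:=K(H)$. This is a Heyting algebra, since $H$ is a Heyting frame. By \cref{proposition:heyting algebra to esakia frame}, $(E_A,\dc{A})$ is an Esakia frame, and in particular it is closed. \cref{proposition:closed iff Rdc closed} then says that $R_{\dc{A}}$ is closed with open complement $O^\Dc_\bot$. Since $\dc{A}$ is also a closure operator on a Stone frame, \cref{corollary:closed conic Stone frame ODc from Kdc} gives
\[
O^\Dc_\bot=\bigvee\{c\tensor\neg c: c\in\Kdc{A}(E_A)\}.
\]

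Next I would substitute the description $\Kdc{A}(E_A)=\{\pidl{u_a}:a\in A\}$ from \cref{lemma:KdcA}. Negation of compact elements in $E_A$ is computed in $\bool{A}$ via principal ideals, so $\neg\pidl{u_a}=\pidl{\neg u_a}=\pidl{\ell_a}$, as already used in that lemma. Hence
\[
O^\Dc_\bot=\bigvee\{\pidl{u_a}\tensor\pidl{\ell_a}: a\in K(H)\}=u_{R_H},
\]
and the two closed sublocales are equal.

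I expect no real obstacle beyond bookkeeping. The one point to check is that Townsend's construction uses exactly the same frame $\idl(\bool{K(H)})$ and the same generator notation $u_a,\ell_a$ as $E_{K(H)}$, so that both open complements live in the same coproduct frame. A second point is that ``equal'' is to be read as equality of sublocales (congruences $\cc{u}$ for the same $u$), not merely isomorphism. Note also that the Heyting hypothesis is used only to make $\dc{K(H)}$ well defined, since its definition requires the implication $a\to b$.
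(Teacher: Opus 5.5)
Your proposal is correct and follows the paper's proof essentially step for step. You compute $u_{R_H}$ via \cref{lemma:townsend R open complement}, obtain $O^\Dc_\bot$ from closedness of the Esakia frame $(E_{K(H)},\dc{K(H)})$ together with \cref{corollary:closed conic Stone frame ODc from Kdc} and \cref{lemma:KdcA}, and conclude because a closed sublocale is determined by its open complement.
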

\begin{proof}
	By~\cref{lemma:townsend R open complement}, the relation~$R_H$ has open complement
	\[
	u_{R_H}=\bigvee\{\pidl{u_a}\tensor \pidl{\ell_a}: a\in K(H)\}.
	\]
	On the other hand,~$H$ induces an Esakia frame~$(E_{K(H)},\dc{K(H)})$ by~\cref{proposition:heyting algebra to esakia frame}. For brevity, denote its cone by just~$\dc$. Thus~$R_{\dc}$ is a closed localic relation~(\cref{proposition:closed iff Rdc closed}), which by~\cref{corollary:closed conic Stone frame ODc from Kdc} has open complement
	\[
	O^{\Dc}_\bot =
	\bigvee\{c\tensor \neg c : c\in \Kdc (E_{K(H)})\}.
	\]
	By~\cref{lemma:KdcA}, the set~$\Kdc(E_{K(H)})$ is precisely the image of~$K(H)$ under the free Boolean algebra embedding~$\beta_{K(H)}$, and from this it follows immediately that
	\begin{align*}
	O^\Dc_\bot
	&=
	\bigvee\{\pidl{\beta_{K(H)}(a)}\tensor \neg \pidl{\beta_{K(H)}(a)}:a\in K(H)\}
	\\&=
	\bigvee\{\pidl{u_a}\tensor \pidl{\ell_a}: a\in K(H)\}
	=
	u_{R_H}.
	\end{align*}
	Their open complements being equal, we get~$R_H= R_{\dc}$ by~\cite[\S III.6.1.4]{picado2012FramesLocalesTopology}.
\end{proof}

\begin{proposition}
\label{proposition:HFrm to ELoc}
The functor~$\townsendB$ restricts to functors~${\townsendB_\mode\colon\HFrm^\op_\mode \to \ELoc_\mode}$.
\end{proposition}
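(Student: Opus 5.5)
The plan is to reduce everything to the machinery of Esakia frames already in place, exploiting \cref{lemma:townsend R is RdcH}. On objects, let $H$ be a Heyting frame. Then $K(H)$ is a Heyting algebra. By \cref{proposition:heyting algebra to esakia frame}, the pair $(E_{K(H)},\dc{K(H)})$ is an Esakia frame, so by \cref{proposition:esakia frame gives esakia locale} the locale $\rel(E_{K(H)},\dc{K(H)}) = (E_{K(H)}, R_{\dc{K(H)}})$ is an Esakia locale. By \cref{lemma:townsend R is RdcH} this relation coincides with Townsend's $R_H$. Hence $\townsendB(H)=(\idl(\bool{K(H)}),R_H)$ is literally $\rel\circ\patch(K(H))$ as an object, and in particular it is an Esakia locale. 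This settles well-definedness on objects for both $\mode\in\{\weak,\strong\}$.

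On morphisms, take a coherent morphism $h\colon H\to H'$ between Heyting frames, i.e.\ a map in $\HFrm_\weak$. Its restriction $g:=h|_{K(H)}\colon K(H)\to K(H')$ is a distributive lattice map between Heyting algebras, so a morphism in $\Heyt_\weak$. Townsend's functor sends $h$ to the locale map with frame map $\idl(\bool{g})$, which is exactly $\patch_\weak(g)$. By \cref{proposition:Heyt to EFrm}, $\patch_\weak(g)$ is a conic morphism between the Esakia frames. Applying $\rel_\weak$ from \cref{proposition:EFrm to ELoc}, the corresponding locale map is internally monotone between $(E_{K(H')},R_{\dc{K(H')}})$ and $(E_{K(H)},R_{\dc{K(H)}})$. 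Using \cref{lemma:townsend R is RdcH} again, these are Townsend's ordered Stone locales. Thus $\townsendB$ sends $h$ to a morphism of $\ELoc_\weak$, and since $\ELoc_\weak$ is full in $\rosLoc$ (and Townsend's functor already yields monotone maps) nothing more is needed.

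For the strong case, if $h$ is in $\HFrm_\strong$, then $g$ is a Heyting algebra map. \cref{proposition:Heyt to EFrm} then says that $\idl(\bool{g})$ is a strong conic morphism. \cref{proposition:EFrm to ELoc}, which uses \cref{corollary:p morphism on Esakia locales}, then gives that the locale map is a p-morphism. Functoriality is inherited from $\townsendB$, since restriction to subcategories preserves identities and composition.

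The only delicate point I anticipate is bookkeeping: Townsend's functor is presented on $\CohFrm^\op$ with the locale $\idl(\bool{K(L)})$. I must check that the identification of $\townsendB$ with $\rel\circ\patch\circ K$ holds on the nose. This means the underlying frames agree, both being $\idl(\bool{K(H)})$; the relations agree as sublocales, which is \cref{lemma:townsend R is RdcH}; and the morphism assignments agree, both being $\idl(\bool{h|_{K(H)}})$. Once this is verified, everything follows from the earlier results.
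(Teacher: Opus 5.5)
Your proposal is correct and follows essentially the same route as the paper: everything hinges on \cref{lemma:townsend R is RdcH}, and for strong morphisms you use, as the paper does, the cone equation from \cref{proposition:Heyt to EFrm} together with \cref{corollary:p morphism on Esakia locales}, here packaged via $\rel_\strong$. The only variation is on objects. You derive all Esakia-locale axioms from \cref{proposition:esakia frame gives esakia locale} applied to $\patch(K(H))$, whereas the paper uses Townsend's result that $\townsendB(H)$ is already an ordered Stone locale and only adds open source via \cref{theorem:induced relation}; your version does not depend on Townsend's verification, and in effect proves the first equation of \cref{proposition:townsend compatible} along the way.
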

\begin{proof}
	If~$H$ is a Heyting frame then Townsend's functor~$\townsendB$ produces the ordered Stone locale~$(\idl(\bool{K(H)}), R_H)$. By~\cref{lemma:townsend R is RdcH} the relation~$R_H$ comes from a conic frame, which has open source by~\cref{theorem:induced relation}. Thus~$(\idl(\bool{K(H)}), R_H)$ defines an Esakia locale, and~$\townsendB_\mode$ is well defined on objects.
	
	That~$\townsendB_\weak$ is well defined on morphisms just follows since~$\HFrm_\weak$ has coherent frame morphisms as arrows, which are sent to internally monotone maps by~$\townsendB$. It thus remains to check~$\townsendB_\strong$ is well defined on morphisms. For that, take an arrow~$h\colon H\to G$ in~$\HFrm_\strong$, inducing a frame map~$\idl(\bool{h})$ of the frames underlying the Esakia locales. To show this defines a p-morphism, it suffices via~\cref{corollary:p morphism on Esakia locales} to show that
	\[
	\Downsub{R_G} \circ \idl(\bool{h})
	=
	\idl(\bool{h})\circ \Downsub{R_H}.
	\]
	But, again by~\cref{lemma:townsend R is RdcH}, these cones~$\Downsub{R_H},\Downsub{R_G}$ are just~$\dc{K(H)},\dc{K(G)}$, respectively, and that this equation holds for those cones follows by the proof of~\cref{proposition:Heyt to EFrm}. Hence~$\idl(\bool{h})$ is a frame map underlying a p-morphism, so~$\townsendB_\strong$ is well defined on morphisms.
\end{proof}

\begin{theorem}
	\label{theorem:HFrm ELoc equivalence}
	Townsend's functors~$\townsendB,\townsendC$ restrict to equivalences
	\[
	\townsendB_\mode
	\colon
	\HFrm^\op_\mode
	\simeq
	\ELoc_\mode
	\cocolon
	\townsendC_\mode.
	\]
\end{theorem}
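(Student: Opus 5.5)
The plan is to avoid a direct proof and show that $\townsendB_\mode$ and $\townsendC_\mode$ agree, up to natural isomorphism, with the composites of equivalences already established. By \cref{theorem:ELoc EFrm equivalence} and \cref{theorem:EFrm HFrm equivalence} we have equivalences $\rel_\mode\circ(\patch_\mode\circ K)^\op\colon\HFrm_\mode^\op\to\ELoc_\mode$ and $(\idl\circ\Kdc^\mode)^\op\circ\cone_\mode\colon\ELoc_\mode\to\HFrm_\mode^\op$. Both $\townsendB_\mode$ and $\townsendC_\mode$ are well defined by \cref{proposition:HFrm to ELoc,proposition:ELoc to HFrm}. It therefore suffices to show that each Townsend functor is naturally isomorphic to the corresponding composite, since a functor naturally isomorphic to an equivalence is again an equivalence, with the other functor as quasi-inverse.

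First compare $\townsendB_\mode$ with $\rel_\mode\circ(\patch_\mode\circ K)^\op$. On objects, both send a Heyting frame $H$ to the locale with frame $\idl(\bool{K(H)})=E_{K(H)}$. The relations agree \emph{on the nose}: Townsend's $R_H$ equals $R_{\dc{K(H)}}$ by \cref{lemma:townsend R is RdcH}. On morphisms, both send $h$ to $\idl(\bool{h|_{K(H)}})$. Hence these two functors are equal, and $\townsendB_\mode$ is an equivalence.

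Next compare $\townsendC_\mode$ with $\idl\circ\Kdc^\mode\circ\cone_\mode$. An Esakia locale $(X,R)$ is sent by the composite to $\idl\KDown(\Opens X)$, and by Townsend to $\idl K_{\down}(\Opens X)$. These sets are equal by \cref{corollary:KDown = Kdown}, and on morphisms both functors are $\idl$ of the restriction of $f^{-1}$. So again the functors coincide.

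It follows that $\townsendB_\mode$ and $\townsendC_\mode$ are the two halves of the composite equivalence $\HFrm^\op_\mode\simeq\EFrm^\op_\mode\simeq\ELoc_\mode$. We must still check that they are mutually quasi-inverse, which holds because each is literally one of the two composite functors. The unit and counit are the composites of the units and counits from \cref{theorem:ELoc EFrm equivalence,theorem:EFrm HFrm equivalence}. In the strong case their components are strong, since each ingredient was shown to lie in the strong categories.

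The main obstacle I expect is bookkeeping of variance and the precise directions of the functors in the composite. In particular, the opposite of an equivalence $\EFrm_\mode\simeq\HFrm_\mode$ must be taken before composing with $\cone_\mode\dashv\rel_\mode$. Here I would use the standard fact that equivalences compose (\cite[Lemma~1.5.5]{riehl2016CategoryTheoryContext}). A secondary subtlety is that \cref{lemma:townsend R is RdcH} gives equality of sublocales only up to the canonical identification of closed sublocales by their open complements. Any resulting isomorphism is an identity-on-locale monotone map in both directions, and is a p-morphism by \cref{corollary:p morphism on Esakia locales} because the cones agree.
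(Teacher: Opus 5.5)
Your proof is correct, but it takes a different route from the paper. The paper proves the theorem by restricting Townsend's own equivalence $\CohFrm^\op\simeq\OStoneLoc$. Since $\HFrm_\weak$ and $\ELoc_\weak$ are full subcategories, Townsend's unit and counit restrict as in \cref{remark:strategy}. For the strong case, the paper shows that every isomorphism in $\HFrm_\weak$ (resp.\ $\ELoc_\weak$) is already an isomorphism in $\HFrm_\strong$ (resp.\ $\ELoc_\strong$). For Esakia locales this is done by combining the two conic inequalities for $f^{-1}$ and its inverse into an equality of cones, then applying \cref{corollary:p morphism on Esakia locales}. The identification of Townsend's functors with the composites through $\EFrm_\mode$ comes only afterwards, as \cref{proposition:townsend compatible}.

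You reverse this order. You first prove that compatibility, whose proof rests only on \cref{lemma:townsend R is RdcH,corollary:KDown = Kdown}, so there is no circularity. You then read the theorem off from the composite of \cref{theorem:ELoc EFrm equivalence,theorem:EFrm HFrm equivalence}.

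Your route has two advantages. It never invokes Townsend's duality theorem itself; only his normal form \cref{theorem:townsend normal form} enters, through \cref{proposition:esakia locale gives esakia frame}. And the strong case comes for free, because both factor equivalences were already established for each $\mode$, so no ``weak isomorphisms are strong'' argument is needed.

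The paper's route shows something slightly stronger: Townsend's own unit and counit restrict, so localic Esakia duality is literally a sub-equivalence of localic Priestley duality. Your natural isomorphisms are instead the composite ones from the Esakia-frame factorisation.

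Your closing worry about the relations agreeing only up to canonical identification is harmless but unnecessary. Sublocales are taken as equivalence classes of subobjects, so \cref{lemma:townsend R is RdcH} gives equality outright.
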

\begin{proof}
	That these functors are well defined follows by~\cref{proposition:HFrm to ELoc,proposition:ELoc to HFrm}. Recalling the observations in~\cref{remark:strategy}, it follows directly that Townsend's~equivalence~${\CohFrm^\op\simeq \OStoneLoc}$ restricts to the equivalence~${\HFrm_\weak^\op\simeq\ELoc_\weak}$. It remains to show that the components of the corresponding natural isomorphisms live in the strong subcategories~$\HFrm_\strong$ and~$\ELoc_\strong$.

	It suffices to show more generally that isomorphisms in the weak categories define isomorphisms in the strong categories. First, let~$h\colon H\to G$ be an isomorphism in~$\HFrm_\weak$. Its restriction~$h|_{K(H)}\colon K(H)\to K(G)$ defines an order isomorphism between Heyting algebras. Since Heyting implication is a right adjoint, it is preserved under order isomorphism, so~$h|_{K(H)}$ must be a Heyting isomorphism. In particular,~$h|_{K(H)}$ and its inverse define Heyting algebra morphisms, and so~$h$ is an isomorphism in~$\HFrm_\strong$.

	Next, let~$f\colon (X,R)\to (Y,Q)$ be an isomorphism in~$\ELoc_\weak$, and denote its inverse by~$g$. Applying the functor~$\cone_\weak$ gives conic morphisms~$f^{-1}$ and~$g^{-1}$, so we get the inclusions of cones
	\[
	\Downsub{R}\circ f^{-1}
	\sqleq
	f^{-1}\circ\Downsub{Q}
	\qquad\text{and}\qquad
	\Downsub{Q}\circ g^{-1}
	\sqleq
	g^{-1}\circ\Downsub{R}.
	\]
	Using the second inclusion we then find straightforwardly that
	\[
	f^{-1}\circ\Downsub{Q}
	=
	f^{-1}\circ\Downsub{Q}\circ g^{-1}\circ f^{-1}
	\sqleq
	f^{-1}\circ g^{-1}\circ\Downsub{R}\circ f^{-1}
	=
	\Downsub{R}\circ f^{-1},
	\]
	which together with the first inclusion gives~$f^{-1}\circ\Downsub{Q} = \Downsub{R}\circ f^{-1}$. A~symmetric argument shows that the analogous condition holds for~$g^{-1}$, and hence~\cref{corollary:p morphism on Esakia locales} shows~$f$ and~$g$ are p-morphisms. Thus~$f$ is an isomorphism in~$\ELoc_\strong$.

	Consequently, the components of the unit and counit of the equivalence between the weak categories in fact define isomorphisms in the strong categories, and we obtain the desired equivalence~$\HFrm^\op_\strong\simeq \ELoc_\strong$.
\end{proof}

Lastly, we show that this equivalence in fact agrees with the one factoring through Esakia frames. The three equivalences here form the following triangle:
\[\begin{tikzcd}[
	column sep=2em,
	row sep=5em,
	labels={/tikz/font=\small, /tikz/text=DiagramFunctorColor}
]
	{\HFrm_\mode^\op} && {\ELoc_\mode} \\
	& {\EFrm^\op_\mode}
	\arrow["{\townsendB_\mode}",sloped, shift left, from=1-1, to=1-3]
	\arrow["{(\patch_\mode\circ K)^\op}"', sloped, shift right, from=1-1, to=2-2]
	\arrow["{\townsendC_\mode}"', sloped, shift left, from=1-3, to=1-1]
	\arrow["{\cone_\mode}", sloped, shift right, from=1-3, to=2-2]
	\arrow["{(\idl\circ \Kdc^\mode)^\op}", sloped, shift right, from=2-2, to=1-1]
	\arrow["{\rel_\mode}"', sloped, shift right, from=2-2, to=1-3]
\end{tikzcd}\]

\begin{proposition}
\label{proposition:townsend compatible}
	The following two equations hold:
	\[
	\townsendB_\mode = \rel_\mode \circ (\patch_\mode \circ K)^\op
	\qquad\text{and}\qquad
	\townsendC_\mode = (\idl\circ \Kdc^\mode)^\op\circ \cone_\mode.
	\]
\end{proposition}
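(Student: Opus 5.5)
We will check both equations separately on objects and on morphisms. Since all functors involved act on morphisms by essentially taking underlying (frame) maps, the morphism parts should be immediate once the objects agree, so the real content is in the object parts.

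For the first equation, take a Heyting frame~$H$. The composite $(\patch_\mode\circ K)^\op$ sends $H$ to the Esakia frame $(E_{K(H)},\dc{K(H)})=(\idl(\bool{K(H)}),\dc{K(H)})$. Applying $\rel_\mode$ then gives the locale with frame $\idl(\bool{K(H)})$ equipped with $R_{\dc{K(H)}}$. Townsend's $\townsendB$ gives the same underlying locale with relation $R_H$, and \cref{lemma:townsend R is RdcH} says precisely $R_H=R_{\dc{K(H)}}$. On a morphism $h\colon H\to G$, $\patch_\mode(K(h))=\idl(\bool{h|_{K(H)}})$, and $\rel$ sends this frame map to the corresponding locale map. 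This is exactly $\townsendB(h)=\idl(\bool{h})$, using the paper's convention that $\bool{h}$ denotes $\bool$ applied to the restriction to compacts.

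For the second equation, take an Esakia locale $(X,R)$. Here $\cone_\mode$ gives $(\Opens X,\Down)$, and $\Kdc^\mode$ of this is $\KDown(\Opens X)$, whose ideal frame is $\idl\KDown(\Opens X)$. By \cref{corollary:KDown = Kdown}, $\KDown(\Opens X)=K_{\down}(\Opens X)$, so this equals $\townsendC(X,R)=\idl K_{\down}(\Opens X)$. The Heyting structure needs no separate check, since as distributive lattices the two sets coincide and Heyting implication is determined by the order. On a morphism $f$, $\cone$ yields $f^{-1}$. The functor $\Kdc^\mode$ restricts this to $f^{-1}|_{\KDown(\Opens Y)}$, and $\idl$ of that is $\townsendC(f)=\idl(f^{-1})$, where Townsend's notation implicitly means restriction to $K_{\down}$.

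The main subtlety is whether ``$=$'' is meant strictly or only up to the canonical identifications. Point-free constructions often introduce silent isomorphisms, for example identifying a relation with its open complement, or a closed sublocale with its congruence. I expect to handle this by noting that a closed sublocale is determined by its open complement (\cite[\S III.6.1.4]{picado2012FramesLocalesTopology}), so the identification needed in \cref{lemma:townsend R is RdcH} is a genuine equality of subobjects. Everything else holds on the nose by construction.
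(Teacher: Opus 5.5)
Your proposal is correct and follows the paper's proof essentially verbatim. On objects the two sides agree by \cref{lemma:townsend R is RdcH} and \cref{corollary:KDown = Kdown}, and on morphisms both sides give $\idl(\bool{h})$ and $\idl(f^{-1})$ respectively; your extra remark that $R_H=R_{\dc{K(H)}}$ is a genuine equality of closed sublocales, because a closed sublocale is determined by its open complement, is the same justification the paper gives at the end of \cref{lemma:townsend R is RdcH}.
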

\begin{proof}
	For the first equation, let~$H$ be a Heyting frame. Observe that both functors produce the same locale defined by the frame~$\idl(\bool{K(H)})$, while~$\townsendB_\mode$ equips it with Townsend's localic relation~$R_H$, and the functor on the right-hand side equips it with the relation~$R_{\dc{K(H)}}$. But these relations are equal by~\cref{lemma:townsend R is RdcH}. Both functors also send a morphism~$h$ of Heyting frames to the map~$\idl(\bool{h})$, so the equation follows.

	For the second equation, let~$(X,R)$ be an Esakia locale with induced cone~$\Down$. Townsend's functor~$\townsendC_\mode$ produces the Heyting frame~$\idl K_{\down}(\Opens X)$, while the functor on the right-hand side produces~$\idl \KDown(\Opens X)$. However, these agree by~\cref{corollary:KDown = Kdown}. Both functors send a morphism~$f$ to~$\idl(f^{-1})$, so equality follows.
\end{proof}

%~~~~~~~~~~~~~~~~~~~~~~~~~~~
\section{Conclusion}
\label{section:conclusion}
\cref{section:esakia locales and esakia frames,section:heyting frames and esakia frames,section:townsends functors} establish equivalences between Esakia locales, Esakia frames, Heyting algebras, and Heyting frames.
Taken together, these results yield a constructive, localic Esakia duality. The point-free analogues of Esakia spaces are Esakia locales and Esakia frames. On the algebraic side we use the Heyting frames introduced in~\cite{bezhanishvili2023FrametheoreticPerspectiveEsakia} as the point-free stand-in for Heyting algebras. The equivalence between Heyting frames and Esakia locales can be obtained directly by restricting Townsend's localic Priestley duality~\cite{townsend1997LocalicPriestleyDuality}. Using the notion of conic frame, we showed that this equivalence factors through the category of Esakia frames via the adjunction~$\cone\dashv\rel$. In this framework, the localic partial order is encoded by a frame-theoretic closure operator, the use of which makes the construction of a Heyting algebra more transparent.

All in all, the constructions can be summed up by the following diagram.

\[\begin{tikzcd}[
	column sep=0pt,
	row sep=0pt,
	cells={
		/tikz/outer xsep=.7ex,
		/tikz/outer ysep=.7ex
	},
	labels={/tikz/font=\small},
	marking/.append style={/tikz/font=\small},
	execute before arrows={
		\node[anchor=center, outer xsep=.6ex, outer ysep=.6ex] (\tikzcdmatrixname-3-1) at
			([xshift={-\summarydiagrambottomrowshift*(\summarydiagramlongcolumnsep/4)}]\tikzcdmatrixname-3-1.center)
			{$\Heyt^\op$};
		\node[anchor=center, outer xsep=.6ex, outer ysep=.6ex] (\tikzcdmatrixname-3-2) at
			([xshift={-\summarydiagrambottomrowshift*(\summarydiagramlongcolumnsep/4)}]\tikzcdmatrixname-3-2.center)
			{$\EFrm^\op$};
		\node[anchor=center, outer xsep=.6ex, outer ysep=.6ex] (\tikzcdmatrixname-3-3) at
			([xshift={-\summarydiagrambottomrowshift*(\summarydiagramlongcolumnsep/4)}]\tikzcdmatrixname-3-3.center)
			{$\dcFrm^\op$};
	}
]
	{\CohFrm^\op} &[\summarydiagramlongcolumnsep] \OStoneLoc &[\summarydiagramtightcolumnsep] \rLoc \\[\summarydiagramshortrowsep]
	{\HFrm^\op} & \ELoc & \rosLoc \\[\summarydiagramlongrowsep]
	{} & {} & {}
	\arrow[shift left=2, from=1-1, to=1-2]
	\arrow["{\citedsimeq{townsend1997LocalicPriestleyDuality}}"{marking, allow upside down}, draw=none, from=1-1, to=1-2]
	\arrow["{\color{\summarydiagramfunctorcolor}\townsendC}"{marking, font=\normalsize, allow upside down}, shift right=5, draw=none, from=1-1, to=1-2]
	\arrow["{\color{\summarydiagramfunctorcolor}\townsendB}"{marking, font=\normalsize, allow upside down}, shift left=5, draw=none, from=1-1, to=1-2]
	\arrow[shift left=2, from=1-2, to=1-1]
	\arrow["\subseteq"{anchor=center}, draw=none, from=1-2, to=1-3]
	\arrow["\subseteq"{marking, allow upside down}, draw=none, from=2-1, to=1-1]
	\arrow[shift left=2, from=2-1, to=2-2]
	\arrow["{\refsimeq{theorem:HFrm ELoc equivalence}}"{marking, allow upside down}, draw=none, from=2-1, to=2-2]
	\arrow[shift right=2, from=2-1, to=3-1]
	\arrow["{\color{\summarydiagrammapcolor}\idl\Kdc(E)\longmapsfrom(E,\dc)}"{marking, allow upside down}, shift left=5, draw=none, from=2-1, to=3-2]
	\arrow["{\color{\summarydiagrammapcolor} H\longmapsto (E_{K(H)},\dc{K(H)})}"{marking, allow upside down, pos=.45}, shift right=5, draw=none, from=2-1, to=3-2]
	\arrow[shift right=2, from=2-1, to=3-2]
	\arrow["{\unlinkedrefsimeq{theorem:EFrm HFrm equivalence}}"{marking, allow upside down}, draw=none, from=2-1, to=3-2]
	\arrow["{\hyperref[theorem:EFrm HFrm equivalence]{\phantom{\rule[-.78em]{.95em}{1.56em}}}}"{anchor=center, xshift=6pt, yshift=-7.2pt}, draw=none, from=2-1, to=3-2]
	\arrow["\subseteq"{marking, allow upside down}, draw=none, from=2-2, to=1-2]
	\arrow[shift left=2, from=2-2, to=2-1]
	\arrow["\subseteq"{anchor=center}, draw=none, from=2-2, to=2-3]
	\arrow[shift left=2, from=2-2, to=3-2]
	\arrow["\subseteq"{marking, allow upside down}, draw=none, from=2-3, to=1-3]
	\arrow[""{name=0, anchor=center, inner sep=0}, shift right=2, from=2-3, to=3-3]
	\arrow[shift right=2, from=3-1, to=2-1]
	\arrow["{\color{\summarydiagrammapcolor}A\longmapsto \idl(A)}"{marking, allow upside down}, shift right=5, draw=none, from=3-1, to=2-1]
	\arrow["{\color{\summarydiagrammapcolor}K(H)\longmapsfrom H}"{marking, allow upside down}, shift left=5, draw=none, from=3-1, to=2-1]
	\arrow["{\unlinkedcitedsimeq{BCM23}}"{marking, allow upside down}, draw=none, from=3-1, to=2-1]
	\arrow["{\hyperlink{cite.0@bezhanishvili2023FrametheoreticPerspectiveEsakia}{\phantom{\rule[-2.1em]{1.66em}{4.2em}}}}"{anchor=center, xshift=2.8pt, yshift=11pt}, draw=none, from=3-1, to=2-1]
	\arrow["{\color{\summarydiagrammapcolor}\Kdc(E)\longmapsfrom(E,\dc)}"{marking, allow upside down, pos={.5-.08*\summarydiagrambottomrowshift}}, shift left=5, draw=none, from=3-1, to=3-2]
	\arrow[shift right=2, from=3-1, to=3-2]
	\arrow["{\color{\summarydiagrammapcolor}A\longmapsto(E_A,\dc{A})}"{marking, allow upside down, pos={.5-.08*\summarydiagrambottomrowshift}}, shift right=5, draw=none, from=3-1, to=3-2]
	\arrow["{\refsimeq{theorem:EFrm Heyt equivalence}}"{marking, allow upside down}, draw=none, from=3-1, to=3-2]
	\arrow[shift right=2, from=3-2, to=2-1]
	\arrow["{\color{\summarydiagrammapcolor} (E,\dc)\longmapsto (X,R_{\dc})}"{marking, allow upside down}, shift left=5, draw=none, from=3-2, to=2-2]
	\arrow["{\color{\summarydiagrammapcolor} (\Opens X,\Down)\longmapsfrom (X,R)}"{marking, allow upside down}, shift right=5, draw=none, from=3-2, to=2-2]
	\arrow[shift left=2, from=3-2, to=2-2]
\arrow["\simeq"{marking, allow upside down}, draw=none, from=3-2, to=2-2]
\arrow["{\text{\scriptsize(\ref{theorem:ELoc EFrm equivalence})}}"{anchor=center, xshift={-\summarydiagrambottomrowshift*.12em}, yshift=-2.1ex, fill=white, inner sep=.5pt}, draw=none, from=3-2, to=2-2]
	\arrow[shift right=2, from=3-2, to=3-1]
	\arrow["\subseteq"{anchor=center}, draw=none, from=3-2, to=3-3]
	\arrow[""{name=1, anchor=center, inner sep=0}, shift right=2, from=3-3, to=2-3]
	\arrow["{\color{\summarydiagramfunctorcolor}\cone}"{marking, font=\normalsize, allow upside down}, shift left=5, draw=none, from=3-3, to=2-3]
	\arrow["{\color{\summarydiagramfunctorcolor}\rel}"{marking, font=\normalsize, allow upside down}, shift right=5, draw=none, from=3-3, to=2-3]
	\arrow["\dashv"{anchor=center, sloped, allow upside down}, draw=none, from=0, to=1]
	\arrow["{\text{\scriptsize(\ref{theorem:cone rel adjunction})}}"{anchor=center, xshift={-\summarydiagrambottomrowshift*.12em}, yshift=-2.1ex, fill=white, inner sep=.5pt}, draw=none, from=0, to=1]
\end{tikzcd}\]

Note that the bottom-left triangle commutes by construction, since the equivalence between Esakia frames and Heyting frames was defined to be the composition of the equivalences to Heyting algebras. The top-right triangle commutes by~\cref{proposition:townsend compatible}.

%~~~~~~~~~~~~~~~~~~~~~~~~~~
\subsection{Discussion}
\label{section:discussion}
We close the paper with some directions for future research and connections to the literature.
\begin{enumerate}
	\item As suggested in~\cite[\S 12.1]{schaaf2026LocalicRelationsOpenCones}, a natural direction would be to attempt to develop a more general theory of cones on locales that also encapsulates localic relations where neither the source nor target map is open. Using such a framework, Townsend's localic Priestley duality~\cite{townsend1997LocalicPriestleyDuality} could then be reinterpreted analogously to the constructions in the present work.
	
	\item Similarly, an alternative description of Townsend's duality~\cite{townsend1997LocalicPriestleyDuality} was developed in~\cite{korostenski2007LaxProperMaps} using lax proper maps of locales. The idea there is to model upwards closure of clopens by the coinserter~$q\colon X\tworightarrow C$ of the target~$t$ and source map~$s$ of the localic relation~$R$, which is the universal map so that~$q\circ t\sqleq q\circ s$. The subframe inclusion~$q^{-1}\colon \Opens C\hookrightarrow \Opens X$ picks out exactly those~$U\in \Opens X$ such that~$s^{-1}(U)\sqleq t^{-1}(U)$. Assuming that~$R$ is reflexive, we see that this holds on the level of sublocales iff~${\up U = t\left[s^{-1}[U]\right] = U}$, capturing upwards closure as desired, without relying on complements as in the definition of~$\Kdc(L)$. Relations between~\cite{korostenski2007LaxProperMaps} and the present work could be investigated.
	
	\item What is the relation between the present work and the choice-free Stone duality in~\cite{bezhanishvili2020ChoiceFreeStone} using upper Vietoris spaces? Studying a localic Vietoris construction using localic relations was also suggested to us independently by Tomáš Jakl. This could also relate to the approach~\cite{hofmann2014NotesEsakiaSpaces} using distributors.
	
	\item The work~\cite{bezhanishvili2023RemarksHyperspacesPriestley} uses \emph{Egli-Milner orders} on hyperspaces of Priestley spaces to build connections to coalgebraic modal logic. (We thank Nick Bezhanishvili for making us aware of this work.) It was shown in~\cite[\S 11]{schaaf2026LocalicRelationsOpenCones} that conic frames can be viewed as frames equipped with generalised Egli-Milner orders, in the sense of~\cite{heunen2024OrderedLocales}. Possible connections between these frameworks could be investigated.
	
	\item The present work should dualise to a localic account of co-Heyting algebras and co-Esakia spaces, similar to the treatment in~\cite{bezhanishvili2010BitopologicalDualityDistributive}. Namely, a \emph{co-Esakia space} is a Priestley space with open \emph{target} map. Thus, we could define \emph{co-Esakia locales} as ordered Stone locales with open target map, which would then induce an upwards closure cone~$\Up=t_!\circ s^{-1}$. This in turn suggests the notion of a \emph{co-Esakia frame}~$(E,\uc)$, for which the set of compact upper elements
	\[
	K_{\!\uc}(E)
	:=
	\{
		a\in K(E): \uc a = a
	\}
	\]
	should then form a co-Heyting algebra with co-implication for~$a,b\in K_{\uc}(E)$ defined by
	\[
		a\setminus b := \uc(a\wedge \neg b).
	\]
	Since conic frames~$(L,\uc)$ and~$(L,\dc)$ are formally the same thing, the difference lies in how the localic relation~$R_{\uc}$ is recovered, which in this case would be the one generated by the relation~$\sim$ defined as:
	\[
	\forall x,y\in L: x\tensor y \sim x\tensor \Uc(x,y).
	\]

	Lastly, bi-Heyting algebras could be modeled using the two-sided notion of conic frame~$(L,\uc,\dc)$ in~\cite{schaaf2026LocalicRelationsOpenCones}. There, the notion of \emph{parallelness} generalises the notion of conjugate pairs on Boolean algebras from~\cite{jonsson1951BooleanAlgebrasOperators}.
	Analogous to the discussion in~\cite[\S 4.2.4]{schaaf2024TowardsPointFreeSpacetimes}, for a two-sided conic frame~$(L,\uc,\dc)$ where~$\uc,\dc$ define closure operators, the images~$\im(\uc)$ and~$\im(\dc)$ form subframes of~$L$, and the triple~$(L,\im(\uc),\im(\dc))$ defines a \emph{biframe}~\cite{banaschewski1983BiframesBispaces} precisely when the `diamonds'~$\uc x\wedge \dc y$ form a join-basis for~$L$. Connections between the bitopological treatment of Priestley and Esakia duality in~\cite{bezhanishvili2010BitopologicalDualityDistributive} and the point-free conic frame approach in the present work could be investigated.

	\item In~\cref{remark:closure algebras} we saw that for any Esakia frame~$(E,\dc)$ we get a closure operator~$(K(E),\dc)$ in the sense of McKinsey and Tarski~\cite{mckinsey1944AlgebraTopology,mckinsey1946ClosedElementsClosure}, relating also to the framework of Boolean algebras with operators~\cite{jonsson1951BooleanAlgebrasOperators}.
	This suggests connections to (intuitionistic) modal logic~\cite{fischerservi1977ModalLogicIntuitionistic}, including duality-theoretic treatments of Heyting algebras with operators~\cite{orlowska2007DiscreteDualitiesHeyting} and the point-free setting of~\cite{hilken2000TopologicalDualityIntuitionistic}.
	Relations between Esakia frames and the point-free setting of McKinsey-Tarski algebras of~\cite{bezhanishvili2023McKinseyTarskiAlgebras} could also be investigated.
	
	\item Esakia duality can be used to give a spatial proof that Heyting algebras satisfy \emph{amalgamation} and \emph{interpolation} properties~\cite{maksimova1977CraigsTheoremSuperintuitionistic}. See for instance~\cite[\S 5]{maksimova1999InterrelationAlgebraicSemantical}, or the more recent Theorems~4.11 and~4.12 in~\cite{almeida2026ColimitsHeytingAlgebras}. On the other hand, Pitts gave a constructive localic proof of the amalgamation and interpolation properties of Heyting algebras in~\cite{pitts1983AmalgamationInterpolation}, but without Esakia duality. Can Pitts's results be recovered with the spatial intuition of Esakia duality, but using our point-free and constructive setting of Esakia locales or frames?
\end{enumerate}

\newpage
%~~~~~~~~~~~~~~~~~~~~~~~~~~
\subsection*{Acknowledgments} This work has been partially funded by the French National Research Agency (ANR) within the framework of ``Plan France 2030'', under the research projects EPIQ ANR-22-PETQ-0007, HQI-Acquisition ANR-22-PNCQ-0001 and HQI-R\&D ANR-22-PNCQ-0002.

%~~~~~~~~~~~~~~~~~~~~~~~~~~
\subsection*{AI tool disclaimer}
AI tools were used in this work for conceptual development, proof sketching and checking, literature search, suggestions for improved exposition, proofreading, and typesetting (diagrams). All final text and proofs were written manually by the author, who has verified the mathematical claims and references, and takes full responsibility for the contents of the paper.

\printbibliography

@article{bezhanishvili2023RemarksHyperspacesPriestley,
	title = {Remarks on Hyperspaces for {{Priestley}} Spaces},
	author = {Bezhanishvili, G. and Harding, J. and Morandi, P. J.},
	date = {2023-01-17},
	journaltitle = {Theoretical Computer Science},
	shortjournal = {Theor. Comput. Sci.},
	volume = {943},
	pages = {187--202},
	issn = {0304-3975},
	doi = {10.1016/j.tcs.2022.12.001}
}

@article{maksimova1977CraigsTheoremSuperintuitionistic,
	title = {Craig's Theorem in Superintuitionistic Logics and Amalgamable Varieties of Pseudo-Boolean Algebras},
	author = {Maksimova, L. L.},
	date = {1977-11-01},
	journaltitle = {Algebra and Logic},
	shortjournal = {Algebra Log.},
	volume = {16},
	number = {6},
	pages = {427--455},
	issn = {1573-8302},
	doi = {10.1007/BF01670006}
}

@article{maksimova1999InterrelationAlgebraicSemantical,
	title = {Interrelation of Algebraic, Semantical and Logical Properties for Superintuitionistic and Modal Logics},
	author = {Maksimova, Larisa},
	date = {1999},
	journaltitle = {Banach Center Publications},
	shortjournal = {Banach Cent. Publ.},
	volume = {46},
	number = {1},
	pages = {159--168},
	issn = {0137-6934}
}

@online{almeida2026ColimitsHeytingAlgebras,
	title = {Colimits of {{Heyting Algebras}} through {{Esakia Duality}}},
	author = {Almeida, Rodrigo Nicolau},
	date = {2026-04-02},
	eprint = {2402.08058},
	eprinttype = {arXiv},
	eprintclass = {math.LO},
	doi = {10.48550/arXiv.2402.08058}
}

@article{stone1938TopologicalRepresentationsDistributive,
	title = {Topological Representations of Distributive Lattices and {{Brouwerian}} Logics},
	author = {Stone, Marshall Harvey},
	date = {1938},
	journaltitle = {Časopis pro pěstování matematiky a fysiky},
	shortjournal = {Časopis Pěst. Mat. Fys.},
	volume = {067},
	number = {1},
	pages = {1--25},
	issn = {1802-114X}
}

@article{priestley1970RepresentationDistributiveLattices,
	title = {Representation of {{Distributive Lattices}} by Means of Ordered {{Stone Spaces}}},
	author = {Priestley, H. A.},
	date = {1970},
	journaltitle = {Bulletin of the London Mathematical Society},
	shortjournal = {Bull. Lond. Math. Soc.},
	volume = {2},
	number = {2},
	pages = {186--190},
	issn = {1469-2120},
	doi = {10.1112/blms/2.2.186}
}

@article{stone1936TheoryRepresentationBoolean,
	title = {The {{Theory}} of {{Representation}} for {{Boolean Algebras}}},
	author = {Stone, M. H.},
	date = {1936},
	journaltitle = {Transactions of the American Mathematical Society},
	shortjournal = {Trans. Am. Math. Soc.},
	volume = {40},
	number = {1},
	eprint = {1989664},
	eprinttype = {jstor},
	pages = {37--111},
	issn = {0002-9947},
	doi = {10.2307/1989664}
}

@article{banaschewski1983BiframesBispaces,
	title = {Biframes and {{Bispaces}}},
	author = {Banaschewski, B. and Brümmer, G. C.L. and Hardie, K. A.},
	date = {1983-01-01},
	journaltitle = {Quaestiones Mathematicae},
	shortjournal = {Quaest. Math.},
	volume = {6},
	number = {1--3},
	pages = {13--25},
	publisher = {Taylor \& Francis},
	doi = {10.1080/16073606.1983.9632289}
}

@article{hilken2000TopologicalDualityIntuitionistic,
	title = {Topological Duality for Intuitionistic Modal Algebras},
	author = {Hilken, Barnaby P.},
	date = {2000-04-28},
	journaltitle = {Journal of Pure and Applied Algebra},
	shortjournal = {J. Pure Appl. Algebra},
	volume = {148},
	number = {2},
	pages = {171--189},
	doi = {10.1016/S0022-4049(98)00170-4}
}

@article{fischerservi1977ModalLogicIntuitionistic,
	title = {On Modal Logic with an Intuitionistic Base},
	author = {Fischer Servi, Gisèle},
	date = {1977-09-01},
	journaltitle = {Studia Logica},
	shortjournal = {Stud. Logica},
	volume = {36},
	number = {3},
	pages = {141--149},
	doi = {10.1007/BF02121259}
}

@article{orlowska2007DiscreteDualitiesHeyting,
	title = {Discrete {{Dualities}} for {{Heyting Algebras}} with {{Operators}}},
	author = {Orłowska, Ewa and Rewitzky, Ingrid},
	date = {2007-01-01},
	journaltitle = {Fundamenta Informaticae},
	shortjournal = {Fundam. Informaticae},
	volume = {81},
	number = {1--3},
	pages = {275--295},
	issn = {0169-2968}
}

@online{hofmann2014NotesEsakiaSpaces,
	title = {Some Notes on {{Esakia}} Spaces},
	author = {Hofmann, Dirk and Nora, Pedro},
	date = {2014-08-05},
	eprint = {1408.1072},
	eprinttype = {arXiv},
	eprintclass = {math.CT},
	doi = {10.48550/arXiv.1408.1072}
}

@article{mckinsey1944AlgebraTopology,
	title = {The {{Algebra}} of {{Topology}}},
	author = {McKinsey, J. C. C. and Tarski, Alfred},
	date = {1944},
	journaltitle = {Annals of Mathematics},
	shortjournal = {Ann. Math.},
	volume = {45},
	number = {1},
	pages = {141--191},
	doi = {10.2307/1969080}
}

@article{hartonas2023ChoicefreeTopologicalDuality,
	title = {Choice-Free Topological Duality for Implicative Lattices and {{Heyting}} Algebras},
	author = {Hartonas, Chrysafis},
	date = {2023-11-14},
	journaltitle = {Algebra universalis},
	volume = {85},
	number = {1},
	pages = {3},
	issn = {1420-8911},
	doi = {10.1007/s00012-023-00830-8}
}

@article{mckinsey1946ClosedElementsClosure,
	title = {On {{Closed Elements}} in {{Closure Algebras}}},
	author = {McKinsey, J. C. C. and Tarski, Alfred},
	date = {1946},
	journaltitle = {Annals of Mathematics},
	shortjournal = {Ann. Math.},
	volume = {47},
	number = {1},
	% eprint = {1969038},
	% eprinttype = {jstor},
	pages = {122--162},
	doi = {10.2307/1969038}
}

@incollection{banaschewski1989UniversalZero,
	author    = {Banaschewski, Bernhard},
	title     = {Universal zero-dimensional compactifications},
	booktitle = {Categorical Topology and its Relation to Analysis, Algebra and Combinatorics},
	editor    = {Ad{\'a}mek, Ji{\v{r}}{\'i} and Mac Lane, Saunders},
	pages     = {257--269},
	publisher = {World Scientific},
	address   = {Teaneck, NJ},
	year      = {1989},
}

@book{gehrke2024TopologicalDualityDistributive,
	title = {Topological {{Duality}} for {{Distributive Lattices}}: {{Theory}} and {{Applications}}},
	shorttitle = {Topological {{Duality}} for {{Distributive Lattices}}},
	author = {Gehrke, Mai and van Gool, Sam},
	date = {2024-02-29},
	edition = {1},
	publisher = {Cambridge University Press},
	doi = {10.1017/9781009349680}
}

@book{riehl2016CategoryTheoryContext,
	title = {Category Theory in Context},
	author = {Riehl, Emily},
	date = {2016},
	series = {Aurora {{Dover}} Modern Math Originals},
	publisher = {Dover Publications, Inc},
	isbn = {978-0-486-80903-8},
	pagetotal = {240}
}

@book{davey2002IntroductionLatticesOrder,
	title = {Introduction to {{Lattices}} and {{Order}}},
	author = {Davey, B. A. and Priestley, H. A.},
	date = {2002},
	edition = {2},
	publisher = {Cambridge University Press},
	doi = {10.1017/CBO9780511809088},
	isbn = {978-0-521-78451-1}
}

@article{bezhanishvili2023McKinseyTarskiAlgebras,
	title = {{{McKinsey-Tarski}} Algebras: {{An}} Alternative Pointfree Approach to Topology},
	shorttitle = {{{McKinsey-Tarski}} Algebras},
	author = {Bezhanishvili, Guram and Raviprakash, Ranjitha},
	date = {2023-11-01},
	journaltitle = {Topology and its Applications},
	shortjournal = {Topol. Its Appl.},
	volume = {339},
	pages = {108689},
	issn = {0166-8641},
	doi = {10.1016/j.topol.2023.108689}
}

@book{borceux1994Handbook1,
	title = {Handbook of {{Categorical Algebra}}: {{Volume}} 1: {{Basic Category Theory}}},
	shorttitle = {Handbook of {{Categorical Algebra}}},
	author = {Borceux, Francis},
	date = {1994},
	series = {Encyclopedia of {{Mathematics}} and Its {{Applications}}},
	volume = {1},
	publisher = {Cambridge University Press},
	doi = {10.1017/CBO9780511525858},
	isbn = {978-0-521-44178-0}
}

@article{bezhanishvili2020ChoiceFreeStone,
	title = {Choice-{{Free Stone Duality}}},
	author = {Bezhanishvili, Nick and Holliday, Wesley H.},
	date = {2020-03},
	journaltitle = {The Journal of Symbolic Logic},
	volume = {85},
	number = {1},
	pages = {109--148},
	issn = {0022-4812, 1943-5886},
	doi = {10.1017/jsl.2019.11}
}

@article{maietti2012InductionPrincipleConsequence,
	title = {An Induction Principle for Consequence in Arithmetic Universes},
	author = {Maietti, Maria Emilia and Vickers, Steven},
	date = {2012-08-01},
	journaltitle = {Journal of Pure and Applied Algebra},
	shortjournal = {Journal of Pure and Applied Algebra},
	series = {Special {{Issue}} Devoted to the {{International Conference}} in {{Category Theory}} `{{CT2010}}'},
	volume = {216},
	number = {8},
	pages = {2049--2067},
	issn = {0022-4049},
	doi = {10.1016/j.jpaa.2012.02.040}
}

@article{peremans1957EmbeddingDistributiveLattice,
	title = {Embedding of a Distributive Lattice into a Boolean Algebra},
	author = {Peremans, Wouter},
	date = {1957-01-01},
	journaltitle = {Proceedings of the KNAW - Series A, Mathematical Sciences},
	volume = {60},
	number = {1},
	pages = {73--81}
}

@book{esakia2019HeytingAlgebrasDuality,
	title = {Heyting {{Algebras}}: {{Duality Theory}}},
	shorttitle = {Heyting {{Algebras}}},
	author = {Esakia, Leo},
	editor = {Bezhanishvili, Guram and Holliday, Wesley H.},
	date = {2019},
	series = {Trends in {{Logic}}},
	volume = {50},
	publisher = {Springer International Publishing},
	doi = {10.1007/978-3-030-12096-2},
	isbn = {978-3-030-12095-5 978-3-030-12096-2}
}

@online{schaaf2026LocalicRelationsOpenCones,
	title = {Localic {{Relations}} with {{Open Cones}}},
	author = {family=Schaaf, given=Nesta, prefix=van der, useprefix=true},
	date = {2026-05-05},
	eprint = {2605.04038},
	eprinttype = {arXiv},
	doi = {10.48550/arXiv.2605.04038},
	label = {vdS}
}

@article{korostenski2007LaxProperMaps,
  title = {Lax Proper Maps of Locales},
  author = {Korostenski, M. and Labuschagne, C. C. A.},
  date = {2007-02-01},
  journaltitle = {Journal of Pure and Applied Algebra},
  volume = {208},
  number = {2},
  pages = {655--664},
  issn = {0022-4049},
  doi = {10.1016/j.jpaa.2006.03.003}
}

@article{esakia1974topologicalKripkemodels,
	title = {Topological {K}ripke Models},
	author = {Esakia, L. L.},
	year = {1974},
	journal = {Doklady Akademii Nauk SSSR},
	volume = {214},
	number = {2},
	pages = {298--301}
}

@article{jonsson1951BooleanAlgebrasOperators,
  title = {Boolean {{Algebras}} with {{Operators}}. {{Part I}}},
  author = {Jónsson, Bjarni and Tarski, Alfred},
  date = {1951},
  journaltitle = {American Journal of Mathematics},
  volume = {73},
  number = {4},
  pages = {891--939},
  publisher = {The Johns Hopkins University Press},
  issn = {0002-9327},
  doi = {10.2307/2372123}
}

@article{johnstone1989ConstructiveClosedSubgroup,
  title = {A Constructive Closed Subgroup Theorem for Localic Groups and Groupoids},
  author = {Johnstone, Peter T.},
  date = {1989},
  journaltitle = {Cahiers de Topologie et Geometrie Differentielle Categoriques},
  volume = {30},
  number = {1},
  pages = {3--23}
}

@article{kock1989GodementTheoremLocales,
  title = {A {{Godement}} Theorem for Locales},
  author = {Kock, Anders},
  date = {1989-05},
  journaltitle = {Mathematical Proceedings of the Cambridge Philosophical Society},
  shortjournal = {Math. Proc. Camb. Phil. Soc.},
  volume = {105},
  number = {3},
  pages = {463--471},
  issn = {0305-0041, 1469-8064},
  doi = {10.1017/S0305004100077835}
}

@article{priestley1972OrderedTopologicalSpaces,
  title = {Ordered {{Topological Spaces}} and the {{Representation}} of {{Distributive Lattices}}},
  author = {Priestley, H. A.},
  date = {1972},
  journaltitle = {Proceedings of the London Mathematical Society},
  volume = {s3-24},
  number = {3},
  pages = {507--530},
  issn = {1460-244X},
  doi = {10.1112/plms/s3-24.3.507}
}

@article{bezhanishvili2023FrametheoreticPerspectiveEsakia,
  title = {A Frame-Theoretic Perspective on {{Esakia}} Duality},
  author = {Bezhanishvili, G. and Carai, L. and Morandi, P. J.},
  date = {2023-09-30},
  journaltitle = {Algebra universalis},
  volume = {84},
  number = {4},
  pages = {30},
  issn = {1420-8911},
  doi = {10.1007/s00012-023-00827-3}
}

@book{johnstone2002Elephant2,
  title = {Sketches of an {{Elephant}}: {{A Topos Theory Compendium}}},
  shorttitle = {Sketches of an {{Elephant}}},
  author = {Johnstone, Peter T.},
  date = {2002-11-21},
  series = {Oxford {{Logic Guides}}},
  volume = {2},
  publisher = {Oxford University Press},
  isbn = {978-0-19-851598-2}
}

@article{townsend1997LocalicPriestleyDuality,
	title = {Localic {{Priestley}} Duality},
	author = {Townsend, Chris},
	date = {1997-03-28},
	journaltitle = {Journal of Pure and Applied Algebra},
	shortjournal = {Journal of Pure and Applied Algebra},
	volume = {116},
	number = {1},
	pages = {323--335},
	issn = {0022-4049},
	doi = {10.1016/S0022-4049(96)00169-7}
}

@book{maclane1998CategoriesWorkingMathematician,
	title = {Categories for the Working Mathematician},
	author = {MacLane, Saunders},
	date = {1998},
	series = {Graduate Texts in Mathematics},
	edition = {2nd ed},
	number = {5},
	publisher = {Springer},
	location = {New York},
	isbn = {978-0-387-98403-2},
	pagetotal = {314}
}

@thesis{townsend1996preframeTechniquesConstructiveLocale,
	author = {Townsend, Christopher Francis},
	year = {1996},
	title = {Preframe techniques in constructive locale theory},
	institution = {University of London},
	type = {PhD thesis}
}

@article{bezhanishvili2010BitopologicalDualityDistributive,
	title = {Bitopological Duality for Distributive Lattices and {{Heyting}} Algebras},
	author = {Bezhanishvili, Guram and Bezhanishvili, Nick and Gabelaia, David and Kurz, Alexander},
	date = {2010-06},
	journaltitle = {Mathematical Structures in Computer Science},
	volume = {20},
	number = {3},
	pages = {359--393},
	publisher = {Cambridge University Press},
	issn = {1469-8072, 0960-1295},
	doi = {10.1017/S0960129509990302}
}

@article{moshier2017GeneratingSublocalesSubsets,
	title = {Generating Sublocales by Subsets and Relations: A Tangle of Adjunctions},
	shorttitle = {Generating Sublocales by Subsets and Relations},
	author = {Moshier, M. Andrew and Picado, Jorge and Pultr, Aleš},
	date = {2017-09-01},
	journaltitle = {Algebra universalis},
	shortjournal = {Algebra Univers.},
	volume = {78},
	number = {1},
	pages = {105--118},
	issn = {1420-8911},
	doi = {10.1007/s00012-017-0446-z}
}

@article{klein1970RelationsCategories,
	title = {Relations in Categories},
	author = {Klein, Aaron},
	date = {1970-12},
	journaltitle = {Illinois Journal of Mathematics},
	volume = {14},
	number = {4},
	pages = {536--550},
	publisher = {Duke University Press},
	issn = {0019-2082, 1945-6581},
	doi = {10.1215/ijm/1256052950}
}

@article{pitts1983AmalgamationInterpolation,
	title = {Amalgamation and Interpolation in the Category of Heyting Algebras},
	author = {Pitts, A. M.},
	date = {1983-08-01},
	journaltitle = {Journal of Pure and Applied Algebra},
	shortjournal = {Journal of Pure and Applied Algebra},
	volume = {29},
	number = {2},
	pages = {155--165},
	issn = {0022-4049},
	doi = {10.1016/0022-4049(83)90104-4}
}

@book{johnstone1982StoneSpaces,
	title = {Stone Spaces},
	author = {Johnstone, P. T.},
	date = {1982},
	series = {Cambridge Studies in Advanced Mathematics},
	number = {3},
	publisher = {Cambridge University Press},
	isbn = {978-0-521-23893-9}
}

@book{joyal1984ExtensionGaloisTheory,
	title = {An Extension of the {{Galois}} Theory of {{Grothendieck}}},
	author = {Joyal, André and Tierney, Myles},
	date = {1984},
	series = {Memoirs of the {{American Mathematical Society}}},
	volume = {51},
	number = {309},
	publisher = {American Mathematical Society},
	issn = {0065-9266, 1947-6221},
	doi = {10.1090/memo/0309},
	isbn = {978-0-8218-2312-5 978-1-4704-0722-3}
}

@article{picado2015NotesProductLocales,
	title = {Notes on the {{Product}} of {{Locales}}},
	author = {Picado, Jorge and Pultr, Aleš},
	date = {2015-04-01},
	journaltitle = {Mathematica Slovaca},
	volume = {65},
	number = {2},
	pages = {247--264},
	issn = {1337-2211, 0139-9918},
	doi = {10.1515/ms-2015-0020}
}

@inproceedings{johnstone1991PreframePresentationsPresent,
	title = {Preframe Presentations Present},
	booktitle = {Category {{Theory}}},
	author = {Johnstone, Peter T and Vickers, Steven},
	editor = {Carboni, Aurelio and Pedicchio, Maria Cristina and Rosolini, Guiseppe},
	date = {1991},
	pages = {193--212},
	publisher = {Springer},
	location = {Berlin, Heidelberg},
	doi = {10.1007/BFb0084221},
	isbn = {978-3-540-46435-8}
}

@article{dowker1977SumsCategoryFrames,
	title = {Sums in the Category of Frames},
	author = {Dowker, C. H. and Strauss, D},
	date = {1977},
	journaltitle = {Houston Journal of Mathematics},
	volume = {3},
	number = {1},
	pages = {17--32}
}

@article{heunen2024OrderedLocales,
	title = {Ordered Locales},
	author = {Heunen, Chris and van der Schaaf, Nesta},
	year = {2024},
	journal = {Journal of Pure and Applied Algebra},
	volume = {228},
	number = {7},
	pages = {107654},
	issn = {0022-4049},
	doi = {10.1016/j.jpaa.2024.107654},
	urldate = {2024-03-12},
	label = {HS}
}

@thesis{schaaf2024TowardsPointFreeSpacetimes,
	title = {Towards Point-Free Spacetimes},
	author = {van der Schaaf, Nesta},
	year = {2024},
	number = {arXiv:2406.15406},
	eprint = {2406.15406},
	publisher = {arXiv},
	doi = {10.48550/arXiv.2406.15406},
	archiveprefix = {arXiv},
	note = {PhD thesis},
	institution = {University of Edinburgh},
	label = {vdS}
}

@book{picado2012FramesLocalesTopology,
	title = {Frames and Locales: Topology without Points},
	shorttitle = {Frames and Locales},
	author = {Picado, Jorge and Pultr, Aleš},
	date = {2012},
	series = {Frontiers in Mathematics},
	publisher = {Birkhäuser},
	location = {Basel},
	isbn = {978-3-0348-0154-6}
}

@book{maclane1994SheavesGeometryLogic,
	title = {Sheaves in {{Geometry}} and {{Logic}}: {{A First Introduction}} to {{Topos Theory}}},
	shorttitle = {Sheaves in {{Geometry}} and {{Logic}}},
	author = {Mac Lane, Saunders and Moerdijk, Ieke},
	date = {1994},
	series = {Universitext},
	publisher = {Springer New York},
	doi = {10.1007/978-1-4612-0927-0},
	isbn = {978-0-387-97710-2 978-1-4612-0927-0}
}
\end{document}